\documentclass[11pt]{article}

\usepackage{authblk}

\usepackage{amsmath}
\usepackage{amsthm}
\usepackage{amssymb}
\usepackage{xcolor}
\usepackage{url}
\usepackage[hidelinks]{hyperref}
\newcommand{\doilink}[1]{\href{https://doi.org/#1}{\nolinkurl{#1}}}
\newcommand{\weblink}[1]{\href{#1}{\nolinkurl{#1}}}
\usepackage{array}
\usepackage{tikz}
\usepackage[a4paper,margin=3cm]{geometry}
\theoremstyle{plain}
\newtheorem{theorem}{Theorem}[section]
\newtheorem{prop}[theorem]{Proposition}
\newtheorem{lemma}[theorem]{Lemma}
\newtheorem{cor}[theorem]{Corollary}
\newtheorem{problem}{Problem}[section]
\theoremstyle{remark}
\newtheorem{example}[theorem]{Example}
\theoremstyle{definition}
\newtheorem{definition}[theorem]{Definition}
\newtheorem*{quoteddefinition}{Definition}
\theoremstyle{plain}
\newtheorem*{quotedtheorem}{Theorem}

\newcommand{\byeqn}[1]{\overset{\eqref{#1}}{=}}

\newcommand{\idmap}{\mathrm{id}}
\newcommand{\inv}{^{-1}}

\newcommand\gd{\mathcal{D}}
\newcommand\gl{\mathcal{L}}
\newcommand\gr{\mathcal{R}}
\newcommand\gh{\mathcal{H}}
\newcommand\gj{\mathcal{J}}
\newcommand\gll{\mathrel{\gl}}
\newcommand\grr{\mathrel{\gr}}
\newcommand\gjj{\mathrel{\gj}}

\newcommand{\End}{\mathop{\mathrm{End}}}
\newcommand{\Sym}{\mathop{\mathrm{Sym}}}
\newcommand{\Aut}{\mathop{\mathrm{Aut}}}

\DeclareMathOperator{\im}{im}
\DeclareMathOperator{\rank}{rank}
\DeclareMathOperator{\sgn}{sgn}
\DeclareMathOperator{\Tr}{Tr}

\title{Complete Mappings of Semigroups}

\author[1,2]{Jo\~ao Ara\'ujo\thanks{
\href{mailto:jj.araujo@fct.unl.pt}{\texttt{jj.araujo@fct.unl.pt}}}}
\author[1,3]{Wolfram Bentz\thanks{
\href{mailto:wbentz@uaberta.pt}{\texttt{wbentz@uaberta.pt}}}}
\author[4]{Peter J. Cameron\thanks{
\href{mailto:pjc20@st-andrews.ac.uk}{\texttt{pjc20@st-andrews.ac.uk}}}}
\author[5]{Kevin Hendrey\thanks{
\href{mailto:kevin.hendrey1@monash.edu}{\texttt{kevin.hendrey1@monash.edu}}}}
\author[6]{Michael Kinyon\thanks{
\href{mailto:michael.kinyon@du.edu}{\texttt{michael.kinyon@du.edu}}}}
\affil[1]{\small Centro de Matem\'{a}tica e Aplica\c{c}\~{o}es,
        Faculdade de Ci\^{e}ncias e Tecnologia,
        Universidade Nova de Lisboa,
        Campus da Caparica,
        2829-516 Caparica, Portugal}
\affil[2]{\small Departamento de Matem\'{a}tica,
        Faculdade de Ci\^{e}ncias e Tecnologia,
        Universidade Nova de Lisboa,
        Campus da Caparica,
        2829-516 Caparica, Portugal}
\affil[3]{\small Departamento de Ci\^{e}ncias e Tecnologia,
        Universidade Aberta,
        Rua Escola Polit\'{e}cnica, 147,
        1269-001 Lisboa, Portugal}
\affil[4]{\small School of Mathematics and Statistics,
        University of St Andrews,
        St Andrews KY16 9SS, UK}
\affil[5]{\small School of Mathematical Sciences,
        Monash University, VIC 3800, Australia}
\affil[6]{\small Department of Mathematics,
        University of Denver,
        Denver CO 80208, USA}

\date{}

\begin{document}

\maketitle

\begin{abstract} 
A complete mapping of a semigroup $S$ is a bijection $\alpha\colon S\to S$ such that the map
$\theta\colon S\to S$ defined by $x\theta=x\cdot x\alpha$ is also a bijection. Equivalently,
it determines a transversal of the multiplication table of $S$. Complete mappings connect group theory, Latin squares, and cryptography,
and their existence for finite groups was characterized by the resolution
of the Hall--Paige conjecture. In this paper, we develop the corresponding theory for
finite semigroups.

We prove that every finite semigroup admitting a complete mapping is regular and that
the problem reduces to principal factors. We classify the existence of a complete mapping in Rees matrix
semigroups without zero, give a Hall-type criterion for Rees $0$-matrix semigroups
over groups with complete mappings, and prove sufficient conditions
for Rees $0$-matrix semigroups whose maximal subgroups do not have complete mappings.
As the main application of the Rees $0$-matrix analysis, we show  that
        $T_n$ has a complete mapping if and only if
        $n=1$ or $n\ge4$. Equivalently, $T_n$ has a complete mapping if and only if the same holds for  $S_n$.
We prove that the full linear monoid of a finite-dimensional vector space has a complete mapping except in dimension $1$ over a field of odd order and in dimension $2$ over $\mathbb F_2$. We also prove that the partition monoid  $\mathcal P_n$ has a complete mapping if and only if $n=1$ or $n\ge4$, and that every finite aperiodic regular $*$-semigroup has a complete mapping. As a consequence, the planar partition, Motzkin and Jones monoids have complete mappings.
The proofs combine Green--Rees structure theory with the Hall--Paige
theorem and Burnside transfer, the Hall--Gale--Edmonds framework for
matchings, flows, matroids and polyhedra,
Hoffman--Kruskal total unimodularity and integral polyhedra, K\H{o}nig edge-colouring, Bevis--Hall--Katz incidence theory over finite abelian
groups, and Bregman--Egorychev--Falikman
permanent estimates.        
The paper concludes with open problems ranging from structural and
classification questions to potential applications in cryptography.

\end{abstract}

\section{Introduction}

Let $(S,\cdot)$ be a magma, that is, a set with a binary operation. A \emph{complete mapping} of $S$ is a bijection $\alpha\colon S\to S$ such that the mapping $\theta\colon S\to S$ defined by $x\theta = x\cdot x\alpha$ ($x\in S$), called an \emph{orthomorphism}, is also a bijection.
Here and throughout, our maps act on the right: $x\alpha$ means $\alpha(x)$.

\begingroup
Our aim is to investigate the existence of complete mappings in finite
semigroups. This roadmap is deliberately terse. Semigroup theory terms are used but not presupposed: each is defined in the main text before its use.
All semigroups, groups and index sets in the following statements are finite.
The main results are the following.
\begin{enumerate}\itemsep0pt
\item Every semigroup admitting a complete mapping is regular
(Theorem~\ref{Thm:regular}).

\item The existence problem reduces exactly to the principal factors: a
semigroup has a complete mapping if and only if every semigroup obtained from
one of its $\mathcal J$-classes by replacing products that leave the class by
zero has a complete mapping (Theorem~\ref{t:J-reduct}).
 Combining this new reduction with the standard principal-factor
form of Rees' theorem (Theorem~\ref{t:principal-factor-Rees}), we first note that each principal
factor is either null or a Rees $0$-matrix semigroup. A null principal factor
with at least two elements has no complete mapping, so the remaining
classification problem is precisely to determine which Rees $0$-matrix
semigroups have complete mappings.

\item Without loss of generality, let $P$ be a sandwich matrix whose first row and first column consist of
the identity. The Rees matrix semigroup $\mathcal M(G,I,\Lambda,P)$ has a
complete mapping if and only if at least one of the following holds: $|G|$ is
odd; the Sylow $2$-subgroups of $G$ are non-cyclic; $|I||\Lambda|$ is even; or
$P$ has an entry of even order (Theorem~\ref{t:ReesNon0}).

\item For a Rees $0$-matrix semigroup
$\mathcal M^0(G,I,\Lambda,P)$, let $Q$ record the non-zero positions of $P$.
Suppose that either $G$ has a complete mapping or $|I||\Lambda|$ is even.
Then $\mathcal M^0(G,I,\Lambda,P)$ has a complete mapping if and only if $Q$
admits a non-negative real matrix $H=(h_{\lambda i})$ such that
$h_{\lambda i}=0$ whenever $Q_{\lambda i}=0$, every row sum is $|I|$ and
every column sum is $|\Lambda|$. Equivalently, every $s$ rows of $Q$ meet at
least $s|I|/|\Lambda|$ columns, or dually every $r$ columns meet at least
$r|\Lambda|/|I|$ rows (Theorems~\ref{c:Rees0} and
\ref{t:Rees0-noncomp-even}). {In the square case
$|I|=|\Lambda|=n$, our condition implies that the matrix $H/n$ is doubly stochastic and vanishes at every
zero-position of $Q$; conversely, every such doubly stochastic matrix yields a
balanced support weighting by multiplication with $n$.}

\item If $V$ has finite dimension $d$ vector space over $\mathbb F_q$, then every proper
principal factor of $\End_{\mathbb F_q}(V)$ has a complete mapping, and
$\End_{\mathbb F_q}(V)$ has a complete mapping if and only if
$\operatorname{GL}_d(q)$ does. Equivalently, the only exceptions are $d=1$
with $q$ odd and $(d,q)=(2,2)$ (Theorem~\ref{t:linear-monoid}).

\item Every proper principal factor of the partition monoid $\mathcal P_n$
has a complete mapping, and $\mathcal P_n$ has a complete mapping if and only
if $n=1$ or $n\ge4$ (Theorem~\ref{t:partition-monoid}).

\item Every finite aperiodic regular $*$-semigroup has a complete mapping
(Proposition~\ref{p:aperiodic-regular-star}). Consequently, the planar
partition, Motzkin and Jones (also called Temperley--Lieb) monoids have complete mappings
(Corollary~\ref{c:planar-diagram-monoids}).

\item The full transformation monoid $T_n$ has a complete mapping if and only
if $n=1$ or $n\ge4$ (Theorem~\ref{Thm:full-transformation-semigroup}).

\item A finite inverse semigroup has a complete mapping if and only if, in
each $\mathcal J$-class, either its maximal subgroups have complete mappings
or the number of its $\mathcal L$-classes, equivalently its
$\mathcal R$-classes, is even (Theorem~\ref{t:inverse}).
\end{enumerate}
\endgroup

{Complete mappings for groups were introduced by H. B. Mann in his study of orthogonal Latin squares based on groups \cite{Mann1,Mann2}. Interest turned to the finite case after P. T. Bateman showed that every infinite group has a complete mapping \cite[Theorem]{Bateman}. Further work was done by L. J. Paige \cite{Paige} and then M. Hall and Paige \cite{hp}. It was in the latter paper that the characterization known as the \emph{Hall--Paige conjecture} was first formulated.}

{The completed Hall--Paige theorem is recorded in
\cite[Theorems~5 and~6]{evans4}.}

\begin{theorem}\label{Thm:HP}
A finite group $G$ has a complete mapping if and only if the Sylow $2$-subgroups of $G$ are either trivial or non-cyclic.
\end{theorem}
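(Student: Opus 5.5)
The plan is to treat the two directions separately: necessity by an elementary abelianization argument, and sufficiency by reducing to finite simple groups. The sufficiency direction is where the real difficulty lies, and in the paper this theorem is quoted from \cite{evans4} rather than reproved.

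\emph{Necessity.} Suppose the Sylow $2$-subgroups of $G$ are non-trivial and cyclic.
\begin{enumerate}
\item By Burnside's transfer theorem, $G$ has a normal $2$-complement $N$. Then $A=G/N$ is cyclic of order $2^k$ with $k\ge1$, and $|N|$ is odd.
\item Let $\pi\colon G\to A$ be the quotient map and write $A$ additively. In a cyclic group of even order, the sum of all elements is the unique involution $t$.
\item Each fibre of $\pi$ has $|N|$ elements and $|N|$ is odd, so $\sum_{g\in G} g\pi=|N|\,t=t\neq 0$.
\item Suppose $\alpha$ is a complete mapping with orthomorphism $\theta$. Since $A$ is abelian, the order of summation is irrelevant. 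As $\theta$ and $\alpha$ are bijections of $G$,
\[
t=\sum_{x\in G}(x\theta)\pi=\sum_{x\in G}x\pi+\sum_{x\in G}(x\alpha)\pi=t+t=0,
\]
which is a contradiction.
\end{enumerate}

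\emph{Sufficiency.} The argument runs by minimal counterexample, in three steps.
\begin{enumerate}
\item \emph{Extension lemma} (Hall--Paige). If $N\trianglelefteq G$ and both $N$ and $G/N$ have complete mappings, then $G$ has one. The proof lifts a complete mapping of $G/N$ through coset representatives and combines it with complete mappings of the conjugates of $N$, arranged fibre by fibre.
\item \emph{Inheritance by quotients.} The hypothesis "Sylow $2$-subgroups trivial or non-cyclic" passes to suitable normal subgroups and quotients, provided one works with a minimal normal subgroup $N$ and uses the known complete mappings for abelian groups. Hall and Paige settle the solvable case directly this way, via abelian chief factors and explicit complete mappings of abelian groups satisfying the condition.
\item \emph{Reduction to simple groups.} By Wilcox's reduction, a minimal counterexample must be a non-abelian simple group, in fact a sporadic group or a group of Lie type. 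Note that every non-abelian simple group has non-cyclic Sylow $2$-subgroups. Evans then constructs complete mappings for the groups of Lie type and all sporadic groups except $J_4$, and Bray handles $J_4$.
\end{enumerate}

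The main obstacle is this final simple-group step. It depends on the classification of finite simple groups and on case-by-case constructions, typically via suitable subgroups or near-complete-mapping arguments. It is not something to redo here, so the plan is to cite \cite{evans4} for it.
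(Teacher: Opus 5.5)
Your proposal is correct in substance: necessity is proved directly and correctly, and sufficiency is a citation, as in the paper. However, the sketch you give of the cited sufficiency proof contains false statements.

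\textbf{Comparison with the paper.} The paper does not prove this theorem. It quotes it from Evans's survey \cite[Theorems~5 and~6]{evans4} and records only who proved which part. You instead give a self-contained, classification-free proof of necessity. This is in fact the argument the paper uses later. Lemma~\ref{l:cyclic2-quotient} supplies your normal $2$-complement; Burnside's hypothesis $N_G(U)=C_G(U)$ holds because $\Aut(U)$ is a $2$-group while $N_G(U)/C_G(U)$ has odd order. The proof of Theorem~\ref{t:converse} is your summation modulo $N$, and your computation is its case $|I|=|\Lambda|=1$. For sufficiency, citing the literature as the paper does is appropriate.

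\textbf{Step~2 of your sufficiency sketch is false as stated.} The hypothesis does not descend to normal subgroups or quotients, and the extension lemma applied along abelian chief factors cannot even handle $Q_8$. Its Sylow $2$-subgroup is $Q_8$ itself, which is non-cyclic. Yet its non-trivial proper normal subgroups are $Z(Q_8)\cong C_2$ and three cyclic subgroups of order $4$. So for every such $N$, either $N$ or $Q_8/N$ is cyclic of even order and has no complete mapping, and the extension lemma never applies. The step is also logically redundant, because Wilcox's reduction already shows that a minimal counterexample is non-abelian simple.

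\textbf{Step~3 misstates the division of labour.}
\begin{itemize}
\item A minimal simple counterexample could a priori be alternating; these groups were excluded by Hall and Paige.
\item The groups of Lie type other than the Tits group were settled by Wilcox, not Evans.
\item Evans handled the Tits group and the sporadic groups other than $J_4$.
\item Bray handled $J_4$.
\end{itemize}
With these corrections, your sufficiency direction is simply the citation the paper gives.
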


{\noindent Hall and Paige proved that a finite group with a non-trivial cyclic Sylow $2$-subgroup has no complete mapping; this is recorded as Theorem~5 in Evans's survey \cite[Theorem~5]{evans4}. The sufficiency was open until 2009, when the reduction and remaining cases were completed by S. Wilcox \cite[Theorem~24]{wilcox}, A. B. Evans \cite[Theorems~1--8]{evans}, and J. N. Bray. These proofs used the Classification of Finite Simple Groups: Wilcox reduced the problem to the case of simple groups and settled the groups of Lie type except for the Tits group (the alternating groups had already been resolved by Hall and Paige); Evans handled the Tits group and all the sporadic groups except for the fourth Janko group, which was done by Bray. Bray's proof was published later in \cite[Section~2]{bccsz}. Subsequently, very different proofs not using the Classification were found, using methods of probabilistic and extremal combinatorics and establishing more general results, by Eberhard, Manners and Mrazovi\'c~\cite[Theorem~1.2]{emm} and by M\"uyesser and Pokrovskiy~\cite[Theorem~1.1]{mp}. These later works give CFSG-free proofs for all sufficiently large finite groups and establish stronger asymptotic or random versions.}

{Complete mappings have grown into a rich and still active line of research in cryptography, with real applications in industry. Mittenthal, seeking substitutions resistant to cryptanalysis, studied the use of orthomorphisms of elementary abelian $2$-groups as bijective block substitutions (S-boxes) \cite[pp.~59--60]{Mittenthal}, gave systematic constructions from shift-register sequences \cite[Proposition~3]{Mittenthal}, and characterised these orthomorphisms by a balance condition on the maximal subgroups \cite[unnumbered theorem, p.~66]{Mittenthal}.}

\begingroup
Before trying to extend this story to other magmas, and in particular to
semigroups, we separate three related combinatorial notions.

Let $S$ be a finite magma of order $n$, and let $T$ be its Cayley table. We
record an entry of $T$ as a triple $(r,c,s)$, where $r$ is its row, $c$ its
column and $s=r\cdot c$ its {\em symbol}. A \emph{transversal} of $T$ is a set of
$n$ entries meeting every row, every column and every symbol exactly once.
If $\alpha\colon S\to S$ is a bijection, the entries selected from the rows
of $T$ by $\alpha$ are
\[
T_\alpha=\{(x,x\alpha,x\cdot x\alpha):x\in S\}.
\]
The map $\alpha$ is a complete mapping precisely when $T_\alpha$ is a
transversal. Thus, for every finite magma,  the existence of a complete mapping is equivalent to the
existence of a transversal.

A Cayley table is a Latin square when each symbol occurs exactly once in
every row and every column. Two Latin squares on the same set of cells are
\emph{orthogonal} if every ordered pair of  symbols occurs in exactly one cell;
either square is then called an \emph{orthogonal mate} of the other. For
example, the following two Latin squares are orthogonal, as the superposition
on the right displays:
\[
\begin{array}{c@{\hspace{1.2cm}}c@{\hspace{1.2cm}}c}
\begin{array}{c|ccc}
 &0&1&2\\ \hline
0&0&1&2\\
1&1&2&0\\
2&2&0&1
\end{array}
&
\begin{array}{c|ccc}
 &0&1&2\\ \hline
0&0&2&1\\
1&1&0&2\\
2&2&1&0
\end{array}
&
\begin{array}{c|ccc}
 &0&1&2\\ \hline
0&(0,0)&(1,2)&(2,1)\\
1&(1,1)&(2,0)&(0,2)\\
2&(2,2)&(0,1)&(1,0)
\end{array}
\end{array}
\]

A \emph{resolution into transversals} of a Latin square of order $n$ is a set
of $n$ pairwise disjoint transversals whose union is the set of all $n^2$
entries. For the first square above, let
\[
\begin{aligned}
T_0&=\{(0,0,0),(1,1,2),(2,2,1)\},\\
T_1&=\{(0,2,2),(1,0,1),(2,1,0)\},\\
T_2&=\{(0,1,1),(1,2,0),(2,0,2)\}.
\end{aligned}
\]
Each $T_i$ meets every row, column and symbol exactly once, and the three
sets partition all nine entries. Here each triple records a row, a column and
the symbol in the first square. If we inspect the same cells in the second
square, the three cells selected by $T_i$ all contain the symbol $i$. For
example, $T_0$ selects the cells $(0,0)$, $(1,1)$ and $(2,2)$, and all three
contain $0$ in the second square. Thus the cells selected by $T_0$, $T_1$ and
$T_2$ are exactly the three symbol classes of the second square.
More generally, the symbol classes of an orthogonal mate form a resolution
of the original Latin square. Conversely, labelling the members of a
resolution by distinct symbols produces an orthogonal mate. Hence a Latin
square has an orthogonal mate if and only if it has a resolution into
transversals.

There is an additional feature for group tables. If
\[
T=\{(x,x\alpha,x(x\alpha)):x\in G\}
\]
is a transversal of the table of a finite group $G$, then, for each $g\in G$,
\[
T_g=\{(gx,x\alpha,gx(x\alpha)):x\in G\}
\]
is also a transversal. The sets $T_g$ are pairwise disjoint and cover the
whole table, so one transversal extends by left translation to a resolution.
Consequently, for a finite group, the existence of a complete mapping, a
transversal, a resolution into transversals and an orthogonal mate are all
equivalent; see also \cite[Theorem~1.1]{VLW}.

For a general finite semigroup, its Cayley table need not be a Latin square,
so an orthogonal mate is usually not a meaningful object and a single
transversal need not extend to a resolution. The notions of complete mapping
and transversal nevertheless remain meaningful and equivalent. This is the
formulation that extends from groups to semigroups.
\endgroup
 
For $n\ge 1$, write $[n]=\{1,\ldots,n\}$. The combinatorial Brandt semigroup
$B_n$ has zero $0$ and non-zero elements $E_{i\lambda}$, with
$i,\lambda\in[n]$, and multiplication
\[
E_{i\lambda}E_{j\mu}=\begin{cases}
        E_{i\mu},&\text{if }\lambda=j,\\
        0,&\text{otherwise,}
        \end{cases}
        \quad
        0x=x0=0.
\]

\begin{example}\label{Ex:Brandt_transversals}
Table 1 shows the Cayley table for $B_2$, which has five elements; in the table below
$1=E_{11}$, $2=E_{22}$, $3=E_{12}$ and $4=E_{21}$. This is the smallest inverse
semigroup which is neither a group nor a commutative idempotent semigroup.
\begin{table}[htb]
\[
    \begin{array}{c|ccccc}
        \cdot & 0 & 1 & 2 & 3 & 4 \\
        \hline
        0 & 0^{\ast,\dagger} & 0 & 0 & 0 & 0 \\
        1 & 0 & 1^{\ast} & 0 & 3^{\dagger} & 0 \\
        2 & 0 & 0 & 2^{\dagger} & 0 & 4^{\ast} \\
        3 & 0 & 0 & 3^{\ast} & 0 & 1^{\dagger} \\
        4 & 0 & 4^{\dagger} & 0 & 2^{\ast} & 0
    \end{array}
\]
\caption{The Brandt semigroup $B_2$ with two transversals.}\label{brandt}
\end{table}

{\noindent This table has exactly two transversals: one is marked by asterisks $\ast$, and the other by daggers $\dagger$. In cycle form, the complete mapping for the asterisk transversal is $(2\ 4\ 3)$ and the complete mapping for the dagger transversal is $(1\ 3\ 4)$.}
\end{example}

We now address the apparent asymmetry in the conventional definitions of complete mapping and orthomorphism. Note that a finite magma admits a complete mapping if and only if there is a permutation of its Cayley table columns so that all elements appear on the diagonal. The following shows that the same holds for permutations of rows or for permutations of both rows and columns.

Let $\Sym(X)$ denote the symmetric group on the set $X$.

\begin{prop}\label{Prp:equivs}
Let $S$ be a finite magma. The following are equivalent:
\begin{enumerate}
\item\label{P1} there exists $\alpha\in \Sym(S)$ such that
$\theta\colon S\to S,\quad x\mapsto x\cdot x\alpha$ is a bijection;
\item\label{P2} there exists $\beta\in \Sym(S)$ such that
$\psi\colon S\to S,\quad x\mapsto x\beta\cdot x$ is a bijection;
\item\label{P3} there exist $\gamma,\delta\in \Sym(S)$ such that
$\varphi\colon S\to S,\quad x\mapsto x\gamma\cdot x\delta$ is a bijection.
\end{enumerate}
\end{prop}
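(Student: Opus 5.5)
The plan is to show (1)$\Rightarrow$(3), (2)$\Rightarrow$(3), and then (3)$\Rightarrow$(1) and (3)$\Rightarrow$(2) by reparametrising the domain. All three conditions say that some set of $|S|$ entries of the Cayley table meets every row, every column and every symbol exactly once. The only difference is which coordinate is used to index the entries. So the argument is pure bookkeeping with bijections; no semigroup structure is used, only finiteness and the fact that compositions and inverses of bijections are bijections.

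For (1)$\Rightarrow$(3), take $\gamma=\idmap$ and $\delta=\alpha$; then $\varphi=\theta$ is a bijection. For (2)$\Rightarrow$(3), take $\gamma=\beta$ and $\delta=\idmap$; then $\varphi=\psi$.

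For (3)$\Rightarrow$(1), suppose $\varphi\colon x\mapsto x\gamma\cdot x\delta$ is a bijection. Substitute $x=y\gamma\inv$, so that $y$ runs over $S$ as $x$ does. Set $\alpha=\gamma\inv\delta$, which lies in $\Sym(S)$ because $\gamma$ and $\delta$ are bijections. Then
\[
y\cdot y\alpha = (y\gamma\inv)\gamma\cdot (y\gamma\inv)\delta = (y\gamma\inv)\varphi ,
\]
so $\theta=\gamma\inv\varphi$ (maps acting on the right) is a composite of bijections, hence a bijection.

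For (3)$\Rightarrow$(2), substitute $x=y\delta\inv$ and set $\beta=\delta\inv\gamma$. Then
\[
y\beta\cdot y = (y\delta\inv)\gamma\cdot(y\delta\inv)\delta = (y\delta\inv)\varphi ,
\]
so $\psi=\delta\inv\varphi$ is a bijection.

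The only step needing care is composing on the correct side, since maps act on the right: $\alpha=\gamma\inv\delta$ means apply $\gamma\inv$ first. There is no genuine obstacle here. Finiteness is not even needed for these implications, as only bijectivity is used.
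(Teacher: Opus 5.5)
Your proof is correct and uses the same reparametrisation as the paper: your (3)$\Rightarrow$(1) with $\alpha=\gamma\inv\delta$ and $\theta=\gamma\inv\varphi$ matches the paper's step exactly. The only difference is organisation: the paper runs the cycle (1)$\Rightarrow$(2)$\Rightarrow$(3)$\Rightarrow$(1), using $\beta=\alpha\inv$ for the first step (your (3)$\Rightarrow$(2) formula with $\gamma=\idmap$, $\delta=\alpha$), while you route every implication through (3).
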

\begin{proof}
(\ref{P1}) $\Rightarrow$ (\ref{P2}): Set $\beta = \alpha\inv$. Then $\beta$ is a bijection and $x\psi = x\beta\cdot x = x\alpha\inv\cdot x\alpha\inv\alpha = x\alpha\inv\theta$, hence $\psi = \alpha\inv\theta$ is also a bijection.

\noindent (\ref{P2}) $\Rightarrow$ (\ref{P3}): Set $\gamma = \beta$ and let $\delta$ be the identity mapping.

\noindent (\ref{P3}) $\Rightarrow$ (\ref{P1}): Set $\alpha = \gamma\inv\delta$. Then $\alpha$ is a bijection and $x\theta = x\cdot x\alpha = x\gamma\inv\gamma \cdot x\gamma\inv\delta = x\gamma\inv\varphi$, hence $\theta = \gamma\inv\varphi$ is also a bijection.
\end{proof}

An element $0$ of a magma $S$ is a \emph{zero} (or \emph{absorbing element}) if $0\cdot x = 0 = x\cdot 0$ for all $x\in S$. As will be seen, in the theory of complete mappings for semigroups, a special role is played by semigroups with zero.

\begin{prop}\label{Prp:zero}
Let $S$ be a magma with zero $0$ and with a complete mapping $\alpha\colon S\to S$ with corresponding orthomorphism $\theta\colon S\to S,\quad x\mapsto x\cdot x\alpha$.
Then $0\alpha = 0\theta = 0$.
\end{prop}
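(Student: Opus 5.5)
The argument is a short counting argument on the fibres of the zero. The idea is to show first that $0\theta=0$, and then deduce $0\alpha=0$ from injectivity of $\theta$.

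First, $0\theta = 0\cdot 0\alpha = 0$, since $0$ is absorbing. So this half is immediate from the definition of a zero and needs no hypothesis on $\alpha$ beyond being a map.

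Next, let $y = 0\alpha\inv$, the unique element with $y\alpha = 0$. It exists because $\alpha$ is a bijection. Then
\[
y\theta = y\cdot y\alpha = y\cdot 0 = 0 = 0\theta.
\]
Since $\theta$ is a bijection, and in particular injective, we get $y=0$. That is, $0\alpha = 0$.

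I expect no step to be a real obstacle. The only point needing care is to use both bijectivity hypotheses. Surjectivity of $\alpha$ produces the preimage $y$ of $0$, and injectivity of $\theta$ identifies $y$ with $0$. Finiteness of $S$ is not needed, so the statement holds for arbitrary magmas with zero.
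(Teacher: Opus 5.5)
Your proof is correct and is essentially the paper's argument. The paper likewise obtains $0\theta=0$ from absorption, computes $(0\alpha^{-1})\theta=0\alpha^{-1}\cdot 0=0$, and uses bijectivity of $\theta$ to get $0\alpha^{-1}=0$, hence $0\alpha=0$. Your opening description of it as a "counting argument" is a misnomer, since only injectivity is used, and you are right that finiteness is not needed.
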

\begin{proof}
First, $0\theta = 0\cdot 0\alpha = 0$. Next,
$0\alpha\inv\theta = 0\alpha\inv\cdot 0\alpha\inv\alpha =
0\alpha\inv\cdot 0 = 0$, and so $0 = 0\theta\inv\alpha = 0\alpha$, as claimed.
\end{proof}

For general magmas, there seem to be few necessary conditions one can state for the existence of complete mappings. The following, which is immediate from the definition, is such a condition.

{For a finite family $(X_a)_{a\in A}$, a \emph{system of distinct
representatives} is a choice of an element $r_a\in X_a$ for every $a\in A$
such that $r_a\ne r_b$ whenever $a\ne b$.}

\begin{prop}\label{Prp:onto}
If $(S,\cdot)$ is a magma with a complete mapping, then the multiplication map $\cdot\colon S\times S\to S$ is onto; indeed, the family $\{aS\mid a\in S\}$ of images of left multiplications by the elements of $S$ has a system of
distinct representatives (and similarly for right multiplications).
\end{prop}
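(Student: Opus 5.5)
The plan is to read both claims off a single complete mapping. Let $\alpha$ be a complete mapping of $S$ with orthomorphism $\theta\colon x\mapsto x\cdot x\alpha$. Since $\theta$ is a bijection, every $s\in S$ equals $s\theta^{-1}\cdot (s\theta^{-1})\alpha$. This exhibits $s$ as a product of two elements of $S$, so the multiplication map $S\times S\to S$ is onto. This first assertion is therefore immediate.

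For the system of distinct representatives, I will index the family $\{aS\mid a\in S\}$ by $a\in S$ and choose $r_a=a\theta=a\cdot a\alpha$. By construction $r_a\in aS$. Because $\theta$ is injective, $a\ne b$ implies $r_a\ne r_b$, so $(r_a)_{a\in S}$ is a system of distinct representatives. One point needs care. The family is indexed by the elements $a$, not by the set of distinct subsets $aS$, so repeated sets must receive distinct representatives. The injectivity of $\theta$ gives exactly this. In fact it shows that the union of any $k$ of the sets $aS$ has at least $k$ elements, which is Hall's condition.

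For right multiplications, I will invoke Proposition~\ref{Prp:equivs}. It provides $\beta\in\Sym(S)$ such that $\psi\colon x\mapsto x\beta\cdot x$ is a bijection. Setting $r_a=a\psi=a\beta\cdot a\in Sa$ gives distinct representatives of $\{Sa\mid a\in S\}$, by the same argument as before.

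I do not expect any real obstacle. The only point needing attention is the indexing issue for the family noted above.
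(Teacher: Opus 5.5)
Your proposal is correct and follows the paper's proof essentially step for step. It obtains surjectivity from $s=s\theta^{-1}\cdot(s\theta^{-1})\alpha$, takes the representatives $a\theta\in aS$, which are distinct because $\theta$ is injective, and handles right multiplications with the map $\psi$ of Proposition~\ref{Prp:equivs}; your remark about indexing the family by elements is a harmless clarification.
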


\begin{proof}
Let $\alpha$ be a complete mapping and let $\theta$ be its orthomorphism. For
$s\in S$, the element $x=s\theta^{-1}$ satisfies
\(s=x\theta=x\cdot x\alpha,\)
so multiplication is onto. Moreover, for each $a\in S$ the element
\(a\theta=a\cdot a\alpha\)
lies in $aS$, and these elements are distinct as $a$ ranges over $S$, because
$\theta$ is a bijection. Hence the family $\{aS\mid a\in S\}$ has a system of
distinct representatives. For right multiplications, we may use
the equivalent formulation in Proposition~\ref{Prp:equivs}: if
$x\psi=x\beta\cdot x$ is bijective, then $a\psi=a\beta\cdot a\in Sa$, and
the elements $a\psi$ are distinct as $a$ ranges over $S$.
\end{proof}

Thus complete mappings correspond to single transversals.
Resolutions and orthogonal mates enter only for Latin squares, while
Proposition~\ref{Prp:onto} gives a weaker necessary condition for arbitrary
magmas.
The proposition suggests that it is natural to investigate complete mappings for magmas in which multiplication on either side by a fixed element is a bijection. These are precisely the \emph{quasigroups}, and most of the literature on complete mappings of magmas other than groups has focused on them. Many studies of complete mappings of quasigroups have a combinatorial flavour and concern transversals in Latin squares, the Cayley tables of finite quasigroups. This reflects the historical origins mentioned above: if a Latin square of order $n$ has $n$ disjoint transversals, then it has an orthogonal mate.

Another motivation for the interest in transversals in the Latin square community is \emph{Ryser's conjecture}, which states that every Latin square of odd order has a transversal. In quasigroup language, the conjecture says that every quasigroup of odd order has a complete mapping.

It would take us too far afield to describe the literature on transversals in Latin squares or, equivalently, complete mappings of quasigroups. We refer the interested reader to the surveys by I. Wanless \cite{Wanless} and R. Montgomery \cite{Montgomery}.

In this paper, we go in a different direction: our aim is to extend the Hall--Paige
existence problem from groups to finite semigroups. The point of departure is that
semigroups have ideals and principal factors, and these interact strongly with complete
mappings. The paper is organized as follows.

Section~\ref{Sec:regularity} proves that a finite semigroup with a complete mapping is
regular and records the inverse semigroup situation in which a complete mapping can be
recovered from its orthomorphism. Section~\ref{Sec:ideals} proves that complete mappings
preserve ideals and Green's $\gj$-classes, and Section~\ref{Sec:firststeps} reduces the
existence problem to principal factors. Section~\ref{Sec:Reesmatrixsemigroups} gives the
classification for Rees matrix semigroups without zero. Section~\ref{Sec:Rees0-matrix-1}
treats Rees $0$-matrix semigroups over groups with complete mappings by a Hall-type
condition on the pattern, and Section~\ref{Sec:Rees0-matrix-2} gives sufficient
conditions for Rees $0$-matrix semigroups over groups without complete mappings.
Section~\ref{Sec:linear-diagram} applies these results to full linear and diagram monoids.
Section~\ref{Sec:Tn} then applies these Rees $0$-matrix results to the full
transformation monoids $T_n$.
Section~\ref{Sec:inv-revisited} applies the principal factor
criterion to inverse semigroups. Section~\ref{Sec:CR_involutions} studies complete
regularity and involutive complete mappings, Section~\ref{Sec:bands} treats bands,
Section~\ref{Sec:strong} treats strong complete mappings, and Section~\ref{Sec:products}
studies products of all elements of a semigroup. The paper ends with open problems in
Section~\ref{Sec:problems}.

Besides the standard Green--Rees structure theory for finite semigroups, we use the
Hall--Paige theorem and Burnside's transfer theorem for the group theoretic reductions,
Hall-type matching and network flow arguments for the pattern
criteria~\cite{Hall,Gale,Edmonds}, total unimodularity for integral
transportation polytopes~\cite{HoffmanKruskal,Schrijver},
K\H{o}nig's edge-colouring theorem for regular bipartite multigraphs~\cite{Konig},
Bevis--Hall--Katz Smith form methods for incidence matrices~\cite{BevisHallKatz}, and the classical
permanent bounds used in the asymptotic enumeration of Latin squares
\cite{Bregman,Egorychev,Falikman}.

\section{Regularity}
\label{Sec:regularity}
An element $a$ of a semigroup $S$ is said to be \emph{regular} if there exists $b\in S$ such that $aba=a$. If every element of a semigroup is regular, then the semigroup itself is said to be regular.

\begin{theorem}\label{Thm:regular}
A finite semigroup with a complete mapping is regular.
\end{theorem}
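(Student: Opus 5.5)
The plan is to show that a complete mapping $\alpha$ and its orthomorphism $\theta$ must map every $\gj$-class onto itself. Then I will use the classical fact that a $\gj$-class $J$ of a finite semigroup containing $x,y$ with $xy\in J$ contains an idempotent, and is therefore regular.

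\emph{Step 1 (filters are invariant).} Call a subset $F\subseteq S$ a \emph{filter} if it is a union of $\gj$-classes that is upward closed in the $\gj$-order. Its complement $S\setminus F$ is then an ideal, possibly empty. If $x\theta=x\cdot x\alpha\in F$, then $x\theta\le_{\gj}x$ and $x\theta\le_{\gj}x\alpha$, so upward closure forces $x\in F$ and $x\alpha\in F$. Hence $F\theta^{-1}\subseteq F\cap F\alpha^{-1}$. Since $\theta$ is a bijection and $S$ is finite, $|F\theta^{-1}|=|F|$. This gives $F\theta^{-1}=F$, and $F\subseteq F\alpha^{-1}$, i.e.\ $F\alpha\subseteq F$. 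As $\alpha$ is injective, $F\alpha=F$. So every filter is invariant under both $\alpha$ and $\theta$.

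\emph{Step 2 (each $\gj$-class is invariant).} Fix a $\gj$-class $J$. Let $F=\{x\in S: J\le_{\gj}J_x\}$, and let $F'=F\setminus J$. Both sets are filters; $F'$ is upward closed because anything above an element strictly above $J$ is itself strictly above $J$. By Step 1, both are invariant under $\alpha$ and $\theta$, so $J=F\setminus F'$ is invariant as well. Consequently, for any $x\in J$ we have $x\alpha\in J$ and $x\cdot x\alpha=x\theta\in J$.

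\emph{Step 3 (regularity).} Put $y=x\alpha$, so that $x,y,xy\in J$. In a finite semigroup $\gd=\gj$ and stability holds. From $xy\le_{\gr}x$ with $xy\gjj x$ we get $xy\grr x$, and symmetrically $xy\gll y$. Hence $xy\in R_x\cap L_y$. By the Miller--Clifford theorem, $L_x\cap R_y$ contains an idempotent. A $\gd$-class containing an idempotent consists entirely of regular elements, so every element of $J$ is regular. Since $J$ was arbitrary, $S$ is regular.

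The only delicate point is the counting in Step 1. It hinges on noticing that both factors of $x\theta$ lie $\gj$-above it, which is exactly what makes preimages of filters stay inside filters. Step 3 is standard Green's-relation theory for finite semigroups and may simply be quoted.
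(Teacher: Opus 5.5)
Your proof is correct, but it takes a different route from the paper.

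\textbf{The paper's route.} It works directly with each element and uses nothing but the definitions. Put $\delta=\alpha^{-1}\theta$, so that $x\alpha^{-1}\cdot x=x\delta$. Telescoping along the $\theta$-orbit of $a$ gives $b=a\alpha\cdot a\theta\alpha\cdots a\theta^{m-2}\alpha$ with $ab=a\theta^{-1}$. Telescoping along the $\delta$-orbit gives $c$ with $ca=a\delta^{-1}=a\theta^{-1}\alpha$. Then $a(bc)a=(a\theta^{-1})(a\theta^{-1}\alpha)=(a\theta^{-1})\theta=a$.

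\textbf{Your route.} You first show that $\alpha$ and $\theta$ stabilize every $\mathcal J$-class. The paper only establishes this later, in Proposition~\ref{Prp:preserve_ideals} and the proof of Theorem~\ref{t:J-reduct}. Your filter-counting argument is a shorter proof of the two-sided-ideal case of Proposition~\ref{Prp:preserve_ideals}, passing to complements. It gets $\theta$- and $\alpha$-invariance at once from $F\theta^{-1}\subseteq F\cap F\alpha^{-1}$, avoiding the paper's $\alpha^{k-1}$ manipulation. You then conclude with stability, Miller--Clifford, and the fact that a $\mathcal D$-class containing an idempotent is regular.

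\textbf{What each approach buys.}
\begin{itemize}
\item The paper's argument is self-contained and produces an explicit inverse $bc$ built from orbit products.
\item It uses finiteness only to make the $\theta$- and $\delta$-orbits of $a$ finite.
\item Its telescoping device is reused in Lemma~\ref{Lem:pregood_inverse}, Theorem~\ref{Thm:good_inverse} and Lemma~\ref{Lem:prod_idem}.
\item Your argument leans on finite-semigroup structure theory ($\mathcal D=\mathcal J$, stability) and gives no formula for the inverse.
\item It is more conceptual: it shows $J\cap J^2\neq\varnothing$ for every $\mathcal J$-class $J$, i.e.\ that no principal factor is null.
\item That is exactly the structural fact behind the paper's later reduction to Rees $0$-matrix principal factors.
\end{itemize}
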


\begin{proof}

Let $\alpha$ be a complete mapping of $S$, and let $\theta$ be the corresponding
orthomorphism, so that
\(x\theta=x\cdot x\alpha \quad (x\in S).\)
Put $\delta=\alpha^{-1}\theta$. Then, for every $x\in S$,
\(x\alpha^{-1}\cdot x=x\delta.\)
Since $S$ is finite, the permutations $\theta$ and $\delta$ have finite orders;
write these orders as $m$ and $n$ respectively.

Fix $a\in S$. We first choose $b\in S$ such that $ab=a\theta^{-1}$.
If $m=1$, take $b=a\alpha$. If $m>1$, take
\(b=a\alpha\cdot a\theta\alpha\cdots a\theta^{m-2}\alpha.\)
In both cases the identity $x\theta=x\cdot x\alpha$ gives $ab=a\theta^{-1}$.

Similarly, we can choose $c\in S$ such that
\(ca=a\delta^{-1}.\)
If $n=1$, take $c=a\alpha^{-1}$. If $n>1$, take
\(c=a\delta^{n-2}\alpha^{-1}\cdots a\delta\alpha^{-1}\cdot a\alpha^{-1}.\)
Using $x\alpha^{-1}\cdot x=x\delta$, we obtain $ca=a\delta^{-1}$ in both
cases.

Now put $a'=bc$. Since $\delta^{-1}=\theta^{-1}\alpha$, we have
\(aa'a=(ab)(ca)=(a\theta^{-1})(a\delta^{-1})=(a\theta^{-1})(a\theta^{-1}\alpha)=a.\)
Thus every element of $S$ is regular.

\end{proof}

If $aba=a$ for some $a,b$ in a semigroup $S$, then setting $a' = bab$, we obtain both $aa'a=a$
and $a'aa'=a'$. In this case $a'$ is said to be an \emph{inverse} of $a$. The set of all inverses of an element $a$ is denoted by $V(a)$.

Recall that an orthomorphism $\theta$ of a group $G$ uniquely determines its corresponding complete mapping by $x\inv\cdot x\theta = x\alpha$
for all $x\in G$. As discussed in the introduction, in semigroups, it may be the case that a permutation is an orthomorphism corresponding
to more than one complete mapping. For example, consider the $2$-element left-zero semigroup $S=\{0,1\}$ defined by $xy=x$ for all $x,y$.
Both the identity permutation and the permutation that exchanges the elements are complete mappings with the same corresponding orthomorphism,
namely the identity permutation.

Nevertheless, once a complete mapping and orthomorphism have been fixed, we can express the complete mapping in terms of the orthomorphism
and a particular choice of inverse.

\begin{lemma}\label{Lem:pregood_inverse}
Let $S$ be a finite semigroup and let $\alpha\colon S\to S$ be a complete mapping with orthomorphism $\theta\colon S\to S,\ x\mapsto x\cdot x\alpha$. Then every inverse mapping $x\mapsto x'\in V(x)$, $x\in S$, satisfies
\begin{equation}\label{Eqn:good2}
x\theta\cdot (x\theta)'\cdot x = x.
\end{equation}
\end{lemma}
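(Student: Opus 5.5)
The plan is to show that $x$ lies in the principal right ideal generated by $x\theta$, that is, $x=x\theta\cdot b$ for some $b\in S^1$. Once this holds, \eqref{Eqn:good2} follows from the defining property of inverses. For any $(x\theta)'\in V(x\theta)$ we have $x\theta\cdot(x\theta)'\cdot x\theta=x\theta$, and therefore
\[
x\theta\cdot(x\theta)'\cdot x=x\theta\cdot(x\theta)'\cdot x\theta\cdot b=x\theta\cdot b=x .
\]
So the only real work is producing $b$. This is the same device as in the proof of Theorem~\ref{Thm:regular}, applied there to $a=x\theta$.

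To construct $b$, let $m$ be the order of the permutation $\theta$, so that $x\theta^m=x$.

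If $m=1$, then $x\theta=x$. In this case \eqref{Eqn:good2} reads $x\cdot x'\cdot x=x$, which is immediate since $x'\in V(x)$.

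If $m>1$, put
\[
b=x\theta\alpha\cdot x\theta^2\alpha\cdots x\theta^{m-1}\alpha .
\]
Using the identity $y\theta=y\cdot y\alpha$ with $y=x\theta^{k}$, one gets $x\theta^{k}\cdot x\theta^{k}\alpha=x\theta^{k+1}$. Hence
\[
x\theta\cdot x\theta\alpha=x\theta^2,\quad x\theta^2\cdot x\theta^2\alpha=x\theta^3,\ \dots
\]
Applying these from the left, one shows by induction on $j$ that
\[
x\theta\cdot x\theta\alpha\cdots x\theta^{j}\alpha=x\theta^{j+1}.
\]
Taking $j=m-1$ gives $x\theta\cdot b=x\theta^m=x$, as required.

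Note that $b$ is an honest element of $S$ when $m>1$, so no adjoined identity is needed. There is no genuine obstacle here. The only point needing care is the telescoping induction, in particular keeping the order of the factors straight, since $S$ need not be commutative. That order is exactly the one used for $ab=a\theta^{-1}$ in the proof of Theorem~\ref{Thm:regular}.
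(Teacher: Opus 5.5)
Your proof is correct and is essentially the paper's argument. Both rest on the telescoping identity $x\theta^{k}\cdot x\theta^{k}\alpha=x\theta^{k+1}$ around the $\theta$-orbit, which shows $x\in x\theta\, S^1$ so that $x\theta\cdot(x\theta)'$ acts as a left identity on $x$; the paper just proves $xx'\cdot x\theta^i=x\theta^i$ by induction on $i$ and substitutes $x\theta$ for $x$ at $i=m-1$, instead of writing out your explicit element $b$.
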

\begin{proof}
Since $S$ is finite, $\theta$ has finite order, say, $m$.
For all $x\in S$, we claim that for all non-negative integers $i$, $xx'\cdot x\theta^i = x\theta^i$. This is clear for $i=0$ since $x'\in V(x)$.
Assuming the claim for $i$, we have
\(xx'\cdot x\theta^{i+1}=xx'\cdot x\theta^i\cdot x\theta^i\alpha=x\theta^i\cdot x\theta^i\alpha=x\theta^{i+1},\)
using the induction hypothesis in the second equality. This establishes the claim. Now taking $i = m-1$, we have $xx'\cdot x\theta^{m-1} = x\theta^{m-1}$, that is, $xx'\cdot x\theta\inv = x\theta\inv$ for all $x\in S$. Replacing $x$ with $x\theta$, we obtain \eqref{Eqn:good2}.
\end{proof}

The inverse in Theorem~\ref{Thm:regular} may be chosen in a way compatible with the fixed orthomorphism.

\begin{theorem}\label{Thm:good_inverse}
Let $S$ be a finite semigroup and let $\alpha\colon S\to S$ be a complete mapping with orthomorphism $\theta\colon S\to S,\ x\mapsto x\cdot x\alpha$. Then for each $x\in S$, there exists an inverse $\bar{x}\in V(x)$ satisfying
\begin{equation}\label{Eqn:good1}
    \bar{x}\cdot x\theta = x\alpha.
\end{equation}
\end{theorem}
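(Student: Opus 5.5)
The plan is to build $\bar x$ in two stages. First I would find an element $u$ that satisfies both $xux=x$ and $u\cdot x\theta=x\alpha$. Then I would replace $u$ by $\bar x=uxu$. This lies in $V(x)$ by the remark following Theorem~\ref{Thm:regular}, and I will check that it still satisfies \eqref{Eqn:good1}.

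The candidate is $u=x\alpha\cdot(x\theta)'$ for an arbitrary choice of inverse $(x\theta)'\in V(x\theta)$. Lemma~\ref{Lem:pregood_inverse} gives the first property immediately:
\[
xux=(x\cdot x\alpha)(x\theta)'x=x\theta\cdot(x\theta)'\cdot x=x.
\]
For the second property, $u\cdot x\theta=x\alpha$, I need the left--right dual of Lemma~\ref{Lem:pregood_inverse}. As in the proof of Theorem~\ref{Thm:regular}, put $\delta=\alpha^{-1}\theta$, so that $y\alpha^{-1}\cdot y=y\delta$ for all $y$. Let $n$ be the order of $\delta$. I would show by induction on $i$ that $z\delta^i\cdot z'z=z\delta^i$ for all $z$ and all $i\ge 0$; the inductive step is $z\delta^{i+1}z'z=(z\delta^i\alpha^{-1})(z\delta^i z'z)=z\delta^{i+1}$. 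Taking $i=n-1$ gives $z\delta^{-1}z'z=z\delta^{-1}$. Replacing $z$ by $z\delta$ then yields
\[
z\cdot(z\delta)'\cdot z\delta=z.
\]
Now apply this with $z=x\alpha$. Here $z\delta=x\alpha\alpha^{-1}\theta=x\theta$, so we get $x\alpha\cdot(x\theta)'\cdot x\theta=x\alpha$, which is exactly $u\cdot x\theta=x\alpha$.

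Finally, with $\bar x=uxu$ we have $\bar x\cdot x\theta=ux(u\cdot x\theta)=ux\cdot x\alpha=u\cdot x\theta=x\alpha$. Moreover $x\bar x x=(xux)ux=xux=x$ and $\bar x x\bar x=u(xux)(uxu)=uxuxu=u(xux)u=uxu=\bar x$, so $\bar x\in V(x)$ and \eqref{Eqn:good1} holds. The only step needing real care is the dual lemma: the roles of $\theta$ and $\alpha$ must be replaced correctly by $\delta$ and $\alpha^{-1}$, and the induction has to multiply on the correct side.
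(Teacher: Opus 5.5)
Your proof is correct and is essentially the paper's: it uses the same candidate $x\alpha\cdot(x\theta)'$, Lemma~\ref{Lem:pregood_inverse} for $xux=x$, and the same induction on $\delta=\alpha^{-1}\theta$ for $u\cdot x\theta=x\alpha$. The one difference is that your replacement of $u$ by $uxu$ is unnecessary: $uxu=x\alpha\cdot(x\theta)'\cdot x\theta\cdot(x\theta)'=u$, so the paper simply checks that $u$ itself already lies in $V(x)$.
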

\begin{proof}
By Theorem~\ref{Thm:regular}, $S$ is regular. For each $x\in S$, choose $x'\in V(x)$ and set $\bar{x} = x\alpha\cdot (x\theta)'$. By \eqref{Eqn:good2},
\(x \bar{x} x = x\cdot x\alpha\cdot (x\theta)'\cdot x = x\theta\cdot (x\theta)'\cdot x = x.\)
Next, for all $x\in S$,
\begin{align*}
    \bar{x}x\bar{x} &= x\alpha\cdot (x\theta)'\cdot x\cdot x\alpha\cdot (x\theta)' = x\alpha\cdot (x\theta)'\cdot x\theta\cdot (x\theta)' \\
    &= x\alpha\cdot (x\theta)' = \bar{x}.
\end{align*}
We have shown $\bar{x}\in V(x)$ for all $x\in S$.

Since $S$ is finite, $\alpha\inv\theta$ has finite order, say, $n$.
We claim that for all non-negative integers $i$, $x(\alpha\inv\theta)^i\cdot x'x = x(\alpha\inv\theta)^i$. This is clear for $i=0$. Assuming the claim for $i$,
we have
\begin{align*}
x(\alpha\inv\theta)^{i+1}\cdot x'x &= x(\alpha\inv\theta)^i\alpha\inv\cdot x(\alpha\inv\theta)^i\cdot x'x \\
&= x(\alpha\inv\theta)^i\alpha\inv\cdot x(\alpha\inv\theta)^i = x(\alpha\inv\theta)^{i+1},
\end{align*}
using the induction hypothesis in the second equality. This establishes the claim. Now taking $i=n-1$, we have, for all $x\in S$,
$x(\alpha\inv\theta)^{n-1}\cdot x'x = x(\alpha\inv\theta)^{n-1}$, that is,
$x\theta\inv\alpha \cdot x'x = x\theta\inv\alpha$ since $\alpha\inv\theta$ has order $n$. Replacing $x$ with $x\theta$, we obtain,
for all $x\in S$,
\(\bar{x}\cdot x\theta = x\alpha\cdot (x\theta)'\cdot {x\theta} = x\alpha.\)
This completes the proof.
\end{proof}

From now on, for a finite semigroup $S$ with a complete mapping $\alpha$, we may, without loss of generality, assume that, for each $x\in S$,
we have assigned an inverse $x'\in V(x)$ such that \eqref{Eqn:good1} holds.

A semigroup is said to be an \emph{inverse semigroup} if every element has a \emph{unique} inverse, that is, if $|V(x)| = 1$
for all $x$. Equivalently, an inverse semigroup is a regular semigroup in which the idempotents commute.

\begin{cor}\label{Cor:inv_ortho}
Let $S$ be a finite inverse semigroup with a complete mapping $\alpha\colon S\to S$ and orthomorphism $\theta\colon S\to S,\ x\mapsto x\cdot x\alpha$.
Then for all $x\in S$,
\(x\inv\cdot x\theta = x\alpha.\)
\end{cor}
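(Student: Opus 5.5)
This follows directly from Theorem~\ref{Thm:good_inverse} together with uniqueness of inverses in an inverse semigroup. First I apply Theorem~\ref{Thm:good_inverse} to the given complete mapping $\alpha$ and its orthomorphism $\theta$. Its hypotheses are exactly those of the corollary: $S$ is finite and $\alpha$ is a complete mapping. So for each $x\in S$ it supplies some $\bar{x}\in V(x)$ with $\bar{x}\cdot x\theta = x\alpha$, as in \eqref{Eqn:good1}.

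Next I use that $S$ is an inverse semigroup, so $|V(x)|=1$ for every $x$. Writing $x\inv$ for the unique element of $V(x)$, we must have $\bar{x}=x\inv$. Substituting this into \eqref{Eqn:good1} gives $x\inv\cdot x\theta=x\alpha$ for all $x\in S$, which is the claim.

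There is no real obstacle; the whole content is already in Theorem~\ref{Thm:good_inverse}. The one point to check is that the inverse $\bar{x}$ built there, namely $x\alpha\cdot(x\theta)'$, really lies in $V(x)$, so that uniqueness of inverses applies to it. That membership is exactly what the first half of the proof of Theorem~\ref{Thm:good_inverse} verifies, using \eqref{Eqn:good2} from Lemma~\ref{Lem:pregood_inverse}. In the inverse case, the auxiliary inverse choice $x\mapsto x'$ used in that proof is also forced to be $x\mapsto x\inv$, so nothing depends on that choice.

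As a remark, the identity shows that in a finite inverse semigroup the orthomorphism $\theta$ determines the complete mapping $\alpha$ uniquely. This is the semigroup analogue of the group formula $x\alpha=x\inv\cdot x\theta$, and it contrasts with the left-zero example given earlier, which is not inverse.
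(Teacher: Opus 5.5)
Your proposal is correct and is essentially the paper's proof. Theorem~\ref{Thm:good_inverse} gives an inverse $\bar{x}\in V(x)$ with $\bar{x}\cdot x\theta=x\alpha$, and uniqueness of inverses in an inverse semigroup forces $\bar{x}=x\inv$, which yields the identity.
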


\begin{proof}
In an inverse semigroup the inverse of each element is unique. Therefore the
inverse $x'$ supplied by Theorem~\ref{Thm:good_inverse} is necessarily
$x\inv$, and the displayed identity is precisely \eqref{Eqn:good1}.
\end{proof}

If $S$ is a regular semigroup with inverse mapping $x\mapsto x'\in V(x)$ and if $\alpha,\theta\colon S\to S$ are permutations such that
$x'\cdot x\theta = x\alpha$ for all $x\in S$, then it need not be the case that $\alpha$ is a complete mapping with orthomorphism $\theta$.
Indeed, let $S=\{0,1\}$ be the $2$-element right-zero semigroup defined by $xy=y$ for all $x,y\in S$ and let $x\mapsto x'$ be the identity
permutation. If $\alpha\colon S\to S$ is the identity permutation and $\theta\colon S\to S$ is a permutation that exchanges the elements, then
$x'\cdot x\theta = x\alpha$ holds, but $0\cdot 0\alpha = 0 \ne 1 = 0\theta$.

The situation is more satisfactory for inverse semigroups.

\begin{theorem}\label{Thm:inv_ortho}
Let $S$ be a finite inverse semigroup, let $\theta$ be a permutation of $S$, and define $\alpha\colon S\to S$ by $x\alpha = x\inv\cdot x\theta$ for all $x\in S$. Then $\theta$ is an orthomorphism if and only if $\alpha$ is a permutation. In this case, $\alpha$ is the unique complete mapping with
corresponding orthomorphism $\theta$.
\end{theorem}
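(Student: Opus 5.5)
The forward direction is immediate from Corollary~\ref{Cor:inv_ortho}. If $\theta$ is an orthomorphism, there is some complete mapping $\beta$ with $x\theta = x\cdot x\beta$ for all $x$. The corollary then gives $x\beta = x\inv\cdot x\theta = x\alpha$, so $\alpha=\beta$ is a permutation. The same computation yields uniqueness: any complete mapping whose orthomorphism is $\theta$ must agree with $x\mapsto x\inv\cdot x\theta$, and hence equal $\alpha$.

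For the converse, assume $\alpha$ is a permutation. It suffices to show $x\cdot x\alpha = x\theta$ for all $x$. Since $x\cdot x\alpha = xx\inv\cdot x\theta$, this amounts to the identity $xx\inv\cdot x\theta = x\theta$, which says that $x\theta$ lies in the principal right ideal $xS$, or equivalently $x\theta\,\mathcal{R}$-below $x$. I would prove this by mimicking the inductive argument of Lemma~\ref{Lem:pregood_inverse}, but run through $\alpha$ rather than $\theta$. The key relation is $x\theta = x\cdot x\inv\cdot x\theta$ precisely when $xx\inv$ fixes $x\theta$ on the left. From the definition, $x\alpha = x\inv\cdot x\theta$ gives $x\alpha\, \mathcal{R}$-below $x\inv$, so $x\alpha\,x\alpha\inv \le x\inv x$ in the natural order on idempotents.

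The idea is a counting or ideal argument that uses the permutation hypothesis on $\alpha$. Consider the idempotents $e_x = xx\inv$ and $f_x = (x\theta)(x\theta)\inv$. We always have $e_x f_x \le f_x$, and we want equality $e_x f_x = f_x$, i.e.\ $f_x\le e_x$. Rewrite $x\cdot x\alpha = e_x\cdot x\theta$ and denote this element $x\theta^*$. Then $x\theta^* \,\mathcal{R}$-below $x\theta$ with equality iff $f_x\le e_x$, and the goal is to show $\theta^*=\theta$. A finiteness argument over $\mathcal{J}$-classes should do it. Order the $\mathcal{J}$-classes, and show by induction from the top that $\theta$ maps elements so that $x\theta^*$ and $x\theta$ lie in the same $\mathcal{J}$-class. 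Comparing cardinalities of the sets $\{x : x\theta \in I\}$ and $\{x : x\alpha\in I\}$ for ideals $I$, using $x\alpha = x\inv x\theta \in I$ whenever $x\theta\in I$ and both maps being bijections, forces $x\theta\in I \iff x\alpha\in I$. In finite semigroups, being $\le$ in $\mathcal{R}$ and $\mathcal{J}$-equivalent gives $\mathcal{R}$-equivalence, so that route is natural.

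The main obstacle is exactly this converse step: showing that the permutation hypothesis on $\alpha$ forces $xx\inv\cdot x\theta = x\theta$. The trick I expect to need is the ideal-counting argument above: for each ideal $I$, the map $\alpha$ sends $\theta\inv(I)$ into $I$ injectively, so $|\theta\inv(I)|\le|I|$. This needs to be combined with a second ideal to pin down $\mathcal{J}$-classes. If that does not close, I would fall back on showing directly that $\theta^*$ is injective and comparing it with $\theta$ via finiteness of the order of $\alpha\inv\theta$, as in Theorem~\ref{Thm:good_inverse}.
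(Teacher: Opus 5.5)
Your forward direction and uniqueness argument are exactly the paper's. For the converse you take a genuinely different route. You present it only as a tentative plan, with an induction over $\mathcal J$-classes and a possible fallback. In fact your ideal-counting idea already finishes the proof with no induction; it just needs to be assembled.

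Write $y=x\theta$, $e=xx^{-1}$ and $z=ey=x\cdot x\alpha$. The useful output of your count is not $|\theta^{-1}(I)|\le|I|$, which is trivial. It is the inclusion $\{u:u\theta\in I\}\subseteq\{u:u\alpha\in I\}$ between two sets of size $|I|$, which forces equality.

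With $I=S^1(x\alpha)S^1$ this equality gives $y\le_{\mathcal J}x\alpha$. Since $x\alpha=x^{-1}y=x^{-1}z$, we get $y\le_{\mathcal J}x\alpha\le_{\mathcal J}z\le_{\mathcal J}y$, so $z\mathrel{\mathcal J}y$. Because idempotents commute, $z=ey=y(y^{-1}ey)$, so $z\le_{\mathcal R}y$. This is where the inverse hypothesis is really used, and it is the step that fails in regular semigroups that are not inverse, where left multiplication by $e$ only gives $z\le_{\mathcal L}y$. Stability of finite semigroups then gives $y=zs$ for some $s\in S^1$, and hence $ey=ezs=zs=y$.

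A shorter version avoids stability altogether. The same count works for left ideals, and $I=S^1(x\alpha)$ gives $y\mathrel{\mathcal L}x^{-1}y$. Hence $y^{-1}y=y^{-1}ey$, and so $y=yy^{-1}y=yy^{-1}ey=eyy^{-1}y=ey$.

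The paper instead uses a symmetry trick. With $\iota\colon x\mapsto x^{-1}$, the definition of $\alpha$ reads $x\iota\alpha=x\cdot x\iota\theta$. So $\iota\theta$ is a complete mapping with orthomorphism $\iota\alpha$. Corollary~\ref{Cor:inv_ortho} applied to that pair gives $x^{-1}\cdot x\iota\alpha=x\iota\theta$, which becomes $x\cdot x\alpha=x\theta$ after replacing $x$ by $x^{-1}$.

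The paper's argument is two lines and reuses the forward direction, but it rests on the orbit machinery of Theorem~\ref{Thm:good_inverse}. Yours is self-contained, using only the counting principle behind Proposition~\ref{Prp:preserve_ideals}. It also isolates exactly where commuting idempotents are needed.
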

\begin{proof}
Assume $\theta$ is an orthomorphism, so that there exists a complete mapping $\beta$ such that $x\theta = x\cdot x\beta$ for all $x\in S$. By Corollary~\ref{Cor:inv_ortho}, $x\beta = x\inv\cdot x\theta = x\alpha$ and thus $\alpha = \beta$ is a complete mapping with $\theta$ as its corresponding orthomorphism. In particular $\alpha$ is a permutation. This also shows the uniqueness of the complete mapping $\alpha$.

Conversely, assume $\alpha$ is a permutation. Let $\iota\colon S\to S,\quad x\mapsto x\inv$ denote the inverse mapping. Since $x\iota\alpha = (x\inv)\alpha = (x\inv)\inv\cdot (x\inv)\theta = x\cdot x\iota\theta$, we have that $\iota\theta$ is a complete mapping with corresponding orthomorphism $\iota\alpha$. By Corollary~\ref{Cor:inv_ortho}, $x\inv\cdot x\iota\alpha = x\iota\theta$ for all $x\in S$. Replacing $x$ with $x\inv$, we get $x\cdot x\alpha = x\theta$ for all $x\in S$. Therefore $\theta$ is an orthomorphism.
\end{proof}

\section{Ideals, Green's relations, and complete mappings}
\label{Sec:ideals}

Let $S$ be a semigroup. A subset $I\subseteq S$ is a \emph{left} (resp.\ \emph{right}) \emph{ideal} if $aI\subseteq I$ ($Ia\subseteq I$) for all $a\in S$, and is a (two-sided) \emph{ideal} if it is both a left ideal and a right ideal.

In finite semigroups, ideals are preserved by complete mappings.

\begin{prop}\label{Prp:preserve_ideals}
Let $S$ be a finite semigroup with a complete mapping $\alpha\colon S\to S$ and orthomorphism $\theta\colon S\to S,\quad x\mapsto x\cdot x\alpha$.
\begin{enumerate}
    \item If $I\subseteq S$ is a right ideal, then $I\theta = I$;
    \item If $I\subseteq S$ is an ideal, then $I\alpha = I$.
\end{enumerate}
\end{prop}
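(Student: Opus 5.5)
Both parts follow from finiteness together with the fact that $\theta$ and $\alpha$ are bijections; no structure theory is needed.

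\emph{Part 1.} Let $I$ be a right ideal. For $x\in I$ we have $x\theta=x\cdot x\alpha\in xS\subseteq I$, so $I\theta\subseteq I$. Since $\theta$ is injective, $|I\theta|=|I|$. As $S$ is finite, the inclusion $I\theta\subseteq I$ between sets of equal size forces $I\theta=I$.

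\emph{Part 2.} Now let $I$ be a two-sided ideal; we show that $\alpha$ maps the complement $S\setminus I$ into itself. Take $y\notin I$ and suppose, for contradiction, that $y\alpha\in I$. Then $y\theta=y\cdot y\alpha\in SI\subseteq I$, because $I$ is a left ideal. By Part~1 (an ideal is in particular a right ideal), $I\theta=I$, so $\theta$ restricts to a bijection of $I$ and hence also of $S\setminus I$. Thus $y\theta\in I$ forces $y\in I$, a contradiction. Therefore $(S\setminus I)\alpha\subseteq S\setminus I$. Since $\alpha$ is a bijection of the finite set $S$, this inclusion is an equality, and taking complements gives $I\alpha=I$.

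The only delicate point is the direction of the argument in Part 2. The direct inclusion $I\alpha\subseteq I$ has no immediate justification, so we work with the complement, using the left-ideal property of $I$ together with Part~1. If that route had failed, the alternative would be to use $\delta=\alpha^{-1}\theta$ from Theorem~\ref{Thm:regular}. Applying the right-hand version of Part~1 to the formulation $x\alpha^{-1}\cdot x=x\delta$ gives $I\delta=I$ for left ideals, and combining $I\theta=I$ with $I\delta=I$ yields $I\alpha=I\theta\delta^{-1}=I$ for two-sided ideals.
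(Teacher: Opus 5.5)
Your proof is correct and is essentially the paper's argument: Part 1 is identical, and in Part 2 your implication $y\alpha\in I\Rightarrow y\in I$ (via $y\theta=y\cdot y\alpha\in SI\subseteq I$ and $I\theta^{-1}=I$) is, with $a=y\alpha$, exactly the paper's step $a\in I\Rightarrow a\alpha^{-1}\in I$, which the paper obtains by substituting $x\alpha^{k-1}=x\alpha^{-1}$ into $x\cdot x\alpha=x\theta$. Phrasing the step through the complement rather than as $I\alpha^{-1}\subseteq I$ is only cosmetic, and finiteness finishes both versions in the same way.
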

\begin{proof}
(1) If $a\in I$, then $a\theta = a\cdot a\alpha\in I$. Thus $I\theta\subseteq I$ and the desired result follows from the finiteness of $S$.

\noindent (2) Since $S$ is finite, $\alpha$ and $\theta$ have finite orders, say, $k$ and $m$, respectively. In $x\cdot x\alpha = x\theta$, replace $x$ with $x\alpha^{k-1}$ to get
\begin{equation}\label{Eqn:ppi-1}
    x\alpha^{k-1} \cdot x = x\alpha^{k-1}\theta
\end{equation}
for all $x\in S$. Now suppose $a\in I$. From \eqref{Eqn:ppi-1}, $a\alpha^{k-1}\theta = a\alpha^{k-1} \cdot a \in I$. Thus $a\alpha\inv = a\alpha^{k-1} \in I\theta^{-1} = I$ using part (1). We have shown the inference $a\in I\Rightarrow a\alpha\inv\in I$. Applying this $k-1$ times, we get $a\in I\Rightarrow a\alpha^{-(k-1)}=a\alpha\in I$. Thus $I\alpha\subseteq I$, and an appeal to finiteness finishes the proof.
\end{proof}

For an element $a\in S$, the \emph{principal left}, \emph{right},
and \emph{two-sided ideals} generated by $a$ are, respectively, $S^1a$, $aS^1$, and $S^1 aS^1$, where $S^1 = S$ if $S$ is a monoid and $S^1 = S\cup \{1\}$ where $1$ is an adjoined identity element if $S$ is not a monoid.
We now show that complete mappings preserve principal ideals and orthomorphisms preserve principal right ideals.

\begin{lemma}\label{Lem:preserve_pideals}
Let $S$ be a finite semigroup with a complete mapping $\alpha\colon S\to S$ and orthomorphism $\theta\colon S\to S,\quad x\mapsto x\cdot x\alpha$. For all $a\in S$,
\begin{enumerate}
    \item $(aS^1)\theta = a\theta\cdot S^1$, and
    \item $(S^1 aS^1)\alpha = S^1\cdot a\alpha\cdot S^1$.
\end{enumerate}
\end{lemma}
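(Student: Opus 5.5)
Both parts follow from Proposition~\ref{Prp:preserve_ideals}, applied to the principal ideal generated by $a$. The idea is to squeeze each image between two ideals that finiteness forces to have the same size.

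\textbf{Part (1).} First, $aS^1$ is a right ideal, so Proposition~\ref{Prp:preserve_ideals}(1) gives $(aS^1)\theta=aS^1$. Next, $a\theta=a\cdot a\alpha\in aS^1$, so $a\theta S^1\subseteq aS^1$.

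For the reverse inclusion I would show $a\in a\theta S^1$. Lemma~\ref{Lem:pregood_inverse} already does this: it gives $a\theta\cdot(a\theta)'\cdot a=a$. Alternatively, as in the proof of Theorem~\ref{Thm:regular}, with $m$ the order of $\theta$,
\[
a\theta\cdot a\theta\alpha\cdot a\theta^2\alpha\cdots a\theta^{m-1}\alpha=a\theta^{m}=a .
\]
Either way $a\in a\theta S^1$, hence $aS^1=a\theta S^1$. Combining, $(aS^1)\theta=aS^1=a\theta\cdot S^1$. The point is that $a\theta$ generates the same principal right ideal as $a$, and $\theta$ fixes that ideal setwise.

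\textbf{Part (2).} Put $J=S^1aS^1$, which is an ideal. By Proposition~\ref{Prp:preserve_ideals}(2), $J\alpha=J$, so $a\alpha\in J$ and therefore $S^1\,a\alpha\,S^1\subseteq J$.

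For the reverse inclusion it suffices to show $a\in S^1\,a\alpha\,S^1$. Let $I=S^1\,a\alpha\,S^1$, again an ideal. Proposition~\ref{Prp:preserve_ideals}(2) applied to $I$ gives $I\alpha=I$, hence $I\alpha^{-1}=I$. Since $a\alpha\in I$, we get $a=(a\alpha)\alpha^{-1}\in I$. So $J\subseteq I$, and therefore $J=I$. Finally, $J\alpha=J=S^1\,a\alpha\,S^1$.

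The only real content is the reverse inclusions: showing that $a$ lies in the principal ideal generated by its image. In part (2) this follows immediately from setwise invariance of every ideal under $\alpha$. In part (1) it needs the product identity (or Lemma~\ref{Lem:pregood_inverse}), because $\theta$ is only known to preserve right ideals, not to send a generator to a generator. I expect that step to be the main obstacle, though a mild one.
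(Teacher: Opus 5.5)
Your proof is correct and essentially the paper's. Part (2) is identical: both ideals $S^1aS^1$ and $S^1\,a\alpha\,S^1$ are invariant under $\alpha$. In part (1) you use the same two ingredients, the orbit identity $a=a\theta w$ and Proposition~\ref{Prp:preserve_ideals}(1), packaged as $aS^1=a\theta S^1$ instead of the paper's direct check that $(ab)\theta=a\theta\,wb\,(ab)\alpha\in a\theta S^1$.
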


\begin{proof}

For (a), let $m$ be the length of the $\theta$-orbit of $a$. Repeatedly using
$x\theta=x(x\alpha)$ around this orbit gives
\(a=a\theta w\)
for some $w\in S^1$. Hence, for $b\in S^1$,
\((ab)\theta=ab(ab)\alpha=a\theta\, w b(ab)\alpha\in a\theta S^1.\)
This proves $(aS^1)\theta\subseteq a\theta S^1$. Conversely,
$a\theta\in (aS^1)\theta$, and Proposition~\ref{Prp:preserve_ideals} yields
$(aS^1)\theta=aS^1$. Therefore
$a\theta S^1\subseteq (aS^1)\theta$.

For (b), put $I_a=S^1aS^1$. Proposition~\ref{Prp:preserve_ideals} gives
$I_a\alpha=I_a$. Since $a\alpha\in I_a$, we have
\(S^1(a\alpha)S^1\subseteq S^1aS^1.\)
Conversely, let $K=S^1(a\alpha)S^1$. Again $K\alpha=K$, and since
$a\alpha\in K$, applying $\alpha^{-1}$ gives $a\in K$. Hence
$S^1aS^1\subseteq S^1(a\alpha)S^1$. Therefore
\((S^1aS^1)\alpha=S^1aS^1=S^1(a\alpha)S^1.\)

\end{proof}

Principal (left, right, two-sided) ideals naturally induce \emph{Green's preorders}: for all $a,b \in S$,
\begin{align*}
a\preceq_{\gl} b &\iff S^1a\subseteq S^1b &&\iff (\exists u\in S^1)\  a=ub, \\
a\preceq_{\gr} b &\iff aS^1\subseteq bS^1 &&\iff (\exists v\in S^1)\ a=bv, \\
a\preceq_{\gj} b &\iff S^1 aS^1\subseteq S^1 bS^1 &&\iff (\exists u,v\in S^1)\  a=ubv.
\end{align*}
In the second formulation of each preorder, reflexivity follows from the use of $S^1$ instead of $S$.
The equivalence relations induced by these preorders are three of \emph{Green's relations}:
\begin{align*}
a\gll b &\iff a\preceq_{\gl} b\text{ and }b\preceq_{\gl} a, \\
a\grr b &\iff a\preceq_{\gr} b\text{ and }b\preceq_{\gr} a, \\
a\gjj b &\iff a\preceq_{\gj} b\text{ and }b\preceq_{\gj} a.
\end{align*}
The Green relations satisfy $\gl\circ \gr = \gr\circ \gl$, where $\circ$ denotes composition of binary relations; hence their composite
coincides with their join. The last two Green's relations are:
\begin{align*}
\gd &= \gl\lor \gr = \gl\circ \gr = \gr\circ \gl, \\
\gh &= \gl\cap \gr.
\end{align*}
In every semigroup we have $\gd\subseteq \gj$. For finite semigroups, the main concern of this paper, $\gd = \gj$.
Thus, for finite semigroups, principal factor arguments will be stated in terms of $\mathcal J$-classes.

\begin{theorem}\label{Thm:green}
Let $S$ be a finite semigroup with a complete mapping $\alpha\colon S\to S$ and corresponding orthomorphism
$\theta\colon S\to S,\quad x\mapsto x\cdot x\alpha$. Then:
\begin{enumerate}
  \item $\theta$ preserves the preorder $\preceq_{\gr}$, hence the equivalence $\mathcal{R}$;
  \item $\alpha$ preserves the preorder $\preceq_{\gj}$, hence the equivalence $\mathcal{J}$.
\end{enumerate}
\end{theorem}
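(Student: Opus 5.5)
The plan is to read both parts off Lemma~\ref{Lem:preserve_pideals}. For part (1), take $a,b\in S$ with $a\preceq_{\gr} b$, that is, $aS^1\subseteq bS^1$. Applying the bijection $\theta$ to this inclusion gives $(aS^1)\theta\subseteq(bS^1)\theta$. By Lemma~\ref{Lem:preserve_pideals}(a) this inclusion reads
\[
a\theta\cdot S^1\subseteq b\theta\cdot S^1,
\]
that is, $a\theta\preceq_{\gr} b\theta$. So $\theta$ preserves $\preceq_{\gr}$.

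Since $\mathcal R$ is the equivalence induced by $\preceq_{\gr}$ (the preorder in both directions), $\theta$ also preserves $\mathcal R$. One might also want the reverse implication, that $\theta^{-1}$ preserves the preorder too. This follows from finiteness. $\theta$ is a bijection of the finite set of principal right ideals; the map $aS^1\mapsto a\theta S^1$ is well defined by Lemma~\ref{Lem:preserve_pideals}(a); and it is inclusion-preserving. An inclusion-preserving bijection of a finite poset onto itself reflects the order, because some power of it is the identity. Hence $a\preceq_{\gr}b$ holds if and only if $a\theta\preceq_{\gr}b\theta$. I would note this as a remark, but the statement only asks for preservation.

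Part (2) is the same argument with Lemma~\ref{Lem:preserve_pideals}(b). If $S^1aS^1\subseteq S^1bS^1$, apply $\alpha$ and use $(S^1xS^1)\alpha=S^1\cdot x\alpha\cdot S^1$ for $x=a$ and $x=b$. This gives $S^1(a\alpha)S^1\subseteq S^1(b\alpha)S^1$, so $a\alpha\preceq_{\gj}b\alpha$, and $\mathcal J$ is preserved. In fact the proof of that lemma shows even more: $S^1aS^1=S^1(a\alpha)S^1$. Thus $a\mathrel{\gj}a\alpha$ for every $a$, so $\alpha$ maps each $\mathcal J$-class onto itself, which is the form the later principal-factor reduction will need.

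There is no real obstacle here. The only point needing care is that Lemma~\ref{Lem:preserve_pideals}(a) is an equality of images of sets under $\theta$, not merely a statement about generators. Applying $\theta$ to an inclusion of sets is legitimate because images preserve inclusion, which is all that is used.
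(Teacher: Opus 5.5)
Your proof is correct and is essentially the paper's own argument: apply $\theta$ (respectively $\alpha$) to the inclusion of principal right (respectively two-sided) ideals, and rewrite both sides using Lemma~\ref{Lem:preserve_pideals}(a) (respectively (b)). Your finiteness remark on order reflection is valid but can be simplified. The proof of Lemma~\ref{Lem:preserve_pideals}(a) also gives $(aS^1)\theta=aS^1$, so $a\theta\mathrel{\gr}a$ for every $a$, which parallels your observation $a\alpha\mathrel{\gj}a$ in part (2).
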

\begin{proof}
(a) Assume $a\preceq_{\gr} b$ for $a,b\in S$. Then $aS^1\subseteq bS^1$. By Lemma~\ref{Lem:preserve_pideals}(a),
\(a\theta\cdot S^1 = (aS^1)\theta\subseteq (bS^1)\theta = b\theta\cdot S^1.\)
Therefore $a\theta\preceq_{\gr} b\theta$.

(b) Assume $a\preceq_{\gj} b$ for $a,b\in S$. Then $S^1 aS^1\subseteq S^1 bS^1$. {By Lemma~\ref{Lem:preserve_pideals}(b),
\(S^1\cdot a\alpha\cdot S^1=(S^1aS^1)\alpha\subseteq(S^1bS^1)\alpha=S^1\cdot b\alpha\cdot S^1.\)
Therefore $a\alpha\preceq_{\gj} b\alpha$.}
\end{proof}

An element $a$ of a monoid $S$ is said to be a \emph{unit} if there exists $b\in S$ such that $ab=ba=1$. For a unit $a$, it is easy to see that $|V(a)|= 1$ and
so $b$ can be denoted by $a\inv$.
The units of a monoid form a group called the \emph{group of units} and denoted by $G(S)$. Suppose that $S$ is finite and $ab=1$. Left multiplication by $a$ has right inverse left multiplication by $b$, and hence it is surjective. Therefore it is bijective. Since $a(ba)=a=a1$, cancellation gives $ba=1$.

\begin{cor}\label{Cor:units}
 If a finite monoid has a complete mapping, then so does its group of units.
\end{cor}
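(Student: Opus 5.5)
The plan is to show that the group of units $G=G(S)$ is invariant under both $\alpha$ and $\theta$. Once that holds, the restriction $\alpha|_G$ is a bijection of $G$ whose orthomorphism is $\theta|_G$. Since $x\cdot x\alpha=x\theta$ for $x\in G$, this restriction is a complete mapping of $G$.

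The first step is to identify $G$ as the $\gj$-class of the identity. If $a$ is a unit, then $1=a^{-1}a$, so $1\preceq_{\gj} a\preceq_{\gj}1$. Conversely, suppose $a\gjj 1$, say $1=uav$ with $u,v\in S$. Then $(ua)v=1$, so $ua$ is a unit by the finiteness argument given just before the statement ($xy=1$ implies $yx=1$). Similarly $u(av)=1$ makes $av$, and hence $u$, a unit. From $ua$ and $u$ being units we get that $a=u^{-1}(ua)$ is a unit. Thus $G$ is exactly the $\gj$-class of $1$.

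The second step is invariance under $\alpha$. By Theorem~\ref{Thm:green}(b), $\alpha$ preserves $\gj$; equivalently, by Lemma~\ref{Lem:preserve_pideals}(b), $S^1\cdot a\alpha\cdot S^1=S^1aS^1$. So for $a\in G$ we have $S^1(a\alpha)S^1=S$, and hence $a\alpha\gjj 1$, giving $a\alpha\in G$. Because $G$ is finite and $\alpha$ is injective, $G\alpha=G$.

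The third step is invariance under $\theta$. For $x\in G$, the element $x\theta=x\cdot x\alpha$ is a product of two units, so it lies in $G$. Again $\theta$ is injective and $G$ is finite, so $\theta|_G$ is a bijection of $G$. This finishes the argument. The only point needing care is the first step, showing that the $\gj$-class of $1$ is exactly $G$; this is where finiteness, through one-sided inverses being two-sided, is used.
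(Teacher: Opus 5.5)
Your proof is correct and follows the paper's own argument. Like the paper, you identify $G(S)$ with the $\mathcal J$-class of $1$, deduce $G(S)\alpha=G(S)$ from the $\mathcal J$-preservation results of Section~\ref{Sec:ideals}, and observe that $x\theta=x\cdot x\alpha$ is a product of units; your added check that the $\mathcal J$-class of $1$ is exactly $G(S)$, which the paper only asserts, and your use of Lemma~\ref{Lem:preserve_pideals}(b) to get $a\alpha\mathrel{\mathcal J}a$ directly are both sound.
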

\begin{proof}
  Let $S$ be a finite monoid with group of units $G(S)$. Let $\alpha\colon S\to S$ be a complete mapping with orthomorphism $\theta\colon S\to S,\quad x\mapsto x\cdot x\alpha$.
  Since $S$ is finite, $G(S)$ is precisely the $\gj$-class of $1$, and thus by Theorem~\ref{Thm:green}, $\alpha$ preserves $G(S)$.
  If $x\in G(S)$, then both $x$ and $x\alpha$ are units, and hence
  $x\theta=x(x\alpha)$ is also a unit. Thus $\theta$ also restricts to a
  permutation of $G(S)$. The restriction of $\alpha$ to $G(S)$ is therefore a
  complete mapping with the restriction of $\theta$ as its orthomorphism.
\end{proof}

If $1\alpha=g$, right translation by $g^{-1}$ normalizes the value at the identity.

\begin{theorem}\label{Thm:fix_ident}
  If a finite monoid has a complete mapping and orthomorphism, then it has a complete mapping and orthomorphism fixing the identity element.
\end{theorem}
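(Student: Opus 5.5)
Let $S$ be a finite monoid with a complete mapping $\alpha$ and orthomorphism $\theta$, $x\theta=x\cdot x\alpha$. The hint preceding the statement suggests composing $\alpha$ with a right translation. Put $g=1\alpha$. By Theorem~\ref{Thm:green}, $\alpha$ preserves the $\gj$-class of $1$, and since $S$ is finite this class is the group of units $G(S)$ (as used in Corollary~\ref{Cor:units}). Hence $g\in G(S)$ and $g^{-1}$ exists. Define
\[
x\beta = x\alpha\cdot g^{-1}, \qquad x\psi = x\cdot x\beta = x\cdot x\alpha\cdot g^{-1} = x\theta\cdot g^{-1}.
\]
Right translation $\rho\colon y\mapsto yg^{-1}$ is a bijection of $S$, because it has inverse $y\mapsto yg$. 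Therefore $\beta=\alpha\rho$ and $\psi=\theta\rho$ are both bijections, and $\beta$ is a complete mapping with orthomorphism $\psi$. We also get $1\beta = g g^{-1}=1$ and $1\psi = 1\cdot 1 = 1$, so both fix the identity.

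The only step needing justification is that $1\alpha$ is a unit. This is exactly where Theorem~\ref{Thm:green}(b) enters, together with the identification of the $\gj$-class of $1$ with $G(S)$ in a finite monoid (if $1=ugv$ then $ug$ has a right inverse, hence is a unit by the finiteness argument before Corollary~\ref{Cor:units}, and so is $g$). There is also a more direct argument: $1\theta=1\alpha$ and $\theta$ preserves $\gr$ by Theorem~\ref{Thm:green}(a), so $1\alpha\grr 1$. Then $1=(1\alpha)v$ for some $v$, and finiteness makes $1\alpha$ a unit.

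With either argument in place, the rest is routine.
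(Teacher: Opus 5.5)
Your proof is correct and is essentially the paper's argument: set $g=1\alpha$, use Theorem~\ref{Thm:green} to place $g$ in the $\gj$-class of $1$, which is $G(S)$, and then replace $\alpha,\theta$ by $\alpha\rho_{g^{-1}},\theta\rho_{g^{-1}}$. Your alternative via $\gr$ also works, though it needs one more line than you wrote: every $a$ satisfies $a\preceq_{\gr}1$, so every $a\theta$ satisfies $a\theta\preceq_{\gr}1\theta$, and surjectivity of $\theta$ then gives $1\preceq_{\gr}1\theta=1\alpha$.
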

\begin{proof}
    Let $S$ be a finite monoid with group of units $G(S)$. Let $\alpha\colon S\to S$ be a complete mapping with orthomorphism $\theta\colon S\to S,\quad x\mapsto x\cdot x\alpha$.
    Let $g=1\alpha$. Again, $G(S)$ is precisely the $\gj$-class of $1$, so $g\in G(S)$ by Theorem~\ref{Thm:green}. Let $\rho_{g\inv}\colon S\to S,\quad x\mapsto xg\inv$ denote
    the right translation by $g\inv$ and note that $\rho_{g\inv}$ is a bijection. Set $\beta=\alpha\rho_{g\inv}$ and $\psi=\theta\rho_{g\inv}$. For all $x\in S$, $x\cdot x\beta = x\cdot x\alpha\cdot g\inv = x\theta\rho_{g\inv} = x\psi$. Thus $\beta$ is a complete mapping with orthomorphism $\psi$. Further $1\beta = 1\alpha\cdot g\inv = gg\inv =1$
    and $1\psi = 1\cdot 1\beta = 1$.
\end{proof}

In contrast to this, it is possible for a semigroup to have a complete mapping, but not have any complete mapping that maps idempotents to idempotents.

\begin{example}
Using \textsc{Mace4}, we found that the (completely simple) semigroup $S$ given by the Cayley table
\[
\begin{array}{c|cccc}
\cdot & 1 & 2 & 3 & 4 \\
\hline
1 & 1 & 2 & 3 & 4 \\
2 & 4 & 3 & 2 & 1 \\
3 & 1 & 2 & 3 & 4 \\
4 & 4 & 3 & 2 & 1
\end{array}
\]
has eight distinct complete mappings. Their corresponding transversals can be easily seen in the table. None of those complete mappings preserve $E(S)=\{1,3\}$.
\end{example}

\section{First steps}
\label{Sec:firststeps}

Theorem~\ref{Thm:green} suggests that the study of complete mappings of finite semigroups should focus on the $\mathcal J$-classes. Let $S$ be a finite semigroup. Given $a\in S$, let $J_a$ be the $\mathcal J$-class of $a$, so that
\(J_a=\{b\in S\mid S^1aS^1=S^1bS^1\},\)
and put
\(I(a)=\{b\in S\mid b\preceq_{\gj}a\}\setminus J_a.\)
Then $S^1 aS^1 = J_a\cup I(a)$. The class $J_a$ is not necessarily a subsemigroup, so we remedy this by introducing a new element, say $J_a^0:=J_a\cup \{0\}$ ($0\notin J_a$). We then define a multiplication in $J_a^0$ as follows: for $u,v\in J_a^0$,
\[
u\times v:=
\begin{cases}
uv & \text{ if } u,v,uv\in J_a \\
0 & \text{ if } u,v\in J_a, uv\notin J_a\\
0 & \text{ if } u=0\text{ or }v=0
\end{cases}.
\]
Then $J_a^0$ is a semigroup whose multiplication agrees with that in $J_a$ whenever possible.
{We call $J_a^0$ the \emph{principal factor} associated with
$J_a$. Equivalently, one collapses the ideal $I(a)$ inside
$J_a\cup I(a)$ to zero; when $I(a)=\varnothing$, our zero-adjoined convention
simply adds a new zero to $J_a$.}
As $S$ is the union of all of its $\mathcal{J}$-classes, it is useful to have
a description of the semigroups $J_a^0$.
{The following standard result is the principal-factor form of
Rees' theorem; it follows from
\cite[Proposition~3.1.5, Proposition~3.2.1 and Theorem~3.2.3]{Howie}.}

\begingroup
Let $G$ be a finite group, let $I$ and $\Lambda$ be non-empty finite sets and
let $P=(p_{\lambda i})$ be a $\Lambda\times I$ matrix over $G\cup\{0\}$
with no zero row or column. The \emph{Rees $0$-matrix semigroup}
$\mathcal M^0(G,I,\Lambda,P)$ has underlying set
$(I\times G\times\Lambda)\cup\{0\}$ and multiplication
\[
(A,a,\alpha)(B,b,\beta)=
\begin{cases}
(A,ap_{\alpha B}b,\beta),&\text{if }p_{\alpha B}\ne0,\\
0,&\text{otherwise,}
\end{cases}
\]
with $0x=x0=0$. If $P$ has no zero entries, the non-zero part
$\mathcal M(G,I,\Lambda,P)=I\times G\times\Lambda$, with the same product,
is a \emph{Rees matrix semigroup}.

\begin{theorem}\label{t:principal-factor-Rees}
Let $S$ be a finite semigroup and let $a\in S$. Then the principal factor
$J_a^0$ is either null or isomorphic to a finite Rees $0$-matrix semigroup.
\end{theorem}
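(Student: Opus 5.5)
The plan is to reproduce the standard Rees argument for the principal factor $J_a^0$. The first step is a dichotomy. Either every product $u\times v$ with $u,v\in J_a^0$ is $0$, in which case $J_a^0$ is null and we are done, or some $u,v\in J_a$ satisfy $uv\in J_a$. In the second case I will show that $J_a^0$ is $0$-simple, and then that it is completely $0$-simple. Finiteness supplies the latter automatically, since every finite $0$-simple semigroup is completely $0$-simple (Howie, Proposition~3.2.1), and Rees' theorem (Howie, Theorem~3.2.3) then gives the isomorphism with some $\mathcal M^0(G,I,\Lambda,P)$. Because $J_a$ is finite, $G$, $I$ and $\Lambda$ are finite.

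For $0$-simplicity, suppose $uv\in J_a$ with $u,v\in J_a$. The first goal is to see that $(J_a^0)^2\neq 0$, which is immediate. Next, let $K$ be a nonzero ideal of $J_a^0$ and pick $x\in K\cap J_a$. For any $y\in J_a$ we have $y\gjj x$ in $S$, so $y=pxq$ with $p,q\in S^1$. The key point is to rewrite this product inside $J_a^0$. Here I plan to use the finite-semigroup fact $\gj=\gd$, together with the standard lemma: if $x,xq\in J$ (with $J$ a $\gj$-class of a finite semigroup), then $x\grr xq$. Using this, I will express $y$ as $s\times x\times t$ with $s,t\in J_a$, or with one of $s,t$ absent when $p$ or $q$ is $1$. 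Each intermediate product then stays in $J_a$ and so is computed the same way in $J_a^0$. This yields $y\in K$, so $K=J_a^0$.

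The main obstacle is exactly this replacement of arbitrary multipliers $p,q\in S^1$ by elements of $J_a$. The cleanest route is probably the following. The existence of $u,v$ with $uv\in J_a$ implies, by Green's lemma/Miller--Clifford arguments, that $J_a$ is a regular $\gd$-class, so every $\gl$- and $\gr$-class of $J_a$ contains an idempotent. Given $y\gjj x$, use $\gd=\gl\circ\gr$ to pick $z$ with $x\grr z\gll y$. Then, with idempotents $e\in R_y$ and $f\in L_y$ and suitable elements of $J_a$, write $z=x\times t$ and $y=s\times z$ with $s,t\in J_a$. Green's lemma guarantees these products remain in $J_a$. Rather than reprove all of this, the plan is to cite Howie, Proposition~3.1.5 for the dichotomy (principal factors are $0$-simple or null) and then combine it with the two results above, as the paper indicates.
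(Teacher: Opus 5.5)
Your proposal is correct and, in its final form, follows exactly the paper's route: the paper gives no argument beyond citing Howie's Proposition~3.1.5 (principal factors are $0$-simple or null), Proposition~3.2.1 (finite $0$-simple implies completely $0$-simple) and Theorem~3.2.3 (Rees). Your sketched direct proof of $0$-simplicity, via stability and the Miller--Clifford idempotent in $L_u\cap R_v$, is a sound unpacking of the first citation. It also handles the minimal $\mathcal J$-class uniformly; this matters because there Howie's principal factor is the simple semigroup $J_a$ itself, so if you rely purely on the citation you should apply Rees' theorem to the completely simple $J_a$ and then adjoin $0$, which gives a sandwich matrix with no zero entries.
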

\endgroup

Let $S$ be a semigroup and $a\in S$. Then on the underlying set $S$ we can define a new binary operation as follows: for all $x,y\in S$,
\(x \times_a y = xay.\)
Then $(S,\times_a)$ is also a semigroup, called \emph{a variant} of $S$, which in general is not isomorphic to $S$. However, a group is isomorphic to any of its variants.

\begin{prop}\label{p:gp2rees}
Let $S=\mathcal M(G,I,\Lambda,P)$ be a Rees matrix semigroup. If $G$ has a complete mapping, then $S$ has a complete mapping.
\end{prop}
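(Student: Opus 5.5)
The plan is to build a complete mapping of $S=\mathcal M(G,I,\Lambda,P)$ coordinatewise. We use a complete mapping of a suitable variant of $G$ on the middle coordinate and simple bijections on the index coordinates. The product is $(i,g,\lambda)(j,h,\mu)=(i,g\,p_{\lambda j}\,h,\mu)$. We will look for $\alpha$ of the form
\[
(i,g,\lambda)\alpha=(i\sigma,\; g\beta_{i,\lambda},\; \lambda\tau),
\]
with $\sigma\in\Sym(I)$ and $\tau\in\Sym(\Lambda)$ to be chosen, and a family of bijections $\beta_{i,\lambda}$ of $G$. Then
\[
(i,g,\lambda)\theta=(i,\; g\,p_{\lambda,i\sigma}\,g\beta_{i,\lambda},\; \lambda\tau).
\]
The map $\alpha$ is a bijection as soon as each $\beta_{i,\lambda}$ is a bijection. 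The first and third coordinates of $\theta$ range over $I\times\Lambda$ bijectively, because $(i,\lambda)\mapsto(i,\lambda\tau)$ is a bijection. So $\theta$ is a bijection provided that, for each fixed pair $(i,\lambda)$, the map $g\mapsto g\,p\,g\beta_{i,\lambda}$ is a bijection of $G$, where $p=p_{\lambda,i\sigma}$.

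Each such inner map is a complete-mapping condition in the variant $(G,\times_p)$, where $x\times_p y=xpy$. The key step is to use the remark just before the statement: a group is isomorphic to each of its variants. Explicitly, $x\mapsto xp$ is an isomorphism from $(G,\times_p)$ onto $(G,\cdot)$, since $(xpy)p=(xp)(yp)$. Let $\gamma$ be a complete mapping of $G$ with orthomorphism $x\mapsto x\cdot x\gamma$. Transporting along this isomorphism, define
\[
g\beta_{i,\lambda}=\bigl((gp)\gamma\bigr)p^{-1}.
\]
Then $g\,p\,g\beta_{i,\lambda}=\bigl((gp)\cdot(gp)\gamma\bigr)p^{-1}$. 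This is a composite of the bijection $g\mapsto gp$, the orthomorphism of $\gamma$, and right multiplication by $p^{-1}$, hence a bijection. Here $p\in G$ because $P$ has no zero entries in the Rees matrix (non-zero) case. Since $\beta_{i,\lambda}$ is also visibly a bijection, any choice of $\sigma$ and $\tau$ works, for instance $\sigma=\idmap$ and $\tau=\idmap$.

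Assembling the pieces, $\alpha$ is a bijection of $S$ and $\theta$ is a bijection of $S$, because $\theta$ permutes the blocks $\{i\}\times G\times\{\lambda\}$ (in fact it fixes each block when $\tau=\idmap$) and acts bijectively on each block. The only point needing care, and the step I expect to be the main obstacle, is bookkeeping. The sandwich entry $p_{\lambda,i\sigma}$ depends on the block, so the transported complete mapping must be chosen separately for each block, with $p$ being the entry actually used in that block. Once that dependence is made explicit, the verification is routine.
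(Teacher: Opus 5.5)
Your proposal is correct and, once you take $\sigma=\tau=\idmap$, it is exactly the paper's argument. Each block $\{i\}\times G\times\{\lambda\}$ is closed under the chosen products and forms the variant $(G,\times_{p_{\lambda i}})$, which is isomorphic to $G$, so the local complete mappings glue to one on $S$; your explicit transport $g\mapsto((gp)\gamma)p^{-1}$ simply makes the paper's appeal to ``a group is isomorphic to each of its variants'' concrete, and the extra freedom in $\sigma,\tau$ is harmless but unnecessary.
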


\begin{proof}
For $A\in I$ and $\lambda\in\Lambda$, put
\(G_{\lambda,A}=\{(A,g,\lambda)\mid g\in G\}.\)
With the multiplication inherited from $S$, this set is the variant $(G,\times_{p_{\lambda,A}})$, where $x\times_{p_{\lambda,A}}y=xp_{\lambda,A}y$. Since a group is isomorphic to each of its variants, every $G_{\lambda,A}$ has a complete mapping. The sets $G_{\lambda,A}$ partition $S$, and products chosen inside one such set remain inside it. Taking the union of these local complete mappings gives a complete mapping of $S$.
\end{proof}

Zeros in the sandwich matrix only turn some products into $0$.

\begin{prop}\label{p:adding0}
Let $S=\mathcal M(G,I,\Lambda,P)$ be a finite Rees matrix semigroup, and let $T=\mathcal M^0(G,I,\Lambda,Q)$ be obtained from $S$ by replacing some entries of $P$ by zero. If $S$ has no complete mapping, then $T$ has no complete mapping.
\end{prop}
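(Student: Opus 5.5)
We argue by contraposition: from a complete mapping $\beta$ of $T$ we build a complete mapping of $S$. The guiding remark is that every product in $T$ is either equal to the corresponding product in $S$ or is $0$.

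Let $\beta$ be a complete mapping of $T$ with orthomorphism $\psi$. By Proposition~\ref{Prp:zero}, $0\beta=0\psi=0$. Hence $\beta$ and $\psi$ restrict to permutations of the non-zero part $I\times G\times\Lambda$, which is exactly the underlying set of $S$. Let $\alpha$ be the restriction of $\beta$, viewed as a permutation of $S$.

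Take a non-zero $x$. Then $x\psi=x\cdot_T x\beta$ is non-zero, since $\psi$ is a bijection fixing $0$. Write $x=(A,a,\lambda)$ and $x\beta=(B,b,\mu)$. The $T$-product is non-zero precisely when $q_{\lambda B}\neq0$. Since $Q$ was obtained from $P$ by replacing some entries with zero, every non-zero entry of $Q$ equals the corresponding entry of $P$, so $q_{\lambda B}=p_{\lambda B}$. Therefore
\[
x\cdot_S x\alpha=(A,ap_{\lambda B}b,\mu)=x\cdot_T x\beta=x\psi .
\]
The map $x\mapsto x\cdot_S x\alpha$ on $S$ is thus the restriction of $\psi$, which is a bijection of $S$. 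So $\alpha$ is a complete mapping of $S$, contradicting the hypothesis. This proves the proposition.

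The only point needing care is the vanishing of the zero. Proposition~\ref{Prp:zero} forces $0\beta=0$, so $\beta$ preserves the non-zero part. Because $\psi$ is injective and $0\psi=0$, no non-zero $x$ can have a $T$-product equal to $0$. Consequently every product selected by $\beta$ uses a surviving entry of $Q$, and at such entries the multiplications of $S$ and $T$ agree. Beyond this, the argument is routine bookkeeping.
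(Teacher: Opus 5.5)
Your proposal is correct and follows essentially the same route as the paper. The paper likewise uses Proposition~\ref{Prp:zero} to force the zero to be fixed, and then observes that the selected non-zero products in $T$ coincide with products of $S$, giving a transversal of $S$'s table. You simply spell out the bookkeeping, namely $q_{\lambda B}=p_{\lambda B}$ at surviving entries and the non-vanishing of $x\psi$ for $x\neq 0$, in more detail.
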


\begin{proof}
The non-zero products in the Cayley table of $T$ are products that already occur in the Cayley table of $S$. If $T$ had a complete mapping, Proposition~\ref{Prp:zero} would force its zero to be fixed. The chosen non-zero diagonal entries would therefore form a transversal of the Cayley table of $S$, contradicting the assumption that $S$ has no complete mapping.
\end{proof}

For a $\mathcal J$-class $J$, write $J^0$ for its principal factor, obtained by adjoining a zero to $J$ and replacing products that leave $J$ by zero.

\begin{theorem}\label{t:J-reduct}
Let $S$ be a finite semigroup. Then $S$ has a complete mapping if and only if every principal factor $J_a^0$ has a complete mapping.
\end{theorem}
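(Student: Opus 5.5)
The plan is to prove both directions by comparing transversals of the Cayley table of $S$ with transversals of the tables of the principal factors, using that $\alpha$ preserves $\mathcal J$-classes (Theorem~\ref{Thm:green}).

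($\Rightarrow$) Let $\alpha$ be a complete mapping of $S$ with orthomorphism $\theta$, and fix a $\mathcal J$-class $J$. By Theorem~\ref{Thm:green}(b), $J\alpha=J$. I then show that $x\theta=x\cdot x\alpha\in J$ for every $x\in J$, and argue by induction down the $\preceq_{\mathcal J}$ order on $\mathcal J$-classes.

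For $x\in J$ the product $x\cdot x\alpha$ lies in $S^1xS^1$, so it is in $J$ or strictly below $J$. Call $J$ \emph{good} if $J\theta=J$. Order the $\mathcal J$-classes by a linear extension of $\preceq_{\mathcal J}$ and proceed from the bottom. Suppose every class strictly below $J$ is good, and consider the ideal $I=S^1JS^1$. Then $I\setminus J$ is a union of classes strictly below $J$, each mapped onto itself by $\theta$. By Proposition~\ref{Prp:preserve_ideals}, $I\theta=I$, so $\theta$ permutes $I$. Since $\theta$ already permutes $I\setminus J$, it must permute $J$.

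Given this, $\alpha|_J$ is a bijection of $J$ and $x\mapsto x\times x\alpha = x\cdot x\alpha$ is a bijection of $J$, with no product collapsed to $0$. Extending by $0\mapsto 0$ gives a complete mapping of $J^0$.

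($\Leftarrow$) For each $\mathcal J$-class $J$, take a complete mapping $\alpha_J$ of $J^0$. By Proposition~\ref{Prp:zero} it fixes $0$, so it restricts to a bijection of $J$ with $x\times x\alpha_J$ ranging bijectively over $J$. In particular no product $x\times x\alpha_J$ with $x\in J$ is $0$, so $x\times x\alpha_J = x\cdot x\alpha_J$ computed in $S$. Define $\alpha=\bigcup_J \alpha_J|_J$. This is a permutation of $S$, and $x\mapsto x\cdot x\alpha$ permutes each $J$, hence all of $S$.

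One subtlety concerns the case where $J$ is a minimal ideal with $I(a)=\varnothing$. There the adjoined zero never occurs as a product of elements of $J$, but Proposition~\ref{Prp:zero} still forces $0\mapsto 0$, so the argument is unchanged.

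The main obstacle is the forward direction's claim that $\theta$ preserves each $\mathcal J$-class. Theorem~\ref{Thm:green} only gives this for $\alpha$ and for $\mathcal R$-preservation of $\theta$. The bottom-up induction using Proposition~\ref{Prp:preserve_ideals} on the ideals $S^1JS^1$ is how I would overcome it. An alternative would be to note that $x\theta\,\mathcal R\,x$ follows from Lemma~\ref{Lem:preserve_pideals}(a) applied to $\theta$-orbits, but the ideal-counting argument is cleaner.
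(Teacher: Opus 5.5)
Your proof is correct and follows essentially the paper's argument: the forward direction rests on Proposition~\ref{Prp:preserve_ideals}, and the converse is glued class by class using Proposition~\ref{Prp:zero}, exactly as in the paper. Your bottom-up induction is unnecessary, because $S^1JS^1\setminus J=\{s\in S\mid J_s<J\}$ is itself an ideal. Proposition~\ref{Prp:preserve_ideals} therefore gives $(S^1JS^1\setminus J)\theta=S^1JS^1\setminus J$ directly, and likewise for $\alpha$, without appealing to Theorem~\ref{Thm:green}. This is precisely how the paper obtains $J\theta=J$ and $J\alpha=J$.
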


\begin{proof}
Suppose first that $S$ has a complete mapping $\alpha$, with orthomorphism $\theta$. Let $J$ be a $\mathcal J$-class of $S$, and put
\(I_{\le J}=\{s\in S\mid J_s\le J\},\quad I_{<J}=\{s\in S\mid J_s<J\}.\)
These are ideals, and hence right ideals. Proposition~\ref{Prp:preserve_ideals} gives
\(I_{\le J}\alpha=I_{\le J},\quad I_{<J}\alpha=I_{<J},\quad I_{\le J}\theta=I_{\le J},\quad I_{<J}\theta=I_{<J}.\)
Therefore both $\alpha$ and $\theta$ preserve the difference
\(I_{\le J}\setminus I_{<J}=J.\)
The restriction of $\alpha$ to $J$, extended by fixing the zero of $J^0$, is then a bijection of $J^0$. Its product map is the restriction of $\theta$ to $J$, again extended by fixing zero, because the selected products $x(x\alpha)$ with $x\in J$ remain in $J$. Thus $J^0$ has a complete mapping.

Conversely, suppose that every principal factor $J^0$ has a complete mapping. By Proposition~\ref{Prp:zero}, each such complete mapping fixes the zero of $J^0$, and hence restricts to a bijection $\alpha_J\colon J\to J$ whose product map is a bijection of $J$. Define $\alpha\colon S\to S$ by $x\alpha=x\alpha_J$ for $x\in J$. Since the $\mathcal J$-classes partition $S$, the map $\alpha$ is a bijection. Moreover, for $x\in J$, the product $x(x\alpha_J)$ is non-zero in $J^0$, and so the same product in $S$ lies in $J$. On each $\mathcal J$-class this product map is the bijection supplied by the complete mapping of $J^0$. Hence $x\mapsto x(x\alpha)$ is a bijection of $S$, and $\alpha$ is a complete mapping of $S$.
\end{proof}

Thus the existence problem for finite semigroups reduces to the principal factors. By the Rees theorem for principal factors, each $J_a^0$ is either a null principal factor or a Rees $0$-matrix semigroup; a null principal factor with at least two elements has no complete mapping. The remaining classification problem is therefore the classification of Rees $0$-matrix semigroups with complete mappings.

\section{Rees matrix semigroups}
\label{Sec:Reesmatrixsemigroups}

In this section we will characterize precisely which Rees matrix semigroups over groups admit complete mappings. A matrix over $G$ is \emph{normalized} if every entry in the first row and column is the identity of $G$. By \cite[Theorem~3.4.2]{Howie}, every Rees matrix semigroup is isomorphic to one with normalized sandwich matrix. We record the normalization. Choose $i_0\in I$ and $\lambda_0\in\Lambda$, and put
\[
q_{\lambda i}=p_{\lambda i_0}^{-1}p_{\lambda i}p_{\lambda_0 i}^{-1}p_{\lambda_0 i_0}.
\]
Then $Q=(q_{\lambda i})$ is normalized, and the map
\[
(i,g,\lambda)\longmapsto
(i,p_{\lambda_0 i_0}^{-1}p_{\lambda_0 i}g p_{\lambda i_0},\lambda)
\]
is an isomorphism from $\mathcal M(G,I,\Lambda,P)$ to $\mathcal M(G,I,\Lambda,Q)$.

Our goal is to prove the following result.

\begin{theorem}\label{t:ReesNon0}
A Rees matrix semigroup $\mathcal{M}(G,I,\Lambda,P)$ with normalized $P$ has a complete mapping if and only if one of the following holds:
\begin{itemize}\itemsep0pt
\item[(a)] $|G|$ is odd;
\item[(b)] $G$ has non-cyclic Sylow $2$-subgroups;
\item[(c)] $|I|\cdot |\Lambda|$ is even;
\item[(d)] The normalized sandwich matrix $P$ has at least one entry of even order.
\end{itemize}
\end{theorem}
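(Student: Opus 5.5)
The plan is to prove sufficiency of (a)--(d) by construction and necessity by a transfer/parity obstruction, in that order.

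\emph{Sufficiency of (a) and (b).} If $|G|$ is odd, or $G$ has non-cyclic Sylow $2$-subgroups, then $G$ has a complete mapping by Theorem~\ref{Thm:HP}. Proposition~\ref{p:gp2rees} then gives a complete mapping of $\mathcal M(G,I,\Lambda,P)$ at once.

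\emph{Necessity.} Suppose (a)--(d) all fail. Then $|I|$ and $|\Lambda|$ are odd, every entry of $P$ has odd order, and $G$ has a non-trivial cyclic Sylow $2$-subgroup $C_{2^k}$ with $k\ge1$. By Burnside's transfer theorem, $G$ has a normal $2$-complement $N$, so $G/N\cong C_{2^k}$ and every element of odd order lies in $N$. Let $\varphi\colon G\to \mathbb Z/2^k$ be the quotient map, and define $\psi\colon S\to\mathbb Z/2^k$ by $(A,g,\lambda)\psi=g\varphi$. Since every $p_{\lambda i}\in N$, we have
\[(A,a,\lambda)(B,b,\mu)\psi=a\varphi+p_{\lambda B}\varphi+b\varphi=a\varphi+b\varphi,\]
so $\psi$ is a homomorphism to an abelian group. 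If $\alpha$ were a complete mapping with orthomorphism $\theta$, summing $x\theta\psi=x\psi+x\alpha\psi$ over $x\in S$ would give $\Sigma=2\Sigma$, where $\Sigma=\sum_{x\in S}x\psi$, because $\alpha$ and $\theta$ are bijections. Hence $\Sigma=0$. On the other hand, $\Sigma=|I||\Lambda||N|\sum_{c\in\mathbb Z/2^k}c=|I||\Lambda||N|\cdot 2^{k-1}$, which is non-zero modulo $2^k$ because $|I|$, $|\Lambda|$ and $|N|$ are odd. This contradiction proves necessity.

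\emph{Sufficiency of (c) and (d).} This is the constructive heart and the step I expect to be the main obstacle, since here $G$ itself has no complete mapping. I would write $x=(A,a,\lambda)$ and $x\alpha=(B,b,\mu)$, so that $x\theta=(A,ap_{\lambda B}b,\mu)$.

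First I would choose the index behaviour. The $I$-coordinate of $x\theta$ is forced to be $A$, and I would make the $\Lambda$-coordinate of $x\alpha$ determine that of $x\theta$ via a permutation. The task then becomes a family of group-level problems: within each row $A$, the group parts $a\,p\,b(a)$ must jointly cover $G$ exactly once for each output block.

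For these group-level problems I would use near-complete mappings. For a group with cyclic Sylow $2$-subgroup, there are bijections $a\mapsto b(a)$ such that $a\mapsto ab(a)$ misses exactly one element and hits one element twice; by the transfer computation above, the missed and doubled elements differ in $\varphi$-value by the generator of order $2$. The idea is to glue two such blocks so that the defects cancel.

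When $|I|$ or $|\Lambda|$ is even (case (c)), I would pair up blocks and swap one element between the two blocks of each pair to repair both defects. I would first do the base cases $|I|\times|\Lambda|=2\times1$ and $1\times2$ by hand, and then tile the general case by pairing.

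When some entry $p_{\lambda i}$ has even order (case (d)), the sandwich twist $p_{\lambda i}$ shifts $\varphi$ by an element of $\mathbb Z/2^k$ of order $2^j$ with $j\ge1$, which is what breaks the parity obstruction. I would use the block routed through the pair $(\lambda,i)$ together with the normalized identity entries in the first row and column to absorb the defect. If that single block cannot absorb it, I would reduce to a $2\times2$ or $1\times2$ configuration containing that entry.

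The delicate point throughout is verifying that the combined map is a bijection on all of $S$, not just blockwise.
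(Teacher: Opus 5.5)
Your proof of sufficiency of (a) and (b), and your necessity argument, are correct and match the paper. Theorem~\ref{t:converse} also passes to the normal $2$-complement $N$ and compares products of middle coordinates modulo $N$, obtaining $p=p^2$; your homomorphism $\psi$ is the additive version of that argument.

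The gap is in (c) and (d), which you only sketch. You work directly in $G$ and take for granted that a group with a non-trivial cyclic Sylow $2$-subgroup has near-complete mappings with prescribed omitted row and column. That is a non-trivial theorem, which you do not prove and which the paper does not supply. It also does not follow by lifting from $G/N$: a lift as in Theorem~\ref{t:goingup} leaves a whole coset of $N$ uncovered. The missing idea is to reduce the whole semigroup, not just the group, to the quotient. Since $N$ has odd order, Lemma~\ref{l:cyclic2-quotient} and Theorem~\ref{t:goingup} show that a complete mapping of $\mathcal M(G/N,I,\Lambda,\bar P)$ lifts to $\mathcal M(G,I,\Lambda,P)$. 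So one may assume $G=C_{2^d}$. There the $2\times1$ blocks are handled by the explicit construction of Theorem~\ref{Thm:ILambda_even} and then tiled as you suggest. Even granting near-complete mappings, your swap repair needs the swapped cells to hit exactly the two missing products. That is a compatibility condition on the defects which you do not verify.

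For (d), the plan as stated cannot work. Here $|I|$ and $|\Lambda|$ are odd, so $S$ has an odd number of $\mathcal H$-classes. A single $\mathcal H$-class is a variant of $G$, hence isomorphic to $G$, and has no complete mapping. A $1\times2$ or $2\times2$ configuration containing the even-order entry already has a complete mapping by (c), whatever that entry is, so the entry plays no role. The complement then still has an odd number of $\mathcal H$-classes and cannot be tiled by even pieces. Some piece with an odd number of $\mathcal H$-classes is unavoidable, and the even-order entry must genuinely enter it through an odd cycle of local packets.

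The paper does this as follows:
\begin{itemize}
\item Lemma~\ref{l:A-reduct} isolates the three-class partial algebra $A=(\{1\}\times G\times\{2\})\cup(\{2\}\times G\times\{1,2\})$ and partitions the rest of $S$ into subsemigroups with even index product, handled by Corollary~\ref{c:even}.
\item Theorem~\ref{t:nonidentity} gives an explicit complete mapping of $A$ over $C_{2^d}$ when $p_{22}\ne1$. It routes part of each class around $(1,\cdot,2)\to(2,\cdot,2)\to(2,\cdot,1)\to(1,\cdot,2)$, treating separately the cases where $p_{22}$ is a generator or a non-identity square.
\item Corollary~\ref{c:evenentry} lifts this back to $G$.
\end{itemize}
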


\subsection{Going up}

We saw in Proposition~\ref{p:gp2rees} that, if $G$ has a complete mapping,
so does any Rees matrix semigroup $\mathcal{M}(G,I,\Lambda,P)$. Our next
result is a generalization of this fact; it also generalizes the result of
Hall and Paige \cite[Corollary~2]{hp}, according to which, if a group $G$ has a
normal subgroup $N$ such that $N$ and $G/N$ have complete mappings, then
$G$ has a complete mapping.

{For $N\unlhd G$, write $\bar G=G/N$ and let $\bar P$ be
the entrywise image of $P$ in $\bar G$.}

\begin{theorem}
Let $S=\mathcal{M}(G,I,\Lambda,P)$ be a Rees matrix semigroup over $G$, and
let $N$ be a normal subgroup of $G$. Suppose that
\begin{itemize}\itemsep0pt
\item[(a)] $N$ has a complete mapping;
\item[(b)] $\bar{S}=\mathcal{M}(\bar G,I,\Lambda,\bar P)$ has a complete mapping.
\end{itemize}
Then $S$ has a complete mapping.
\label{t:goingup}
\end{theorem}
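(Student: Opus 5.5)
The plan is to lift a complete mapping of $\bar S$ to $S$ fibre by fibre, following the Hall--Paige argument for normal subgroups. Write $g\mapsto\bar g$ for the quotient map $G\to\bar G$. For each coset $\bar g\in\bar G$, fix a representative $t_{\bar g}\in G$. Every element of $S$ can then be written uniquely as $(i,t_{\bar g}n,\lambda)$ with $n\in N$, and it lies over $(i,\bar g,\lambda)\in\bar S$. Let $\bar\alpha$ be a complete mapping of $\bar S$ with orthomorphism $\bar\theta$, and let $\beta$ be a complete mapping of $N$ with orthomorphism $\psi\colon n\mapsto n\cdot n\beta$.

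\emph{Step 1: definition of $\alpha$.} Take $x=(i,t_{\bar g}n,\lambda)$ and write $(i,\bar g,\lambda)\bar\alpha=(j,\bar h,\mu)$. Put $p=p_{\lambda j}$ and let $\sigma=\sigma_{\lambda j}$ be the automorphism $m\mapsto p^{-1}mp$ of $N$; this is an automorphism because $N$ is normal. Define
\[
x\alpha=(j,\,(n\sigma)\beta\, t_{\bar h},\,\mu).
\]
Then $x\alpha$ lies over $(i,\bar g,\lambda)\bar\alpha$, since $(n\sigma)\beta\in N$ and $N t_{\bar h}=t_{\bar h}N$.

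\emph{Step 2: $\alpha$ is a bijection.} Since $\bar\alpha$ is a bijection, it suffices to check each fibre. On the fibre over $(i,\bar g,\lambda)$, the map $n\mapsto (n\sigma)\beta\,t_{\bar h}$ is a composite of bijections $N\to N\to Nt_{\bar h}$. Its image is exactly the fibre over $(j,\bar h,\mu)$.

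\emph{Step 3: the orthomorphism is a bijection.} Using $np=p\,(n\sigma)$, compute
\[
x\cdot x\alpha=(i,\;t_{\bar g}\,n\,p\,(n\sigma)\beta\,t_{\bar h},\;\mu)
=(i,\;t_{\bar g}\,p\,\bigl((n\sigma)\psi\bigr)\,t_{\bar h},\;\mu).
\]
This element lies over $(i,\bar g,\lambda)\bar\theta=(i,\bar g\bar p\bar h,\mu)$. Since $\bar\theta$ is a bijection, distinct fibres of $S$ go to distinct fibres of $\bar S$. Within one fibre, $n\mapsto (n\sigma)\psi$ is a bijection of $N$. Hence $n\mapsto t_{\bar g}p\,((n\sigma)\psi)\,t_{\bar h}$ is a bijection from $N$ onto $t_{\bar g}pNt_{\bar h}$, which is the full coset over $\bar g\bar p\bar h$ by normality. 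Therefore $x\mapsto x\cdot x\alpha$ is a bijection of $S$, and $\alpha$ is a complete mapping.

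\emph{Main obstacle.} The only delicate point is commuting $n$ past the sandwich entry $p_{\lambda j}$. This entry depends on the pair $(\lambda,j)$, so no single complete mapping of $N$ can be used uniformly. The remedy is to twist $\beta$ by the conjugation automorphism $\sigma_{\lambda j}$ separately on each fibre, as in Step 1. With that twist, the rest reduces to the normality identity $aNb=\overline{ab}$-coset and to the bijectivity of $\bar\alpha$ and $\bar\theta$.
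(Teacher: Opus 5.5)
Your proof is correct and follows essentially the paper's route: you lift the complete mapping of $\bar S$ fibre by fibre over coset representatives, and on each fibre you use a complete mapping of $N$ twisted by conjugation by the sandwich entry $p_{\lambda j}$. The only cosmetic difference is where the conjugation sits. The paper conjugates the output, setting $(i,g_uh,\lambda)\phi=(i',(h\alpha)^{p_{\lambda,i'}}g_v,\lambda')$, so the product has middle coordinate $g_u(h\cdot h\alpha)p_{\lambda,i'}g_v$. You conjugate the input of $\beta$ and obtain $t_{\bar g}\,p\,((n\sigma)\psi)\,t_{\bar h}$, which works equally well.
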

\begin{proof}
Take $g_1,\ldots,g_r$ coset representatives for $N$ in $G$. Let $\bar\phi$
and $\alpha$ be the complete mappings for $\bar S$ and $N$.
Throughout this proof we use the conjugation convention
\(x^g=g^{-1}xg \quad (x,g\in G).\)
If $(i,g_uN,\lambda)\bar\phi=(i',g_vN,\lambda')$, put
\((i,g_uh,\lambda)\phi=(i',(h\alpha)^{p_{\lambda,i'}}g_v,\lambda').\)

We show that $\phi$ is a bijection. So suppose that
\((i_1,g_{u_1}h_1,\lambda_1)\phi=(i_2,g_{u_2}h_2,{\lambda_2})\phi.\)
Since $\phi$ mod $N$ is $\bar\phi$, which is a bijection, this implies
$i_1=i_2$, $\lambda_1=\lambda_2$, and $u_1=u_2$, so that also $i'_1=i'_2$,
$\lambda'_1=\lambda'_2$, and $v_1=v_2$. Then
$h_1\alpha=h_2\alpha$, so $h_1=h_2$ since $\alpha$ is a bijection on $N$.

Now we show that $\psi$ is a bijection, where $s\psi=s\cdot s\phi$. So suppose
that
\((i_1,g_{u_1}h_1,\lambda_1)\psi=(i_2,g_{u_2}h_2,{\lambda_2})\psi.\)
Again, since we have a bijection mod $N$, we conclude that $i_1=i_2$,
$\lambda_1=\lambda_2$, and $u_1=u_2$, whence also $i'_1=i'_2$,
$\lambda'_1=\lambda'_2$, and $v_1=v_2$. Now we have
\(g_{u_1}h_1p_{\lambda_1,i'_1}(h_1\alpha)^{p_{\lambda_1,i'_1}}g_{v_1}
=g_{u_1}h_2p_{\lambda_1,i'_1}(h_2\alpha)^{p_{\lambda_1,i'_1}}g_{v_1}\), so $h_1\cdot h_1\alpha=h_2\cdot h_2\alpha$. It follows that $h_1=h_2$, and we are done.
\end{proof}

The obstruction for groups without complete mappings is detected in a cyclic $2$-quotient.

\begin{lemma}\label{l:cyclic2-quotient}
Let $G$ be a finite group which does not have a complete mapping. Then $G$
has a normal subgroup $N$ of odd order such that
\(G/N\)
is a non-trivial cyclic $2$-group. Moreover, $N$ has a complete mapping.
\end{lemma}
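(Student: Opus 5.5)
By the Hall--Paige theorem (Theorem~\ref{Thm:HP}), a group $G$ without a complete mapping has a Sylow $2$-subgroup $T$ that is non-trivial and cyclic, say $|T|=2^k$ with $k\ge1$. We then produce a normal $2$-complement $N$ using Burnside's normal $p$-complement (transfer) theorem. Once $N$ is in hand, $G/N\cong T$ is a non-trivial cyclic $2$-group, and $N$ has odd order.

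To invoke Burnside's theorem we must show that $T$ lies in the centre of its normaliser, i.e.\ $N_G(T)=C_G(T)$. We use that $T$ is abelian, so $T\le C_G(T)$, and that $N_G(T)/C_G(T)$ embeds in $\Aut(T)$. For $T$ cyclic of order $2^k$, $\Aut(T)\cong(\mathbb Z/2^k\mathbb Z)^\times$ has order $2^{k-1}$, a $2$-group. On the other hand, $T\le C_G(T)$ and $T$ is a Sylow $2$-subgroup of $N_G(T)$, so the index $|N_G(T):C_G(T)|$ divides $|N_G(T):T|$, which is odd. A quotient that is both a $2$-group and of odd order is trivial, so $N_G(T)=C_G(T)$. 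Burnside's theorem then gives a normal subgroup $N$ with $G=NT$ and $N\cap T=1$. Thus $N$ is the set of elements of odd order, $|N|=|G|/2^k$ is odd, and $G/N\cong T$.

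For the final claim, $N$ has odd order, so its Sylow $2$-subgroups are trivial and Theorem~\ref{Thm:HP} shows that $N$ has a complete mapping. (One can also see this directly without Hall--Paige: $x\mapsto x$ is a complete mapping, since squaring is a bijection in a group of odd order.)

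The only real step is verifying the hypothesis of Burnside's transfer theorem, namely the automorphism-group computation and the coprimality argument above. Everything else is immediate from Theorem~\ref{Thm:HP}.
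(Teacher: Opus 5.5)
Your proposal is correct and follows essentially the same route as the paper's proof. Both use Hall--Paige to get a non-trivial cyclic Sylow $2$-subgroup, show $N_G(T)=C_G(T)$ because the quotient is both a $2$-group (via $\Aut(T)$) and of odd order, apply Burnside's normal $2$-complement theorem, and finish with Hall--Paige for the odd-order $N$. Your aside that the identity is already a complete mapping of an odd-order group is a valid shortcut for that last step.
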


\begin{proof}
By the Hall--Paige theorem, the Sylow $2$-subgroups of $G$ are non-trivial
and cyclic. Let $U$ be a Sylow $2$-subgroup of $G$. Since $U$ is cyclic,
$\operatorname{Aut}(U)$ is a $2$-group. The group
\(N_G(U)/C_G(U)\)
embeds in $\operatorname{Aut}(U)$, and hence is a $2$-group. On the other
hand, $U\le C_G(U)$, so the order of $N_G(U)/C_G(U)$ divides the odd number
$|N_G(U)/U|$. Therefore $N_G(U)=C_G(U)$. By Burnside's normal
$2$-complement theorem, $G$ has a normal $2'$-subgroup $N$ such that
\(G=NU, \quad N\cap U=1.\)
Thus $G/N\cong U$, which is a non-trivial cyclic $2$-group. Finally, $N$ has
odd order, and so has a complete mapping by the Hall--Paige theorem.
\end{proof}

\subsection[Rees matrix semigroups: even index product]{$G$ has no complete mapping but $|I|\cdot|\Lambda|$ is even}

Suppose that $G$ has no complete mapping. By Lemma~\ref{l:cyclic2-quotient} and Theorem~\ref{t:goingup}, it suffices to deal with the case in which $G$ is a cyclic $2$-group. We prove in this subsection that, if $|I|\cdot|\Lambda|$ is even, then a complete mapping exists.

\begin{theorem}\label{Thm:ILambda_even}
{Let $d\ge0$. If $|I|\cdot|\Lambda|$ is even, then
$\mathcal{M}(C_{2^d},I,\Lambda,P)$ has a complete mapping.}
\end{theorem}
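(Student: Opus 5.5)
We write $G=C_{2^d}=\mathbb Z_n$ additively, with $n=2^d$, so that $(i,g,\lambda)(j,h,\mu)=(i,g+p_{\lambda j}+h,\mu)$. If $d=0$, the group $G$ is trivial and has a complete mapping, so Proposition~\ref{p:gp2rees} applies; assume $d\ge1$. We reduce to the case $|I|$ even by duality. The opposite semigroup of $\mathcal M(G,I,\Lambda,P)$ is isomorphic to $\mathcal M(G,\Lambda,I,P^{T})$ via $(i,g,\lambda)\mapsto(\lambda,g,i)$, using commutativity of $G$. By Proposition~\ref{Prp:equivs} (equivalence of (1) and (2)), a semigroup has a complete mapping if and only if its opposite does. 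So assume $|I|$ is even, and fix a partition of $I$ into pairs $\{i_0,i_1\}$.

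The plan is to build $\alpha$ blockwise, preserving each set
\[B=\{i_0,i_1\}\times G\times\{\lambda\},\]
and to arrange that the orthomorphism $\theta$ preserves each row block $\{i_r\}\times G\times\{\lambda\}$. Fix $\lambda$ and a pair, and put $a_c=p_{\lambda i_c}$ for $c\in\{0,1\}$. For $x=(i_r,g,\lambda)$ we set $x\alpha=(i_c,\,h'-a_c,\,\lambda)$. Then
\[x\theta=(i_r,\,g+h',\,\lambda).\]
In these shifted coordinates the requirements become the following. For each $r$, the map $g\mapsto g+h'$ must be a permutation of $G$. Jointly over $r\in\{0,1\}$, the pairs $(c,h')$ must run over $\{0,1\}\times G$ exactly once. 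The shift $h'\mapsto h'-a_c$ is a bijection for each fixed $c$, so the second condition is exactly bijectivity of $\alpha$ on $B$.

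For row $r=0$, take $h'=-2g$. Then $g+h'=-g$ is a permutation of $G$. The values $-2g$ are the even residues, each attained exactly twice, by $g$ and $g+n/2$. Assign column $c=0$ to $g\in\{0,\dots,n/2-1\}$ and $c=1$ to the others; row $0$ then covers every $(c,\text{even})$ exactly once. For row $r=1$, take $h'=1-2g$. Then $g+h'=1-g$ is again a permutation. The values are the odd residues, each attained twice, and the same column split covers every $(c,\text{odd})$ exactly once. Hence $\alpha$ restricted to $B$ is a bijection of $B$, and $\theta$ restricted to $B$ is a bijection of $B$ preserving rows.

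Since the blocks $B$ partition $S$, taking the union over all $\lambda$ and all pairs gives a complete mapping of $S$. The only delicate point is finding a single per-row rule in which the sum map is bijective while the $\alpha$-values are two-to-one, so that the two columns of a pair absorb the collision. The evenness of $n$ (splitting $G$ into even and odd residues), together with the pairing of $I$, is exactly what makes this work. Everything else is bookkeeping.
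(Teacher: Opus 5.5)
Your proof is correct, and its skeleton matches the paper's. You pair up $I$, build a complete mapping on each subsemigroup $\{i_0,i_1\}\times G\times\{\lambda\}$, glue these together, and dualize when $|\Lambda|$ is even. Your explicit isomorphism of the opposite semigroup with $\mathcal M(G,\Lambda,I,P^{T})$, combined with Proposition~\ref{Prp:equivs}, spells out what the paper only calls ``a dual construction''.

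Where you genuinely differ is the local construction. The paper works directly in the $2\times2$ block Cayley table with entries $p=p_{\lambda i}$ and $q=p_{\lambda j}$. It handles $d=1$ by two explicit tables according to whether $p=q$. For $d\ge2$ it chooses half-blocks of rows and columns, with a case split on whether $p$ and $q$ have the same parity; its parity bookkeeping in the lower-left block uses that $2^{d-1}$ is even.

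You instead write the second factor as $(i_c,h'-a_c,\lambda)$, which cancels the sandwich entry. The product coordinate $g+h'$ is then independent of which member of the pair is hit. Consequently a single rule, $h'=r-2g$ with $c$ recording which half of $\{0,\ldots,2^d-1\}$ contains $g$, works for every $d\ge1$ and every $P$. This is because $g\mapsto r-g$ is a permutation, while the values $r-2g$ are exactly the residues of parity $r$. Each such residue is attained by $g$ and $g+2^{d-1}$, and your column split separates these two preimages.

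Your route is shorter and makes transparent that the sandwich entries play no role in this case. The paper's route has the merit of exhibiting the transversal directly as a pattern of cells in the Cayley table, as in its $C_4$ illustration.
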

\begin{proof}

If $d=0$, then $C_{2^d}$ is trivial and
$\mathcal{M}(C_{2^d},I,\Lambda,P)$ is a rectangular band. Hence the identity
mapping is a complete mapping. We may therefore assume that $d\ge1$.

We deal first with the case $I=\{i,j\}$, $\Lambda=\{\lambda\}$. Suppose
that $P_{\lambda,i}=p$ and $P_{\lambda,j}=q$.

Suppose first that $d=1$. If $p=q$, define a permutation $f$ by
\begin{align*}
(i,0,\lambda)f&=(i,0,\lambda), &
(i,1,\lambda)f&=(j,0,\lambda),\\
(j,0,\lambda)f&=(i,1,\lambda), &
(j,1,\lambda)f&=(j,1,\lambda).
\end{align*}
The corresponding products are
\begin{align*}
(i,0,\lambda)\cdot(i,0,\lambda)&=(i,p,\lambda), &
(i,1,\lambda)\cdot(j,0,\lambda)&=(i,p+1,\lambda),\\
(j,0,\lambda)\cdot(i,1,\lambda)&=(j,p+1,\lambda), &
(j,1,\lambda)\cdot(j,1,\lambda)&=(j,p,\lambda).
\end{align*}
If $p\ne q$, then $q=p+1$, and define $f$ by
\begin{align*}
(i,0,\lambda)f&=(i,0,\lambda), &
(i,1,\lambda)f&=(j,1,\lambda),\\
(j,0,\lambda)f&=(i,1,\lambda), &
(j,1,\lambda)f&=(j,0,\lambda).
\end{align*}
The corresponding products are
\begin{align*}
(i,0,\lambda)\cdot(i,0,\lambda)&=(i,p,\lambda), &
(i,1,\lambda)\cdot(j,1,\lambda)&=(i,q,\lambda),\\
(j,0,\lambda)\cdot(i,1,\lambda)&=(j,q,\lambda), &
(j,1,\lambda)\cdot(j,0,\lambda)&=(j,p,\lambda).
\end{align*}
Thus $f$ is a complete mapping in both cases.

 Assume next that
$d\ge2$.
We can write the Cayley table in $2\times2$ block form as follows:
\[\begin{array}{c|cc|}
&(i,y,\lambda)&(j,y,\lambda)\\
\hline
(i,x,\lambda) & (i,x+p+y,\lambda) & (i,x+q+y,\lambda) \\
(j,x,\lambda) & (j,x+p+y,\lambda) & (j,x+q+y,\lambda) \\
\hline
\end{array}\]

We are going to take $2^{d-1}$ consecutive rows and columns in each block. Observe that, if $i$ runs from $0$ to $2^{d-1}-1$, then $(i+a)+(i+b)$ takes the values $a+b,a+b+2,\ldots,a+b+2^d-2$; that is, all values in $C_{2^d}$ having the same parity as $a+b$.

In the top left block we take entries in row $x$ and column $x$ for
$x=0,\ldots,2^{d-1}-1$; this gives all entries $(i,2x+p,\lambda)$. In
the bottom left, we have to take the remaining columns $2^{d-1},\ldots,2^d-1$, and we again take rows $0,\ldots,2^{d-1}-1$, to get entries $(j,2x+p,\lambda)$.

Suppose $p$ and $q$ have opposite parity. In the top right block we take
the remaining rows $2^{d-1},\ldots,2^d-1$ and columns $0,\ldots,2^{d-1}-1$
and get entries $(i,2x+q,\lambda)$. In the bottom right block we take
the remaining rows $2^{d-1},\ldots,2^d-1$ and columns $2^{d-1},\ldots,2^d-1$ to get entries $(j,2x+q,\lambda)$.

Now suppose that $p$ and $q$ have the same parity. In the top right block we take rows $2^{d-1},\ldots,2^d-1$, but this time use columns
$1,\ldots,2^{d-1}$, and get entries $(i,2x+1+q,\lambda)$. In the bottom right block we use the remaining rows $2^{d-1},\ldots,2^d-1$ and columns
$2^{d-1}+1,\ldots2^d-1,0$ to obtain entries $(j,2x+1+q,\lambda)$.

We have used each row, column and entry once.

Now, if $|I|$ is even, we split $I$ into parts of size $2$, and deal
with each pair $(i,\lambda)$ and $(j,\lambda)$ as above.

If $|\Lambda|$ is even, we perform a dual construction. This proves the desired result.
\end{proof}

As an illustration, consider the case $G=C_4$. The entries $\lambda$ have been removed to save space.

{\tiny
\[
\begin{array}{c|cccccccc|}
&(i,0)&(i,1)&(i,2)&(i,3)&(j,0)&(j,1)&(j,2)&(j,3)\\\hline
(i,0)&(i,p)^*&(i,p+1)&(i,p+2)&(i,p+3)&(i,q)&(i,q+1)&(i,q+2)&(i,q+3)\\
(i,1)&(i,p+1)&(i,p+2)^*&(i,p+3)&(i,p)&(i,q+1)&(i,q+2)&(i,q+3)&(i,q)\\
(i,2)&(i,p+2)&(i,p+3)&(i,p)&(i,p+1)&(i,q+2)^\circ&(i,q+3)^*&(i,q)&(i,q+1)\\
(i,3)&(i,p+3)&(i,p)&(i,p+1)&(i,p+2)&(i,q+3)&(i,q)^\circ&(i,q+1)^*&(i,q+2)\\
(j,0)&(j,p)&(j,p+1)&(j,p+2)^*&(j,p+3)&(j,q)&(j,q+1)&(j,q+2)&(j,q+3)\\
(j,1)&(j,p+1)&(j,p+2)&(j,p+3)&(j,p)^*&(j,q+1)&(j,q+2)&(j,q+3)&(j,q)\\
(j,2)&(j,p+2)&(j,p+3)&(j,p)&(j,p+1)&(j,q+2)&(j,q+3)&(j,q)^\circ&(j,q+1)^*\\
(j,3)&(j,p+3)&(j,p)&(j,p+1)&(j,p+2)&(j,q+3)^*&(j,q)&(j,q+1)&(j,q+2)^\circ\\
\hline
\end{array}
\]
}

Choose the starred cells from the first four columns. If $p$ and $q$ have the same parity, choose the starred cells from the last four, otherwise choose the circled cells.

If $G$ has no complete mapping, an even product of the two index sizes gives a sufficient condition in the normalized Rees matrix case.

\begin{cor}\label{c:even}
Let $G$ be a group which does not have a complete mapping. If
$|I|\cdot|\Lambda|$ is even, then $\mathcal{M}(G,I,\Lambda,P)$ has a complete mapping.
\end{cor}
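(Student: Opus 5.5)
The plan is to combine Lemma~\ref{l:cyclic2-quotient}, Theorem~\ref{Thm:ILambda_even} and the going-up Theorem~\ref{t:goingup}.

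First, since $G$ has no complete mapping, Lemma~\ref{l:cyclic2-quotient} provides a normal subgroup $N\unlhd G$ of odd order. The quotient $\bar G=G/N$ is a non-trivial cyclic $2$-group, say $\bar G\cong C_{2^d}$ with $d\ge1$, and $N$ itself has a complete mapping. This already gives hypothesis (a) of Theorem~\ref{t:goingup}.

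For hypothesis (b), I would form the quotient Rees matrix semigroup $\bar S=\mathcal M(\bar G,I,\Lambda,\bar P)$, where $\bar P$ is the entrywise image of $P$ in $\bar G$. Since $\bar P$ has entries in $\bar G$ and no zeros, $\bar S$ is a genuine Rees matrix semigroup over $C_{2^d}$ (up to isomorphism) with the same index sets $I$ and $\Lambda$. The hypothesis that $|I|\cdot|\Lambda|$ is even is unchanged, so Theorem~\ref{Thm:ILambda_even} shows that $\bar S$ has a complete mapping. Its proof does not use any normalization of $P$, only the pairing of indices, so an arbitrary $\bar P$ is fine.

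Theorem~\ref{t:goingup} then gives a complete mapping of $\mathcal M(G,I,\Lambda,P)$. The only point needing care is that Theorem~\ref{Thm:ILambda_even} is stated for an arbitrary sandwich matrix over $C_{2^d}$, and that the identification $\bar G\cong C_{2^d}$ transports $\bar S$ isomorphically. Both are immediate, so I do not expect any step to be a real obstacle.
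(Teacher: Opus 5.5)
Your proposal is correct and is essentially the paper's own proof: Lemma~\ref{l:cyclic2-quotient} gives the odd-order normal subgroup $N$ with a complete mapping and cyclic $2$-group quotient, Theorem~\ref{Thm:ILambda_even} handles $\mathcal M(G/N,I,\Lambda,\bar P)$, and Theorem~\ref{t:goingup} lifts the result. Your remark that Theorem~\ref{Thm:ILambda_even} needs no normalization of the sandwich matrix is accurate, since its pairing construction works for arbitrary entries $p,q$.
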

\begin{proof}
By Lemma~\ref{l:cyclic2-quotient}, there is a normal subgroup $N$ of odd order
such that $Q=G/N$ is a non-trivial cyclic $2$-group, and $N$ has a complete
mapping. By Theorem~\ref{Thm:ILambda_even},
$\mathcal{M}(Q,I,\Lambda,\bar P)$ has a complete mapping, where $\bar P$ is
the image of $P$ in $Q$. The result follows from Theorem~\ref{t:goingup}.
\end{proof}

\subsection[Rees matrix semigroups: odd index product with even-order entry]{$G$ has no complete mapping, $|I|\cdot|\Lambda|$ is odd, and the normalized matrix $P$ contains an entry of even order}

In order to reduce the problem to a smaller one, we extend the notion of complete mappings to partial binary algebras.

Given any semigroup $S$ and subset $A$ of $S$, we consider $A$ to be a partial binary algebra in which the operation $x \cdot_A y$ for $x,y \in A$ is defined as $x \cdot_A y=x \cdot_S y$ if $x \cdot_S y \in A$, and is undefined otherwise.

A bijection $f\colon A\to A$ is said to be a complete mapping of the partial binary algebra $A$ if the map $\phi\colon A\to A$ given by $x\phi=x\cdot (xf)$ is defined for all $x\in A$ and is a bijection.

If $A$ is a subsemigroup of $S$, this is the ordinary definition of complete mapping. Complete mappings on disjoint partial subalgebras glue as follows.

\begin{lemma}\label{l:partition}
Let $S$ be a semigroup, let $\{P_i\}_{i\in I}$ be a partition of $S$, and suppose that each partial algebra $P_i$ has a complete mapping $f_i$. Define $f\colon S\to S$ by $xf=xf_i$ if $x \in P_i$. Then $f$ is a complete mapping of $S$.
\end{lemma}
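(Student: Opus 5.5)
The plan is to verify directly that the glued map $f$ is a bijection of $S$ and that its product map $\phi\colon x\mapsto x\cdot(xf)$ is also a bijection of $S$.

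First, $f$ is well defined, because the $P_i$ partition $S$: each $x\in S$ lies in exactly one block $P_i$, and there $xf=xf_i$. Since $f_i$ is a bijection of $P_i$ onto itself, $f$ maps each block $P_i$ bijectively onto $P_i$. The blocks are disjoint and cover $S$, so $f$ is a bijection of $S$.

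Next, take $x\in P_i$. By the definition of a complete mapping of the partial binary algebra $P_i$, the partial product $x\cdot_{P_i}(xf_i)$ is defined. So the product $x\cdot_S(xf)$ lies in $P_i$, and it equals $x\phi_i$, where $\phi_i$ is the orthomorphism of $f_i$. Hence $\phi$ restricted to $P_i$ is $\phi_i$, which is a bijection of $P_i$ onto itself. Gluing over the partition exactly as for $f$ shows that $\phi$ is a bijection of $S$. Therefore $f$ is a complete mapping of $S$.

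There is no real obstacle here. The only point needing care is to use that definedness in the partial algebra forces $x\cdot(xf)$ to land in the same block $P_i$, so that the images of different blocks under $\phi$ cannot collide.
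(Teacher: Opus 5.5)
Your proposal is correct and follows the paper's proof: both glue the block bijections $f_i$ into a bijection of $S$, and both use that definedness in the partial algebra $P_i$ keeps $x\cdot xf$ inside $P_i$, so the product map is the disjoint union of the bijections $\phi_i$ and hence a bijection of $S$.
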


\begin{proof}
Since the sets $P_i$ form a partition of $S$ and each $f_i$ is a bijection of
$P_i$, the map $f$ is a bijection of $S$. For $x\in P_i$, the product
$x\cdot xf_i$ is defined in the partial algebra $P_i$ and lies in $P_i$;
moreover the map $x\mapsto x\cdot xf_i$ is a bijection of $P_i$. Hence the
product map $x\mapsto x\cdot xf$ is the disjoint union of these bijections on
the parts $P_i$, and is therefore a bijection of $S$. Thus $f$ is a complete
mapping of $S$.
\end{proof}

A square subgroup generated by one sandwich entry gives the first reduction.

\begin{lemma}\label{l:A-reduct}
Let $G$ be a group with no complete mapping. Assume that $I=\{1,2,\ldots,|I|\}$ and $\Lambda=\{1,2,\ldots,|\Lambda|\}$, where $|I|$ and $|\Lambda|$ are odd and at least $3$.
Let $S=\mathcal{M}(G,I,\Lambda,P)$ and $A$ the partial subalgebra of $S$ on the set
\((\{1\} \times G \times \{2\}) \cup (\{2\} \times G \times \{1,2\}).\)
If $A$ has a complete mapping, then so does $S$.
\end{lemma}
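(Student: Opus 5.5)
The plan is to cover $S$ by $A$ together with disjoint subsemigroups that each have a complete mapping, and then glue all the complete mappings with Lemma~\ref{l:partition}. Think of $S$ as a grid of cells indexed by $I\times\Lambda$, where cell $(i,\lambda)$ is the set $\{i\}\times G\times\{\lambda\}$. The set $A$ is the union of the three cells $(1,2)$, $(2,1)$ and $(2,2)$. A single cell, with the inherited product, is the variant $(G,\times_{p_{\lambda i}})\cong G$. Since $G$ has no complete mapping, we cannot use single cells as pieces. Instead we use \emph{dominoes}: pairs of cells that lie in a common row or in a common column, not necessarily adjacent.

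First we check that a domino is a subsemigroup with a complete mapping. For a row domino $\{i\}\times G\times\{\lambda,\mu\}$ we have $(i,g,\nu)(i,h,\nu')=(i,g\,p_{\nu i}\,h,\nu')$, so the set is closed under multiplication. It is the Rees matrix semigroup $\mathcal M(G,\{i\},\{\lambda,\mu\},P')$ with the $2\times1$ sandwich matrix $P'=(p_{\lambda i},p_{\mu i})^{T}$. Likewise a column domino $\{i,j\}\times G\times\{\lambda\}$ is closed and equals $\mathcal M(G,\{i,j\},\{\lambda\},P'')$. In both cases the product of the index sizes is $2$, which is even. Corollary~\ref{c:even} therefore gives a complete mapping; that corollary makes no normalization assumption, and in any case normalization is an isomorphism. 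As a subsemigroup, the domino's complete mapping is a complete mapping of the corresponding partial algebra of $S$.

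The remaining, purely combinatorial, step is to partition the cells outside $A$ into dominoes. Write $n=|I|$ and $m=|\Lambda|$, both odd and at least $3$. We use the following tiling.
\begin{enumerate}
\item In row $1$ the free cells are $(1,1)$ and $(1,3),\dots,(1,m)$. There are $m-1$ of them, an even number, so we pair them within row $1$.
\item For each column $\lambda\ge3$, pair $(2,\lambda)$ with $(3,\lambda)$ vertically.
\item Pair $(3,1)$ with $(3,2)$ horizontally.
\item Rows $4,\dots,n$ are $n-3$ rows, an even number, so in every column we pair them vertically, $(4,\lambda)$ with $(5,\lambda)$, and so on.
\end{enumerate}
These dominoes, together with $A$, cover every cell exactly once. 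The hypothesis $n,m\ge3$ is exactly what makes row $3$ and column $3$ available. Applying Lemma~\ref{l:partition} to the partition of $S$ into $A$ and these dominoes then yields a complete mapping of $S$.

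The only real obstacle is the observation that single cells must be avoided. Once we notice that dominoes need only share a row or a column, rather than be adjacent, and hence fall under Corollary~\ref{c:even}, the rest is the parity count above.
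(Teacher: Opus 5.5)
Your proof is correct and follows the paper's argument. The paper likewise partitions $S\setminus A$ into rectangular subsemigroups $I'\times G\times\Lambda'$ with $|I'||\Lambda'|$ even, applies Corollary~\ref{c:even} to each, and glues them with Lemma~\ref{l:partition}. The only difference is the tiling: you use two-cell dominoes throughout, while the paper uses three dominoes around $A$ plus the larger even-area blocks $\{1,2,3\}\times G\times\{4,\dots,|\Lambda|\}$ and $\{4,\dots,|I|\}\times G\times\Lambda$.
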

\begin{proof}
Consider the partition of $S$ into the following sets, with any empty set omitted:
\[
A, \{1,3\} \times G \times \{1\}, \{3\} \times G \times \{2,3\}, \{1,2\} \times G \times \{3\},
\]
\[\{1,2,3\} \times G \times \{4,\dots, |\Lambda|\},\{4,\dots,|I|\}\times G \times \{1, \dots, |\Lambda|\}.
\]
Except for $A$, all of the listed parts are subsemigroups of $S$; moreover they have complete mappings by Corollary~\ref{c:even}. As $A$ has a complete mapping by assumption, the result follows from Lemma~\ref{l:partition}.
\end{proof}

\begin{theorem}\label{t:nonidentity}
Let $S=\mathcal{M}(C_{2^d},I,\Lambda,P)$ for $d\ge 1$ and odd-cardinality sets $I,\Lambda$. Assume moreover that the matrix $P$ is in normalized form, and contains a non-identity entry. Then $S$ has a complete mapping.
\end{theorem}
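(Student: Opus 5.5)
By the normalization of $P$, every entry in its first row and first column is the identity. So if $|I|=1$ or $|\Lambda|=1$, every entry of $P$ is the identity. Hence the hypothesis forces $|I|,|\Lambda|\ge 3$, both odd, and these are exactly the hypotheses of Lemma~\ref{l:A-reduct}.

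Write $C_{2^d}$ additively. Permute $I\setminus\{1\}$ and $\Lambda\setminus\{1\}$; this keeps $P$ normalized and does not change the isomorphism type. After this we may assume the non-identity entry is $a=p_{2,2}\neq 0$, while $p_{1,1}=p_{1,2}=p_{2,1}=0$. By Lemma~\ref{l:A-reduct}, $S=\mathcal M(C_{2^d},I,\Lambda,P)$ has a complete mapping as soon as the partial algebra
\[
A=(\{1\}\times G\times\{2\})\cup(\{2\}\times G\times\{1,2\})
\]
does. The whole proof therefore reduces to building a transversal of the $3\cdot 2^d\times 3\cdot 2^d$ partial table of $A$.

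Next I record which cells of this table are allowed. A product $(i,x,\lambda)(j,y,\mu)=(i,x+p_{\lambda j}+y,\mu)$ lies in $A$ unless $i=1$ and $\mu=1$.
\begin{itemize}
\item The $2^d$ columns $(2,y,1)$ are the only ones producing symbols $(\ast,\ast,1)$. Since the symbols $(2,\ast,1)$ must each appear once, these columns must be matched to $2^d$ rows of the form $(2,x,\lambda)$, with every such product defined.
\item The rows $(1,x,2)$ must then use columns with $\mu=2$, producing the block $(1,\ast,2)$.
\item The remaining rows $(2,\ast,\ast)$ produce the block $(2,\ast,2)$.
\end{itemize}
The only cells carrying the entry $a$ are those with row index $\lambda=2$ and column index $j=2$.

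The first step is a parity/sum computation, which shows where the Hall--Paige obstruction is circumvented. Summing the group coordinates over a putative transversal, the rows, columns and symbols each contribute $3\cdot 2^{d-1}\equiv 2^{d-1}\pmod{2^d}$, since $\sum_{g\in C_{2^d}} g=2^{d-1}$. So if $k$ is the number of selected cells carrying $a$, we need
\[
k a\equiv 2^{d-1}\pmod{2^d}.
\]
With $a=2^s u$, $u$ odd and $s<d$, the choice $k=2^{d-1-s}u^{-1}$ (reduced mod $2^{d-s}$) gives a solution with $0<k\le 2^d$. With $a=0$ no $k$ works, which is exactly why the non-identity hypothesis is needed.

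The main step, and the main obstacle, is to realize a transversal with exactly the right pattern of $a$-cells. I would first treat $d=1$ (so $a=1$, $k$ odd) by an explicit $6\times 6$ choice, as in the proof of Theorem~\ref{Thm:ILambda_even}. For $d\ge 2$ I would split each group coordinate into halves and intervals of consecutive residues, as in that proof, using the fact that $(t+b)+(t+c)$ for $t$ running over $2^{d-1}$ consecutive values gives all residues of the parity of $b+c$. Concretely:
\begin{itemize}
\item Match rows $(2,x,1)$ with columns $(2,y,1)$ along a diagonal, covering one parity class of $(2,\ast,1)$.
\item Match rows $(2,x,2)$ with columns $(2,y,1)$ shifted, covering the other parity class. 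These cells have sandwich entry $p_{2,2}=a$ only when the column index is $2$; here the column index is $2$ with $\mu=1$, so the parity of $a$ enters through the shift.
\item Distribute the remaining cells between $(1,\ast,2)$ and $(2,\ast,2)$ using the columns $(1,y,2)$ (entry $0$) and $(2,y,2)$ (entry $a$).
\end{itemize}
The delicate point is choosing the interval shifts, including the number of $a$-cells and a final wrap-around column, so that every row, column and symbol is hit once. I would split into the cases $a$ odd and $a$ even. I expect the even case, where $a=2^s u$ with $s\ge1$, to need $k$ of the $a$-cells arranged as a cyclic shift by $a$ within a single block, so that the deficit $2^{d-1}$ is compensated. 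If a uniform interval construction fails there, the fallback is to pass to the quotient by the subgroup $\langle 2^{d-s}\rangle$ and treat the coset structure directly.
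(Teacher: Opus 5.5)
Your preliminary steps are correct and match the paper's. Normalization forces $|I|,|\Lambda|\ge 3$, the non-identity entry can be moved to $p_{2,2}$, and Lemma~\ref{l:A-reduct} reduces everything to finding a complete mapping of the partial algebra $A$. Your account of which cells of $A$ are defined and which carry $a$ is accurate, and so is the congruence $ka\equiv 2^{d-1}\pmod{2^d}$. But that congruence is only a necessary condition: it explains why $a\neq 0$ is needed, and it does not produce a transversal. The substance of the theorem is an explicit complete mapping of $A$, and your proposal never defines one. Everything after the sum computation is an intention (``I would'', ``I expect'', ``if a uniform interval construction fails''). No bijection $f$ of $A$ is written down, and nothing about $f$ or $x\mapsto x\cdot xf$ is verified. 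So the main step is missing.

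\emph{Case $a$ odd.} Here your outline can be completed and is close to the paper's argument. Identify $C_{2^d}$ with $\mathbb Z/2^d$ so that $a=1$. Then the map $(1,i,2)\mapsto(2,i,2)\mapsto(2,i,1)\mapsto(1,i,2)$ for $0\le i<2^{d-1}$, fixing all other elements, is a complete mapping. In the three product blocks $(1,\ast,2)$, $(2,\ast,1)$, $(2,\ast,2)$, the lower half contributes $2i+1$, $2i+1$, $2i$, and the fixed upper half contributes $2i$, $2i$, $2i+1$. No shift is needed, since $a$ odd already separates the parity classes on the diagonal.

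\emph{Case $a$ even.} You give no construction, and your fallback does not work. The lifting result, Theorem~\ref{t:goingup}, needs the kernel to have a complete mapping, but every non-trivial subgroup of $C_{2^d}$ is a non-trivial cyclic $2$-group and has none. Moreover, if $a$ is the only non-identity entry of $P$ and $2s\ge d$, then $a$ becomes trivial in $C_{2^d}/\langle 2^{d-s}\rangle$. The quotient semigroup then has no complete mapping at all, by Theorem~\ref{t:converse}. So the even case must be handled inside $C_{2^d}$, and this is where the real work lies. The paper does it as follows:
\begin{itemize}
\item it chooses a generator $g$ with $a=g^{j}$, where $j=2^l$;
\item it splits the exponents into the sets $L_1,L_2,L_3$;
\item it uses the $3$-cycle only on $L_1$, a shift by $j$ on $L_2$, and a shift by $j-1$ on $L_3$;
\item it proves bijectivity from the pairwise disjointness of $L_1$, $L_2+j$ and $L_3+(j-1)$;
\item it checks surjectivity of the product map separately on odd residues, even residues and odd multiples of $j$.
\end{itemize}
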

\begin{proof}
Let $G=C_{2^d}$ and $n=2^d$. As the normalized matrix $P$ contains a non-identity element, we have $|I|,|\Lambda|\ge3$.

We may assume that $I=\{1,2,\dots,|I|\}$, $\Lambda=\{1,2,\dots,|\Lambda|\}$, and that $p_{2,2}$ is not the identity of $G$. By Lemma~\ref{l:A-reduct} it suffices to construct a complete mapping for the partial algebra $A$, as defined in the lemma. Assume first that $p_{2,2}$ is a cyclic generator $g$ of $G$.

Define $f\colon A\to A$ as follows: for $0 \le i\le 2^{d-1}-1$ let
\[
(1,g^i,2)f=(2,g^i,2),\; (2,g^i,2)f=(2,g^i,1),\; (2,g^i,1)f=(1,g^i,2),
\]
while $f$ fixes all other elements of $A$. Clearly, $f$ is a bijection. Let $x\phi=x\cdot (xf)$. It suffices to show that $\phi$ is surjective. Let $p=2^{d-1}$.

If $i$ is even, then
\[
\begin{aligned}
(1,g^{i/2+p},2)\phi&=(1,g^i,2), &
(2,g^{i/2+p},1)\phi&=(2,g^i,1), &
(2,g^{i/2},1)\phi&=(2,g^i,2).
\end{aligned}
\]
If $i$ is odd, then
\[
\begin{aligned}
(1,g^{(i-1)/2},2)\phi&=(1,g^i,2), &
(2,g^{(i-1)/2},2)\phi&=(2,g^i,1), &
(2,g^{(i-1)/2+p},2)\phi&=(2,g^i,2).
\end{aligned}
\]
Hence $\phi$ is surjective and $f$ is a complete mapping.

Assume next that $p_{2,2}$ is a non-identity square of $G$. {Choose a generator $a$ of $G$ and write $p_{2,2}=a^e$. Since $p_{2,2}$ is a non-identity square, $e=2^l q$ with $1\le l\le d-1$ and $q$ odd. Replacing $a$ by the generator $g=a^q$, we may write $p_{2,2}=g^{2^l}$. Put $j=2^l$.} Let
\begin{align*}
L_1 &= \{j/2, j/2+j, j/2+2j, \dots, (n-j)/2\}; \\
L_2 &= \left(\{1,2,\dots,(n-j)/2-1\} \cup \{n-j/2+1,n-j/2+2, \dots, n\}\right) \setminus L_1; \\
L_3 &= \{ (n-j)/2+1, (n-j)/2+2, \dots, n-j/2\}.
\end{align*}
Define $f\colon A\to A$ as follows.

If $k\in L_1$, then set
\[
(1,g^k,2)f=(2,g^k,2); \quad (2,g^k,2)f=(2,g^k,1); \quad (2,g^k,1)f=(1,g^k,2).
\]
If $k\in L_2$, then for all $(i,\lambda)\in \{(1,2),(2,1),(2,2)\}$, define $(i,g^k,\lambda)f= (i, g^{k+j}, \lambda)$.

Finally, for $k\in L_3$ and $(i,\lambda)\in \{(1,2),(2,1),(2,2)\}$, let $(i,g^k,\lambda)f= (i, g^{k+j-1}, \lambda)$.

We claim that $f$ is a complete mapping.

For the bijectivity of $f$, consider the sets $L_1$, $L_2+j$, and $L_3+(j-1)$ (with arithmetic modulo $n$). If $k+j \in L_1$, then either $k\in L_1$ or $k=n-j/2\in L_3$. Hence $L_1$ and $L_2+j$ are disjoint.
Similarly, if $k+j-1 \in L_1$, then $k\in L_2$, and so $L_1$ and $L_3+(j-1)$ are disjoint. Finally, if $k+j=l+j-1$, then $k=l-1$, and there are no $k \in L_2, l \in L_3$ satisfying this equation. It follows that $L_1$, $L_2+j$, and $L_3+(j-1)$ are pairwise disjoint. The injectivity, and hence bijectivity of $f$ now follows easily.

For surjectivity of the map $x\phi=x\cdot (xf)$, first let $(i,\lambda)\in\{(1,2),(2,1)\}$. If $k$ is even, then the equation $2x+j=k\pmod n$ has a solution $x'$ in $L_1\cup L_2$. We have
\((i,g^{x'},\lambda)\phi=(i,g^k,\lambda),\)
unless $i=2$, $\lambda=1$, and $x'\in L_1$, in which case
\((2,g^{x'},2)\phi=(2,g^k,1).\)
If $k$ is odd, then the equation $2x+j-1=k\pmod n$ has a solution $x'$ in $L_3$, for which
\((i,g^{x'},\lambda)\phi=(i,g^k,\lambda).\)

We now consider $i=\lambda=2$. If $k$ is an odd multiple of $j$, $2x=k \pmod n$ has a solution $x'$ in $L_1$, for which $(2,g^{x'},1)\phi=(2,g^k,2)$.

 If $k$ is even, but not an odd multiple of $j$, then $2x+2j=k \pmod n$ has a solution $x'$ in $L_2$, for which $(2,g^{x'},2)\phi=(2,g^k,2)$.

 Finally, if $k$ is odd then $2x+2j-1=k \pmod n$ has a solution $x'$ in $L_3$, for which $(2,g^{x'},2)\phi=(2,g^k,2)$.
 The result follows.
\end{proof}

An even-order sandwich entry supplies such a square after passing to the relevant cyclic subgroup.

\begin{cor}\label{c:evenentry}
Let $G$ be a group which does not have a complete mapping, let $I$ and $\Lambda$ be sets of odd cardinality, and let $P$ be a normalized matrix over $G$ containing an element $h$ of even order. Then $\mathcal{M}(G,I,\Lambda,P)$ has a complete mapping.
\end{cor}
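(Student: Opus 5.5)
The plan is to reduce to the cyclic $2$-group case, exactly as in the proof of Corollary~\ref{c:even}, but invoking Theorem~\ref{t:nonidentity} instead of Theorem~\ref{Thm:ILambda_even}.

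First, since $G$ has no complete mapping, Lemma~\ref{l:cyclic2-quotient} provides a normal subgroup $N$ of odd order such that $Q=G/N$ is a non-trivial cyclic $2$-group, say $Q\cong C_{2^d}$ with $d\ge1$, and such that $N$ has a complete mapping.

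Next I pass to the entrywise image $\bar P$ of $P$ in $Q$.
\begin{itemize}
\item The matrix $\bar P$ is still normalized, because the identity entries in the first row and column of $P$ map to the identity of $Q$.
\item The entry $h$ maps to a non-identity element $hN$ of $Q$. Indeed, if $hN=N$ then $h\in N$, so the order of $h$ would divide the odd number $|N|$. This contradicts the assumption that $h$ has even order.
\end{itemize}
So $\bar S=\mathcal M(Q,I,\Lambda,\bar P)$ is a Rees matrix semigroup over $C_{2^d}$ with $d\ge1$, with index sets $I$ and $\Lambda$ of odd cardinality, and with a normalized sandwich matrix containing a non-identity entry. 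Theorem~\ref{t:nonidentity} therefore shows that $\bar S$ has a complete mapping.

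Finally, both hypotheses of Theorem~\ref{t:goingup} now hold: $N$ has a complete mapping, and $\mathcal M(G/N,I,\Lambda,\bar P)$ has a complete mapping. Hence $\mathcal M(G,I,\Lambda,P)$ has a complete mapping.

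I do not expect a genuine obstacle here. The only point needing care is that the even-order entry survives the quotient as a non-identity entry, and this is exactly where the oddness of $|N|$ is used. All the substantive work is already contained in Theorem~\ref{t:nonidentity} and Theorem~\ref{t:goingup}.
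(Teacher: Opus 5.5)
Your proposal is correct and follows essentially the same route as the paper's proof: pass to the cyclic $2$-group quotient $G/N$ from Lemma~\ref{l:cyclic2-quotient}, observe that $h$ survives as a non-identity entry because $|N|$ is odd, apply Theorem~\ref{t:nonidentity} to the quotient, and lift with Theorem~\ref{t:goingup}. Your explicit checks, that $\bar P$ stays normalized and why $hN\ne N$, are details the paper leaves implicit.
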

\begin{proof}
By Lemma~\ref{l:cyclic2-quotient}, there is a normal subgroup $N$ of odd order
such that $Q=G/N$ is a non-trivial cyclic $2$-group, and $N$ has a complete
mapping. Since $h$ has even order, its image in $Q$ is non-trivial. Hence
$\mathcal{M}(Q,I,\Lambda,\bar P)$ has a complete mapping by
Theorem~\ref{t:nonidentity}. The result follows from Theorem~\ref{t:goingup}.
\end{proof}

\subsection{The converse}
Combined, Proposition~\ref{p:gp2rees}, Corollary~\ref{c:even}, and Corollary~\ref{c:evenentry} show one direction of Theorem~\ref{t:ReesNon0}. In this section, we finish the proof by showing the converse.

\begin{theorem}\label{t:converse}
Suppose that
\begin{itemize}
\item[(a)] $G$ is a group with non-trivial cyclic Sylow $2$-subgroups;
\item[(b)] $I,\Lambda$ are finite sets of odd cardinality;
\item[(c)] $P$ is a normalized matrix over $G$ in which each entry has odd order.
\end{itemize}
Then $\mathcal{M}(G,I,\Lambda,P)$ has no complete mapping.
\end{theorem}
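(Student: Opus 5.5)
The plan is a parity argument in the style of Hall and Paige, carried out after mapping $S=\mathcal M(G,I,\Lambda,P)$ homomorphically onto an abelian cyclic $2$-group. The idea is that a complete mapping forces the sum of all images to vanish, while a direct count shows this sum is the unique involution.

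\emph{Step 1: the quotient.} By hypothesis (a) and Theorem~\ref{Thm:HP}, $G$ has no complete mapping. Lemma~\ref{l:cyclic2-quotient} then gives a normal subgroup $N$ of odd order such that $G/N\cong C_{2^d}$ with $d\ge1$. Let $\varphi\colon G\to C_{2^d}$ be the quotient map, and write $C_{2^d}$ additively.

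\emph{Step 2: a homomorphism from $S$.} Define $\psi\colon S\to C_{2^d}$ by $(i,g,\lambda)\psi=g\varphi$. Every entry $p_{\lambda j}$ of $P$ has odd order by (c), so its image in the $2$-group $C_{2^d}$ is trivial. Hence
\[
((i,g,\lambda)(j,h,\mu))\psi=(gp_{\lambda j}h)\varphi=g\varphi+h\varphi=(i,g,\lambda)\psi+(j,h,\mu)\psi .
\]
So $\psi$ is a semigroup homomorphism into an abelian group. Normalization of $P$ is not actually needed; only the odd orders of its entries matter.

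\emph{Step 3: the contradiction.} Suppose $\alpha$ is a complete mapping of $S$ with orthomorphism $\theta$, and put $\Sigma=\sum_{x\in S}x\psi$. Since $\theta$ and $\alpha$ are bijections,
\[
\Sigma=\sum_{x\in S}(x\theta)\psi=\sum_{x\in S}\bigl(x\psi+(x\alpha)\psi\bigr)=2\Sigma,
\]
and therefore $\Sigma=0$. On the other hand, each element of $C_{2^d}$ has exactly $|N|$ preimages under $\varphi$. Consequently
\[
\Sigma=|I|\,|\Lambda|\,|N|\sum_{c\in C_{2^d}}c .
\]
In a cyclic group of even order the elements other than $0$ and the involution $t$ pair off with their inverses, so the sum of all elements is $t\ne0$. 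The coefficient $|I||\Lambda||N|$ is odd by (b) and the choice of $N$, so $\Sigma=t\ne0$. This contradicts $\Sigma=0$, and hence $\mathcal M(G,I,\Lambda,P)$ has no complete mapping.

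The only point needing care is Step 2, namely that odd-order sandwich entries die in the $2$-quotient so that $\psi$ is multiplicative; everything else is bookkeeping. I do not expect a genuine obstacle.
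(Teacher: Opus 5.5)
Your proposal is correct and is essentially the paper's argument. Both pass to the odd-order normal subgroup $N$ with cyclic $2$-group quotient, note that odd-order sandwich entries become trivial there, and compare the total of the middle coordinates (the involution $t$) with its value after applying the orthomorphism ($2\Sigma$), which yields the paper's contradiction $p=p^2$. Your packaging of this count as a homomorphism $\psi\colon S\to C_{2^d}$ is simply a cleaner presentation of the same computation.
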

\begin{proof}
By the Hall--Paige theorem, $G$ does not have a complete mapping. Hence, by
Lemma~\ref{l:cyclic2-quotient}, there is a normal subgroup $N$ of odd order
such that $G/N$ is a non-trivial cyclic $2$-group. Let $p$ be the unique
element of order $2$ in $G/N$. Every element of $G/N$ occurs $|N|$ times as the image of an element of $G$. Since $|N|$ is odd, the product of these images is the product of all elements of $G/N$. Pairing every element with its inverse leaves only the unique involution $p$.

In $S=\mathcal{M}(G,I,\Lambda,P)$, each element has the form $(i,g,\lambda)$;
the product of all the elements of $G$ occurring in the middle of such triples
is $p^{|I|\cdot|\Lambda|}=p$ modulo $N$. Suppose that a complete mapping $\phi$ exists. Then the products of the middle elements of $s\phi$, or of $s\psi$, mod~$N$, are also equal to $p$.

Each element of the last product occurs in an element of $S$ of the form
$(i,g,\lambda)(j,h,\mu)$, where the two factors run over all elements of $S$.
The middle coordinate is $g p_{\lambda j}h$. Since $G/N$ is a $2$-group and each sandwich entry $p_{\lambda j}$ has odd order, every $p_{\lambda j}$ maps to the identity in $G/N$. Hence the middle coordinate is congruent to $gh$ modulo $N$. We know that the
product of the $g$s and the product of the $h$s is $p$ modulo $N$. Thus we have a contradiction $p=p^2$ modulo $N$.
\end{proof}

Theorem~\ref{t:ReesNon0} now follows.

\section[Rees 0-matrix semigroups over groups with complete mappings]{Rees $0$-matrix semigroups over groups with\\complete mappings}
\label{Sec:Rees0-matrix-1}

If $P$ is a $\Lambda\times I$ matrix over $G\cup\{0\}$, the \emph{pattern} of
$P$ is the $\Lambda \times I$ matrix $Q$ obtained by replacing all elements of
$G$ by the identity $1$.

\begin{definition}\label{d:local-packet}
Let $S=\mathcal M^0(G,I,\Lambda,P)$. For $i\in I$ and $\lambda\in\Lambda$, write
\(H_{i,\lambda}=\{(i,g,\lambda)\mid g\in G\}.\)
If $p_{\lambda j}\ne0$, multiplication of elements of $H_{i,\lambda}$ by
elements of $H_{j,\mu}$ maps into $H_{i,\mu}$ and is given by
\((i,x,\lambda)(j,y,\mu)=(i,xp_{\lambda j}y,\mu).\)
The triple
\((H_{i,\lambda},H_{j,\mu},H_{i,\mu})\)
is called a \emph{local packet}. We use the same term for the restricted multiplication table on $H_{i,\lambda}\times H_{j,\mu}$. {In this terminology, $H_{i,\lambda}$ is the source class, $H_{j,\mu}$ is the compatible multiplier class and $H_{i,\mu}$ is the target class; compatibility means that $p_{\lambda j}\ne0$.}
\end{definition}

\begingroup

We introduce now the terminology needed for the next theorem.

\begin{quoteddefinition}
Let $Q=(Q_{\lambda i})_{\lambda\in\Lambda,i\in I}$ be a
$\Lambda\times I$ zero-one matrix. A non-negative real matrix
$H=(h_{\lambda i})_{\lambda\in\Lambda,i\in I}$ is said to be a
\emph{balanced support weighting} of $Q$ if $h_{\lambda i}=0$ whenever
$Q_{\lambda i}=0$, every row sum is $|I|$ and every column sum is
$|\Lambda|$. It is said to be \emph{positive} if $h_{\lambda i}\ge1$
whenever $Q_{\lambda i}=1$.
\end{quoteddefinition}

{A balanced support weighting will be used in
Theorem~\ref{c:Rees0}, while the stronger positivity condition will be used in
Theorem~\ref{t:Rees0-general-anchored}. When $Q$ is an $n\times n$ matrix,
$H$ is a balanced support weighting precisely when $H/n$ is a doubly
stochastic matrix whose entries vanish wherever $Q$ has a zero. Thus the
square case belongs naturally to the classical theory of doubly stochastic
matrices. Decomposing $H/n$ into permutation matrices supported on $Q$
recovers the corresponding perfect-matching interpretation and gives alternative
route to obtain the square Hall condition appearing in Theorem~\ref{c:Rees0}.} 

We also recall the following network terminology. Let $N$ be a finite set and
let $$c\colon N\times N\to[0,\infty].$$ The pair $(N,c)$ is said to be a
\emph{finite directed network}; the elements of $N$ are its nodes and
$c(x,y)$ is the capacity of the arc from $x$ to $y$. In particular,
$c(x,y)=0$ means that this arc cannot carry a positive flow. A \emph{flow}
is a function $f\colon N\times N\to\mathbb R$ such that
$f(x,y)+f(y,x)=0$ and $f(x,y)\le c(x,y)$, for all $x,y\in N$. A
\emph{demand} is a function $d\colon N\to\mathbb R$, with negative values
interpreted as supplies. For $S,T\subseteq N$, we will write
$c(S,T)=\sum_{x\in S,y\in T}c(x,y)$ and $d(S)=\sum_{x\in S}d(x)$, and we
will write $f(N,x)=\sum_{y\in N}f(y,x)$. 

The demand $d$ is said to be
\emph{feasible} if there is a flow $f$ such that $f(N,x)\ge d(x)$, for all
$x\in N$.
Gale proved the following theorem
\cite[Feasibility Theorem]{Gale}.

\begin{quotedtheorem}[Gale]
A demand $d$ on a finite directed network $(N,c)$ is feasible if and only if
$d(N\setminus S)\le c(S,N\setminus S)$, for every $S\subseteq N$.
\end{quotedtheorem}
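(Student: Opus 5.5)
The statement is Gale's Feasibility Theorem: a demand $d$ on a finite network $(N,c)$ is feasible if and only if $d(N\setminus S)\le c(S,N\setminus S)$ for every $S\subseteq N$. The plan is to reduce it to max-flow/min-cut by adding a super source and a super sink.

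\emph{Necessity.} Suppose $f$ is a flow with $f(N,x)\ge d(x)$ for all $x$, and fix $S\subseteq N$. Summing over $x\in N\setminus S$ gives
\[
d(N\setminus S)\le \sum_{x\in N\setminus S}\sum_{y\in N} f(y,x).
\]
By antisymmetry, the terms with both $x,y\in N\setminus S$ cancel in pairs. What remains is $\sum_{y\in S,\,x\in N\setminus S} f(y,x)$, which is at most $c(S,N\setminus S)$ because $f\le c$ arc by arc.

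\emph{Sufficiency.} Take $S=N$ in the hypothesis: this gives $d(\varnothing)=0\le 0$, which carries no information. Take $S=\varnothing$: this gives $d(N)\le 0$, so the total demand does not exceed the total supply. Now adjoin a source $s$ and a sink $t$. For each $x$ with $d(x)<0$, add an arc $s\to x$ of capacity $-d(x)$. For each $x$ with $d(x)>0$, add an arc $x\to t$ of capacity $d(x)$. Write $D^+=\sum_{d(x)>0}d(x)$.

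\begin{itemize}
\item First show that a flow saturating all sink arcs yields a feasible flow on $N$. Every node $x$ receives net inflow $d(x)$ from the original arcs, except supply nodes, which may send out less than $-d(x)$. Either way $f(N,x)\ge d(x)$ holds.
\item Next apply the max-flow min-cut theorem. Allowing infinite capacities is harmless: if a cut has infinite capacity, the inequality is trivially true, and in any case a finite min cut exists, namely the cut around $t$ of capacity $D^+$. So it suffices to show every $s$--$t$ cut has capacity at least $D^+$.
\item Take a cut whose source side is $\{s\}\cup S$. Its capacity is
\[
c(S,N\setminus S)+\sum_{x\in N\setminus S,\ d(x)<0}(-d(x))+\sum_{x\in S,\ d(x)>0}d(x).
\]
\item Bound this below. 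Using the hypothesis $c(S,N\setminus S)\ge d(N\setminus S)$, the first two terms together are at least $\sum_{x\in N\setminus S,\ d(x)>0}d(x)$. Adding the third term gives at least $D^+$.
\end{itemize}

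Hence a maximum flow saturates all sink arcs, and restricting it to $N$ gives the required flow.

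The main obstacle is bookkeeping rather than any deep step. The antisymmetric flow convention has to be matched with the usual nonnegative-arc version of max-flow min-cut: define the positive part of $f$ on each arc, and apply the theorem to the capacities $c$, which are allowed to be infinite, with the real-valued case handled by compactness or the standard augmenting-path argument. Once that translation is in place, everything above is the standard Ford--Fulkerson reduction.
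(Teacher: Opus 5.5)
Your proof is correct. It necessarily takes a different route from the paper, because the paper gives no proof: it quotes Gale's theorem as an external result \cite[Feasibility Theorem]{Gale} and uses it only as a black box for (c)$\Rightarrow$(e) in Theorem~\ref{c:Rees0}.

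Your super-source/super-sink reduction to max-flow/min-cut checks out at each step:
\begin{itemize}
\item In the necessity computation, antisymmetry kills the internal terms.
\item The cut with source side $\{s\}\cup S$ has the capacity you state, and it is at least $D^+$ by the hypothesis.
\item The cut with sink side $\{t\}$ has capacity exactly $D^+$.
\item From a flow $g$ saturating the sink arcs you get $f(N,x)=d(x)$ when $d(x)\ge 0$, and $f(N,x)=-g(s,x)\ge d(x)$ when $d(x)<0$.
\end{itemize}

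The one step worth tightening is the appeal to max-flow/min-cut with infinite capacities. With infinite capacities the set of admissible flows may be unbounded, so compactness does not apply directly. Plain augmenting paths also need not terminate with irrational capacities. The clean fix is to replace every infinite capacity by $D^+$. A cut crossing such an arc still has capacity at least $D^+$, and all other cuts are unchanged. A maximum flow then exists by compactness, and any flow admissible for the truncated capacities is admissible for $c$.

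As for what each route buys: the citation keeps the paper short. Your argument makes the step self-contained, modulo the more familiar max-flow/min-cut theorem, of which Gale's theorem is essentially a reformulation.
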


%
%
%
\endgroup

{The next result is the main theorem of this section. Here,} $1$ denotes the trivial group.
\begin{theorem}\label{c:Rees0}
Let $Q$ be a $\Lambda\times I$ zero-one matrix. Then the following are
equivalent:
\begin{enumerate}\itemsep0pt
\item For any group $G$ which possesses a complete mapping, and any matrix
$P$ over $G\cup\{0\}$ with pattern $Q$, the Rees $0$-matrix semigroup
$\mathcal{M}^0(G,I,\Lambda,P)$ has a complete mapping.
\item The Rees $0$-matrix semigroup $\mathcal{M}^0(1,I,\Lambda,Q)$ has a
complete mapping.
\item For any $s$ rows of $Q$, there are at least $s|I|/|\Lambda|$ columns
which contain non-zero entries in some of the chosen rows.
\item For any $r$ columns of $Q$, there are at least $r|\Lambda|/|I|$ rows
which contain non-zero entries in some of the chosen columns.
\item {The pattern $Q$ admits a balanced support weighting.}

\end{enumerate}
\end{theorem}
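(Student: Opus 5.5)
The plan is to run everything through the trivial-group case (2), using three equivalences: (1)$\Leftrightarrow$(2) by lifting and specialising, (2)$\Leftrightarrow$(5) by a counting argument plus an integral decomposition, and (3)$\Leftrightarrow$(5)$\Leftrightarrow$(4) by Gale's feasibility theorem.

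\textbf{Step 1: (1)$\Leftrightarrow$(2).} The implication (1)$\Rightarrow$(2) is immediate, since the trivial group has a complete mapping. For (2)$\Rightarrow$(1), note that by Proposition~\ref{Prp:zero} a complete mapping of $\mathcal M^0(1,I,\Lambda,Q)$ fixes $0$. On the non-zero part it therefore amounts to a bijection $\sigma$ from the cells $(i,\lambda)$ to the cells $(j,\mu)$ such that $Q_{\lambda j}=1$ and the products $(i,\mu)$ are pairwise distinct. I will lift $\sigma$ to $S=\mathcal M^0(G,I,\Lambda,P)$ packet by packet. For each cell, the local packet $(H_{i,\lambda},H_{j,\mu},H_{i,\mu})$ has multiplication table $(x,y)\mapsto xp_{\lambda j}y$. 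This table is isotopic to the Cayley table of $G$, so a complete mapping of $G$ gives a bijection $H_{i,\lambda}\to H_{j,\mu}$ whose products fill $H_{i,\mu}$ exactly once. Since $\sigma$ is a bijection and its products are distinct, these local bijections glue, together with $0\mapsto 0$, to a complete mapping of $S$. This is a variant of the gluing argument in Lemma~\ref{l:partition}.

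\textbf{Step 2: (2)$\Rightarrow$(5) by counting.} Given $\sigma$ as in Step 1, put $h_{\lambda j}$ equal to the number of cells $(i,\lambda)$ that $\sigma$ sends into column class $j$.
\begin{itemize}
\item $h_{\lambda j}=0$ whenever $Q_{\lambda j}=0$, because $\sigma$ only uses compatible pairs.
\item The row sum $\sum_j h_{\lambda j}$ equals $|I|$, since there are $|I|$ source cells with second coordinate $\lambda$.
\item The column sum $\sum_\lambda h_{\lambda j}$ equals $|\Lambda|$, since $\sigma$ is onto and there are $|\Lambda|$ target cells $(j,\mu)$.
\end{itemize}
So $H=(h_{\lambda j})$ is an integral balanced support weighting.

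\textbf{Step 3: (3)$\Leftrightarrow$(5)$\Leftrightarrow$(4) by Gale's theorem.} Build a network with a source, the row nodes $\Lambda$, the column nodes $I$ and a sink. Give the source-to-$\lambda$ arcs capacity $|I|$, the $\lambda$-to-$j$ arcs capacity $\infty$ when $Q_{\lambda j}=1$ and $0$ otherwise, and the $j$-to-sink arcs capacity $|\Lambda|$. A balanced support weighting is exactly a flow of value $|I||\Lambda|$ that saturates every source and sink arc. Gale's theorem says such a flow exists if and only if every cut has capacity at least $|I||\Lambda|$. With infinite middle capacities, a finite cut containing a set $R$ of rows on the source side must also put all of $N(R)$ there. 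The cut condition then reads $|I|(|\Lambda|-|R|)+|\Lambda||N(R)|\ge|I||\Lambda|$, which is exactly condition (3). The symmetric computation for column sets gives (4). By total unimodularity of the transportation constraints (Hoffman--Kruskal), or by integrality of max-flow, the weighting can be taken integral.

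\textbf{Step 4: (5)$\Rightarrow$(2), the main obstacle.} Take $H$ integral and regard it as a bipartite multigraph $B$ on $\Lambda\cup I$ with $h_{\lambda j}$ parallel edges. Then every $\lambda$ has degree $|I|$ and every $j$ has degree $|\Lambda|$. I must label each edge by a distinct pair $(i,\mu)\in I\times\Lambda$ so that:
\begin{itemize}
\item the edges at each $\lambda$ receive distinct first labels $i$;
\item the edges at each $j$ receive distinct second labels $\mu$.
\end{itemize}
The edge $\lambda j$ labelled $(i,\mu)$ then encodes $\sigma\colon(i,\lambda)\mapsto(j,\mu)$ with product $(i,\mu)$.

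This is where the real work is. I would try two approaches, in this order.
\begin{itemize}
\item First, build the auxiliary bipartite multigraph whose vertices are the edge-ends at $\lambda$-vertices (slots indexed by $I$) and the edge-ends at $j$-vertices (slots indexed by $\Lambda$), and reduce the labelling to a perfect matching or a König edge-colouring of an $|I|$- or $|\Lambda|$-regular multigraph.
\item Failing that, first label the edges at each $\lambda$ by $i$ using any bijection. For each $i$ the colour class is then a map $\Lambda\to I$ with fibres of sizes $d_i(j)$ satisfying $\sum_i d_i(j)=|\Lambda|$. Choose the $i$-labelling with König's theorem, so that these fibres are balanced. After that, assigning $\mu$ amounts to a Latin-rectangle-type completion: at each $j$, distribute the $\Lambda$ labels among the colour classes so that each class $i$ receives every $\mu$ exactly once overall. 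Hall's condition for this completion should follow from the degree counts.
\end{itemize}
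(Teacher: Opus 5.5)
Your argument is correct in outline and takes a genuinely different route from the paper. The one thing still missing is that Step~4 is only a plan. Your second bullet closes it immediately, and it needs no ``balancing'' of the first labels: the fibres $d_i(j)$ cannot in general be equalised, and nothing of the kind is required.

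Fix at each $\lambda$ an arbitrary bijection from the $|I|$ edges of $B$ at $\lambda$ to $I$. Form the bipartite multigraph $B'$ whose two vertex classes are a copy of $I$ for the column vertices $j$ and a copy of $I$ for the labels $i$. Put one edge joining $j$ to $i$ for each edge of $B$ at $j$ carrying label $i$. A vertex $j$ has degree $\sum_\lambda h_{\lambda j}=|\Lambda|$. A label $i$ has degree $|\Lambda|$, since it is used exactly once at each $\lambda$. So $B'$ is $|\Lambda|$-regular, and K\H{o}nig's theorem splits it into $|\Lambda|$ perfect matchings. Index them by $\mu\in\Lambda$ and use the index as the second label. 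The labels $\mu$ are then distinct at each $j$ and within each label class $i$. Hence the $|I||\Lambda|$ pairs $(i,\mu)$ are distinct and exhaust $I\times\Lambda$, and your encoding yields the complete mapping.

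Comparison with the paper. The paper organises everything around the blown-up square matrix $Q^*$ on $(I\times\Lambda)\times(\Lambda\times I)$:
\begin{enumerate}
\item Square Hall gives (3)$\Leftrightarrow$(4).
\item A one-transversal of $Q^*$ gives a support-compatible bijection $\tau$ that ignores products.
\item The array lemma, Theorem~\ref{t:kh}, then re-chooses the \emph{first} coordinates inside each $\lambda$-column so that the products become distinct.
\item Condition (5) is attached separately, by counting and Gale's theorem.
\end{enumerate}
You organise everything around the weighting (5) instead:
\begin{enumerate}
\item (2)$\Rightarrow$(5) by counting fibres of $\sigma$, which even gives an integral weighting.
\item (3)$\Leftrightarrow$(5)$\Leftrightarrow$(4) by a single min-cut computation.
\item (5)$\Rightarrow$(2) by flow integrality plus K\H{o}nig, re-choosing the \emph{second} coordinates $\mu$.
\end{enumerate}

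The decisive combinatorial step is the same in both. Theorem~\ref{t:kh}(a) is exactly K\H{o}nig's theorem for the $m$-regular multigraph joining the columns of $A$ to those of $B$, proved by the usual Hall peeling.

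Your version uses only standard results and avoids $Q^*$ and the bespoke lemma. It is in fact the construction the paper itself uses later to build $\Theta$ in the proof of Theorem~\ref{t:Rees0-general-anchored}. The paper's version gets (3)$\Leftrightarrow$(4) almost for free from squareness, and it obtains the routing directly from a perfect matching without any integrality argument.
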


\begin{lemma}\label{l:factorL}
Let $G$ be a group with a normal subgroup $N$ having a complete mapping.
Suppose that $\mathcal{M}^0(\bar G,I,\Lambda,\bar P)$ has a complete
mapping. Then $\mathcal{M}^0(G,I,\Lambda,P)$ has a complete mapping.
\end{lemma}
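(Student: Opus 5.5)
The plan is to mimic the proof of Theorem~\ref{t:goingup}, treating the zero separately. Write $\bar S=\mathcal{M}^0(\bar G,I,\Lambda,\bar P)$, where $\bar P$ is the entrywise image of $P$ in $\bar G=G/N$ with $0$ sent to $0$. Since $N$ is normal, $\bar P$ has exactly the same pattern as $P$: an entry is zero in $\bar P$ if and only if it is zero in $P$. Let $\bar\phi$ be a complete mapping of $\bar S$ with orthomorphism $\bar\psi$. By Proposition~\ref{Prp:zero}, $0\bar\phi=0\bar\psi=0$, so $\bar\phi$ restricts to a bijection of the non-zero elements.

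For each non-zero $x=(i,g_uN,\lambda)$ write $(i,g_uN,\lambda)\bar\phi=(i',g_vN,\lambda')$. The product $x\cdot x\bar\phi$ is non-zero, so $\bar p_{\lambda i'}\neq0$ and therefore $p_{\lambda i'}\in G$. Fix coset representatives $g_1,\dots,g_r$ and a complete mapping $\alpha$ of $N$. Define $\phi$ on $S=\mathcal M^0(G,I,\Lambda,P)$ by $0\phi=0$ and
\[
(i,g_uh,\lambda)\phi=(i',(h\alpha)^{p_{\lambda i'}}g_v,\lambda')\qquad(h\in N),
\]
exactly as in Theorem~\ref{t:goingup}, using the conjugation convention $x^g=g^{-1}xg$. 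Normality of $N$ ensures that $(h\alpha)^{p_{\lambda i'}}\in N$, so $\phi$ reduces modulo $N$ to $\bar\phi$.

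Next I check bijectivity of $\phi$ and of $\psi\colon s\mapsto s\cdot s\phi$, following the earlier argument. Because $\phi$ reduces to $\bar\phi$ modulo $N$, equal images force equal $i,\lambda,u$ and therefore equal $i',\lambda',v$; injectivity of $\alpha$ then gives $h_1=h_2$. For $\psi$, the compatibility $p_{\lambda i'}\neq0$ guarantees that every product $s\cdot s\phi$ with $s\neq0$ is the non-zero element
\[
(i,g_uhp_{\lambda i'}p_{\lambda i'}^{-1}(h\alpha)p_{\lambda i'}g_v,\lambda')=(i,g_u\,(h\cdot h\alpha)\,p_{\lambda i'}g_v,\lambda').
\]
This reduces modulo $N$ to $x\bar\psi\ne0$. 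Hence $\psi$ sends non-zero elements to non-zero elements and $0$ to $0$. Equal images again force equal $(i,\lambda,u)$, and cancelling gives $h_1\cdot h_1\alpha=h_2\cdot h_2\alpha$, so $h_1=h_2$ because $h\mapsto h\cdot h\alpha$ is injective. Finiteness then upgrades injectivity to bijectivity.

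The only genuinely new point relative to Theorem~\ref{t:goingup} is that no product hits zero. This is settled by Proposition~\ref{Prp:zero}, which pins the zero of $\bar S$, together with the fact that $N$-reduction preserves the pattern. I expect no serious obstacle beyond careful bookkeeping of the conjugation convention in the product computation.
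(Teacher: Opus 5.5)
Your proposal is correct and follows the paper's proof. Both fix zero via Proposition~\ref{Prp:zero}, use $\bar p_{\lambda i}=0\iff p_{\lambda i}=0$ so that every non-zero quotient product lifts to a non-zero product, and reuse the lifting construction of Theorem~\ref{t:goingup}; you additionally write out the product computation the paper leaves implicit. One small correction: the pattern equality holds by the definition of $\bar P$ (non-zero entries map into $\bar G$), not because $N$ is normal, though this does not affect the argument.
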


\begin{proof}
By Proposition~\ref{Prp:zero}, the complete mapping of
$\mathcal M^0(\bar G,I,\Lambda,\bar P)$ fixes zero. On the non-zero elements we can
use exactly the same lifting construction as in Theorem~\ref{t:goingup}.
Since $\bar p_{\lambda i}=0$ if and only if
$p_{\lambda i}=0$, every non-zero quotient product used by the complete
mapping lifts to a non-zero product. The proof of Theorem~\ref{t:goingup}
therefore shows that this lifted map is bijective and that its product map is
bijective on the non-zero elements. Extending it by $0f=0$ gives a complete mapping of
$\mathcal M^0(G,I,\Lambda,P)$.
\end{proof}

We now prove Theorem \ref{c:Rees0}. 

Taking $N=G$ in Lemma~\ref{l:factorL} gives (b) $\Rightarrow$ (a) in Theorem~\ref{c:Rees0}. Conversely, (a) $\Rightarrow$ (b), since the trivial group has a complete mapping.

For (b) $\Rightarrow$ (d), let $|I|=m$ and $|\Lambda|=n$. Note
that, for $S=\mathcal{M}^0(1,I,\Lambda,Q)$, we can suppress the element of $G$,
which is necessarily the identity; so non-zero elements of the semigroup are identified with pairs in
$I\times\Lambda$. Let $A$ be the Cayley table of $S$.

Suppose that columns $j_1,\ldots,j_r$ of $Q$ have non-zero entries only in rows
$\lambda_1,\ldots,\lambda_s$. Consider the \emph{columns} of $A$ indexed
by $(j_1,\mu),\ldots,(j_r,\mu)$ for all $\mu\in\Lambda$. These $rn$ columns
contain non-zero entries only in the $ms$ rows $(i,\lambda_1),\ldots,(i,\lambda_s)$
for $i\in I$. A complete mapping fixes $0$, and the associated product map is
bijective, so the chosen entries in these non-zero columns must be non-zero and
must lie in distinct non-zero rows. Thus, for the existence of a transversal, we
require $ms\ge nr$.

The dual argument shows that (b) implies (c).

Next we show that (c) and (d) are equivalent. Starting with the
$n\times m$ zero-one matrix $Q$, define a square zero-one matrix $Q^*$ by indexing rows by $I\times\Lambda$, indexing columns by
$\Lambda\times I$, and setting
\(Q^*_{(i,\lambda),(\mu,j)}=Q_{\lambda j}.\)
Equivalently, $Q^*$ has $m$ block rows and $n$ block columns, indexed by the first coordinate of each index pair, with each block being
a copy of $Q$; hence its total size is $mn\times mn$. By abuse of notation, we will below identify rows with their indices.

For a square zero-one matrix, a \emph{one-transversal} is a collection of non-zero entries meeting every row and every column exactly once. Equivalently, it is a perfect matching in the associated bipartite graph.

If $P$ is a zero-one matrix, and $F$ is a set of rows of $P$, let $N_P(F)$ denote the set of columns of $P$ which contain a non-zero entry in
one of the rows in $F$.

Now let $R\subseteq I\times\Lambda$ be a set of rows of $Q^*$, and put
\(L(R)=\{\lambda\in\Lambda\mid(i,\lambda)\in R \text{ for some } i\in I\}.\)
We claim that
\(N_{Q^*}(R)=\Lambda\times N_Q(L(R)).\)
Indeed, the entries in $Q^*$ depend on the second coordinates of their
row and column index pairs, but not on their first coordinates.
If (c) holds, then
\[
        |N_{Q^*}(R)|=n|N_Q(L(R))|
        \ge n\frac{m|L(R)|}{n}=m|L(R)|\ge |R|.
\]
Thus $Q^*$ satisfies Hall's row condition. Conversely, if Hall's row condition
holds for $Q^*$ and $L\subseteq\Lambda$, apply it to
$R=I\times L$. Then
\(m|L|=|R|\le |N_{Q^*}(R)|=n|N_Q(L)|,\)
which is precisely (c). Hence (c) is equivalent to Hall's row condition for
$Q^*$.

The same argument applied to columns shows that (d) is equivalent to Hall's
column condition for $Q^*$. Since $Q^*$ is square, Hall's row condition and
Hall's column condition are equivalent, each being equivalent to the existence
of a one-transversal in $Q^*$. Therefore (c) and (d) are equivalent; moreover,
under either condition $Q^*$ has a one-transversal.

\begingroup
We {show next} that (e) implies (c) and (d). Let
$\Sigma\subseteq\Lambda$. The total weight in the rows of $\Sigma$ is
$m|\Sigma|$ and is supported in the columns of $N_Q(\Sigma)$, whose total
weight is $n|N_Q(\Sigma)|$. Therefore
$m|\Sigma|\le n|N_Q(\Sigma)|$. The dual argument proves (d).

We now prove that (c) implies (e). On the node set
$\Lambda\cup I$, assign demand $-m$ to each row node and demand $n$ to each
column node. Give the arc $\lambda\to i$ infinite capacity when
$Q_{\lambda i}=1$ and give every other arc capacity $0$.

Let $S=A\cup B$, where $A\subseteq\Lambda$ and $B\subseteq I$. A direct
calculation gives
\[
        d((\Lambda\cup I)\setminus S)=m|A|-n|B|.
\]
If some support arc goes from $A$ to $I\setminus B$, then
$c(S,(\Lambda\cup I)\setminus S)=\infty$, so Gale's inequality is
automatic. Otherwise $N_Q(A)\subseteq B$, the cut capacity is $0$ and Gale's
inequality becomes $m|A|\le n|B|$. Condition (c) gives
\[
        m|A|\le n|N_Q(A)|\le n|B|,
\]
so every cut satisfies Gale's inequality. Notice that it is enough to check
the smallest possible set $B$, namely $B=N_Q(A)$; enlarging $B$ only
increases the right-hand side.

Gale's theorem now gives a feasible flow $f$. The sum of the demands is
$-mn+nm=0$, and antisymmetry gives
$\sum_{x\in\Lambda\cup I}f(\Lambda\cup I,x)=0$. Since every net inflow is
at least its demand, all these inequalities are equalities. Put
$h_{\lambda i}=f(\lambda,i)$. The reverse arc $i\to\lambda$ has capacity
$0$, so $h_{\lambda i}\ge0$; if $Q_{\lambda i}=0$, both directions have
capacity $0$ and $h_{\lambda i}=0$. Equality at a row node gives
$\sum_i h_{\lambda i}=m$, and equality at a column node gives
$\sum_\lambda h_{\lambda i}=n$. Thus $H=(h_{\lambda i})$ is a balanced
support weighting. Together with the first paragraph of the proof and the
equivalence of (c) and (d), this proves that (c), (d) and (e) are
equivalent.
\endgroup

It remains to show that (c) implies (b).
This depends on the following
combinatorial result.

\begin{theorem}\label{t:kh}
	Let $f$ be a bijection from the set of cells of an $m\times n$ array $A$ to
	the set of cells of another $m\times n$ array $B$.
	\begin{enumerate}\itemsep0pt
		\item It is possible to premultiply $f$ by a permutation $p_1$ of $A$ which
		fixes the columns and permutes independently the elements in each column so
		that the resulting bijection does not map two cells in the same row to two
		cells in the same column.
		\item It is possible to postmultiply $f$ by a permutation $p_2$ of $B$ which
		fixes the rows and permutes independently the elements of each row such that
		the resulting bijection does not map two cells in the same row to two cells
		in the same column.
	\end{enumerate}
	\label{arrays}
\end{theorem}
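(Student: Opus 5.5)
Part (b) follows from part (a) by symmetry. Replace $f$ by $f^{-1}$ and transpose both arrays. A premultiplication of $f^{-1}$ that fixes columns of $B$ becomes a postmultiplication of $f$ that fixes rows of $B$, up to naming of rows and columns. The condition ``no two cells in the same row go to two cells in the same column'' is also symmetric in this sense: for $f^{-1}$ it says no two cells in the same column of $B$ come from the same row of $A$. So I concentrate on (a), and at the end I will check carefully that the transposition really carries the row condition on one side to the column condition on the other.

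For (a), I would set up a bipartite multigraph whose vertex classes are the rows of $A$ ($m$ of them) and the columns of $B$ ($n$ of them). For each column $c$ of $A$, the $m$ cells of that column are sent by $f$ to $m$ cells of $B$. This gives a multiset $M_c$ of $m$ columns of $B$, namely the columns of the images. A premultiplication $p_1$ that permutes column $c$ of $A$ internally amounts to choosing a bijection $\sigma_c$ between the rows of $A$ and the multiset $M_c$: the cell in row $r$, column $c$ is sent to the image whose $B$-column is $\sigma_c(r)$. The goal is that, for each fixed row $r$ of $A$, the $n$ cells in row $r$ land in $n$ distinct columns of $B$. Equivalently, for each $r$ the values $\sigma_c(r)$, $c=1,\dots,n$, form a permutation of the columns of $B$.

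This is an edge-colouring problem, which is where K\H{o}nig's theorem (cited in the introduction) comes in. Form the bipartite multigraph $\Gamma$ with one side the columns $c$ of $A$ and the other side the columns $\beta$ of $B$. Put one edge $c$--$\beta$ for each cell of column $c$ of $A$ that $f$ maps into column $\beta$ of $B$. Each column $c$ of $A$ has degree $m$, since it has $m$ cells. Each column $\beta$ of $B$ has degree $m$, since it has $m$ cells and $f$ is a bijection. So $\Gamma$ is $m$-regular on $n+n$ vertices. By K\H{o}nig's edge-colouring theorem, the edges split into $m$ perfect matchings; index these matchings by the rows $r$ of $A$. For each column $c$ of $A$, the $m$ edges at $c$ receive distinct colours, so colour $r$ picks exactly one cell of column $c$. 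Let $p_1$ move that cell into row $r$ of column $c$. Then, within the new row $r$, the $n$ cells go to the $B$-columns of the matching $r$, and these are all distinct because the matching is perfect. This is exactly the required property.

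The main point to verify is the bookkeeping. First, $p_1$ is a genuine permutation of $A$ fixing columns: within column $c$ the $m$ colours are distinct, so $p_1$ is a bijection of that column. Second, the composite $p_1f$ has the stated property, with the paper's right-action convention deciding whether this means $p_1f$ or $fp_1$. The only genuinely nontrivial step is the appeal to K\H{o}nig's theorem, or equivalently to repeated Hall/Birkhoff decomposition of a regular bipartite multigraph. I expect no real obstacle beyond checking the regularity of $\Gamma$ and the direction conventions in part (b).
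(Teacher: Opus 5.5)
Your proof is correct and is essentially the paper's argument. The paper uses the same bipartite graph on the columns of $A$ and the columns of $B$. It extracts one perfect matching by Hall's theorem, swaps the matched cells into the first row, and inducts on $m$. That is the Hall-plus-induction proof of K\H{o}nig's theorem written out inline, whereas you invoke K\H{o}nig's theorem once to obtain all $m$ rows at the same time.

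Part (b) is handled in the paper exactly as you propose: apply (a) to $f^{-1}$ on the transposed arrays and invert. One small fix in your write-up: the expression $fp_1$ does not type-check. You should simply define $p_1$ so that the cell in position $(r,c)$ is sent to the colour-$r$ cell of column $c$; this costs nothing, since $p_1^{-1}$ also fixes the columns.
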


\begin{proof}
	(a) We proceed by induction on $m$, the case $m=1$ being trivial.
	Suppose that $m\ge 2$. For each $i\in\{1,\ldots,n\}$, let $C_i$
	denote the set of cells in the $i$-th column of $B$, and put
\(C_i^-:=f^{-1}(C_i).\)
	Thus $|C_i^-|=m$ for each $i$. Our goal is to find a permutation of $A$
	which fixes the columns and permutes independently the elements of each
	column such that, under the resulting bijection, no two cells in the same
	row of $A$ are mapped to the same column of $B$. Equivalently, for each
	$i\in\{1,\ldots,n\}$, at most one cell of each row of $A$ is mapped into $C_i$;
	since each row has $n$ cells and $B$ has $n$ columns, this means exactly
	one cell of each row is mapped into each $C_i$.

	Consider the bipartite graph $\Gamma$ whose left vertices are the columns of
	$A$ and whose right vertices are the columns of $B$. A column $K$ of $A$
	is adjacent to the $i$-th column of $B$ if and only if
\(K\cap C_i^-\ne\emptyset.\)
	For any set $S$ of columns of $A$, there are $m|S|$ cells in
	$\bigcup S$. These cells are contained in the union of the sets $C_i^-$
	for which the $i$-th column of $B$ is a neighbour of some column in $S$.
	Since each $C_i^-$ has size $m$, it follows that
\(m|S|\le m|N_\Gamma(S)|,\)
	and hence $|N_\Gamma(S)|\ge |S|$. By Hall's marriage theorem \cite[Theorem~1]{Hall}, $\Gamma$ has a
	matching which saturates the columns of $A$; since both parts have $n$
	vertices, this matching is perfect.

	Let $M$ be such a perfect matching. For each column $K$ of $A$, let
	$i(K)$ be the index such that $K$ is matched by $M$ to the $i(K)$-th
	column of $B$. Choose a cell
\(x_K\in K\cap C_{i(K)}^-.\)
	Let $y_K$ be the cell in the first row of $A$ and in the column $K$.
	Define a permutation $p^*$ of $A$ by swapping $y_K$ and $x_K$ in each
	column $K$, and fixing all other cells. Thus $p^*$ fixes the columns of
	$A$ and permutes independently the elements of each column.

	Let $A^*$ be the $m\times n$ array obtained from $A$ by applying $p^*$. The first
	row of $A^*$ is mapped by $f$ to cells lying in distinct columns of $B$:
	indeed, if $K$ is matched to the $i(K)$-th column of $B$, then
\(y_Kp^*f=x_Kf\in C_{i(K)}.\)

	Now let $A'$ be the $(m-1)\times n$ array obtained from $A^*$ by deleting
	its first row. For each $i\in\{1,\ldots,n\}$, put
\(D_i=A'\cap C_i^-.\)
	Since the first row of $A^*$ contains exactly one element of $C_i^-$, we
	have $|D_i|=m-1$ for each $i$.

	Let $B'$ be any $(m-1)\times n$ array, and let $C_i'$ denote the $i$-th
	column of $B'$. Choose a bijection
\(f'\colon A'\longrightarrow B'\)
	such that
\((f')^{-1}(C_i')=D_i \quad\text{for every }i\in\{1,\ldots,n\}.\)
	Such a bijection exists because both $D_i$ and $C_i'$ have size $m-1$.

	By induction, there is a permutation $q'$ of $A'$ which fixes the columns
	and permutes independently the elements of each column such that the
	resulting bijection $q'f'$ does not map two cells in the same row of $A'$
	to two cells in the same column of $B'$. Extend $q'$ to a permutation $q$
	of $A^*$ by fixing every cell in the first row of $A^*$.

	Then $qf$ has the required property on the rows of $A^*$. The first row
	has the required property by the construction of $p^*$, and the remaining
	rows have the required property by the induction hypothesis applied to
	$f'$, because $qf$ and $qf'$ induce the same partition of $A'$ according
	to the target column. Therefore
\(p_1=p^*q\)
	is the desired permutation of $A$.

	\medskip

	(b) Apply part (a) to the inverse bijection $f^{-1}$, viewed as a
	bijection from the cells of $B^{\mathrm t}$ to the cells of
	$A^{\mathrm t}$. This gives a permutation $q$ of $B^{\mathrm t}$ which
	fixes the columns of $B^{\mathrm t}$, equivalently fixes the rows of $B$,
	and such that $qf^{-1}$ does not map two cells in the same row of
	$B^{\mathrm t}$ to two cells in the same column of $A^{\mathrm t}$.
	Translating back, this says that $(qf^{-1})^{-1}=fq^{-1}$ does not map
	two cells in the same row of $A$ to two cells in the same column of $B$.
	Thus $p_2=q^{-1}$ is the required permutation of $B$.
\end{proof}

Now assume (c) of Theorem~\ref{c:Rees0}, and choose a one-transversal
of $Q^*$. This transversal induces a bijection
\(\tau\colon I\times\Lambda\longrightarrow I\times\Lambda\)
from row indices to inverted column indices, such that if
\((a,\lambda)\tau=(j,\mu),\)
then $Q_{\lambda j}=1$. As in the previous cases, we identify $I\times\Lambda$ with $\mathcal M^0(1,I,\Lambda,Q)\setminus \{0\}$ by ignoring the (constant) group coordinate.

Applying Theorem~\ref{t:kh} to $f=\tau$ yields a permutation $p_1$ as in part (a) of the theorem. Define a map $\alpha$ on $\mathcal M^0(1,I,\Lambda,Q)$ by
\(0\alpha=0,\quad (a,\lambda)\alpha=((a,\lambda)p_1)\tau\quad\text{for }(a,\lambda)\in I\times\Lambda.\)
Thus $\alpha$ is the extension of $p_1\tau$ that fixes $0$. We show that it is a complete mapping.

Indeed, $\alpha$ is clearly a bijection. Because $p_1$ preserves the second coordinate of its argument, if
\(((a,\lambda)p_1)\tau=(j,\mu),\)
then $Q_{\lambda j}=1$, and hence
\((a,\lambda)\cdot (a,\lambda)\alpha\ne 0.\)

Now let $(a,\lambda)\ne (a',\lambda')$. If $a\ne a'$, then
\[
(a,\lambda)\cdot (a,\lambda)\alpha
\quad\text{and}\quad
(a',\lambda')\cdot (a',\lambda')\alpha
\]
have different first coordinates. If $a=a'$ and $\lambda\ne\lambda'$, then, by the properties of $p_1$,
\((a,\lambda)\alpha \quad\text{and}\quad (a',\lambda')\alpha\)
have different second coordinates, and hence so do
\[
(a,\lambda)\cdot (a,\lambda)\alpha
\quad\text{and}\quad
(a',\lambda')\cdot (a',\lambda')\alpha.
\]
Hence the function
\((a,\lambda)\mapsto (a,\lambda)\cdot (a,\lambda)\alpha\)
is injective and, by finiteness, bijective on the non-zero elements. Together with $0\alpha=0$, this proves that $\alpha=p_1\tau$ is a complete mapping. Hence (b) holds.

\subsection{The combinatorial case}
The trivial group case has two useful concrete forms. First, the Brandt
semigroups $B_n$, introduced on page \pageref{brandt}, admit an exact enumeration in
terms of Latin squares. Second, the square zero-one specialization of
Theorem~\ref{c:Rees0} is precisely Hall's marriage condition.

Recall that $B_n$ denotes the combinatorial Brandt semigroup with matrix units
$E_{i\lambda}$, where $i,\lambda\in[n]$. In the notation of this section,
\(B_n=\mathcal M^0(1,[n],[n],\Delta_n),\)
where $\Delta_n$ is the $[n]\times[n]$ identity pattern. This Rees description
allows the complete mappings of $B_n$ to be described by Latin squares.

\begin{prop}\label{prop:brandt-latin}
The complete mappings of $B_n$ are in explicit bijection with the Latin squares
of order $n$. More precisely, a bijection $\alpha\colon B_n\to B_n$ is a complete
mapping if and only if
\(0\alpha=0, \quad E_{i\lambda}\alpha=E_{\lambda,L_{i\lambda}} \quad (i,\lambda\in[n])\)
for a Latin square $L=(L_{i\lambda})$ of order $n$.
\end{prop}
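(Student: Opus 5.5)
Since $0$ is a zero, Proposition~\ref{Prp:zero} forces every complete mapping $\alpha$ of $B_n$ to satisfy $0\alpha=0\theta=0$. Hence $\alpha$ restricts to a bijection of the non-zero elements $\{E_{i\lambda}\}$, and so does the orthomorphism $\theta$. In particular, every product $E_{i\lambda}\cdot E_{i\lambda}\alpha$ must be non-zero. By the multiplication rule this happens exactly when $E_{i\lambda}\alpha=E_{\lambda,\mu}$ for some $\mu$. So we can write $E_{i\lambda}\alpha=E_{\lambda,L_{i\lambda}}$, which defines an $n\times n$ array $L$ with entries in $[n]$. The orthomorphism is then $E_{i\lambda}\theta=E_{i,L_{i\lambda}}$.

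Next I would translate the two bijectivity conditions into conditions on $L$.

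\emph{Bijectivity of $\theta$.} The map $(i,\lambda)\mapsto(i,L_{i\lambda})$ is a bijection of $[n]\times[n]$ if and only if, for each fixed $i$, the map $\lambda\mapsto L_{i\lambda}$ is a permutation. That is, every row of $L$ is a permutation of $[n]$.

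\emph{Bijectivity of $\alpha$.} The map $(i,\lambda)\mapsto(\lambda,L_{i\lambda})$ is a bijection if and only if, for each fixed $\lambda$, the map $i\mapsto L_{i\lambda}$ is a permutation. That is, every column of $L$ is a permutation of $[n]$.

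Together these say exactly that $L$ is a Latin square.

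For the converse, take any Latin square $L$ and define $\alpha$ by the displayed formula, with $0\alpha=0$. The two equivalences above show that $\alpha$ is a bijection of $B_n$ and that $\theta$ is a bijection of the non-zero part, with $0\theta=0$. Hence $\alpha$ is a complete mapping.

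Finally, the correspondence $\alpha\leftrightarrow L$ is injective in both directions, since $L_{i\lambda}$ is read off directly from $E_{i\lambda}\alpha$. This gives the explicit bijection claimed.

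The only point needing care is the first step. It uses that a complete mapping cannot send a non-zero element to $0$ and cannot produce the product $0$ from a non-zero element. Both follow from Proposition~\ref{Prp:zero} together with injectivity of $\alpha$ and $\theta$. Everything after that is a direct count, so I expect no real obstacle.
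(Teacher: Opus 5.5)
Your proof is correct and follows the paper's argument essentially step by step: $\theta$ fixes $0$ and is injective, so it cannot send a non-zero element to $0$, which forces $E_{i\lambda}\alpha=E_{\lambda,L_{i\lambda}}$; injectivity of $\alpha$ then makes the columns of $L$ permutations, and injectivity of $\theta$ makes the rows permutations. The only cosmetic difference is that you obtain $0\alpha=0$ up front from Proposition~\ref{Prp:zero}, whereas the paper deduces it at the end from the fact that $\alpha$ sends non-zero elements to non-zero elements.
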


\begin{proof}
Let $L$ be a Latin square of order $n$ and define $\alpha$ by the displayed
formulae. Since each column of $L$ is a permutation of $[n]$, the assignment
$(i,\lambda)\mapsto (\lambda,L_{i\lambda})$ is a bijection of $[n]^2$; hence
$\alpha$ is a bijection of $B_n$. If $\theta$ is the corresponding
orthomorphism, then
\(0\theta=0, \quad E_{i\lambda}\theta =E_{i\lambda}E_{\lambda,L_{i\lambda}} =E_{i,L_{i\lambda}}.\)
Since each row of $L$ is a permutation of $[n]$, the assignment
$(i,\lambda)\mapsto (i,L_{i\lambda})$ is also a bijection of $[n]^2$. Thus
$\theta$ is a bijection, and $\alpha$ is a complete mapping.

Conversely, let $\alpha$ be a complete mapping of $B_n$, and let $\theta$ be its
orthomorphism. Since $0\theta=0$ and $\theta$ is injective, no non-zero element
can be mapped to $0$ by $\theta$. Therefore, for each matrix unit
$E_{i\lambda}$, the element $E_{i\lambda}\alpha$ must be a matrix unit whose
first index is $\lambda$; otherwise
$E_{i\lambda}(E_{i\lambda}\alpha)=0$. Hence there are uniquely determined
entries $L_{i\lambda}\in[n]$ such that
\(E_{i\lambda}\alpha=E_{\lambda,L_{i\lambda}}.\)
It follows also that $0\alpha=0$. The injectivity of $\alpha$ on the matrix
units implies that each column of $L$ is a permutation of $[n]$, while the
injectivity of $\theta$, whose non-zero part is
$E_{i\lambda}\mapsto E_{i,L_{i\lambda}}$, implies that each row of $L$ is a
permutation of $[n]$. Thus $L$ is a Latin square. The two constructions are
mutually inverse.
\end{proof}

In particular, $B_n$ has exactly $L(n)$ complete mappings, where $L(n)$ denotes
the number of Latin squares of order $n$. Thus $B_2$, the semigroup displayed in
Example~\ref{Ex:Brandt_transversals}, has exactly two complete mappings. From \cite{McKayWanless} we get that 
$B_3$ has $12$ complete mappings and $B_4$ has $576$.

The bijection above also transfers the asymptotic enumeration of Latin squares directly to complete mappings of $B_n$. The next proposition records the resulting growth rate.
\begin{prop}\label{p:brandt-latin-asymptotic}
As $n\to\infty$, the number $L(n)$ of complete mappings of $B_n$ satisfies
\[
        \log L(n)=n^2\log n-2n^2+o(n^2).
\]
Consequently,
\[
        L(n)=\left((1+o(1))\frac{n}{e^2}\right)^{n^2}.
\]
\end{prop}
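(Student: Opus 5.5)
By Proposition~\ref{prop:brandt-latin}, the number of complete mappings of $B_n$ is exactly $L(n)$, the number of Latin squares of order $n$. So the statement reduces to the classical asymptotics of $L(n)$. I would bound $L(n)$ from above and below by permanents of $0$-$1$ matrices and then apply the Bregman and Egorychev--Falikman bounds. The starting point is to build a Latin square row by row. Having fixed the first $k$ rows of a $k\times n$ Latin rectangle, the admissible choices for row $k+1$ are the perfect matchings of the bipartite graph (columns versus symbols) in which each column is joined to the $n-k$ symbols not yet used in it. This graph is $(n-k)$-regular, so its biadjacency matrix $M_k$ has all row and column sums $n-k$, and the number of extensions is $\operatorname{per}M_k$.

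For the upper bound I would use Bregman's theorem: a $0$-$1$ matrix with row sums $r_i$ has permanent at most $\prod_i (r_i!)^{1/r_i}$. Here this gives
\[
\operatorname{per}M_k\le ((n-k)!)^{n/(n-k)}.
\]
Multiplying over $k=0,\ldots,n-1$ yields $L(n)\le\prod_{j=1}^{n}(j!)^{n/j}$.

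For the lower bound I would use the van der Waerden bound of Egorychev and Falikman. Since $M_k/(n-k)$ is doubly stochastic,
\[
\operatorname{per}M_k\ge (n-k)^n\,n!/n^n.
\]
Multiplying gives $L(n)\ge (n!)^{2n}/n^{n^2}$.

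It remains to compare the logarithms using Stirling's formula $\log j!=j\log j-j+O(\log j)$. For the upper bound,
\[
\sum_{j=1}^n \frac{n}{j}\log j!=n\sum_{j=1}^n(\log j-1)+O(n\log^2 n)=n^2\log n-2n^2+o(n^2).
\]
For the lower bound, $2n\log n!-n^2\log n=2n^2\log n-2n^2-n^2\log n+O(n\log n)$, which is the same expression. Together these give $\log L(n)=n^2\log n-2n^2+o(n^2)$. Exponentiating then gives $L(n)=(n e^{-2}e^{o(1)})^{n^2}=((1+o(1))n/e^2)^{n^2}$.

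The only step needing care is the bookkeeping of the Stirling error terms. It is routine, but one must check that $\sum_j (n/j)O(\log j)=O(n\log^2 n)$ is $o(n^2)$. That holds, so there is no genuine obstacle beyond citing the two permanent bounds correctly.
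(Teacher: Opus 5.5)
Your proposal is correct and follows the paper's proof essentially step for step. Like the paper, you count row by row via permanents of the regular availability matrices, bound $L(n)\le\prod_{r=1}^n(r!)^{n/r}$ with Bregman and $L(n)\ge(n!)^{2n}/n^{n^2}$ with Egorychev--Falikman, and then use Stirling to match both logarithms to $n^2\log n-2n^2+o(n^2)$. The only cosmetic fix is that $\log j!=j\log j-j+O(\log j)$ fails at $j=1$, so write the error as $O(\log(j+1))$ as the paper does; this leaves the $O(n\log^2 n)$ estimate unchanged.
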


\begin{proof}
Construct a Latin square row by row. After $n-r$ rows have been chosen, form
the $n\times n$ zero-one matrix $A_r$ whose rows are indexed by the columns
of the partial Latin square and whose columns are indexed by its symbols. Put
a $1$ in position $(j,s)$ when the symbol $s$ is still available in column
$j$. Every row and every column of $A_r$ has sum $r$, and the possible next
rows are the perfect matchings of this availability graph. Their number is
$\operatorname{per}A_r$.

Br\'egman's inequality gives
$\operatorname{per}A_r\le (r!)^{n/r}$~\cite[Theorem~4]{Bregman}. Since
$r^{-1}A_r$ is doubly stochastic, the van der Waerden inequality gives
\[
        \operatorname{per}A_r\ge r^n\frac{n!}{n^n}.
\]
This inequality is stated by Egorychev~\cite[p.~299]{Egorychev} and by
Falikman~\cite[Theorem~1]{Falikman}. These bounds hold after every partial
Latin square, so multiplying them for $r=1,\ldots,n$ gives
\[
        \frac{(n!)^{2n}}{n^{n^2}}
        \le L(n)
        \le \prod_{r=1}^n(r!)^{n/r}.
\]

For the lower bound, Stirling's formula gives
\[
        2n\log(n!)-n^2\log n
        =n^2\log n-2n^2+O(n\log n).
\]
For the upper bound, use
$\log(r!)=r\log r-r+O(\log(r+1))$. Then
\[
\begin{aligned}
        n\sum_{r=1}^n\frac{\log(r!)}r
        &=n\log(n!)-n^2
          +O\left(n\sum_{r=1}^n\frac{\log(r+1)}r\right)\\
        &=n^2\log n-2n^2+O\bigl(n(\log n)^2\bigr).
\end{aligned}
\]
Taking logarithms in the two-sided estimate proves the first formula.
Write its error term as $n^2\varepsilon_n$, where $\varepsilon_n\to0$.
Exponentiation gives
\[
        L(n)=\left(e^{\varepsilon_n}\frac{n}{e^2}\right)^{n^2}
             =\left((1+o(1))\frac{n}{e^2}\right)^{n^2},
\]
and the proposition is proved.
\end{proof}

For a square zero-one matrix and the trivial group, Theorem~\ref{c:Rees0}
corresponds to Hall's theorem.

\begingroup
The following corollary makes this correspondence precise: it identifies
the support condition in Theorem~\ref{c:Rees0} with the usual perfect
matching condition in the square case over the trivial group. A
\emph{one-transversal} of $Q$ is a set of $|I|$ entries equal to $1$ meeting
every row and every column exactly once.
{Recall that, for $\Sigma\subseteq\Lambda$, we let
$N_Q(\Sigma)=\{i\in I:Q_{\lambda i}=1
\text{ for some }\lambda\in\Sigma\}$.}
\begin{cor}\label{cor:rees0-hall-square}
Let $I$ and $\Lambda$ be finite sets with $|I|=|\Lambda|$, and let $Q$ be a
$\Lambda\times I$ zero-one matrix. Then the following are equivalent:
\begin{enumerate}\itemsep0pt
\item $\mathcal M^0(1,I,\Lambda,Q)$ has a complete mapping;
\item $Q$ has a one-transversal;
\item the bipartite graph with biadjacency matrix $Q$ has a perfect matching;
\item $\operatorname{per}Q\ne0$;
\item $|N_Q(\Sigma)|\ge|\Sigma|$ for every $\Sigma\subseteq\Lambda$.
\end{enumerate}
\end{cor}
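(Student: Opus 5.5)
The plan is to route everything through Theorem~\ref{c:Rees0} with $m=|I|=|\Lambda|=n$, so that the ratios $|I|/|\Lambda|$ and $|\Lambda|/|I|$ are both $1$. With this specialization, condition (c) of that theorem reads $|N_Q(\Sigma)|\ge|\Sigma|$ for every set $\Sigma$ of rows, which is exactly item (e) of the corollary. Condition (b) of the theorem is item (a) of the corollary. Theorem~\ref{c:Rees0} therefore gives (a)$\Leftrightarrow$(e) immediately, and no new combinatorial work is needed for this equivalence.

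The remaining items are standard reformulations, which I would handle in the following order.
\begin{enumerate}
\item[(b)$\Leftrightarrow$(c):] Interpret $Q$ as the biadjacency matrix of the bipartite graph with parts $\Lambda$ and $I$. A one-transversal is a set of $1$-entries meeting each row and each column once, which is literally a perfect matching; this is a matter of unwinding definitions.
\item[(b)$\Leftrightarrow$(d):] Expand $\operatorname{per}Q=\sum_{\sigma}\prod_\lambda Q_{\lambda,\lambda\sigma}$ over bijections $\sigma\colon\Lambda\to I$. Every term is $0$ or $1$, and a term equals $1$ exactly when the positions $(\lambda,\lambda\sigma)$ form a one-transversal. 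Since there are no negative terms, no cancellation can occur, so the permanent is non-zero precisely when a one-transversal exists.
\item[(c)$\Leftrightarrow$(e):] This is Hall's marriage theorem \cite[Theorem~1]{Hall} applied to the $\Lambda$-side of the graph. A matching saturating $\Lambda$ is perfect because the two sides have equal size.
\end{enumerate}
These equivalences close the cycle.

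For a self-contained check of (b)$\Rightarrow$(a), one can also give a direct construction. Fix a one-transversal, viewed as a bijection $\sigma\colon\Lambda\to I$ with $Q_{\lambda,\lambda\sigma}=1$. Choose a Latin square $L$ indexed by $I\times\Lambda$ with symbols in $\Lambda$, and define
\[
(a,\lambda)\alpha=(\lambda\sigma,L_{a\lambda}),\qquad 0\alpha=0.
\]
Each product $(a,\lambda)(\lambda\sigma,L_{a\lambda})=(a,L_{a\lambda})$ is non-zero because $Q_{\lambda,\lambda\sigma}=1$. Injectivity of the product map follows from the rows of $L$ being permutations. Injectivity of $\alpha$ follows from $\sigma$ being injective together with the columns of $L$ being permutations. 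This generalizes Proposition~\ref{prop:brandt-latin}, but it is optional since Theorem~\ref{c:Rees0} already covers the implication.

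I expect no real obstacle here. The one point needing care is confirming that the orientation conventions agree: $Q$ is $\Lambda\times I$, and $N_Q(\Sigma)\subseteq I$ for $\Sigma\subseteq\Lambda$. Under these conventions, condition (c) of Theorem~\ref{c:Rees0} with $s|I|/|\Lambda|=s$ is verbatim item (e).
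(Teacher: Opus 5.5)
Your proposal is correct and follows essentially the same route as the paper. Specialize Theorem~\ref{c:Rees0} to $|I|=|\Lambda|$ so that its condition (c) becomes item (e), then link (b), (c), (d) and (e) by unwinding definitions, using the non-negativity of the permanent expansion, and applying Hall's theorem. Your optional Latin-square construction for (b)$\Rightarrow$(a) is a valid direct check, and it reduces to Proposition~\ref{prop:brandt-latin} when $Q$ is the identity pattern, but as you say it is not needed.
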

\begin{proof}
The trivial group has a complete mapping, so Theorem~\ref{c:Rees0} shows that
(a) holds precisely when the row and column inequalities in conditions (c)
and (d) of that theorem hold. Since $|I|=|\Lambda|$, its row inequality is
exactly (e). Hall's theorem makes (c) and (e) equivalent. A perfect matching
is the same as a one-transversal, so (b) and (c) are equivalent, and the
permanent of a zero-one matrix is non-zero precisely when such a transversal
exists, so (b) and (d) are equivalent. Finally, in a square bipartite graph a
perfect matching also saturates the column class, so the column inequality of
Theorem~\ref{c:Rees0} follows. Thus, the five conditions are equivalent.
\end{proof}
\endgroup

\section[Rees 0-matrix semigroups over groups without complete mappings]{Rees $0$-matrix semigroups over groups without complete mappings}
\label{Sec:Rees0-matrix-2}

Let $S=\mathcal M^0(G,I,\Lambda,P)$ be a finite Rees $0$-matrix semigroup
whose maximal subgroup does not have a complete mapping. By the Hall--Paige
theorem, the Sylow $2$-subgroups of $G$ are non-trivial and cyclic. By Lemma~\ref{l:cyclic2-quotient}, we may replace $G$ with its cyclic $2$-group quotient.
If at least one of $|I|$ and $|\Lambda|$ is even, we will show that the existence of a complete mapping depends only on the pattern of $P$, not on its individual entries. When both index sets are odd, this no longer holds: the quotient values of the non-zero sandwich entries enter through
incidence equations, which we address in Theorem~\ref{t:incidence-criterion}.

A \emph{near-transversal} in a finite multiplication table is a set of entries meeting each row, each column, and each product value at most once, and having one fewer entry than a full transversal.

For a compatible pair of $\mathcal{H}$-classes in
\(S=\mathcal M^0(G,I,\Lambda,P), \quad p_{\lambda j}\ne0,\)
the local multiplication has the form
\[
H_{i,\lambda}\times H_{j,\mu}
        \longrightarrow H_{i,\mu},
        \quad
        ((i,x,\lambda),(j,y,\mu))\longmapsto (i,xp_{\lambda j}y,\mu).
\]
This is the local packet of Definition~\ref{d:local-packet}.
With appropriate parametrization, this is equivalent to the multiplication table of a variant of $G$.
When $G$ has no complete mapping, the local packet will not have a full transversal. Instead, we will use a near-transversal. The remaining entry of each packet will then be paired with one from another packet.

For instance, if $G=C_2=\{1,\varepsilon\}$ and the sandwich entry is $u=1$, the
local table is
\[
\begin{array}{c|cc}
        x\backslash y & 1 & \varepsilon \\ \hline
        1 & 1 & \varepsilon \\
        \varepsilon & \varepsilon & 1
\end{array}
\]
and there is no two-cell transversal with distinct rows, columns, and products.
Each single cell is, however, a near-transversal. If the omitted row is $r$ and
the omitted column is $c$, then the omitted product is $\varepsilon ruc$. The
variables in the proof record these omitted rows and columns, and the incidence
equations express the condition that the omitted products of one packet are
filled by the residual products of other packets.

When one index set is even, it turns out that the omitted entries can be paired by a fixed-point-free
involution. When both index sets are odd, at least one other cycle is required;
the product around each such cycle leads to an obstruction in the cyclic
$2$-group quotient.

\begin{definition}\label{d:Q-admissible-routing}
Let $Q$ be a $\Lambda\times I$ zero-one matrix, and put
\(A=I\times\Lambda.\)
A \emph{$Q$-admissible routing} is a bijection
\(\Theta\colon A\longrightarrow A\)
with the following two properties. If
\((i,\lambda)\Theta=(j_{i,\lambda},\mu_{i,\lambda}),\)
then
\(Q_{\lambda,j_{i,\lambda}}=1,\)
and the map
\(\Pi_\Theta\colon A\longrightarrow A, \quad (i,\lambda)\Pi_\Theta=(i,\mu_{i,\lambda}),\)
is a bijection.
\end{definition}

\begin{lemma}\label{l:Rees0-class-routing}
Let $Q$ be a $\Lambda\times I$ zero-one matrix satisfying the equivalent
conditions (c) and (d) of Theorem~\ref{c:Rees0}. Then $Q$ admits a
$Q$-admissible routing.
\end{lemma}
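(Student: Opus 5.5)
The plan is to reuse the construction from the proof of (c) $\Rightarrow$ (b) in Theorem~\ref{c:Rees0}, because a $Q$-admissible routing is essentially the complete mapping of $\mathcal M^0(1,I,\Lambda,Q)$ built there.

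First, from conditions (c) and (d), the proof of Theorem~\ref{c:Rees0} already shows that the square $mn\times mn$ matrix $Q^*$, with $Q^*_{(i,\lambda),(\mu,j)}=Q_{\lambda j}$, satisfies Hall's condition. It therefore has a one-transversal, which gives a bijection $\tau\colon I\times\Lambda\to I\times\Lambda$ such that $(a,\lambda)\tau=(j,\mu)$ implies $Q_{\lambda j}=1$.

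Next, apply Theorem~\ref{t:kh}(a) to $f=\tau$, viewing $A=I\times\Lambda$ as an array whose rows are indexed by $I$ and columns by $\Lambda$. This yields a permutation $p_1$ that fixes the $\Lambda$-coordinate and permutes within each column. Set $\Theta=p_1\tau$; it is a bijection as a composite of bijections.

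Then check the two properties of Definition~\ref{d:Q-admissible-routing}.

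\textbf{Support.} Since $p_1$ preserves the second coordinate $\lambda$, writing $(i,\lambda)\Theta=(j,\mu)$ gives $(i,\lambda)p_1=(a,\lambda)$ for some $a$. Hence $Q_{\lambda j}=1$.

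\textbf{Bijectivity of $\Pi_\Theta$.} We need $(i,\lambda)\mapsto(i,\mu_{i,\lambda})$ to be injective. This means that for fixed $i$, the values $\mu_{i,\lambda}$ are distinct as $\lambda$ varies: no two cells in the same row $i$ are sent to the same $\mu$. Here the orientation matters. Theorem~\ref{t:kh} guarantees that cells in the same row are not sent to the same \emph{column} of the target array. So the target array must be laid out with its columns indexed by the second coordinate $\mu$, that is, rows by $J$-values and columns by $\mu$. With $m=|I|$ rows and $n=|\Lambda|$ columns, this is a legitimate $m\times n$ array, and Theorem~\ref{t:kh} gives exactly the needed injectivity. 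Injectivity together with finiteness then gives bijectivity.

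This is the same verification the paper carried out at the end of the proof of Theorem~\ref{c:Rees0}, where distinct second coordinates of $(a,\lambda)\alpha$ for fixed $a$ were obtained.

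The main obstacle is this bookkeeping. The rows of the source array are indexed by $I$ (the first coordinate) and the columns by $\Lambda$. For the target, I must check that the column classes of Theorem~\ref{t:kh} are the $\mu$-fibres, and that $p_1$ (which fixes columns, i.e.\ $\lambda$) is applied on the source side, so that $p_1$ does not destroy the support condition.

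If this orientation fails to match, the fallback is a direct argument. For each fixed $i$, we need a system of distinct $\mu$'s. This can be found by a König edge-colouring of the bipartite multigraph between $\Lambda$-labels and $\mu$-labels induced by $\tau$. That multigraph is $m$-regular, because each $\lambda$ has $m$ cells and each $\mu$ receives $m$ cells, so its edges split into $m$ perfect matchings. Assigning the $i$-th matching to row $i$ permutes cells within each $\lambda$-column, and yields the same $p_1$.
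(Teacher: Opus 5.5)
Your proof is correct, and your orientation is the right one. With source rows indexed by $I$ and source columns by $\Lambda$, and target columns indexed by $\mu$, Theorem~\ref{t:kh}(a) says exactly that $\lambda\mapsto\mu_{i,\lambda}$ is injective for each fixed $i$. Because $p_1$ preserves $\lambda$, the support condition $Q_{\lambda j}=1$ inherited from $\tau$ survives. (Where you write ``rows by $J$-values'' you mean the first coordinate $j\in I$.)

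The paper obtains the same object more economically. It cites (c)$\Rightarrow$(b) of Theorem~\ref{c:Rees0} to get a complete mapping of $\mathcal M^0(1,I,\Lambda,Q)$. It then uses Proposition~\ref{Prp:zero} to restrict that mapping to $I\times\Lambda$. The support condition follows because $x\cdot x\alpha\neq0$, and bijectivity of $\Pi_\Theta$ follows because $\Pi_\Theta$ is just the orthomorphism on the non-zero elements.

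Your argument reopens that black box and repeats the final verification in the proof of (c)$\Rightarrow$(b), so the routing produced is the same. The paper's packaging has the advantage of making plain that a $Q$-admissible routing is nothing but the non-zero part of an arbitrary complete mapping of $\mathcal M^0(1,I,\Lambda,Q)$, not only of the one built from $Q^*$.

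Your K\H{o}nig fallback is also valid. The $\lambda$--$\mu$ multigraph induced by $\tau$ is $m$-regular on both sides, and assigning its $m$ perfect matchings to the elements of $I$ defines the required $p_1$ directly. This is in fact a one-step proof of Theorem~\ref{t:kh}(a) itself.
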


\begin{proof}
By Theorem~\ref{c:Rees0}, the Rees $0$-matrix semigroup
\(\mathcal M^0(1,I,\Lambda,Q)\)
has a complete mapping. By Proposition~\ref{Prp:zero}, a complete mapping of
a semigroup with zero fixes zero, and therefore restricts to a bijection on
the non-zero elements. We identify the non-zero elements of
$\mathcal M^0(1,I,\Lambda,Q)$ with $I\times\Lambda$, and denote this
restricted bijection by $\Theta$.

If
\((i,\lambda)\Theta=(j,\mu),\)
then the product
\((i,\lambda)(j,\mu)\)
is non-zero; otherwise the associated product map would send both $0$ and
$(i,\lambda)$ to $0$. Hence $Q_{\lambda j}=1$. Moreover, the product map
sends $(i,\lambda)$ to $(i,\mu)$, and is bijective on the non-zero elements.
This says that
\((i,\lambda)\longmapsto (i,\mu)\)
is a bijection of $I\times\Lambda$. Thus $\Theta$ is a $Q$-admissible
routing.
\end{proof}

\begin{lemma}\label{l:Rees0-support-necessary}
Let
\(S=\mathcal M^0(G,I,\Lambda,P)\)
be a finite Rees $0$-matrix semigroup, and let $Q$ be the pattern of $P$.
If $S$ has a complete mapping, then $Q$ satisfies conditions \textup{(c)} and
\textup{(d)} of Theorem~\ref{c:Rees0}.
\end{lemma}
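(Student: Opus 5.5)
The plan is to reuse the counting argument from the proof of (b) $\Rightarrow$ (d) in Theorem~\ref{c:Rees0}, now with the group coordinate present. This will show directly that a complete mapping of $S$ forces the inequalities (c) and (d). Alternatively, one could first project onto the trivial group. The map $S\to\mathcal M^0(1,I,\Lambda,Q)$ that forgets the middle coordinate is a homomorphism, but it is not injective, so a complete mapping need not descend. The direct count therefore seems cleaner.

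Let $m=|I|$, $n=|\Lambda|$ and $g=|G|$, and let $\alpha$ be a complete mapping of $S$ with orthomorphism $\theta$. By Proposition~\ref{Prp:zero}, $0\alpha=0\theta=0$. Since $\theta$ is injective, no non-zero $x$ satisfies $x(x\alpha)=0$. So for every non-zero $x=(i,a,\lambda)$ with $x\alpha=(j,b,\mu)$ we have $p_{\lambda j}\ne0$, that is, $Q_{\lambda j}=1$.

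For (d), fix columns $j_1,\dots,j_r$ of $Q$, and let $L\subseteq\Lambda$ be the set of rows in which these columns have non-zero entries. Consider the $rng$ non-zero elements $y=(j_t,b,\mu)$ with $t\le r$, $b\in G$ and $\mu\in\Lambda$. Each has a unique preimage $x=y\alpha^{-1}$, which is non-zero because $0\alpha=0$. Write $x=(i,a,\lambda)$. The product $x(x\alpha)$ is non-zero, so $Q_{\lambda j_t}=1$ and hence $\lambda\in L$. Distinct $y$ give distinct $x$, and all these $x$ lie in the set $I\times G\times L$, which has $mg|L|$ elements. Hence $rng\le mg|L|$, that is, $|L|\ge rn/m$, which is exactly (d).

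For (c) I would argue dually using Proposition~\ref{Prp:equivs}. That proposition gives a bijection $\beta$ with $x\mapsto (x\beta)x$ bijective, and the analogous zero-fixing argument applies: $0\beta=0$ because $S$ has a zero, as in the proof of Proposition~\ref{Prp:zero}. Alternatively, (c) follows from (d) by the equivalence already established in the proof of Theorem~\ref{c:Rees0}, which used only the pattern $Q$. This second route is simplest, so I would invoke it.

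The only point needing care is the nonvanishing of $x(x\alpha)$ for non-zero $x$, together with the fact that $\alpha^{-1}$ sends non-zero elements to non-zero elements. Both follow immediately from Proposition~\ref{Prp:zero}, so I expect no real obstacle.
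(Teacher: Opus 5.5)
Your proof is correct and follows the paper's argument. For (d) the paper uses exactly your count: the $rn|G|$ elements whose first index lies in the chosen columns are pulled back through $\alpha$ into $I\times G\times N_Q(J)$, with Proposition~\ref{Prp:zero} guaranteeing $x(x\alpha)\neq 0$. For (c) the paper just says ``by a dual argument'', whereas you invoke the pattern-only equivalence (c)$\Leftrightarrow$(d) proved via Hall's theorem for $Q^*$ in Theorem~\ref{c:Rees0}; this is legitimate and not circular, since that equivalence does not depend on the present lemma.
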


\begin{proof}
Let $m=|I|$, $n=|\Lambda|$, and let $q=|G|$. Let $\alpha$ be a complete
mapping of $S$, and let
\(x\theta=x\cdot x\alpha\)
be the associated orthomorphism. By Proposition~\ref{Prp:zero}, both
$\alpha$ and $\theta$ fix $0$.

Let $J\subseteq I$, with $|J|=r$, and let
\(L=N_Q(J)=\{\lambda\in\Lambda\mid Q_{\lambda j}=1\text{ for some }j\in J\}.\)
Write $|L|=s$. Consider the set of non-zero elements of $S$ whose first index
lies in $J$:
\(C_J=\{(j,g,\mu)\mid j\in J,\ g\in G,\ \mu\in\Lambda\}.\)
It has size $rnq$. Since $\alpha$ is a bijection, exactly $rnq$ elements
$x\in S\setminus\{0\}$ satisfy $x\alpha\in C_J$. If
\(x=(i,h,\lambda) \quad\text{and}\quad x\alpha=(j,g,\mu)\)
with $j\in J$, then $x\theta=x(x\alpha)$ is non-zero; otherwise $\theta$ would
send both $0$ and $x$ to $0$. Hence $p_{\lambda j}\ne0$, so
$\lambda\in L$. Therefore all these $rnq$ elements $x$ lie in the set
\(\{(i,h,\lambda)\mid i\in I,\ h\in G,\ \lambda\in L\},\)
which has size $msq$. Thus
\(rnq\le msq,\)
and so
\[
        s\ge \frac{rn}{m}.
\]
This is condition \textup{(d)} of Theorem~\ref{c:Rees0}.

Condition \textup{(c)} holds by a dual argument.
\end{proof}

Our next result establishes a collection of near-transversals in each local packet in the special case of a cyclic $2$-group.

\begin{lemma}\label{l:cyclic2-near-transversal}
Let $C$ be a cyclic group of order $2^d$, with $d\ge 1$, and let
$\varepsilon$ be the unique element of order $2$ in $C$. For every
$u,r,c\in C$, the table
\((x,y)\longmapsto xuy \quad (x,y\in C)\)
contains $|C|-1$ cells meeting every row except $r$, every column except $c$, and representing every element of $C$ except
\(\varepsilon ruc.\)
\end{lemma}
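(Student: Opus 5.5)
The first step is a change of variables that removes the parameters $u,r,c$. Since $C$ is abelian, put $x=ra$ and $y=bc$. Then
\[
xuy=(ruc)\,ab .
\]
As $a$ and $b$ run over $C$, the row $x=r$ corresponds to $a=1$ and the column $y=c$ to $b=1$. The symbol $\varepsilon ruc$ corresponds to the product $ab=\varepsilon$. So the map $(a,b)\mapsto(ra,bc)$ is a bijection of cells that carries rows to rows and columns to columns, and it relabels symbols by the bijection $z\mapsto ruc\,z$. It therefore suffices to prove the case $u=r=c=1$.

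Writing $C=\mathbb Z_n$ additively with $n=2^d$, the claim becomes the following. There is a bijection $\sigma$ of $\mathbb Z_n\setminus\{0\}$ such that
\[
a\mapsto a+a\sigma
\]
is a bijection from $\mathbb Z_n\setminus\{0\}$ onto $\mathbb Z_n\setminus\{n/2\}$. For $n=2$ the single cell $(1,1)$ with symbol $0$ works. For $n=4$ the cells $(1,3),(2,1),(3,2)$ give the symbols $0,3,1$, which is everything except $2$.

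For general $n$ I will first try to extend this by an explicit block construction, in the spirit of the proof of Theorem~\ref{Thm:ILambda_even}. The rows and columns are split into ``low'' and ``high'' halves. Along shifted diagonals $a\mapsto a+k$, the sums $2a+k$ sweep out a single parity class as $a$ runs over $n/2$ consecutive values. The plan has three parts.
\begin{enumerate}
\item Use one shifted diagonal to produce all the odd symbols.
\item Use a second diagonal on the complementary rows and columns to produce the even symbols other than $n/2$.
\item Repair the boundary cells that would use row $0$ or column $0$ by a small local swap, as was done with the sets $L_1,L_2,L_3$ in Theorem~\ref{t:nonidentity}.
\end{enumerate}
The verification is then a case check of the same kind as in those proofs.

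If a clean closed form does not appear, the fallback is induction on $d$ through the quotient $\mathbb Z_{2n}\to\mathbb Z_n$ by the subgroup $\{0,n\}$. The idea is to lift a near-transversal of $\mathbb Z_n$ to $\mathbb Z_{2n}$ by doubling each cell into two cells, using coset representatives as in the lifting construction of Theorem~\ref{t:goingup}. The missing row, column and symbol then need to be corrected using the two extra cells of the fibre over the omitted data.

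The main obstacle is exactly this explicit bookkeeping: producing a uniform family of cells that avoids row $0$ and column $0$ while hitting every even symbol except $n/2$. The residues near $0$ and $n/2$ are where the naive diagonal choices collide. I would settle this case first by computing the cases $n=8$ and $n=16$ by hand to guess the pattern, before writing the general verification.
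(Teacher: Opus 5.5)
Your reduction to $u=r=c=1$ is correct; it is the same translation the paper applies at the end of its proof. Your cells for $n=2$ and $n=4$ also check out. But for $n=2^d$ with $d\ge3$ the proposal stops exactly where the content of the lemma begins. You never say which diagonals to use, and you defer the construction to guessing a pattern from $n=8$ and $n=16$, so nothing is proved beyond $n=4$.

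Your fallback would not close the gap either. The lifting in Theorem~\ref{t:goingup} needs the kernel to have a complete mapping, and here the kernel $\{0,n\}\le\mathbb Z_{2n}$ is $C_2$, which has none. Concretely, over a cell $(a,b)$ of $\mathbb Z_n$, the four cells of $\mathbb Z_{2n}$ in rows $a,a+n$ and columns $b,b+n$ form a copy of the $C_2$ table. Any two of them in distinct rows and columns carry the same symbol. So ``doubling each cell'' collides in every fibre, not just over the omitted row, column and symbol.

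What is missing is a correct choice of shifts; with it, your primary plan works and needs no repair step. For $n\ge4$ put
\[
a\sigma=a+\tfrac n2-1\quad(1\le a\le\tfrac n2),\qquad a\sigma=a+\tfrac n2\quad(\tfrac n2<a\le n-1).
\]
On the low rows the columns are $\tfrac n2,\dots,n-1$. The symbols $2a+\tfrac n2-1$ are all the odd residues, because $2a$ runs over all even residues and $\tfrac n2-1$ is odd. On the high rows the columns are $1,\dots,\tfrac n2-1$. The symbols $2a+\tfrac n2$ are all the even residues except $\tfrac n2$, because $2a$ now runs over the non-zero even residues.

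The paper uses a sequencing instead. Take steps $a_t=(-1)^{t+1}t$ and partial sums $s_t=a_1+\cdots+a_t$, so that $s_{2k-1}=k$ and $s_{2k}=-k$. The cells $(s_{t-1},a_t)$ for $1\le t\le n-1$ omit row $s_{n-1}=\tfrac n2$, column $0$ and symbol $s_0=0$, and a translation finishes. Either explicit family, with its few-line verification, is what your proposal needs to supply.
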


\begin{proof}
Inside this proof, we will use additive notation for $C$. Thus
\(C=\mathbb Z/q\mathbb Z, \quad q=2^d, \quad \varepsilon=q/2.\)
For $1\le t\le q-1$, put
\(a_t=(-1)^{t+1}t, \quad s_0=0, \quad s_t=a_1+\cdots+a_t.\)
Then
\(s_{2k-1}=k, \quad s_{2k}=-k.\)
Hence
\(s_0,s_1,\ldots,s_{q-1}\)
run through all elements of $C$, with $s_0=0$ and $s_{q-1}=\varepsilon$.
Also $a_1,\ldots,a_{q-1}$ run through all non-zero elements of $C$. Therefore
the cells
\((s_{t-1},a_t), \quad 1\le t\le q-1,\)
use every row except $\varepsilon$, every column except $0$, and every non-zero entry
 in the table $(x,y)\mapsto x+y$.

We may now pass to the table $(x,y)\mapsto x+u+y$, and translate rows and columns so
that the omitted row is $r$ and the omitted column is $c$. The omitted entry
is then
\(r+u+c+\varepsilon.\)
Returning to multiplicative notation, this is
\(\varepsilon ruc.\)
\end{proof}

The missing entries in the local tables can be associated in pairs if one index set has even cardinality.

\begin{theorem}\label{t:Rees0-noncomp-even}
Let
\(S=\mathcal M^0(G,I,\Lambda,P)\)
be a finite Rees $0$-matrix semigroup. Suppose that $G$ does not have a
complete mapping, and that at least one of $|I|$ and $|\Lambda|$ is even. Let
$Q$ be the pattern of $P$. Then $S$ has a complete mapping if and only if
$Q$ satisfies the equivalent conditions \textup{(c)} and \textup{(d)} of
Theorem~\ref{c:Rees0}.
\end{theorem}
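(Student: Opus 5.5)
Necessity is already done: Lemma~\ref{l:Rees0-support-necessary} shows that a complete mapping forces conditions (c) and (d). So the plan concentrates on sufficiency. First we reduce to a cyclic $2$-group. By Lemma~\ref{l:cyclic2-quotient} there is a normal subgroup $N$ of odd order with a complete mapping, and $\bar G=G/N\cong C_{2^d}$ with $d\ge1$. The pattern of $\bar P$ equals $Q$. Hence, by Lemma~\ref{l:factorL}, it suffices to build a complete mapping of $\mathcal M^0(C,I,\Lambda,P)$ with $C=C_{2^d}$ and $P$ arbitrary of pattern $Q$. By symmetry (the anti-isomorphism swapping $I$ and $\Lambda$, together with Proposition~\ref{Prp:equivs}) we may assume $|I|$ is even.

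Next we take a $Q$-admissible routing $\Theta$ from Lemma~\ref{l:Rees0-class-routing}. For each $(i,\lambda)$ with $(i,\lambda)\Theta=(j,\mu)$, the source class $H_{i,\lambda}$ is sent entirely into the multiplier class $H_{j,\mu}$. The products then land in $H_{i,\mu}$, and each target class is hit by exactly one local packet because $\Pi_\Theta$ is bijective. The packet table is $(x,y)\mapsto x p_{\lambda j} y$. On it we use Lemma~\ref{l:cyclic2-near-transversal} with a chosen omitted row $r_{i,\lambda}$, omitted column $c_{i,\lambda}$ and omitted product $\varepsilon r_{i,\lambda}p_{\lambda j}c_{i,\lambda}$. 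This leaves one source element $(i,r,\lambda)$ unassigned, one multiplier element $(j,c,\mu)$ unused, and one target value in $H_{i,\mu}$ uncovered, for each of the $|I||\Lambda|$ packets. The leftover elements must then be matched among themselves.

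The matching step is to pair packets via a fixed-point-free involution on $A=I\times\Lambda$. I would pair $(i,\lambda)$ with $(i',\lambda)$, where $i\leftrightarrow i'$ is a fixed perfect matching of $I$ (possible since $|I|$ is even). The leftover source of packet $(i,\lambda)$ is sent to the leftover multiplier of packet $(i',\lambda)$ and vice versa. This is cross-compatible because both $(i',\lambda)\Theta$ and $(i,\lambda)\Theta$ have first coordinates $j',j$ with $p_{\lambda j'}\ne0$ and $p_{\lambda j}\ne0$. The resulting products lie in $H_{i,\mu'}$ and $H_{i',\mu}$, however, not in $H_{i,\mu}$ and $H_{i',\mu'}$. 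So I would instead swap only the group coordinates, keeping targets in their own $\mathcal H$-classes. That means modifying the routing so that, for a paired packet, the leftover source of $(i,\lambda)$ goes to a different element of the same multiplier class $H_{j,\mu}$. This is where the free parameters $r,c$ are used. Concretely, within the union of the two paired packets we want the two leftover cells, $(r_1,c_2)$ in packet one and $(r_2,c_1)$ in packet two (after swapping which column each packet omits, which is allowed when both packets share a multiplier class or can be rerouted to share one), to produce exactly the two omitted products $\varepsilon r_1u_1c_1$ and $\varepsilon r_2u_2c_2$. This gives two equations in $C$ in the unknowns $r_1,r_2,c_1,c_2$. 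They are solvable because $\varepsilon$ appears exactly once on each side and the equations are linear in a cyclic group (for example, solve for $c_2$ and $c_1$ given $r_1,r_2$). Finally, Lemma~\ref{l:partition} glues everything together, with $0$ fixed.

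I expect the main obstacle to be the combinatorial bookkeeping of the pairing. The two leftover pieces must be genuinely interchangeable: the leftover source of one packet has to multiply nonzero into the leftover multiplier of its partner, landing in the partner's target class. If the naive pairing by $I$ fails, I would first try pairing along $\Pi_\Theta$-structure: choose a routing in which paired packets share the multiplier row index $\lambda$ and the source index $i$, using a $2$-cycle decomposition obtained from the evenness of $|I|$ together with the equivalence (c)$\Leftrightarrow$(e) and a Birkhoff/König decomposition of the balanced weighting into such paired routings.
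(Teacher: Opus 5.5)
Your reduction to $C=C_{2^d}$, your use of a $Q$-admissible routing $\Theta$, and your per-packet near-transversals from Lemma~\ref{l:cyclic2-near-transversal} all agree with the paper. The matching step, however, does not work as you describe it, and that step is the whole content of the proof.

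You correctly notice that cross-routing the leftovers of $(i,\lambda)$ and $(i',\lambda)$ puts the residual products in $H_{i,\mu'}$ and $H_{i',\mu}$. Your repair of this is impossible, for two reasons.
\begin{enumerate}
\item Since $\Theta$ is a bijection, two distinct packets never share a multiplier class $H_{j,\mu}$. So there is no ``swapping which column each packet omits''.
\item Suppose a packet kept its leftover source inside its own multiplier class, with the product landing in its own target class. Then the packet would contain a full transversal of the table $(x,y)\mapsto xuy$. That is a complete mapping of a group isomorphic to $C_{2^d}$, which contradicts Hall--Paige.
\end{enumerate}
So your two equations in $r_1,r_2,c_1,c_2$ describe a configuration that cannot occur.

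Your fallback also fails in general. Forcing paired packets to have a common $\mathcal L$-index, $\mu_{(i,\lambda)}=\mu_{(i',\lambda)}$, is not always possible. For the identity pattern $Q=I_2$, every routing has $j_{(i,\lambda)}=\lambda$, so bijectivity of $\Theta$ forces $\mu\neq\mu'$. Yet the theorem asserts that a complete mapping exists there. Separately, Lemma~\ref{l:partition} does not apply, because packets are not closed partial subalgebras.

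The missing idea is that a residual product need not return to its own packet. It only has to fill the omitted product of \emph{some} packet, bijectively, while the leftover multiplier elements are filled by a separate bijection.

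The paper handles $|\Lambda|$ even directly, and $|I|$ even via $S^{\mathrm{op}}$. Fix a fixed-point-free involution $\delta$ of $\Lambda$, and send the leftover $(i,r_a,\lambda)$ to $(j_a,1,\mu_a\delta)$. Then:
\begin{enumerate}
\item The sandwich entry is still $u_a$, so compatibility is automatic.
\item Every packet omits column $1$, so the bijection $a\mapsto(j_a,\mu_a\delta)$ fills all leftover multiplier elements.
\item The residual product lands in the target class of the partner $a\rho=(i,\lambda')$, where $\mu_{a\rho}=\mu_a\delta$. One then only needs $r_au_a=\varepsilon r_{a\rho}u_{a\rho}$ on each $2$-cycle of $\rho$, which is solvable.
\end{enumerate}

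Alternatively, your discarded first scheme can be rescued. Take $T\colon(i,\lambda)\mapsto(i',\lambda)$ and send each leftover source to the omitted multiplier element of packet $aT$. The residual products are then routed by the permutation $K$ defined by $(aK)\Pi_\Theta=(i,\mu_{aT})$. The resulting system
\[
r_au_{aT}c_{aT}=\varepsilon r_{aK}u_{aK}c_{aK}
\]
is solvable by Lemma~\ref{l:defect-equations}, for two reasons. Each $\langle T,K\rangle$-orbit $B$ is a union of $T$-orbits, hence has even size. Also $\prod_{a\in B}u_{aT}=\prod_{a\in B}u_{aK}=\prod_{a\in B}u_a$. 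This is exactly the construction in the proof of Theorem~\ref{t:Rees0-odd-defect-switch} with $\Psi=T\Theta$, and that proof never uses oddness of $|I|$ or $|\Lambda|$.
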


\begin{proof}
Necessity is shown in Lemma~\ref{l:Rees0-support-necessary}. Assume conversely that
$Q$ satisfies conditions \textup{(c)} and \textup{(d)} of Theorem~\ref{c:Rees0}.

By Lemma~\ref{l:cyclic2-quotient}, there is a normal subgroup $N$ of $G$ of
odd order such that $G/N$ is a non-trivial cyclic $2$-group. Since $N$ has a
complete mapping, Lemma~\ref{l:factorL} shows that it is enough to prove the
result after passing to the quotient by $N$. Thus, for the rest of the proof,
we may assume that
\(G=C_{2^d}\)
is cyclic of order $2^d$, with $d\ge1$. Let $\varepsilon$ denote the unique
element of order $2$ in $G$.

We first prove the case where $|\Lambda|$ is even. By
Lemma~\ref{l:Rees0-class-routing}, choose a $Q$-admissible routing
\(\Theta\colon I\times\Lambda\longrightarrow I\times\Lambda.\)
For $a=(i,\lambda)\in I\times\Lambda$, write
\(a\Theta=(j_a,\mu_a), \quad u_a=p_{\lambda,j_a}.\)
Then $u_a\in G$ for every $a$.

Choose a fixed-point-free involution
\(\delta\colon \Lambda\longrightarrow\Lambda.\)
Since $\Pi_\Theta$ is a bijection, there is a unique permutation
\(\rho\colon I\times\Lambda\longrightarrow I\times\Lambda\)
such that, for $a=(i,\lambda)$,
\(a\rho=(i,\lambda')\text{ for some }\lambda' \in \Lambda \quad\text{and}\quad \mu_{a\rho}=\mu_a\delta.\)
The map $\rho$ is a fixed-point-free involution.

For each two-element orbit $\{a,a\rho\}$ of $\rho$, choose one representative
$a$ and define
\(r_{a\rho}=1, \quad r_a=\varepsilon u_{a\rho}u_a^{-1}.\)
Then, for every $a\in I\times\Lambda$, we have
\begin{equation}\label{eq:Rees0-even-pairing}
        r_au_a=\varepsilon r_{a\rho}u_{a\rho}.
\end{equation}

We now define a map $f$ on the reduced Rees $0$-matrix semigroup. First, set
\(0f=0.\)
Next, let $a=(i,\lambda)\in I\times\Lambda$. Applying
Lemma~\ref{l:cyclic2-near-transversal} to the table
\((x,y)\longmapsto xu_ay\)
with omitted row $r_a$ and omitted column $1$, we obtain a bijection
\(\alpha_a\colon G\setminus\{r_a\}\longrightarrow G\setminus\{1\}\)
such that
\(\{\,xu_a(x\alpha_a)\mid x\in G\setminus\{r_a\}\,\} = G\setminus\{\varepsilon r_au_a\}.\)
For $x\in G$, define
\[
        (i,x,\lambda)f=
        \begin{cases}
        (j_a,x\alpha_a,\mu_a), & \text{if }x\ne r_a,\\[2mm]
        (j_a,1,\mu_a\delta), & \text{if }x=r_a.
        \end{cases}
\]

The map $f$ is bijective. Indeed, fix an $\mathcal H$-class
\(H_{j,\mu}=\{(j,g,\mu)\mid g\in G\}.\)
There is a unique $a$ such that
\(a\Theta=(j,\mu).\)
Then $H_a f$ contains every element of $H_{j,\mu}$
except $(j,1,\mu)$. There is also a unique $b=(i, \lambda)$ such that
\(b\Theta=(j,\mu\delta),\)
and such that $f$ sends $(i,r_b, \lambda)$ to $(j,1,\mu)$. Thus every
$\mathcal{H}$-class is in the image of $f$, and hence $f$ is a bijection.

It remains to check that $f$ induces an orthomorphism. Let $a=(i,\lambda)$. For $x\ne r_a$, the products
\((i,x,\lambda)(j_a,x\alpha_a,\mu_a)\)
run through every element of
\(H_{i,\mu_a}\)
except
\((i,\varepsilon r_au_a,\mu_a).\)
If $x=r_a$, we instead obtain the product
\((i,r_a,\lambda)(j_a,1,\mu_a\delta) = (i,r_au_a,\mu_a\delta).\)
Now, $a\rho$ has the same first coordinate as $a$ and satisfies
\(\mu_{a\rho}=\mu_a\delta.\)
The product omitted by applying $f$ to $H_{a\rho}$ without group coordinate $r_{a\rho}$ is
\((i,\varepsilon r_{a\rho}u_{a\rho},\mu_a\delta),\)
which is equal to
\((i,r_au_a,\mu_a\delta)\)
by \eqref{eq:Rees0-even-pairing}. Thus the residual product from $H_a$ fills exactly the product omitted by $H_{a\rho}$. Since $\rho$ is
a fixed-point-free involution, every omitted product is achieved exactly once.
Hence the map
\(x\longmapsto x\cdot xf\)
is a bijection. Therefore $f$ is a complete mapping of the reduced semigroup,
and Lemma~\ref{l:factorL} lifts this complete mapping to the original
semigroup.

This proves the theorem when $|\Lambda|$ is even.

If $|I|$ is even, we may apply the case already proved to the opposite
semigroup $S^{\mathrm{op}}$. The opposite semigroup is again a Rees
$0$-matrix semigroup, with the two index sets interchanged and with transposed
pattern. Conditions (c) and (d) of Theorem~\ref{c:Rees0} are interchanged
under transposition, and hence still hold. Therefore $S^{\mathrm{op}}$ has a
complete mapping; say $h$ is such a mapping.

In terms of the multiplication of $S$, the associated product map in
$S^{\mathrm{op}}$ is
\(x\longmapsto (xh)x.\)
This map is bijective. Since $h$ is a bijection, putting $y=xh$ gives
\(y(yh^{-1})=(xh)x.\)
Hence
\(y\longmapsto y(yh^{-1})\)
is a bijection of $S$. Thus $h^{-1}$ is a complete mapping of $S$.
\end{proof}

A square pattern matrix with a one-transversal satisfies the Hall inequalities.

\begin{cor}\label{c:Rees0-even-transversal}
Let
\(S=\mathcal M^0(G,I,\Lambda,P)\)
be a finite Rees $0$-matrix semigroup such that $G$ does not have a complete
mapping. Suppose that $|I|=|\Lambda|$ is even and that the pattern of $P$
contains a one-transversal. Then $S$ has a complete mapping.
\end{cor}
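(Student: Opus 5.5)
This corollary follows directly from Theorem~\ref{t:Rees0-noncomp-even}. Since $|I|=|\Lambda|$ is even, at least one index set has even cardinality and $G$ has no complete mapping. So it suffices to check that the pattern $Q$ of $P$ satisfies conditions (c) and (d) of Theorem~\ref{c:Rees0}.

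Write $n=|I|=|\Lambda|$. Because $|I|/|\Lambda|=1$, condition (c) becomes Hall's condition: any $s$ rows of $Q$ have non-zero entries in at least $s$ columns. Condition (d) is the same statement for columns.

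To verify (c), let $\Sigma\subseteq\Lambda$ be a set of $s$ rows. The one-transversal of $Q$ picks, for each $\lambda\in\Sigma$, a column $i_\lambda$ with $Q_{\lambda i_\lambda}=1$. These columns are pairwise distinct, since a one-transversal meets each column exactly once. Hence $|N_Q(\Sigma)|\ge s$.

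Condition (d) follows symmetrically, because the transversal also meets every column exactly once. Alternatively, one can appeal to the equivalence of (c) and (d) already established in the proof of Theorem~\ref{c:Rees0}, or to Corollary~\ref{cor:rees0-hall-square}.

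With (c) and (d) in hand, Theorem~\ref{t:Rees0-noncomp-even} gives a complete mapping of $S$. There is no real obstacle here. The only point to notice is that in the square case the weighted Hall inequality reduces exactly to the unweighted one, and a one-transversal supplies it.
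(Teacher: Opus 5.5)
Your proof is correct, and it is slightly more direct than the paper's. Both rest on Theorem~\ref{t:Rees0-noncomp-even}; the difference is where the Hall inequalities are checked.

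The paper never verifies conditions (c) and (d) of Theorem~\ref{c:Rees0} for the pattern of $P$ itself. Instead it proceeds in three steps.
\begin{enumerate}
\item It replaces every entry of $P$ off the chosen one-transversal by $0$.
\item It applies Theorem~\ref{t:Rees0-noncomp-even} to the resulting semigroup $S'$. The pattern of $S'$ is a permutation matrix, so (c) and (d) are immediate.
\item It notes that a complete mapping $f$ of $S'$ is also one of $S$. Each product $x\cdot xf$ with $x\ne0$ is non-zero in $S'$, so it uses only a transversal entry, and there $P$ and $P'$ agree.
\end{enumerate}

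You instead check (c) and (d) directly for the full pattern. This is just as easy, because these inequalities only get easier to satisfy when non-zero entries are added, and it removes the transfer step.

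The paper's detour does illustrate a reusable principle at the level of complete mappings. Filling in zero sandwich entries preserves complete mappings, whatever the group or the parities; this is the argument of Proposition~\ref{p:adding0}, now between two Rees $0$-matrix semigroups. Your version stays entirely inside the Hall criterion.
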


\begin{proof}
Choose one such one-transversal and let $P'$ be the matrix obtained from $P$ by
replacing every entry outside the transversal by $0$. The pattern of $P'$ is a
permutation matrix, and hence satisfies conditions (c) and (d) of
Theorem~\ref{c:Rees0}. By Theorem~\ref{t:Rees0-noncomp-even},
\(S'=\mathcal M^0(G,I,\Lambda,P')\)
has a complete mapping; call it $f$. For every non-zero $x\in S'$, the product
$x\cdot xf$ is non-zero, and therefore uses one of the entries on the chosen
transversal. The matrices $P$ and $P'$ agree on those entries. Thus the same
map $f$, on the same underlying set and with $0f=0$, is also a complete mapping
of $S$.
\end{proof}

We now turn our attention to the considerably more complicated case in which $G$ does not have a complete mapping and that $|I|$ and $|\Lambda|$ are both odd.

Let $\Gamma$ be a finite directed multigraph. For an edge $e$, write $e^-$
for its tail and $e^+$ for its head. If $H$ is an abelian group, the groups
$H^{E(\Gamma)}$ and $H^{V(\Gamma)}$ are taken with pointwise multiplication.
We define the \emph{incidence homomorphism}
\(\partial_\Gamma\colon H^{E(\Gamma)}\longrightarrow H^{V(\Gamma)}\)
 by
\[
(\partial_\Gamma z)_v
        =
        \left(
        \prod_{\substack{e\in E(\Gamma)\\ e^+=v}}z_e
        \right)
        \left(
        \prod_{\substack{e\in E(\Gamma)\\ e^-=v}}z_e^{-1}
        \right)
        \quad (v\in V(\Gamma)),
\]
 where empty products are $1_H$. Thus an edge contributes $z_e$ at its head and
$z_e^{-1}$ at its tail.

Let $\pi_0(\Gamma)$ be the set of connected components of the underlying
undirected graph of $\Gamma$. The \emph{componentwise augmentation homomorphism}
\(\varepsilon_\Gamma:H^{V(\Gamma)} \longrightarrow H^{\pi_0(\Gamma)}\)
is defined by
\[
(\varepsilon_\Gamma d)_W
        =
        \left(
        \prod_{v\in W}d_v
        \right)
        \quad (W\in\pi_0(\Gamma)).
\]
Here ``componentwise'' means that one product is taken separately on each
connected component $W$. The kernel of $\varepsilon_\Gamma$ is therefore
\[
        \ker\varepsilon_\Gamma
        =
        \left\{
        d\in H^{V(\Gamma)}:
        \left(
        \prod_{v\in W}d_v
        \right)=1_H
        \text{ for every }W\in\pi_0(\Gamma)
        \right\}.
\]
For $z\in H^{E(\Gamma)}$ and $W\in\pi_0(\Gamma)$,
\[
\begin{aligned}
        (\varepsilon_\Gamma(\partial_\Gamma z))_W
        &=
       \left(
        \prod_{v\in W}(\partial_\Gamma z)_v
        \right) \\[1mm]
        &=
        \left(
        \prod_{v\in W}
        \left[
        \left(
        \prod_{\substack{e\in E(\Gamma)\\ e^+=v}}z_e
        \right)
        \left(
        \prod_{\substack{e\in E(\Gamma)\\ e^-=v}}z_e^{-1}
        \right)
        \right]
        \right) \\[1mm]
        &=
        \left(
        \prod_{e\in E(W)}\left(z_ez_e^{-1}\right)
        \right) \\[1mm]
        &=1_H,
\end{aligned}
\]
where $E(W)$ denotes the set of edges whose endpoints lie in $W$. Since $W$ is
a connected component of the underlying undirected graph, every edge with one
endpoint in $W$ has both endpoints in $W$. Loops and parallel edges are counted
with their multiplicities in $E(W)$. This proves
\(\im\partial_\Gamma \subseteq \ker\varepsilon_\Gamma.\)

{Fix a connected component $W$ of the underlying undirected
graph. A loop is an edge $e$ of $\Gamma$ with $e^-=e^+$.  Such an edge contributes both $z_e$ and
$z_e^{-1}$ at the same vertex, so it contributes trivially to
$\partial_\Gamma$; in the oriented incidence matrix its column is therefore
the zero column. We may delete all loops without changing the image of
$\partial_\Gamma$.

We now compare our notation with that of Bevis, Hall and Katz
\cite[Section~II and Theorem~1(iii)]
{BevisHallKatz}. Their graph $\mathcal G$ is the loopless graph underlying
$W$, their vertices $v_1,\ldots,v_m$ are our vertices and their edges
$e_1,\ldots,e_n$ are our non-loop edges. Choose their incidence function so
that $f(e)=(e^+,e^-)$. Their incidence matrix $A$ then has a $1$ in the row
of the head and a $-1$ in the row of the tail. Thus $A$ is precisely the
integer exponent matrix of the restriction of $\partial_\Gamma$ to $W$.
Their Theorem~1(iii) gives unimodular matrices $S$ and $T$ for which $SAT$
has $m-1$ diagonal entries equal to $1$, one zero row and all remaining
entries equal to $0$. Hence the cokernel has one free generator, corresponding
to the single component product obstruction. We now prove the analogous
criterion directly with coefficients in an arbitrary abelian group.}

\begin{theorem}\label{t:incidence-criterion}
Let $\Gamma$ be a finite directed multigraph, let $H$ be an abelian group, and
let $d_v\in H$ for $v\in V(\Gamma)$. The system
\begin{equation}\label{eq:incidence-equation}
        (\partial_\Gamma z)_v=d_v
        \quad (v\in V(\Gamma))
\end{equation}
has a solution $z\in H^{E(\Gamma)}$ if and only if, for every connected
component $W$ of the underlying undirected graph of $\Gamma$,
\[
        \left(
        \prod_{v\in W}d_v
        \right)=1_H.
\]
\end{theorem}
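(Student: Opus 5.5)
Necessity is already in hand: the computation just before the statement shows $\operatorname{im}\partial_\Gamma\subseteq\ker\varepsilon_\Gamma$. So if $z$ solves \eqref{eq:incidence-equation}, then $d=\partial_\Gamma z$ lies in $\ker\varepsilon_\Gamma$, and that is exactly the displayed componentwise product condition. What remains is sufficiency, which says $\ker\varepsilon_\Gamma\subseteq\operatorname{im}\partial_\Gamma$. I will prove it directly with coefficients in the arbitrary abelian group $H$, by a spanning-forest argument, rather than passing through Smith normal forms and tensoring.

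\textbf{Reductions.} Both $\partial_\Gamma$ and $\varepsilon_\Gamma$ split as products over the connected components $W\in\pi_0(\Gamma)$. An edge only contributes at vertices inside its own component, so it suffices to solve the system on one component $W$ at a time with $\prod_{v\in W}d_v=1_H$. Loops contribute trivially, as noted above, and we set $z_e=1_H$ on them. We may also set $z_e=1_H$ on every edge outside a chosen spanning tree $T$ of the underlying undirected graph of $W$. Each edge of $T$ keeps its orientation from $\Gamma$. If $W$ is a single vertex $v$, the hypothesis reads $d_v=1_H$, and $z\equiv1_H$ works.

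\textbf{Leaf-stripping induction.} We induct on $|W|$. Choose a leaf $v$ of $T$, and let $e$ be its unique tree edge, joining $v$ to a neighbour $w$. If $e^+=v$, put $z_e=d_v$; if $e^-=v$, put $z_e=d_v^{-1}$. Either way $(\partial z)_v=d_v$, since $e$ is the only edge with nontrivial value incident to $v$. Now delete $v$ and $e$, and replace $d_w$ by $d'_w=d_w\cdot(\text{contribution of }e\text{ at }w)^{-1}$. The contribution of $e$ at $w$ is the inverse of its contribution at $v$, namely $d_v^{-1}$, so $d'_w=d_wd_v$. The new data on $W\setminus\{v\}$ therefore still has total product $\prod_{u\in W}d_u=1_H$. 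The tree $T\setminus v$ spans $W\setminus\{v\}$, so the induction hypothesis solves the reduced system. Restoring $e$ then gives a solution on $W$. Commutativity of $H$ is used throughout, so that the products may be rearranged freely.

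\textbf{Main obstacle.} The only step needing care is the sign and orientation bookkeeping at $w$. We must check that the correction $d'_w=d_wd_v$ is correct whether $e$ points into $v$ or out of $v$, and that the total product is preserved. Once a single edge is seen to contribute mutually inverse elements at its two ends, this is immediate. Everything else, including multigraph edges and loops, is absorbed by assigning them the value $1_H$.
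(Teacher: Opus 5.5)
Your proof is correct and follows essentially the same route as the paper: both solve one component at a time, set loops and non-tree edges to $1_H$, and solve along a spanning tree. The only difference is presentation. The paper roots the tree at $o$ and writes the solution in closed form, putting $\bigl(\prod_{w\in T_v}d_w\bigr)^{-1}$ on the edge from $v$ to its parent (inverted when $\Gamma$ orients that edge the other way), which is exactly what your leaf-stripping recursion produces when unwound.
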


\begin{proof}
	The necessity of the component product condition was shown above.

For sufficiency the equations may be solved separately on the connected
components. Fix one connected component $W$ and assume
\[
        \left(
        \prod_{v\in W}d_v
        \right)=1_H.
\]
Choose a spanning tree of the underlying undirected graph of $W$ with
root $o$, and direct the chosen tree towards $o$. We refer to this partial orientation of edges as the \emph{auxiliary orientation} or \emph{auxiliary direction}.

  If $v\ne o$, let $T_v$ be the
set of vertices in the subtree rooted at $v$, and let $e_v$ be the tree edge
from $v$ to its parent in the auxiliary orientation. Put
\[
        y_{e_v}
        =
        \left(
        \prod_{w\in T_v}d_w
        \right)^{-1}.
\]
If the direction of $e_v$ in $\Gamma$ agrees with the auxiliary direction, set
$z_{e_v}=y_{e_v}$; if the direction of $e_v$ in $\Gamma$ is opposite to the
auxiliary direction, set $z_{e_v}=y_{e_v}^{-1}$. Put $z_e=1_H$ for every edge
outside the chosen tree.

{
For our calculations, we work in the auxiliary orientation. When a tree edge has the opposite orientation in $\Gamma$, the inversion in the definition of $z_{e_v}$ compensates for the reversal. Hence we obtain the same result as in the original orientation.}

\begingroup
Let $v\ne o$, and let $C(v)$ be the set of children of $v$. The tree edges
entering $v$ are the edges $e_u$ with $u\in C(v)$, and the only tree edge
leaving $v$ is $e_v$. Hence
\[
\begin{aligned}
(\partial_\Gamma z)_v
&=\left(\prod_{u\in C(v)}y_{e_u}\right)y_{e_v}^{-1}
 =\left(\prod_{u\in C(v)}\prod_{w\in T_u}d_w^{-1}\right)
  \left(\prod_{w\in T_v}d_w\right)\\
&=\left(\prod_{u\in C(v)}\prod_{w\in T_u}d_w^{-1}\right)
  \left(d_v\prod_{u\in C(v)}\prod_{w\in T_u}d_w\right)
 =d_v.
\end{aligned}
\]
\endgroup
At the root there is no outgoing tree edge. Thus
\[
(\partial_\Gamma z)_o
=\prod_{u\in C(o)}y_{e_u}
=\left(\prod_{w\in W\setminus\{o\}}d_w\right)^{-1}
=d_o.
\]
The last equality follows from
\[
        \left(
        \prod_{w\in W}d_w
        \right)=1_H.
\]

Thus we constructed a solution on $W$. Repeating the construction on each
component gives $z\in H^{E(\Gamma)}$ with $\partial_\Gamma z=d$.
\end{proof}

\begin{definition}\label{d:defect-graph}
Let $A$ be a finite set and let $T,K\in\operatorname{Sym}(A)$. The
\emph{$(T,K)$-defect graph} $\Gamma(T,K)$ is the directed multigraph with
vertex set $A$ and, for each $x\in A$, two directed edges
\(R_x\colon x\longrightarrow xK^{-1}, \quad C_x\colon xT^{-1}\longrightarrow xK^{-1}.\)
\end{definition}
We note that parallel edges will be treated as distinct in all of the following calculations, and that loops will always provide self-cancelling contributions to the involved products.

\begin{lemma}\label{l:defect-graph-components}
The connected components of the underlying undirected graph of $\Gamma(T,K)$ are exactly the orbits of the group
$\langle T,K\rangle$ on $A$.
\end{lemma}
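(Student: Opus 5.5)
We prove the two inclusions separately: each connected component is contained in a single orbit, and each orbit is contained in a single component. The only input is the definition of $\Gamma(T,K)$, together with the fact that, because $A$ is finite, the group $\langle T,K\rangle$ coincides with the monoid generated by $T$ and $K$, since $T^{-1}$ and $K^{-1}$ are positive powers of $T$ and $K$. Throughout, the orbit relation is the equivalence relation $x\sim y$ if $y=xg$ for some $g\in\langle T,K\rangle$.

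\emph{Components lie inside orbits.} It suffices to show that the two endpoints of every edge lie in the same orbit, since connectivity in the underlying undirected graph is the transitive closure of edge adjacency. The edge $R_x$ joins $x$ to $xK^{-1}$, and these lie in the same orbit. The edge $C_x$ joins $xT^{-1}$ to $xK^{-1}$. Both endpoints are images of $x$ under elements of $\langle T,K\rangle$, so they lie in the orbit of $x$. Loops and parallel edges cause no difficulty here.

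\emph{Orbits lie inside components.} It suffices to show that, for every $y\in A$, both $y$ and $yK$ and $y$ and $yT$ lie in the same component. The orbit of $y$ is reached from $y$ by finitely many applications of $K$ and $T$, using the remark above that inverses are positive powers, and the component relation is an equivalence, so these two cases are enough.
\begin{itemize}
\item For $K$: put $x=yK$. The edge $R_x$ joins $x=yK$ to $xK^{-1}=y$.
\item For $T$: put $x=yT$. Then $C_x$ joins $xT^{-1}=y$ to $xK^{-1}=yTK^{-1}$. Next put $x'=yTK^{-1}$. The edge $R_{x'}$ joins $x'$ to $x'K^{-1}$, which is not what we need. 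Instead, the edge $R_{yT}$ joins $yT$ to $yTK^{-1}$. Combining this with $C_{yT}$, which joins $y$ to $yTK^{-1}$, gives an undirected path $y$, then $yTK^{-1}$, then $yT$. Hence $y$ and $yT$ are in the same component.
\end{itemize}

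Since the orbits of $\langle T,K\rangle$ and the components both partition $A$, the two inclusions together show the partitions coincide.

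The only point needing care is the $T$-step. A single $C$-edge links $y$ not to $yT$ directly but to $yTK^{-1}$, so the argument has to route through the $R$-edge attached to the same vertex $yT$. The rest is bookkeeping.
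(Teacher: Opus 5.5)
Your proof is correct and matches the paper's argument: every edge joins two points of one orbit, and conversely $R_{yK}$, together with the pair $C_{yT}$, $R_{yT}$ sharing the endpoint $yTK^{-1}$, shows that each component is closed under $K$ and $T$. The paper closes components under $K^{\pm1}$ and $T^{\pm1}$ directly instead of using finiteness, and you should delete the abandoned remark about $R_{x'}$ in the $T$-step, which plays no role in the argument.
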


\begin{proof}
First, every edge of $\Gamma(T,K)$ has its two endpoints in the same
$\langle T,K\rangle$-orbit: the edge $R_x$ joins $x$ to $xK^{-1}$, and the edge
$C_x$ joins $xT^{-1}$ to the same vertex $xK^{-1}$. Therefore each connected
component is contained in a $\langle T,K\rangle$-orbit.

For the reverse inclusion, fix $x\in A$. The edge $R_x$ shows that $x$ is
connected to $xK^{-1}$. Applying the same observation to $xK$ shows that $x$
is also connected to $xK$. Moreover, $R_x$ connects $x$ to $xK^{-1}$, while
$C_x$ connects $xT^{-1}$ to the same vertex $xK^{-1}$; hence $x$ is connected
to $xT^{-1}$. Applying this last conclusion to $xT$ shows that $x$ is also
connected to $xT$. Thus every connected component is closed under $K^{\pm1}$
and $T^{\pm1}$. It is therefore a union of $\langle T,K\rangle$-orbits.
Combined with the first paragraph, this proves the claim.
\end{proof}

\begin{lemma}\label{l:defect-equations}
Let $C$ be a finite cyclic $2$-group, let $A$ be a finite index set, and let $T$ and
$K$ be permutations of $A$. For each $a\in A$, let $b_a\in C$. Then the
system
\begin{equation}\label{eq:defect-equation-system}
        r_{aK}r_a^{-1}c_{aK}c_{aT}^{-1}=b_a
        \quad (a\in A)
\end{equation}
has a solution with $r_a,c_a\in C$ if and only if, for every orbit $B$ of the
group $\langle T,K\rangle$ on $A$, one has
\[
        \left(\prod_{a\in B} b_a\right)=1.
\]
\end{lemma}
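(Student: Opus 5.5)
The plan is to recast the system \eqref{eq:defect-equation-system} as an incidence system $\partial_\Gamma z=d$ on the defect graph $\Gamma=\Gamma(T,K)$ of Definition~\ref{d:defect-graph}. Then we apply Theorem~\ref{t:incidence-criterion} with $H=C$, and identify the components with the $\langle T,K\rangle$-orbits via Lemma~\ref{l:defect-graph-components}.

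\textbf{Step 1: reindex.} Substitute $x=aK$, so $a=xK^{-1}$. The equation indexed by $a$ becomes
\[
r_x\,r_{xK^{-1}}^{-1}\,c_x\,c_{xK^{-1}T}^{-1}=b_{xK^{-1}}.
\]
It is cleaner to index equations by vertices $v=a$ and put the unknowns on edges. Assign the variable $r_x$ to the edge $R_x\colon x\to xK^{-1}$, and the variable $c_x$ to the edge $C_x\colon xT^{-1}\to xK^{-1}$, each up to a sign convention.

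At the vertex $v=a$, the edges with head $a$ are $R_{aK}$ and $C_{aK}$. The edges with tail $a$ are $R_a$ and $C_{aT}$. Hence
\[
(\partial_\Gamma z)_a=z_{R_{aK}}\,z_{C_{aK}}\,z_{R_a}^{-1}\,z_{C_{aT}}^{-1}.
\]
Setting $z_{R_x}=r_x$ and $z_{C_x}=c_x$ makes this exactly $r_{aK}r_a^{-1}c_{aK}c_{aT}^{-1}$, which is the left side of \eqref{eq:defect-equation-system}. So the system is literally $\partial_\Gamma z=b$, with $b$ viewed as an element of $C^{A}$.

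I would check this head/tail bookkeeping carefully; it is the only real content. If $R_a$ or $C_{aT}$ is a loop, or coincides with an incoming edge, the contributions cancel consistently on both sides, in line with the remark after Definition~\ref{d:defect-graph}. Distinct edges carry distinct variables, since $x\mapsto R_x$ and $x\mapsto C_x$ are injective, so the map $(r,c)\mapsto z$ is a bijection $C^A\times C^A\to C^{E(\Gamma)}$.

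\textbf{Step 2: apply the criterion.} By Theorem~\ref{t:incidence-criterion} with the abelian group $H=C$, a solution exists if and only if $\prod_{v\in W}b_v=1$ for every connected component $W$ of the underlying undirected graph of $\Gamma$. By Lemma~\ref{l:defect-graph-components}, these components are exactly the $\langle T,K\rangle$-orbits $B$, which gives the stated criterion.

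The cyclic $2$-group hypothesis is not used beyond commutativity. The main obstacle is purely the orientation and indexing verification in Step 1.
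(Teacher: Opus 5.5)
Your proposal is correct and follows the paper's proof. You use the same edge assignment $z_{R_x}=r_x$, $z_{C_x}=c_x$ on $\Gamma(T,K)$, the same head/tail count at each vertex, and the same appeal to Theorem~\ref{t:incidence-criterion} and Lemma~\ref{l:defect-graph-components}. The only difference is in necessity: the paper telescopes the products over each $\langle T,K\rangle$-orbit directly, whereas you take necessity from the necessity half of the incidence criterion, which is equally valid.
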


\begin{proof}
First suppose that a solution exists, and let $B$ be an orbit of
$\langle T,K\rangle$ on $A$. Since $B$ is invariant under both $T$ and $K$, the
maps $a\mapsto aK$ and $a\mapsto aT$ are permutations of $B$. Multiplying
\eqref{eq:defect-equation-system} over all $a\in B$ gives
\[
        \left(\prod_{a\in B} r_{aK}\right)
        \left(\prod_{a\in B} r_a^{-1}\right)
        \left(\prod_{a\in B} c_{aK}\right)
        \left(\prod_{a\in B} c_{aT}^{-1}\right)
        =
        \left(\prod_{a\in B} b_a\right).
\]
The first two products on the left are inverse to one another, because
$a\mapsto aK$ is a permutation of $B$. The last two products are inverse to one
another, because both $a\mapsto aK$ and $a\mapsto aT$ are permutations of $B$.
Thus the left-hand side is $1$, and hence $\left(\prod_{a\in B}b_a\right)=1$.

Conversely, assume that $\left(\prod_{a\in B}b_a\right)=1$ for every orbit $B$ of
$\langle T,K\rangle$. Consider the defect graph $\Gamma(T,K)$ of
Definition~\ref{d:defect-graph}. Assign the variable $r_x$ to the edge $R_x$
and the variable $c_x$ to the edge $C_x$. At a vertex $a\in A$, the incoming
edges are exactly $R_{aK}$ and $C_{aK}$, while the outgoing edges are exactly
$R_a$ and $C_{aT}$. Therefore the incidence equation
$(\partial_{\Gamma(T,K)}z)_a=b_a$ is
\(r_{aK}c_{aK}r_a^{-1}c_{aT}^{-1}=b_a.\)
The group $C$ is cyclic, hence abelian, so this incidence equation is
precisely the equation indexed by $a$ in \eqref{eq:defect-equation-system}.
By Lemma~\ref{l:defect-graph-components}, the connected components of
$\Gamma(T,K)$ are precisely the orbits of $\langle T,K\rangle$. The assumed
orbit product conditions are therefore exactly the component product conditions
in Theorem~\ref{t:incidence-criterion}. Applying that theorem with $H=C$ and
$d_a=b_a$ gives elements $r_x,c_x\in C$ satisfying
\eqref{eq:defect-equation-system}.
\end{proof}

\begingroup
We introduce now the notation needed for the next theorem. Let
$S=\mathcal M^0(G,I,\Lambda,P)$, where $G$ has no complete mapping, $|I|$
and $|\Lambda|$ are odd and the pattern $Q$ of $P$ satisfies the equivalent
conditions of Theorem~\ref{c:Rees0}. Fix $N\unlhd G$ of odd order such that
$C=G/N$ is a non-trivial cyclic $2$-group, let $\varepsilon$ be the unique
element of order $2$ in $C$ and write $\bar p_{\lambda j}=Np_{\lambda j}$
when $p_{\lambda j}\ne0$. By $A$ we will denote $I\times\Lambda$.

Let $\Theta$ and $\Psi$ be $Q$-admissible routings of $A$. For
$a=(i,\lambda)$, write $a\Theta=(j_a,\mu_a)$ and
$a\Psi=(j'_a,\mu'_a)$, and put $u_a=\bar p_{\lambda,j_a}$,
$v_a=\bar p_{\lambda,j'_a}$, $T=\Psi\Theta^{-1}$ and
$K=\Pi_\Psi\Pi_\Theta^{-1}$. The pair $(\Theta,\Psi)$ is said to be
\emph{defect-compatible modulo $N$} if, for every orbit $B$ of
$\langle T,K\rangle$ on $A$,
\begin{equation}\label{eq:Rees0-odd-defect-condition}
        \prod_{a\in B}v_a
        =
        \varepsilon^{|B|}\prod_{a\in B}u_a.
\end{equation}

We have the following theorem.

\begin{theorem}\label{t:Rees0-odd-defect-switch}
If there is a defect-compatible pair of $Q$-admissible routings, then $S$
has a complete mapping.
\end{theorem}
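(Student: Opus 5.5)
The plan is to build the complete mapping directly from the two routings: $\Theta$ supplies near-transversals in the local packets, and $\Psi$ routes the single leftover element of each $\mathcal H$-class. First, by Lemma~\ref{l:factorL} (with $N$ of odd order, which has a complete mapping by the Hall--Paige theorem), it suffices to treat $\bar S=\mathcal M^0(C,I,\Lambda,\bar P)$ with $C=G/N$ cyclic of order $2^d$. We set $0f=0$ and work on the non-zero part, indexed by $A\times C$ with $A=I\times\Lambda$; write $H_a$ for the $\mathcal H$-class indexed by $a\in A$. The unknowns are a row $r_a\in C$ for each $a$ and a column $c_a\in C$ for each $a$, to be fixed at the end.

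For $a=(i,\lambda)$, apply Lemma~\ref{l:cyclic2-near-transversal} to the table $(x,y)\mapsto xu_ay$ with omitted row $r_a$ and omitted column $\gamma_{a\Theta}$, where we put $\gamma_{b}:=c_{b\Theta^{-1}}$. This gives a bijection $\alpha_a\colon C\setminus\{r_a\}\to C\setminus\{\gamma_{a\Theta}\}$ whose products cover $H_{a\Pi_\Theta}$ except $(i,\varepsilon r_au_a\gamma_{a\Theta},\mu_a)$. Define $(i,x,\lambda)f=(j_a,x\alpha_a,\mu_a)$ for $x\ne r_a$, and send the leftover element to
\[(i,r_a,\lambda)f=(j'_a,\gamma_{a\Psi},\mu'_a).\]
Both kinds of product are non-zero, because $\Theta$ and $\Psi$ are $Q$-admissible. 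The leftover product is $(i,r_av_a\gamma_{a\Psi},\mu'_a)$, which lies in $H_{a\Pi_\Psi}$.

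Bijectivity of $f$ follows the pattern of Theorem~\ref{t:Rees0-noncomp-even}. Each class $H_b$ is hit by the near-transversal coming from $b\Theta^{-1}$ in every column except $\gamma_b$. That column is supplied exactly once, by the leftover of the unique $a'=b\Psi^{-1}$, since it is sent to column $\gamma_{a'\Psi}=\gamma_b$.

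For the products, the class $H_{a\Pi_\Theta}$ misses $\varepsilon r_au_a\gamma_{a\Theta}$, and the only leftover landing in that class comes from the unique $x$ with $x\Pi_\Psi=a\Pi_\Theta$, i.e.\ $xK=a$. So we need, for all $x\in A$,
\[
r_xv_x\gamma_{x\Psi}=\varepsilon\, r_{xK}u_{xK}\gamma_{xK\Theta}.
\]
Substituting $\gamma_{x\Psi}=c_{x\Psi\Theta^{-1}}=c_{xT}$ and $\gamma_{xK\Theta}=c_{xK}$ turns this into
\[
r_{xK}r_x^{-1}c_{xK}c_{xT}^{-1}=\varepsilon v_xu_{xK}^{-1}\quad(x\in A),
\]
which is exactly the system \eqref{eq:defect-equation-system} with $b_x=\varepsilon v_xu_{xK}^{-1}$. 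By Lemma~\ref{l:defect-equations}, it is solvable if and only if $\prod_{x\in B}\varepsilon v_xu_{xK}^{-1}=1$ on every orbit $B$ of $\langle T,K\rangle$. Since $K$ permutes $B$ and $\varepsilon=\varepsilon^{-1}$, this is precisely \eqref{eq:Rees0-odd-defect-condition}. With that solution every product is attained exactly once, so $f$ is a complete mapping of $\bar S$, and Lemma~\ref{l:factorL} lifts it to $S$.

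The main obstacle is the bookkeeping of indices: choosing the omitted column of $H_b$ as $c_{b\Theta^{-1}}$ so that the compatibility equations take exactly the shape of Lemma~\ref{l:defect-equations}, and verifying that $x\mapsto xK$ and $x\mapsto xT$ match the incoming and outgoing leftovers. I would check this carefully against the definitions $T=\Psi\Theta^{-1}$ and $K=\Pi_\Psi\Pi_\Theta^{-1}$, adjusting inverses if a direction convention is off. Flipping inverses changes only the orientation of the defect graph, not its orbits, so the solvability criterion is unaffected.
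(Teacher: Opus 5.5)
Your proposal is correct and is essentially the paper's proof. It uses the same reduction to $\bar S$ via Lemma~\ref{l:factorL}, near-transversals in the packets $H_a\times H_{a\Theta}$ with omitted row $r_a$ and omitted column $c_a$ (your $\gamma_{a\Theta}=c_a$ is only a relabelling by target class), leftovers routed by $\Psi$ to column $c_{aT}$, and the fill equations reduced to Lemma~\ref{l:defect-equations} with $b_a=\varepsilon v_au_{aK}^{-1}$. The index directions you were worried about already check out as written, so no inverses need adjusting.
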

\endgroup

\begin{proof}
Let $(\Theta,\Psi)$ be a defect-compatible pair of $Q$-admissible routings.
By Lemma~\ref{l:cyclic2-quotient}, a subgroup $N$ with the stated properties
exists. Since $N$ has odd order, it has a complete mapping. Lemma~\ref{l:factorL}
therefore reduces the proof to the quotient Rees $0$-matrix semigroup
\(\bar S=\mathcal M^0(C,I,\Lambda,\bar P).\)
Thus it is enough to construct a complete mapping in the quotient case. By abuse of notation, we may assume that $G$ denotes the cyclic $2$-group $C$, the
entries of $P$ are in $C \cup \{0\}$, and that
$\varepsilon$ is the unique element of order $2$ in $G$.

We first record the consequences of admissibility that are used in the
construction. If $a=(i,\lambda)$, $a\Theta=(j_a,\mu_a)$ and
$a\Psi=(j'_a,\mu'_a)$, then $Q_{\lambda,j_a}=Q_{\lambda,j'_a}=1$; hence
$u_a=p_{\lambda,j_a}$ and $v_a=p_{\lambda,j'_a}$ are non-zero sandwich entries.
Moreover, $\Theta$ and $\Psi$ are bijections, and so are $\Pi_\Theta$ and
$\Pi_\Psi$. Hence $T$ and $K$ are permutations of $A$, and their defining identities are
\((aT)\Theta=a\Psi, \quad (aK)\Pi_\Theta=a\Pi_\Psi \quad (a\in A).\)

For each $a\in A$, put
\(b_a=\varepsilon v_a u_{aK}^{-1}.\)
Let $B$ be an orbit of $\langle T,K\rangle$ on $A$. Since $K$ preserves $B$,
\[
        \left(\prod_{a\in B}u_{aK}\right)=\left(\prod_{a\in B}u_a\right).
\]
Using the hypothesis \eqref{eq:Rees0-odd-defect-condition}, we obtain
\[
\begin{aligned}
        \left(\prod_{a\in B} b_a\right)
        &=\left(\prod_{a\in B}\left(\varepsilon v_a u_{aK}^{-1}\right)\right)\\
        &=\left(
        \varepsilon^{|B|}
        \left(\prod_{a\in B} v_a\right)
        \left(\prod_{a\in B}u_{aK}\right)^{-1}
        \right)\\
        &=\left(
        \varepsilon^{|B|}
        \left(\prod_{a\in B} v_a\right)
        \left(\prod_{a\in B}u_a\right)^{-1}
        \right)\\
        &=1.
\end{aligned}
\]
Lemma~\ref{l:defect-equations} therefore gives elements $r_a,c_a\in G$, for
$a\in A$, such that
\(r_{aK}r_a^{-1}c_{aK}c_{aT}^{-1} = \varepsilon v_a u_{aK}^{-1} \quad (a\in A).\)
Since $G$ is abelian and $\varepsilon^2=1$, this is equivalent to
\begin{equation}\label{eq:Rees0-odd-fill-equation}
        r_av_ac_{aT}
        =
        \varepsilon r_{aK}u_{aK}c_{aK}
        \quad (a\in A).
\end{equation}

We now define a bijection $f\colon \bar S\longrightarrow\bar S$. Set $0f=0$. For
$a=(i,\lambda)\in A$, write $a\Theta=(j_a,\mu_a)$ and consider the group table of the local packet $H_a\times H_{a\Theta}$, given by
\((x,y)\longmapsto xu_ay.\)
Apply Lemma~\ref{l:cyclic2-near-transversal} with omitted row $r_a$ and omitted
column $c_a$. This gives a bijection
\(\alpha_a\colon G\setminus\{r_a\}\longrightarrow G\setminus\{c_a\}\)
such that
\(\{\,xu_a(x\alpha_a)\mid x\in G\setminus\{r_a\}\,\} = G\setminus\{\varepsilon r_au_ac_a\}.\)
For $x\ne r_a$, define
\((i,x,\lambda)f=(j_a,x\alpha_a,\mu_a).\)

It remains to map the one element omitted from each local packet. Write
$a\Psi=(j'_a,\mu'_a)$. Since $(aT)\Theta=a\Psi$, the $\mathcal{H}$-class indexed by
$a\Psi$ is the class used as the second argument in the near-transversal indexed by $aT$, and that
near-transversal omits precisely the element with group coordinate $c_{aT}$.
We therefore define
\((i,r_a,\lambda)f=(j'_a,c_{aT},\mu'_a).\)

This defines a bijection on the non-zero elements. Indeed, the
near-transversal indexed by $a$ maps $H_{i,\lambda} \setminus
\{(i,r_a,\lambda)\}$ onto the target $\mathcal H$-class indexed by
$a\Theta$, with the single element of group coordinate $c_a$ removed. The
remaining source elements are sent to the omitted elements of the target classes
indexed by $(aT)\Theta$. Since $T$ is a permutation of $A$, every omitted target
element is filled exactly once. Together with $0f=0$, this proves that $f$ is a
bijection of $\bar S$.

It remains to check the orthomorphism. For $x\ne r_a$, the products
\((i,x,\lambda)(j_a,x\alpha_a,\mu_a)\)
run through every element of the product $\mathcal H$-class indexed by
\(a\Pi_\Theta=(i,\mu_a)\)
except the element with group coordinate
\(\varepsilon r_au_ac_a.\)
Thus the near-transversal indexed by $a$ leaves exactly one product missing.
The residual element of $H_{i,\lambda}$ gives
\((i,r_a,\lambda)(j'_a,c_{aT},\mu'_a) = (i,r_av_ac_{aT},\mu'_a).\)
By the definition of $K$,
\((aK)\Pi_\Theta=a\Pi_\Psi=(i,\mu'_a),\)
so this residual product lies in the same product $\mathcal H$-class as the
missing product from the local packet indexed by $aK$. Equation
\eqref{eq:Rees0-odd-fill-equation} says that its group coordinate is exactly
that missing coordinate:
\(r_av_ac_{aT} = \varepsilon r_{aK}u_{aK}c_{aK}.\)
Since $K$ is a permutation of $A$, every omitted product is filled exactly once.
Therefore the map $x\mapsto x\cdot xf$ is a bijection on the non-zero elements
of $\bar S$; it also sends $0$ to $0$. Hence $f$ is a complete mapping of
$\bar S$.

Finally, Lemma~\ref{l:factorL} lifts this complete mapping from $\bar S$ to
$S$. Therefore $S$ has a complete mapping.
\end{proof}

The previous results can be further simplified if the defect graph is connected.

\begin{cor}\label{c:Rees0-odd-connected-defect}
With the hypotheses and notation of Theorem~\ref{t:Rees0-odd-defect-switch},
suppose that the group $\langle T,K\rangle$
acts transitively on $A=I\times\Lambda$.
If
\[
        \left(\prod_{a\in A} v_a\right)
        =
        \varepsilon
        \left(\prod_{a\in A} u_a\right),
\]
then $S$ has a complete mapping.
\end{cor}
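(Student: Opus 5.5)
This is a direct specialization of Theorem~\ref{t:Rees0-odd-defect-switch}. It suffices to check that, under transitivity, the displayed identity is exactly the defect-compatibility condition \eqref{eq:Rees0-odd-defect-condition}.

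When $\langle T,K\rangle$ acts transitively on $A$, there is exactly one orbit, namely $B=A$. So the defect-compatibility of $(\Theta,\Psi)$ modulo $N$ reduces to the single equation
\[
\prod_{a\in A}v_a=\varepsilon^{|A|}\prod_{a\in A}u_a .
\]
Here $|A|=|I|\cdot|\Lambda|$, and the standing hypotheses of Theorem~\ref{t:Rees0-odd-defect-switch} assume that $|I|$ and $|\Lambda|$ are both odd. Hence $|A|$ is odd. Since $\varepsilon$ has order $2$, we get $\varepsilon^{|A|}=\varepsilon$. The required condition is therefore precisely the displayed hypothesis of the corollary.

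The products make sense as elements of $C$, and their order does not matter because $C$ is cyclic and hence abelian. Thus $(\Theta,\Psi)$ is a defect-compatible pair of $Q$-admissible routings, and Theorem~\ref{t:Rees0-odd-defect-switch} yields a complete mapping of $S$.

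The only point needing care is the parity of $|A|$, which comes from the standing oddness assumption.
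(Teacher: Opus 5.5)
Your proposal is correct and matches the paper's proof: transitivity leaves the single orbit $B=A$, and the standing oddness of $|I|$ and $|\Lambda|$ gives $\varepsilon^{|A|}=\varepsilon$. The displayed hypothesis is therefore exactly the defect-compatibility condition, and Theorem~\ref{t:Rees0-odd-defect-switch} applies.
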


\begin{proof}
Since $|I|$ and $|\Lambda|$ are both odd, the set $A=I\times\Lambda$ has odd
cardinality. Thus
\(\varepsilon^{|A|}=\varepsilon.\)
The result is therefore the transitive-orbit case of
Theorem~\ref{t:Rees0-odd-defect-switch}.
\end{proof}

We can obtain further simplifications if we restrict $\Psi$ in terms of $\Theta$.

\begin{definition}\label{d:Rees0-general-residual-routing}
Let
\(S=\mathcal M^0(G,I,\Lambda,P)\)
be a finite Rees $0$-matrix semigroup, let $Q$ be the pattern of $P$, and put
$A=I\times\Lambda$. Let $\Theta$ be a $Q$-admissible routing. For
$a=(i,\lambda)\in A$, write
\(a\Theta=(j_a,\mu_a).\)
A permutation $R$ of $A$ is called residual for $\Theta$ if
\(p_{\lambda,j_{aR}}\ne0 \quad(a=(i,\lambda)\in A).\)
For such $R$ define
\(a\mu_R=(i,\mu_{aR}) \quad(a=(i,\lambda)\in A).\)
\end{definition}

\begingroup
Retain the notation of Definition~\ref{d:Rees0-general-residual-routing}
and suppose that $G$ has no complete mapping. Fix $N\unlhd G$ of odd order
such that $C=G/N$ is a non-trivial cyclic $2$-group, let $\varepsilon$ be
the unique element of order $2$ in $C$ and write
$\bar p_{\lambda j}=Np_{\lambda j}$ when $p_{\lambda j}\ne0$. If $R$ is
residual for $\Theta$ and $a=(i,\lambda)$, put
$u_a=\bar p_{\lambda,j_a}$ and $w_a=\bar p_{\lambda,j_{aR}}$. The
permutation $R$ is said to be \emph{cycle-compatible modulo $N$} if
$\mu_R$ is a bijection and, for every cycle $B$ of $R$,
\begin{equation}\label{eq:general-residual-cycle-condition}
        \prod_{a\in B}w_a
        =
        \varepsilon^{|B|}\prod_{a\in B}u_a.
\end{equation}

The next theorem gives the residual-routing criterion.

\begin{theorem}\label{t:Rees0-general-residual-routing}
With the notation above, if $R$ is cycle-compatible modulo $N$, then $S$
has a complete mapping.
\end{theorem}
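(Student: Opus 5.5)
The plan is to deduce the statement directly from Theorem~\ref{t:Rees0-odd-defect-switch}. The idea is to manufacture from $\Theta$ and $R$ a second $Q$-admissible routing $\Psi$ whose associated permutation $T$ is exactly $R$. Then the cycle conditions \eqref{eq:general-residual-cycle-condition} should assemble into the orbit conditions \eqref{eq:Rees0-odd-defect-condition}.

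\emph{Step 1: define $\Psi$.} Set $\Psi=R\Theta$, so that $a\Psi=(aR)\Theta=(j_{aR},\mu_{aR})$ for $a=(i,\lambda)$. I would then check the two conditions of Definition~\ref{d:Q-admissible-routing}.
\begin{itemize}
\item $\Psi$ is a bijection, being a composite of two bijections.
\item The support condition reads $Q_{\lambda,j_{aR}}=1$. This is exactly the hypothesis that $R$ is residual for $\Theta$, namely $p_{\lambda,j_{aR}}\neq 0$.
\item The map $\Pi_\Psi$ sends $(i,\lambda)$ to $(i,\mu_{aR})$, so $\Pi_\Psi=\mu_R$. This is a bijection because cycle-compatibility includes the requirement that $\mu_R$ be bijective.
\end{itemize}
Hence $\Psi$ is a $Q$-admissible routing.

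\emph{Step 2: identify the data in Theorem~\ref{t:Rees0-odd-defect-switch}.} For the pair $(\Theta,\Psi)$ we get:
\begin{itemize}
\item $T=\Psi\Theta^{-1}=R$;
\item $K=\Pi_\Psi\Pi_\Theta^{-1}=\mu_R\Pi_\Theta^{-1}$;
\item $j'_a=j_{aR}$, so $v_a=\bar p_{\lambda,j_{aR}}=w_a$, while $u_a$ is unchanged.
\end{itemize}

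\emph{Step 3: verify defect-compatibility.} Let $B$ be an orbit of $\langle T,K\rangle=\langle R,K\rangle$. Since $B$ is $R$-invariant, it is a disjoint union of cycles $B_1,\dots,B_t$ of $R$. For each cycle, \eqref{eq:general-residual-cycle-condition} gives $\prod_{a\in B_s}w_a=\varepsilon^{|B_s|}\prod_{a\in B_s}u_a$. Multiplying these equalities in the abelian group $C$ and using $\sum_s|B_s|=|B|$ yields
\[
\prod_{a\in B}v_a=\varepsilon^{|B|}\prod_{a\in B}u_a,
\]
which is \eqref{eq:Rees0-odd-defect-condition}. So $(\Theta,\Psi)$ is defect-compatible modulo $N$, and Theorem~\ref{t:Rees0-odd-defect-switch} gives a complete mapping of $S$.

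I expect no genuine obstacle. The only points needing care are that the orbits of $\langle T,K\rangle$ are coarser than the cycles of $R$, so the multiplicative cycle conditions suffice, and that the standing hypotheses of Theorem~\ref{t:Rees0-odd-defect-switch} are in force. Those hypotheses are that $|I|$ and $|\Lambda|$ are odd and that $Q$ satisfies the conditions of Theorem~\ref{c:Rees0}; the latter holds because $\Theta$ is a $Q$-admissible routing, which already yields a complete mapping of $\mathcal M^0(1,I,\Lambda,Q)$. If the odd-cardinality assumption is not in force, the construction in the proof of Theorem~\ref{t:Rees0-odd-defect-switch} never uses parity, so the same argument applies verbatim.
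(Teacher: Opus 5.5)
Your reduction is correct, and it takes a genuinely different route from the paper. With $\Psi=R\Theta$ you get $T=R$, $\Pi_\Psi=\mu_R$ and $v_a=w_a$, so $(\Theta,\Psi)$ is a pair of $Q$-admissible routings. Every orbit of $\langle R,K\rangle$ is a union of $R$-cycles, so the cycle conditions \eqref{eq:general-residual-cycle-condition} multiply up to \eqref{eq:Rees0-odd-defect-condition}, and Theorem~\ref{t:Rees0-odd-defect-switch} then finishes.

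The paper instead proves the statement directly. It solves only the decoupled system $c_{aR}c_a^{-1}=\varepsilon w_a^{-1}u_a$ on the functional graph of $R$, which is a disjoint union of directed cycles, so Theorem~\ref{t:incidence-criterion} applies cycle by cycle. It then sets $r_a=\varepsilon u_a^{-1}c_a^{-1}$, which forces every omitted product and every residual product to have identity group coordinate. Matching omitted products to residual products then reduces to the bijectivity of $\mu_R$ and $\Pi_\Theta$ on $\mathcal H$-class indices. In the language of your reduction, this is the special solution of \eqref{eq:Rees0-odd-fill-equation} in which both sides equal $1$.

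The trade-off is as follows.
\begin{itemize}
\item The paper's route is self-contained. It needs neither the coupled system of Lemma~\ref{l:defect-equations} nor any parity or Hall-type standing hypothesis.
\item Your route is shorter given Theorem~\ref{t:Rees0-odd-defect-switch}, and it makes explicit that the residual-routing criterion is the case $\Psi=R\Theta$ of the defect-switch criterion.
\item Your route also yields slightly more: it suffices to impose \eqref{eq:general-residual-cycle-condition} on each orbit of $\langle R,\mu_R\Pi_\Theta^{-1}\rangle$, rather than on each cycle of $R$.
\end{itemize}

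On the parity caveat, you need not appeal to the internals of another proof. If $|I|$ or $|\Lambda|$ is even, Theorem~\ref{t:Rees0-noncomp-even} applies directly, because the existence of $\Theta$ already gives conditions (c) and (d) of Theorem~\ref{c:Rees0}.
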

\endgroup

\begin{proof}
Let $R$ be a cycle-compatible residual permutation.
By Lemma~\ref{l:cyclic2-quotient}, such a subgroup $N$ exists. Since $N$ has
odd order, it has a complete mapping. As in the previous theorem, and by Lemma~\ref{l:factorL}, it suffices
to prove the result for
$ \mathcal M^0(C,I,\Lambda,\bar P)$. 

For each $a\in A$, consider the $C$-equation
              \begin{equation}\label{eq:general-residual-column-equation}
        c_{aR}c_a^{-1}=\varepsilon w_a^{-1}u_a.
\end{equation}
This is an incidence equation on the directed graph whose edges are
$a\to aR$. On a cycle $B$ of $R$, the product of the right-hand sides in
\eqref{eq:general-residual-column-equation} is
\[
        \left(\prod_{a\in B}\left(\varepsilon w_a^{-1}u_a\right)\right)
        =
        \varepsilon^{|B|}
        \left(\prod_{a\in B}w_a\right)^{-1}
        \left(\prod_{a\in B}u_a\right),
\]
which is $1$ by \eqref{eq:general-residual-cycle-condition}. Theorem~\ref{t:incidence-criterion}
therefore gives a solution of \eqref{eq:general-residual-column-equation}.
Put
\(r_a=\varepsilon u_a^{-1}c_a^{-1}.\)
Then
\begin{equation}\label{eq:general-residual-row-choice}
        \varepsilon r_au_ac_a=1.
\end{equation}

For $a=(i,\lambda)$, apply Lemma~\ref{l:cyclic2-near-transversal} to the table of the local packet on $H_a \times H_{a\Theta}$, given by
\((x,y)\longmapsto xu_ay \quad(x,y\in C),\)
with omitted row $r_a$ and omitted column $c_a$. By
\eqref{eq:general-residual-row-choice}, the omitted product is the identity
of $C$. Hence there is a bijection
\(\alpha_a\colon C\setminus\{r_a\}\longrightarrow C\setminus\{c_a\}\)
such that
\(\{xu_a(x\alpha_a)\mid x\in C\setminus\{r_a\}\}=C\setminus\{1\}.\)

Define $f$ on $\mathcal M^0(C,I,\Lambda,\bar P)$ by $0f=0$ and, for
$a=(i,\lambda)$, by \((i,x,\lambda)f=(j_a,x\alpha_a,\mu_a)\), for all \(x\ne r_a\), while \((i,r_a,\lambda)f=(j_{aR},c_{aR},\mu_{aR})\).
The map $f$ is a bijection. Indeed, the first line maps all but one element of
$H_a$ onto the $\mathcal H$-class indexed by $a\Theta$, omitting exactly the
coordinate $c_a$. The omitted coordinate in the class indexed by $a\Theta$ is
filled by the residual element from the class indexed by $aR^{-1}$. Since
$\Theta$ and $R$ are permutations of $A$, every non-zero element is used once.

The product map is also bijective. The products coming from the first line run
through all non-identity coordinates in the product $\mathcal H$-class indexed
by $a\Pi_\Theta$. The residual product from $a$ has group coordinate
\(r_aw_ac_{aR} = \varepsilon u_a^{-1}c_a^{-1}w_ac_{aR} =1,\)
where the last equality follows from
\eqref{eq:general-residual-column-equation}. This residual product lies in the
product $\mathcal H$-class indexed by $a\mu_R$. Since $\mu_R$ is a bijection,
the residual products fill precisely the identity coordinate elements omitted
by the near-transversals. Thus $x\mapsto x\cdot xf$ is a bijection, and $f$ is
a complete mapping of the quotient Rees $0$-matrix semigroup. Lemma~\ref{l:factorL}
lifts it to $S$.
\end{proof}

\begingroup
We introduce now the notation and the external results needed for the next
theorem. Let $S=\mathcal M^0(G,\mathcal K,\mathcal Y,P)$, where $G$ has no
complete mapping and $\kappa=|\mathcal K|$ and $\nu=|\mathcal Y|$ are odd.
Fix $N\unlhd G$ of odd order such that $C=G/N$ is a non-trivial cyclic
$2$-group and let $\varepsilon$ be the unique element of order $2$ in $C$.
For $I\in\mathcal Y$ and $Q\in\mathcal K$, write $I\sim Q$ when
$p_{I,Q}\ne0$, and write $\bar p_{I,Q}=Np_{I,Q}$ when $I\sim Q$. We regard
$\{(Q,I):I\sim Q\}$ as a bipartite support relation on
$\mathcal K\times\mathcal Y$. A positive balanced support weighting is a
positive weighting in the sense defined before Theorem~\ref{c:Rees0}, with
margins $\nu$ on $\mathcal K$ and $\kappa$ on $\mathcal Y$.

Let $A$ be a real matrix. The matrix $A$ is said to be \emph{totally
unimodular} if every square submatrix has determinant $0$, $1$ or $-1$. A
polyhedron is said to be \emph{integral} if all its vertices are integral.

We first prove the following fact.

\begin{theorem}
The oriented node-edge incidence matrix of a finite directed graph is totally
unimodular.
\end{theorem}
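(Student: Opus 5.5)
The plan is the classical argument by induction on the size $k$ of a square submatrix. Let $M$ be the oriented incidence matrix, with rows indexed by vertices and columns by edges. A loop gives a zero column, as noted earlier in the paper. A non-loop edge $e$ gives a column with exactly two non-zero entries: $+1$ in the row of $e^+$ and $-1$ in the row of $e^-$. Parallel edges only repeat columns. So every entry of $M$ lies in $\{0,1,-1\}$, and every column has at most one $+1$, at most one $-1$, and no other non-zero entries. Since this column property is inherited by any submatrix, I would prove the stronger statement that \emph{every} matrix with this column property has all square minors in $\{0,\pm1\}$. The theorem then follows directly.

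Let $B$ be a $k\times k$ submatrix of $M$. For $k=1$ the claim is immediate because the entries lie in $\{0,\pm1\}$. For $k\ge2$ I would split into three cases according to the columns of $B$.

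First, if some column of $B$ is zero, then $\det B=0$.

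Second, suppose some column of $B$ has exactly one non-zero entry, necessarily $\pm1$. Expanding $\det B$ along that column gives $\pm\det B'$, where $B'$ is a $(k-1)\times(k-1)$ submatrix of $B$. Hence $B'$ is also a submatrix of $M$ and satisfies the column property. By induction $\det B'\in\{0,\pm1\}$, so $\det B\in\{0,\pm1\}$ as well.

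Third, suppose every column of $B$ has exactly two non-zero entries, one $+1$ and one $-1$. Then every column sums to zero. Equivalently, the sum of all rows of $B$ is the zero vector, so the rows are linearly dependent over $\mathbb Q$ and $\det B=0$.

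These three cases are exhaustive, which completes the induction.

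I do not expect a genuine obstacle here. The only point needing care is to make sure the case split really covers everything, which it does by the column property. One should also note explicitly that restricting to a subset of rows can turn a two-entry column into a one-entry or zero column, which is exactly why the induction is run over arbitrary submatrices rather than incidence matrices of graphs.

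In the paper's multiplicative notation, this is the integer exponent matrix of $\partial_\Gamma$. It is therefore consistent with the Bevis--Hall--Katz comparison above, and it is what the subsequent Hoffman--Kruskal integrality argument for positive balanced support weightings will use.
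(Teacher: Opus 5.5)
Your proof is correct and follows essentially the same route as the paper: induction on the order of the square submatrix, expanding along a column with at most one non-zero entry, and otherwise noting that each column contains one $1$ and one $-1$, so the rows sum to zero and the determinant vanishes. Your formulation via the inherited column property, with loops treated as zero columns, simply makes explicit what the paper's induction uses implicitly.
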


\begin{proof}
Let $A$ be a square submatrix of an oriented incidence matrix. We use
induction on the order of $A$. If a column of $A$ has at most one non-zero
entry, expansion along that column reduces the assertion to a smaller
submatrix. Otherwise every column of $A$ contains both non-zero entries of
the corresponding column of the full oriented incidence matrix, namely one
$1$ and one $-1$. Therefore the sum of the rows of $A$ is zero and
$\det A=0$. The result follows.
\end{proof}

We shall use the following standard consequence of total unimodularity.

\begin{theorem}
If $A$ is totally unimodular and $b$ is integral, then every vertex of the
polyhedron $\{x\in\mathbb R^n:x\ge0,\ Ax=b\}$ is integral.
\end{theorem}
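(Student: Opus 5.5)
The plan is to prove this in the usual way, through basic solutions and Cramer's rule. Let $P=\{x\in\mathbb R^n:x\ge0,\ Ax=b\}$ and let $v$ be a vertex of $P$. First I will characterize vertices: $v$ is a vertex exactly when the columns of $A$ indexed by its support $\sigma=\{j:v_j>0\}$ are linearly independent. The reason is that a non-zero vector $w$ supported in $\sigma$ with $Aw=0$ would give $v\pm tw\in P$ for small $t>0$, so $v$ would be the midpoint of two distinct points of $P$. Conversely, independence makes $v$ the unique point of $P$ that vanishes off $\sigma$, and that is enough for it to be a vertex. Only the first implication is needed here.

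Next I will pass to a square nonsingular system. Let $A_\sigma$ be the submatrix of $A$ on the columns in $\sigma$. It has full column rank $k=|\sigma|$, so I can choose $k$ rows $\rho$ for which the $k\times k$ submatrix $B=A_{\rho,\sigma}$ is nonsingular. The restriction $v_\sigma$ satisfies $Bv_\sigma=b_\rho$, and $v$ is zero outside $\sigma$. Since $A$ is totally unimodular and $B$ is a nonsingular square submatrix, $\det B=\pm1$. By Cramer's rule each coordinate of $v_\sigma$ is $\det B_j/\det B$. Here $B_j$ is $B$ with column $j$ replaced by the integral vector $b_\rho$, so $\det B_j$ is an integer. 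Hence $v_\sigma\in\mathbb Z^k$, and therefore $v$ is integral.

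There is one degenerate case: if $\sigma=\varnothing$, then $v=0$, which is integral. No other step is a real obstacle. The only point needing care is the vertex characterization, namely that a vertex, being an extreme point, has linearly independent support columns. The perturbation argument above settles it. I will also note that, for the paper's purposes, the statement gives no claim that $P$ has any vertices, so the theorem is vacuous when $P$ is empty. Because $P$ lies in the non-negative orthant, it contains no line, so it has a vertex whenever it is non-empty. That fact is what makes the result useful for the transportation polytopes in the subsequent application.
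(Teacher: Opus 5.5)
Your proof is correct and is essentially the paper's argument: both reduce the vertex to a nonsingular square integral system whose determinant is $\pm1$ by total unimodularity, and then apply Cramer's rule. The only cosmetic difference is in the bookkeeping. The paper keeps all $n$ coordinates, adjoins unit rows for the active constraints $x_j=0$, and notes that the augmented matrix is still totally unimodular. You instead first discard the zero coordinates, using the extreme-point characterization, and work directly with a $k\times k$ submatrix of $A$; this is exactly the minor obtained by expanding the paper's determinant along its unit rows.
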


\begin{proof}
Let $x$ be a vertex. Among the equations $Ax=b$ and the active coordinate
equations $x_j=0$, choose $n$ linearly independent equations. Their coefficient
matrix $B$ is obtained by adjoining unit rows to rows of $A$. The matrix $B$ is
totally unimodular, since a determinant involving a unit row expands to a
minor of $A$. Since $B$ is nonsingular, $\det B=\pm1$. Its right-hand side is
integral, and Cramer's rule gives $x\in\mathbb Z^n$.
\end{proof}

We shall also use the following regular case of K\H{o}nig's line-colouring theorem.

\begin{quotedtheorem}[K\H{o}nig]
Every finite $d$-regular bipartite multigraph is the disjoint union of $d$
perfect matchings.
\end{quotedtheorem}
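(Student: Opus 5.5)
Since this is the regular case of K\H{o}nig's theorem, I would prove it by induction on $d$, extracting one perfect matching at a time with Hall's marriage theorem \cite[Theorem~1]{Hall}, exactly as it was used in the proof of Theorem~\ref{t:kh}. The case $d=0$ is vacuous: the multigraph has no edges and is the union of zero matchings. In a $d$-regular bipartite multigraph with parts $X$ and $Y$ and $d\ge1$, counting edges gives $d|X|=d|Y|$, so $|X|=|Y|$ and a matching saturating $X$ is perfect.

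\textbf{Hall's condition.} Let $S\subseteq X$. The $d|S|$ edges incident with $S$, counted with multiplicity, all end in $N(S)$. The vertices of $N(S)$ have in total exactly $d|N(S)|$ incident edges. Hence $d|S|\le d|N(S)|$, so $|N(S)|\ge|S|$. Parallel edges cause no trouble: Hall's theorem is applied to the underlying simple bipartite graph, which has the same neighbourhoods. Hall's theorem therefore gives a matching saturating $X$, which is perfect; call it $M$. Choosing one representative edge for each matched pair lets us regard $M$ as a set of edges of the multigraph.

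\textbf{Induction step.} Delete the edges of $M$. Each vertex loses exactly one incident edge, so the remaining multigraph is $(d-1)$-regular and bipartite on the same vertex set. By induction its edge set is a disjoint union of $d-1$ perfect matchings. Adding $M$ gives $d$ pairwise disjoint perfect matchings whose union is the whole edge set.

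The only point needing care is the multigraph bookkeeping: degrees must count parallel edges, and each matching must use a specific copy of an edge. I expect no real obstacle. An alternative route uses total unimodularity, namely the preceding theorem applied to the bipartite incidence polytope $\{x\ge0: Ax=\mathbf 1\}$, where the vector with all coordinates $1/d$ is feasible. That route also yields an integral perfect matching, but the Hall argument is simpler, so I would use it.
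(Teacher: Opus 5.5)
Your proof is correct and follows the same route as the paper: double-counting to verify Hall's condition, Hall's theorem to extract one perfect matching, and induction on $d$ after deleting that matching. Your extra bookkeeping (the base case $d=0$, the equality $|X|=|Y|$, and choosing specific copies of parallel edges) just makes explicit details that the paper's proof leaves implicit.
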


\begin{proof}
Let $A$ be a set of vertices in one bipartite class. Double-counting the
edges incident with $A$ gives $d|A|\le d|N(A)|$. Hall's theorem gives a
perfect matching. Removing this matching leaves a $(d-1)$-regular bipartite
multigraph, and hence induction completes the proof. The theorem is proved.
\end{proof}

Let $\sigma\colon\mathcal Y\to\mathcal K$ be a map and let
$\rho\in\operatorname{Sym}(\mathcal Y)$. The pair $(\sigma,\rho)$ is said
to be an \emph{anchored pair modulo $N$}  if $I\sim I\sigma$ and
$I\sim(I\rho)\sigma$, for all $I\in\mathcal Y$, and the following condition
holds on every cycle of $\rho$. For $I_0\in\mathcal Y$, let $m$ be the
least positive integer such that $I_0\rho^m=I_0$, and put
$I_t=I_0\rho^t$, for $0\le t\le m$. Then
$(I_0,I_1,\ldots,I_{m-1})$ is a cycle of $\rho$, and we require
\begin{equation}\label{eq:general-anchored-cycle-condition}
        \prod_{t=0}^{m-1}\bar p_{I_t,I_{t+1}\sigma}
        =
        \varepsilon^m\prod_{t=0}^{m-1}\bar p_{I_t,I_t\sigma}.
\end{equation}
The condition is independent of the initial member of the cycle.

So we have the following theorem.

\begin{theorem}\label{t:Rees0-general-anchored}
With the notation above, if the support of $P$ admits a positive balanced
support weighting and an anchored pair modulo $N$, then $S$ has a complete
mapping.
\end{theorem}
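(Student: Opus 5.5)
The plan is to reduce to Theorem~\ref{t:Rees0-general-residual-routing}. From the data $(\sigma,\rho)$ and the positive weighting I will build a $Q$-admissible routing $\Theta$ together with a residual permutation $R$ that is cycle-compatible modulo $N$. The anchored pair will govern a few distinguished \emph{anchor} elements. All other elements will be put into $R$-cycles of even length whose two sides of \eqref{eq:general-residual-cycle-condition} agree trivially. This matters because a fixed point of $R$ would demand $w_a=\varepsilon u_a$ with $w_a=u_a$, which is impossible.

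\textbf{Step 1 (integrality).} Write $h_{I,Q}=1+h'_{I,Q}$ on the support. The conditions $h'\ge0$, $h'=0$ off the support, and prescribed row and column sums define a transportation polyhedron. It is nonempty by hypothesis, and its constraint matrix is the oriented incidence matrix of a bipartite digraph after a sign change, hence totally unimodular. By the two theorems proved just before the statement, it has an integral vertex. This gives an integral positive balanced weighting $h$: $h_{I,Q}\ge1$ whenever $I\sim Q$, each row sum is $\kappa$, and each column sum is $\nu$.

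\textbf{Step 2 (the routing).} Form the bipartite multigraph on $\mathcal Y\sqcup\mathcal K$ with $h_{I,Q}$ parallel edges between $I$ and $Q$. Every $I$ has degree $\kappa$ and every $Q$ has degree $\nu$. Label the $\kappa$ edges at $I$ bijectively by the elements $(i,I)$, $i\in\mathcal K$; this fixes $j_a$ for $a=(i,I)$. Since $h_{I,I\sigma}\ge1$, I choose the labelling so that a designated anchor $a_I=(i_I,I)$ has $j_{a_I}=I\sigma$. Next I must choose the targets $\mu_a$ so that two bijectivity conditions hold: each class $(j,\mu)$ receives exactly one element, and for each fixed $i$ the map $I\mapsto\mu_{(i,I)}$ is a bijection of $\mathcal Y$. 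I will encode this as a regular bipartite multigraph and decompose it into perfect matchings by K\H{o}nig's theorem. Concretely, the $\nu$ elements routed to a fixed $j$ are distributed among the $\mu$'s, and the $i$-coordinates are arranged so that each matching corresponds to one value of $\mu$.

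\textbf{Step 3 (the residual $R$).} Define $R$ on the anchors by $a_I R=a_{I\rho}$. Then $w_{a_I}=\bar p_{I,(I\rho)\sigma}$ and $u_{a_I}=\bar p_{I,I\sigma}$, so \eqref{eq:general-anchored-cycle-condition} is exactly \eqref{eq:general-residual-cycle-condition} on these cycles. The residual condition $p_{\lambda,j_{aR}}\ne0$ is the hypothesis $I\sim(I\rho)\sigma$. The remaining $\nu(\kappa-1)$ elements, an even number, I will pair into $2$-cycles $\{a,b\}$ with the same $\mathcal Y$-coordinate $I$. Then both $j_a$ and $j_b$ are compatible with $I$, and $w_aw_b=u_bu_a=u_au_b$, which together with $\varepsilon^2=1$ gives \eqref{eq:general-residual-cycle-condition}. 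Within each $I$ there are $\kappa-1$ non-anchors, which is even, so this pairing is possible.

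The main obstacle is making $\mu_R\colon(i,I)\mapsto(i,\mu_{aR})$ bijective simultaneously with the routing constraints of Step~2. The anchor moves from $I$ to $I\rho$ and so shifts its $\mu$-value, while the $2$-cycles swap $\mu$-values between two $i$'s in the same row $I$. My first attempt is to choose the $\mu$-labels and the pairing jointly: I will take the König decomposition in Step~2 fine enough that, for each $i$, the values $\mu_{(i,I)}$ and their images under the pairing/anchor shift still run over $\mathcal Y$ exactly once. If a direct choice fails, I will modify the pairing by Hall-type exchanges, using the slack supplied by positivity of $h$.
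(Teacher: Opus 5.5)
\textbf{Gap: bijectivity of $\mu_R$ is left unproved.} Steps~1 and~2 are sound and, once made precise, coincide with the paper's construction. The paper subtracts $1$ only at the anchor positions, i.e.\ $h'_{Q,I}=h_{Q,I}-\delta_{Q,I\sigma}$, rather than on the whole support, but either total-unimodularity argument gives what you need. Your routing is the paper's: the $\nu$-regular multigraph on two copies of $\mathcal K$, with edge $i\to j_{(i,I)}$ for each $I$, and K\H{o}nig colours indexed by $\mathcal Y$. The problem is Step~3. Cycle-compatibility requires $\mu_R$ to be a bijection, and you leave this open. With your choices it can genuinely fail. You let the anchor rows $i_I$ vary with $I$, and you choose the non-anchor $2$-cycles separately in each column $I$. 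On a fixed row $i$, $\mu_R$ takes the values $\mu_{(i,I)R}$. The partners $(i,I)R=(i',I)$ and $(i,I')R=(i'',I')$ lie in rows that change with $I$. The bijectivity of $\Pi_\Theta$ therefore does not prevent $\mu_{(i',I)}=\mu_{(i'',I')}$ when $i'\ne i''$. An anchor cycle $(i_I,I)\mapsto(i_{I\rho},I\rho)$ also wanders across rows. Neither ``a K\H{o}nig decomposition fine enough'' nor ``Hall-type exchanges'' amounts to an argument here.

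\textbf{Missing idea: make both choices uniform in $I$.} Fix one row $i_*\in\mathcal K$. For every $I$, give the label $(i_*,I)$ to an edge from $I$ to $I\sigma$; the labellings at different $I$ are independent, and $h_{I,I\sigma}\ge1$, so this is allowed. Since $\kappa$ is odd, choose a single fixed-point-free involution $\lambda$ of $\mathcal K\setminus\{i_*\}$. Set $(i,I)R=(i\lambda,I)$ for $i\ne i_*$ and $(i_*,I)R=(i_*,I\rho)$. Then $\mu_R$ on row $i\ne i_*$ is $I\mapsto\mu_{(i\lambda,I)}$, and on row $i_*$ it is $I\mapsto\mu_{(i_*,I\rho)}$. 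Both are permutations of $\mathcal Y$, precisely because $\Pi_\Theta$ is a bijection, i.e.\ $I\mapsto\mu_{(i,I)}$ is a permutation for every fixed $i$. Your checks of residuality and of the cycle condition then go through unchanged: the $2$-cycles give $w_aw_b=u_bu_a$, and the anchor cycles reproduce the anchored condition. This is exactly the paper's proof, with $Q_*$ in the role of $i_*$.
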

\endgroup

\begin{proof}
Let $h$ be a positive balanced support weighting and let $(\sigma,\rho)$ be an anchored pair modulo $N$.
We will show that we can apply Theorem~\ref{t:Rees0-general-residual-routing}.

Choose $Q_\ast\in\mathcal K$, and choose a fixed-point-free involution
\(\lambda\colon \mathcal K\setminus\{Q_\ast\}\longrightarrow \mathcal K\setminus\{Q_\ast\}.\)
For $Q\in\mathcal K$, put
\(c_\sigma(Q)=|\{I\in\mathcal Y\mid I\sigma=Q\}|.\)
Define
\(h'_{Q,I}=h_{Q,I}-\delta_{Q,I\sigma}.\)
Then $h'_{Q,I}\ge0$, and $h'_{Q,I}=0$ unless $I\sim Q$. The row and column sums of the corresponding matrix are
\[
\left(\sum_{I\in\mathcal Y}h'_{Q,I}\right)=\nu-c_\sigma(Q),
        \quad
        \left(\sum_{Q\in\mathcal K}h'_{Q,I}\right)=\kappa-1.
\]
We will show that we can replace $h'$ with an integer-valued matrix.

Let $\Gamma$ be the bipartite graph with vertex classes $\mathcal K$ and
$\mathcal Y$ and edge set
\(E(\Gamma)=\{(Q,I)\in\mathcal K\times\mathcal Y\mid I\sim Q\}.\)
Consider the polytope of non-negative edge weights
$x=(x_{Q,I})_{(Q,I)\in E(\Gamma)}$ satisfying the constraints
\[
\sum_{I:I\sim Q}x_{Q,I}=\nu-c_\sigma(Q)
        \quad(Q\in\mathcal K),
\]
and
\[
\sum_{Q:I\sim Q}x_{Q,I}=\kappa-1
        \quad(I\in\mathcal Y).
\]
It is non-empty, because $(h'_{Q,I})$ is a feasible point, and it is bounded.
After multiplying the equations belonging to one vertex class by $-1$, its
constraint matrix is the oriented node-edge incidence matrix of $\Gamma$ and
is therefore totally unimodular by the incidence matrix theorem stated above.
The right-hand side is integral, so the Hoffman--Kruskal theorem stated above
shows that every vertex of this polytope is integral. Choosing one vertex and
extending it by zero outside $E(\Gamma)$ gives a non-negative integer matrix
$(u_{Q,I})$ supported on $I\sim Q$ and satisfying the same constraints.

For each $I\in\mathcal Y$ and $Q\in\mathcal K$, create $u_{Q,I}$ slots of type $(Q,I)$. For each fixed $I$, there are $\kappa-1$ such slots, so assign them bijectively
to the elements of $\mathcal K\setminus\{Q_\ast\}$. For $B \in \mathcal K$, define $A(B,I)\in\mathcal K$ by
\(A(Q_\ast,I)=I\sigma,\)
and, for $B\ne Q_\ast$, by taking $A(B,I)$ to be the first coordinate of the type of the
slot assigned to $B$ (for the fixed $I$). Then
\(I\sim A(B,I) \quad(B\in\mathcal K,\ I\in\mathcal Y).\)
For each fixed $Q\in\mathcal K$, the number of pairs $(B,I)$ with
$A(B,I)=Q$ is
\[
        c_\sigma(Q)+\left(\sum_{I\in\mathcal Y}u_{Q,I}\right)=\nu.
\]
Thus the pairs $(B,I)$ define a $\nu$-regular bipartite multigraph between two
copies of $\mathcal K$, with an edge from $B$ to $A(B,I)$ for each
$I\in\mathcal Y$. By the regular bipartite multigraph theorem stated above, this multigraph is
the union of $\nu$ perfect matchings. Index the colours by the elements of
$\mathcal Y$, and let $C(B,I)$ be the colour of the edge corresponding to
$(B,I)$.

Put $A_0=\mathcal K\times\mathcal Y$, and define
\((B,I)\Theta=(A(B,I),C(B,I)).\)

	 This is a $Q$-admissible routing.

Indeed, the support condition follows from
$I\sim A(B,I)$. For each colour $J\in\mathcal Y$, the edges of colour $J$
form a perfect matching from the left copy of $\mathcal K$ to the right copy.
Consequently, for every $(Q,J)\in\mathcal K\times\mathcal Y$, there is exactly
one pair $(B,I)$ such that
\(A(B,I)=Q, \quad C(B,I)=J.\)
Thus $\Theta$ is a bijection. Moreover,
\((B,I)\Pi_\Theta=(B,C(B,I)).\)
At each fixed left vertex $B$, the $\nu$ incident edges receive all $\nu$
colours exactly once. Hence $I\mapsto C(B,I)$ is a permutation of
$\mathcal Y$ for every $B$, and $\Pi_\Theta$ is a bijection. Both conditions
in Definition~\ref{d:Q-admissible-routing} are therefore satisfied.

 Define a permutation $R$ of $A_0$ by
\((B,I)R=(B\lambda,I) \quad(B\ne Q_\ast),\)
and
\((Q_\ast,I)R=(Q_\ast,I\rho).\)

We claim that the permutation $R$ is residual for $\Theta$. Let $a=(B,I)$. If $B\ne Q_\ast$, then
$aR=(B\lambda,I)$, and the first coordinate of $(aR)\Theta$ is
$A(B\lambda,I)$; hence $I\sim A(B\lambda,I)$. If $B=Q_\ast$, then
$aR=(Q_\ast,I\rho)$, and the first coordinate of $(aR)\Theta$ is
\(A(Q_\ast,I\rho)=(I\rho)\sigma;\)
hence $I\sim(I\rho)\sigma$ by hypothesis. These are exactly the non-zero
sandwich entry conditions in
Definition~\ref{d:Rees0-general-residual-routing}. Furthermore,
\((B,I)\mu_R=(B,C(B\lambda,I)) \quad(B\ne Q_\ast),\)
whereas
\((Q_\ast,I)\mu_R=(Q_\ast,C(Q_\ast,I\rho)).\)
For $B\ne Q_\ast$, the map $I\mapsto C(B\lambda,I)$ is a permutation of
$\mathcal Y$; for $B=Q_\ast$, the map is the composition of $\rho$ with the
permutation $J\mapsto C(Q_\ast,J)$. Thus the associated map $\mu_R$ is a bijection.

It remains to check \eqref{eq:general-residual-cycle-condition}. The two-cycles
$(B,I)\leftrightarrow(B\lambda,I)$, with $B\ne Q_\ast$, contribute the same two
quotient sandwich entries to both sides of
\eqref{eq:general-residual-cycle-condition}, and $\varepsilon^2=1$. On the
row $Q_\ast$, the cycles of $R$ are exactly the cycles of $\rho$, and
\eqref{eq:general-residual-cycle-condition} is precisely
\eqref{eq:general-anchored-cycle-condition}. The result follows from Theorem~\ref{t:Rees0-general-residual-routing}.
\end{proof}

We will later need to consider the special case of $S_3$. Here, the quotient by $A_3$ records only the sign of a sandwich entry.

\begin{cor}\label{c:Rees0-S3-anchored}
Let
\(S=\mathcal M^0(S_3,\mathcal K,\mathcal Y,P)\)
be a finite Rees $0$-matrix semigroup satisfying the hypotheses of
Theorem~\ref{t:Rees0-general-anchored}, except that the quotient entries
$\bar p_{I,Q}$ are replaced by their signs
\(s(Q,I)=\sgn(p_{I,Q}).\)
If, for every cycle $(I_0,I_1,\ldots,I_{m-1})$ of $\rho$,
\begin{equation}\label{eq:S3-anchored-cycle-condition}
        (-1)^m
        \left(
        \prod_{t=0}^{m-1}
        \frac{s((I_{t+1})\sigma,I_t)}{s((I_t)\sigma,I_t)}
        \right)=1,
\end{equation}
then $S$ has a complete mapping.
\end{cor}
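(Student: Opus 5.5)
This is a direct specialization of Theorem~\ref{t:Rees0-general-anchored} to $G=S_3$. The plan is to verify that the hypotheses of that theorem hold with $N=A_3$, and that the sign condition \eqref{eq:S3-anchored-cycle-condition} is exactly the anchored cycle condition \eqref{eq:general-anchored-cycle-condition}.

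First, $S_3$ has cyclic Sylow $2$-subgroups of order $2$, so by Theorem~\ref{Thm:HP} it has no complete mapping. Take $N=A_3$, which is normal of odd order $3$. Then $C=S_3/A_3$ is cyclic of order $2$, as in Lemma~\ref{l:cyclic2-quotient}, and $\varepsilon$ is its non-identity element. The map $C\to\{\pm1\}$ induced by $\sgn$ is an isomorphism sending $\varepsilon$ to $-1$. Under it, each quotient entry $\bar p_{I,Q}=A_3p_{I,Q}$ corresponds to $s(Q,I)=\sgn(p_{I,Q})$.

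The hypotheses assumed include the positive balanced support weighting and the support conditions $I\sim I\sigma$ and $I\sim(I\rho)\sigma$. It therefore remains only to check \eqref{eq:general-anchored-cycle-condition} on each cycle $(I_0,\dots,I_{m-1})$ of $\rho$. Apply the sign isomorphism and write multiplicatively in $\{\pm1\}$. The condition becomes
\[
\prod_{t=0}^{m-1}s(I_{t+1}\sigma,I_t)=(-1)^m\prod_{t=0}^{m-1}s(I_t\sigma,I_t).
\]
Every sign is $\pm1$ and so is its own inverse. Hence this equation is equivalent to \eqref{eq:S3-anchored-cycle-condition}, where the quotient of the two products is the same as their product.

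Thus $(\sigma,\rho)$ is an anchored pair modulo $A_3$, and Theorem~\ref{t:Rees0-general-anchored} yields a complete mapping of $S$. The only point needing care is the indexing convention: $s(Q,I)$ is indexed by (column, row), whereas $\bar p_{I,Q}$ is indexed by (row, column). One must match $s(I_{t+1}\sigma,I_t)$ with $\bar p_{I_t,I_{t+1}\sigma}$ and $s(I_t\sigma,I_t)$ with $\bar p_{I_t,I_t\sigma}$. With that matching the identification is immediate, so no real obstacle is expected.
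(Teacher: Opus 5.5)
Your proposal is correct and follows the paper's proof exactly. Take $N=A_3$ and identify $S_3/A_3$ with $\{\pm1\}$ via the sign map, so that $\varepsilon\mapsto-1$ and $\bar p_{I,Q}\mapsto s(Q,I)$; then \eqref{eq:S3-anchored-cycle-condition} becomes \eqref{eq:general-anchored-cycle-condition}, and Theorem~\ref{t:Rees0-general-anchored} applies, with your remarks on the transposed indexing and on signs being self-inverse only making explicit what the paper leaves implicit.
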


\begin{proof}
Take $N=A_3$. Then $S_3/N\cong\{\pm1\}$, the element of order $2$ is $-1$, and the image
of a sandwich entry is its sign. The condition
\eqref{eq:S3-anchored-cycle-condition} is exactly
\eqref{eq:general-anchored-cycle-condition} in this quotient. The result
follows from Theorem~\ref{t:Rees0-general-anchored}.
\end{proof}

In the odd-by-odd case the pattern alone does not decide the existence of a
complete mapping. Suppose, for example, that all non-zero entries of the
sandwich matrix become the identity in the cyclic $2$-group quotient. Then
$u_a=v_a=1$ for all $a$. Since
 $|I\times\Lambda|$
is odd, at least one orbit of $\langle T,K\rangle$ on $I\times\Lambda$ has odd
cardinality. On such an orbit, condition
\eqref{eq:Rees0-odd-defect-condition} would force
$1=\varepsilon$,
which is impossible. Thus the obstruction in
Theorem~\ref{t:Rees0-odd-defect-switch} depends on the images of the sandwich
matrix entries in the cyclic $2$-group quotient. If these entries do not supply
the required element $\varepsilon$, the obstruction to this defect-switch construction persists.

\begin{theorem}\label{t:converse0}
Suppose that
\begin{itemize}
\item[(a)] $G$ is a group with non-trivial cyclic Sylow $2$-subgroups;
\item[(b)] $I$ and $\Lambda$ are finite sets of odd cardinality;
\item[(c)] $P$ is obtained from a normalized matrix over $G$ in which each
entry has odd order by replacing some entries with $0$.
\end{itemize}
Then
$\mathcal M^0(G,I,\Lambda,P)$
has no complete mapping.
\end{theorem}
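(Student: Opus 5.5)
Two routes are available: a reduction to Proposition~\ref{p:adding0}, and a direct copy of the transfer argument of Theorem~\ref{t:converse}. I would lead with the reduction. Let $P_0$ be the normalized matrix over $G$, all of whose entries have odd order, from which $P$ is obtained by zeroing some entries. By Theorem~\ref{t:converse}, whose hypotheses (a)--(c) coincide with ours applied to $P_0$, the Rees matrix semigroup $\mathcal M(G,I,\Lambda,P_0)$ has no complete mapping. The semigroup $\mathcal M^0(G,I,\Lambda,P)$ arises from it exactly as in Proposition~\ref{p:adding0}, by replacing some sandwich entries with $0$. That proposition then says $\mathcal M^0(G,I,\Lambda,P)$ has no complete mapping. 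This is essentially the whole proof.

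The one point I would check is the degenerate case. The definition of a Rees $0$-matrix semigroup requires that $P$ have no zero row or column, but Proposition~\ref{p:adding0} does not need this. Its proof only uses two facts: a complete mapping fixes $0$ (Proposition~\ref{Prp:zero}), and every non-zero diagonal product in the table of $T$ equals the same product in the table of $S$. So the argument holds for any admissible $P$.

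As a cross-check, and in case one prefers a self-contained argument, I would also sketch the direct version. By Lemma~\ref{l:cyclic2-quotient}, take $N\unlhd G$ of odd order with $G/N$ a non-trivial cyclic $2$-group, and let $p$ be its involution. Suppose $\phi$ is a complete mapping. By Proposition~\ref{Prp:zero} it fixes $0$, and it permutes the non-zero elements $(i,g,\lambda)$. Every non-zero product has middle coordinate $g\,p_{\lambda j}\,h\equiv gh \pmod N$, since the odd-order entries die in the $2$-group quotient. The product of all middle coordinates of the non-zero elements is $p^{|I||\Lambda|}=p$ modulo $N$, because $|N|$ is odd and $|I||\Lambda|$ is odd. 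Multiplying $x\cdot x\phi$ over all non-zero $x$ therefore gives $p\cdot p=1$ on one side and $p$ on the other, a contradiction, exactly as in Theorem~\ref{t:converse}.

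There is no real obstacle. The only care needed is that products are taken in the abelian quotient $G/N$, so the order of the factors is irrelevant.
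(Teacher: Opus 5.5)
Your proposal is correct, and your main route is exactly the paper's proof: it combines Theorem~\ref{t:converse}, applied to the normalized matrix before any entries are replaced by zero, with Proposition~\ref{p:adding0}. Your direct cross-check, which multiplies middle coordinates in the abelian quotient $G/N$, is also sound but not needed.
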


\begin{proof}
This follows directly from Theorem~\ref{t:converse} and
Proposition~\ref{p:adding0}.
\end{proof}

\begingroup

\section{Full linear and diagram monoids}
\label{Sec:linear-diagram}

The aim of this section is to apply the previous results to full linear
monoids and to some diagram monoids.

\subsection{Full linear monoids}

The following result was proved on the more general context of independence algebras by Fountain and Lewin
\cite[Propositions~1.3 and~1.4]{FountainLewin}.

\begin{quotedtheorem}[Fountain and Lewin]
Let $V$ be finite-dimensional and let $\alpha,\beta\in\End(V)$. Then
$\alpha\mathrel{\mathcal L}\beta$ if and only if
$\im(\alpha)=\im(\beta)$, $\alpha\mathrel{\mathcal R}\beta$ if and only if
$\ker(\alpha)=\ker(\beta)$ and $\alpha\mathrel{\mathcal D}\beta$ if and
only if $\rank(\alpha)=\rank(\beta)$. Moreover, $\mathcal D=\mathcal J$
and every positive-rank principal factor of $\End(V)$ is completely
$0$-simple.
\end{quotedtheorem}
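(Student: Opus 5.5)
The plan is to prove the characterisations of the one-sided preorders first and derive everything else from them. With the paper's convention that maps act on the right, $\alpha=u\beta$ means that $\alpha$ factors through $\beta$ as its last step, and $\alpha=\beta v$ means that $\beta$ is applied first.

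\textbf{Step 1: the $\mathcal L$-preorder.} I will show that $\alpha\preceq_{\gl}\beta$ if and only if $\im\alpha\subseteq\im\beta$. If $\alpha=u\beta$, then clearly $\im\alpha\subseteq\im\beta$. Conversely, suppose $\im\alpha\subseteq\im\beta$ and fix a basis $(e_k)$ of $V$. For each $k$ choose $w_k$ with $w_k\beta=e_k\alpha$, and define $u\in\End(V)$ by $e_ku=w_k$. Then $e_k(u\beta)=e_k\alpha$ for every $k$, so $\alpha=u\beta$.

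\textbf{Step 2: the $\mathcal R$-preorder.} Dually, I will show that $\alpha\preceq_{\gr}\beta$ if and only if $\ker\beta\subseteq\ker\alpha$. If $\alpha=\beta v$, this inclusion is immediate. Conversely, define $v$ on $\im\beta$ by $(x\beta)v=x\alpha$. This is well defined and linear precisely because $\ker\beta\subseteq\ker\alpha$. Extend $v$ arbitrarily (for instance by $0$) on a complement of $\im\beta$. Taking both inclusions in Steps 1 and 2 gives the stated descriptions of $\mathcal L$ and $\mathcal R$.

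\textbf{Step 3: $\mathcal D$ and $\mathcal J$.} If $\alpha\mathrel{\mathcal D}\beta$, then $\alpha\mathrel{\mathcal R}\gamma\mathrel{\mathcal L}\beta$ for some $\gamma$. Hence $\rank\alpha=\dim V-\dim\ker\gamma=\rank\gamma=\dim\im\beta=\rank\beta$. Conversely, suppose $\rank\alpha=\rank\beta$. Choose a complement $W$ of $\ker\alpha$; then $\dim W=\rank\alpha=\dim\im\beta$. Take $\gamma$ to be $0$ on $\ker\alpha$ and an isomorphism $W\to\im\beta$ on $W$. Then $\ker\gamma=\ker\alpha$ and $\im\gamma=\im\beta$, so $\alpha\mathrel{\mathcal R}\gamma\mathrel{\mathcal L}\beta$ by Steps 1 and 2. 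For $\mathcal J$: if $\alpha=u\beta v$ then $\rank\alpha\le\rank\beta$, so $\alpha\mathrel{\mathcal J}\beta$ forces equal ranks, hence $\alpha\mathrel{\mathcal D}\beta$. Since $\gd\subseteq\gj$ always holds, $\gd=\gj$. This argument uses only dimension and does not need $V$, or the field, to be finite.

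\textbf{Step 4: the principal factors.} Fix $r\ge1$ and let $J_r$ be the set of rank-$r$ endomorphisms. $J_r$ contains idempotents, for example a projection onto an $r$-dimensional subspace. Moreover every $\alpha\in J_r$ is regular. To see this, choose a complement $W$ of $\ker\alpha$ and a complement $U$ of $\im\alpha$, and let $\alpha'$ invert $\alpha|_W\colon W\to\im\alpha$ and be $0$ on $U$; then $\alpha\alpha'\alpha=\alpha$. So $J_r^0$ is regular and therefore not null. By the standard dichotomy (Howie), the principal factor of a $\mathcal J$-class is either null or $0$-simple, so $J_r^0$ is $0$-simple.

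To get complete $0$-simplicity I will show that every idempotent of $J_r$ is primitive in $J_r^0$. Let $e,f\in J_r$ be idempotents with $e=ef=fe$. From $e=ef$ we get $\im e\subseteq\im f$, and from $e=fe$ we get $\ker f\subseteq\ker e$. Equal ranks then force $\im e=\im f$ and $\ker e=\ker f$, and two idempotents with the same image and kernel coincide, so $e=f$. A $0$-simple semigroup with a primitive idempotent is completely $0$-simple.

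I expect this last step to be the only real obstacle. If $V$ is finite, one could instead cite ``finite $0$-simple implies completely $0$-simple'', but the primitivity argument avoids any finiteness assumption. Applying Theorem~\ref{t:principal-factor-Rees} then identifies each positive-rank principal factor as a Rees $0$-matrix semigroup over $\operatorname{GL}_r$.
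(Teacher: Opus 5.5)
Your proof is correct. The paper does not prove this statement itself: it quotes Fountain and Lewin, Propositions~1.3 and~1.4. There the result is obtained for endomorphism monoids of independence algebras of finite rank, with images, kernels and rank playing the roles they play here.

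Your argument is a direct linear-algebra specialisation of that result:
\begin{itemize}
\item you lift a basis through $\beta$ for the $\gl$-preorder;
\item you factor through $\im\beta$ for the $\gr$-preorder;
\item you build an explicit $\gamma$ with $\ker\gamma=\ker\alpha$ and $\im\gamma=\im\beta$ for $\gd$;
\item you use monotonicity of rank under $\alpha=u\beta v$ for $\gj$;
\item you observe that idempotents of equal finite rank with $e=ef=fe$ share image and kernel and therefore coincide.
\end{itemize}
The last point gives primitive idempotents, hence complete $0$-simplicity via the null-or-$0$-simple dichotomy, without using finiteness of the field.

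What the citation buys is generality: one statement covers sets and vector spaces alike. What your route buys is a self-contained proof, valid over any field. It also makes visible that finite dimension enters only through rank--nullity and through the step forcing $\im e=\im f$ and $\ker e=\ker f$.

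One small caution concerns your closing aside. Theorem~\ref{t:principal-factor-Rees} is stated only for finite semigroups and does not identify the maximal subgroup. That the group is $\operatorname{GL}_r$ needs a separate $\mathcal H$-class computation. The paper effectively supplies this through the parametrisation $\pi_K g\iota_U$ in the proof of Theorem~\ref{t:linear-monoid}. Since this identification is not part of the statement, it does not affect your proof.
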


Dalla Volta and Gavioli proved that $\operatorname{GL}_k(q)$ has a complete mapping when $k\ge2$ and $q$ is odd
\cite[Theorem~3]{DallaVoltaGavioli1997}. We record now the full classification, which follows from the
Hall--Paige theorem.

\begin{prop}\label{p:linear-group-complete}
Let $d\ge1$. The following are equivalent:
\begin{enumerate}
\item $\operatorname{GL}_d(q)$ has a complete mapping;
\item neither $d=1$ with $q$ odd nor $(d,q)=(2,2)$ holds.
\end{enumerate}
\end{prop}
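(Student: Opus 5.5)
By the Hall--Paige theorem (Theorem~\ref{Thm:HP}), it suffices to determine exactly when a Sylow $2$-subgroup of $\operatorname{GL}_d(q)$ is trivial or non-cyclic. So the plan is to characterise, in terms of $d$ and $q$, when this Sylow subgroup is non-trivial and cyclic, and to check that this happens precisely in the two listed exceptional cases.

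\textbf{Case $d=1$.} Here $\operatorname{GL}_1(q)\cong\mathbb F_q^\times$ is cyclic of order $q-1$. Its Sylow $2$-subgroup is therefore cyclic, and it is trivial exactly when $q-1$ is odd, that is, when $q$ is even. So for $d=1$ a complete mapping exists if and only if $q$ is even.

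\textbf{Case $d\ge2$.} The goal is to exhibit a non-cyclic $2$-subgroup whenever the order $|\operatorname{GL}_d(q)|$ is even, except when $(d,q)=(2,2)$.
\begin{enumerate}
\item If $q$ is odd, the diagonal matrices with entries $\pm1$ in the first two positions and $1$ elsewhere form a Klein four-group $C_2\times C_2$. Hence the Sylow $2$-subgroup is non-cyclic.
\item If $q=2^f$ with $f\ge2$, the upper unitriangular matrices $I+tE_{12}$ with $t\in\mathbb F_q$ form a subgroup isomorphic to $(\mathbb F_q,+)$. This is elementary abelian of order $q\ge4$, so again the Sylow $2$-subgroup is non-cyclic.
\item If $q=2$ and $d\ge3$, the group of unitriangular matrices $I+aE_{13}+bE_{23}$ with $a,b\in\mathbb F_2$ is a Klein four-group, since these matrices commute and each squares to $I$. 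So the Sylow $2$-subgroup is non-cyclic.
\item If $(d,q)=(2,2)$, then $\operatorname{GL}_2(2)\cong S_3$, whose Sylow $2$-subgroup is $C_2$. This is non-trivial and cyclic, so there is no complete mapping.
\end{enumerate}

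Combining the two cases gives exactly the stated classification. No step is a real obstacle; the only care needed is the case split for $q$ even and $d\ge2$, to make sure the $q=2$, $d\ge3$ subcase is covered by an explicit Klein four-subgroup rather than by the additive-group argument, which needs $q\ge4$.
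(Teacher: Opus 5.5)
Your proof is correct and follows the paper's argument essentially step for step: reduce to Hall--Paige, handle $d=1$ via the cyclicity of $\mathbb F_q^\times$, and for $d\ge2$ exhibit a non-cyclic $2$-subgroup in each case other than $(d,q)=(2,2)$, where $\operatorname{GL}_2(2)\cong S_3$. The only cosmetic difference is the case $q=2$, $d\ge3$: you use $\{I+aE_{13}+bE_{23}\}$, while the paper uses the group generated by $I+E_{12}$ and $I+E_{13}$; both are Klein four-groups.
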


\begin{proof}
Suppose first that $d=1$. Then
$\operatorname{GL}_1(q)=\mathbb F_q^\times$ is cyclic. Its order is odd when
$q$ is even, while for odd $q$ its Sylow $2$-subgroup is non-trivial and
cyclic. Thus we can now suppose that $d\ge2$. For $1\le i,j\le d$, by
$E_{ij}$ we will denote the $d\times d$ matrix whose $(i,j)$-entry is $1$
and whose remaining entries are $0$. If $q$ is odd, the two diagonal
involutions with respective first two diagonal entries $(-1,1)$ and
$(1,-1)$ generate $C_2\times C_2$. If $q>2$ is even, the subgroup
$\{I+aE_{12}:a\in\mathbb F_q\}$ is elementary abelian of order $q$. If
$q=2$ and $d\ge3$, the transvections $I+E_{12}$ and $I+E_{13}$ generate
$C_2\times C_2$. Finally, $\operatorname{GL}_2(2)\cong S_3$ has a cyclic
Sylow $2$-subgroup. The result follows from Theorem~\ref{Thm:HP}. 
\end{proof}

The aim of the next theorem is to classify the full linear monoids having a
complete mapping.

\begin{theorem}\label{t:linear-monoid}
Let $V$ have dimension $d\ge1$ over $\mathbb F_q$. Every proper
principal factor of $\End_{\mathbb F_q}(V)$ has a complete mapping. Moreover,
the following are equivalent:
\begin{enumerate}
\item $\End_{\mathbb F_q}(V)$ has a complete mapping;
\item $\operatorname{GL}_d(q)$ has a complete mapping;
\item neither $d=1$ with $q$ odd nor $(d,q)=(2,2)$ holds.
\end{enumerate}
\end{theorem}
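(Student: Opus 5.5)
The plan is to combine Theorem~\ref{t:J-reduct} with the Fountain--Lewin description of Green's relations. By Fountain and Lewin, the $\mathcal J$-classes of $\End(V)$ are the sets $J_k$ of endomorphisms of rank $k$, for $0\le k\le d$. The top class $J_d$ is $\operatorname{GL}_d(q)$, and it is closed under multiplication, so $J_d^0$ is $\operatorname{GL}_d(q)$ with a zero adjoined; this has a complete mapping exactly when $\operatorname{GL}_d(q)$ does. The class $J_0$ is the single zero map, and its principal factor trivially has a complete mapping. Hence (a)$\Leftrightarrow$(b) follows from Theorem~\ref{t:J-reduct} once every proper principal factor $J_k^0$ with $1\le k\le d-1$ is shown to have a complete mapping. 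Proposition~\ref{p:linear-group-complete} then gives (b)$\Leftrightarrow$(c).

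For $1\le k<d$, the factor $J_k^0$ is completely $0$-simple, so it is a Rees $0$-matrix semigroup $\mathcal M^0(\operatorname{GL}_k(q),I,\Lambda,P)$. Here $I$ is the set of kernels (codimension-$k$ subspaces), $\Lambda$ is the set of images ($k$-dimensional subspaces), and $p_{W,K}\ne0$ exactly when $W\cap K=0$, i.e.\ $V=W\oplus K$. Both index sets have size the Gaussian binomial $\binom{d}{k}_q$. The group $\operatorname{GL}_d(q)$ acts on pairs (image, kernel) preserving complementarity. Every $k$-space has the same number $c$ of complements, and dually. So the pattern $Q$ is a square $0$-$1$ matrix with constant row and column sums $c\ge1$. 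Then $H=(N/c)Q$ with $N=\binom dk_q$ is a balanced support weighting, and conditions (c), (d), (e) of Theorem~\ref{c:Rees0} hold. By K\H{o}nig, $Q$ also has a one-transversal.

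If $\operatorname{GL}_k(q)$ has a complete mapping, Theorem~\ref{c:Rees0}(a) finishes the argument. Otherwise, by Proposition~\ref{p:linear-group-complete}, either $k=1$ with $q$ odd, or $(k,q)=(2,2)$. If the common index size $N=\binom dk_q$ is even, Theorem~\ref{t:Rees0-noncomp-even} (or Corollary~\ref{c:Rees0-even-transversal}) applies.

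The main obstacle is the case where $N$ is odd. For $k=1$ and $q$ odd, $N=(q^d-1)/(q-1)=1+q+\dots+q^{d-1}$, which is odd exactly when $d$ is odd. For $(k,q)=(2,2)$ with $d\ge3$, the parity of $\binom d2_2$ varies with $d$. In these odd cases I would use Theorem~\ref{t:Rees0-general-anchored} or its $S_3$ form, Corollary~\ref{c:Rees0-S3-anchored}. Positivity of the weighting $(N/c)Q$ requires $N\ge c$, which holds. It then remains to find an anchored pair $(\sigma,\rho)$ whose cycle products differ by the required $\varepsilon$.

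The key is to use the freedom in the sandwich matrix. After choosing bases, $p_{W,K}\in\operatorname{GL}_k(q)$ is the matrix of the projection of $W$ onto a reference complement along $K$, so its determinant class mod squares (for $k=1$, $q$ odd), or its sign (for $S_3$), varies with $K$. I would take $\rho$ to be a transposition or a short cycle on two images $W_0,W_1$, with the remaining points fixed. I would then pick $\sigma(W_0)$ and $\sigma(W_1)$ to be kernels complementary to both $W_0$ and $W_1$, chosen so that the quotient entries satisfy the anchored condition with $\varepsilon$. For each fixed point $I$ of $\rho$, a cycle of length $1$ needs $1=\varepsilon$, which fails. So fixed points cannot be allowed. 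Instead, $\rho$ should be chosen as a product of cycles covering $\mathcal Y$, each arranged to carry an odd number of ``non-square'' switches. Finding such an arrangement explicitly, for example via the action of a Singer cycle or an explicit computation for lines in $\mathbb F_q^d$, is where I expect the real work to lie.
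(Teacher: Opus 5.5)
Your reduction follows the paper's route up to the exceptional ranks. The paper also uses Theorem~\ref{t:J-reduct}, the Fountain--Lewin Rees form with the complementarity pattern, and the positive weighting $N/c$. It then applies Theorem~\ref{c:Rees0} when $\operatorname{GL}_k(q)$ has a complete mapping and Theorem~\ref{t:Rees0-noncomp-even} when $N$ is even.

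The gap is that you never produce the anchored pair needed for Theorem~\ref{t:Rees0-general-anchored}. Without it, (c)$\Rightarrow$(a) is unproved, for instance for $k=1$ with $q$ and $d$ odd. The case is not marginal. For $(k,q)=(2,2)$, $N=\binom{d}{2}_2=(2^d-1)(2^{d-1}-1)/3$ is always odd, and indeed every Gaussian binomial at $q=2$ is odd. So, contrary to your remark, the anchored construction is needed for every $d\ge3$.

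Two of your heuristics also point the wrong way. The anchored condition asks that the cyclic ratio $\prod_t \bar p_{I_t,I_{t+1}\sigma}\bar p_{I_t,I_t\sigma}^{-1}$ equal $\varepsilon^m$. Even cycles therefore need a trivial ratio, not an odd number of switches. Moreover, the ratio is computed in the cyclic $2$-group $\mathbb F_q^\times/H$, with $H$ the Hall $2'$-subgroup, not modulo squares. When $q\equiv1\pmod4$, $\varepsilon$ is the image of $-1$, which is a square, so counting non-square switches targets the wrong element.

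The missing idea is that any two subspaces of the same dimension have a common complement. Write $U=A\oplus B$ and $U'=A\oplus B'$ with $A=U\cap U'$, take the graph $\{b+b\phi\}$ of an isomorphism $\phi\colon B\to B'$, and add a complement of $U+U'$.

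With this, set aside three images, which leaves an even number since $N$ is odd, and pair the rest. For each pair $\{I,I'\}$, let $I\sigma=I'\sigma$ be a common complement and let $\rho$ swap $I$ and $I'$. The anchored condition on every transposition then holds automatically; note that $\sigma$ need not be injective.

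It remains to exhibit one $3$-cycle with cyclic ratio $\varepsilon$. For $k=1$ the paper takes $L_i=\langle e_i\rangle$ and $K_i=\ker\lambda_i$. Here $\lambda_0,\lambda_1,\lambda_2$ take the values $(1,0,-1)$, $(1,1,0)$ and $(0,1,1)$ on $e_0,e_1,e_2$ and vanish on the other basis vectors. With $\iota_{L_i}(1)=e_i$ and $\pi_{K_i}=\lambda_i$ one gets $p_{L_i,K_i}=1$, $p_{L_0,K_1}=p_{L_1,K_2}=1$ and $p_{L_2,K_0}=-1$, whose image is exactly $\varepsilon$ for every odd $q$.

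For $(k,q)=(2,2)$, an analogous explicit triple of $2$-spaces and codimension-$2$ kernels in $\langle e_1,e_2,e_3\rangle\oplus W$ gives two identity entries and one transvection, which is odd in $\operatorname{GL}_2(2)\cong S_3$. No Singer cycle or global arrangement of switches is needed.
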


\begin{proof}
To prove this result we determine the Rees representation and show that every proper principal
factor satisfies the equivalent conditions of Theorem~\ref{c:Rees0}.

Under our zero-adjoined convention,
the rank-$0$ principal factor is the two-element semilattice obtained from the
zero endomorphism by adjoining a new zero; its identity mapping is complete. 

Let $1\le k<d$ and put
$W=\mathbb F_q^k$. For each $k$-subspace $U$ choose an isomorphism
$\iota_U\colon W\to U$, and for each $(d-k)$-subspace $K$ choose a
surjection $\pi_K\colon V\to W$ with kernel $K$. Every rank-$k$
endomorphism with kernel $K$ and image $U$ is uniquely of the form
$\pi_Kg\iota_U$, where $g\in\operatorname{GL}_k(q)$. Thus
\(J_k^0\cong \mathcal M^0(\operatorname{GL}_k(q),\mathcal K_{d-k},\mathcal U_k,P_k),\)
and, with our right-action convention, $p_{U,K}=\iota_U\pi_K$ when
$V=U\oplus K$, while $p_{U,K}=0$ otherwise.

The complement graph is square because the numbers of $k$-subspaces and
$(d-k)$-subspaces are equal. It is regular because $\operatorname{GL}(V)$
is transitive on both classes and preserves complementarity. Every subspace
has a complement and, since $0<k<d$, there are also non-complementary pairs.
Thus, if $N$ is the number of vertices on either side and $c$ is the degree,
then $0<c<N$. The value $N/c$ on every support edge is a positive balanced
support weighting. Therefore the pattern satisfies the equivalent conditions
of Theorem~\ref{c:Rees0}.

Hence, existence of a complete mapping is determined by the top principle factor.
If $\operatorname{GL}_k(q)$ has a complete mapping, then
$J_k^0$ has one by Theorem~\ref{c:Rees0}. By  Proposition~\ref{p:linear-group-complete}, only
two types of proper factor remain.

We first observe that two subspaces $U,U'$ of the same dimension have a common
complement. In fact, write $U=A\oplus B$ and $U'=A\oplus B'$, where
$A=U\cap U'$.
{Choose an isomorphism
$\phi\colon B\to B'$ and set
$D=\{b+b\phi:b\in B\}\le B\oplus B'$. The decomposition
$U+U'=A\oplus B\oplus B'$ shows directly that $D\cap U=D\cap U'=0$ and that
$\dim D$ is the codimension of both $U$ and $U'$ in $U+U'$. Hence $D$ is a
complement of both subspaces inside $U+U'$. If $C$ is a complement of $U+U'$
in $V$, then $D\oplus C$ is a common complement of $U$ and $U'$.} 

{Suppose first that $k=1$ and $q$ is odd. The number
$N=(q^d-1)/(q-1)=1+q+\cdots+q^{d-1}$ has the parity of $d$.} If $d$ is even,
Theorem~\ref{t:Rees0-noncomp-even} applies. Thus we can now suppose that
$d\ge3$ is odd. 

{We now apply Theorem~\ref{t:Rees0-general-anchored}.} Let $H$ be the Hall $2'$-subgroup of
$\mathbb F_q^\times$, put $C=\mathbb F_q^\times/H$ and let $\varepsilon$
be the image of $-1$. Choose independent vectors $e_0,e_1,e_2$, put
$L_i=\langle e_i\rangle$, and choose functionals $\lambda_i$ whose values on
these vectors are
\[
\begin{array}{c|ccc}
 &e_0&e_1&e_2\\ \hline
\lambda_0&1&0&-1\\
\lambda_1&1&1&0\\
\lambda_2&0&1&1
\end{array}
\]
and which vanish on the remaining basis vectors. Put
$K_i=\ker(\lambda_i)$, take $\iota_{L_i}(1)=e_i$ and take
$\pi_{K_i}=\lambda_i$. Therefore $p_{L_i,K_j}=\lambda_j(e_i)$.
{Hence the $P$-entries of the form $p_{L_i,K_j}$ correspond to the transpose of the above table. Thus the diagonal
entries $p_{L_i,K_i}$} are $1$, while $p_{L_0,K_1}=1$, $p_{L_1,K_2}=1$ and
$p_{L_2,K_0}=-1$. Let $\rho$ contain the cycle $(L_0,L_1,L_2)$ and put
$L_i\sigma=K_i$. Pair the remaining lines, choose a common complementary
hyperplane for each pair, map both lines to it under $\sigma$ and let
$\rho$ exchange them. The anchored cycle condition {\eqref{eq:general-anchored-cycle-condition}} is automatic on the
transpositions and holds on the distinguished cycle because the image of
$-1$ is $\varepsilon$. Thus Theorem~\ref{t:Rees0-general-anchored} applies.

Suppose now that $k=2$, $q=2$ and $d\ge3$. The number of $2$-subspaces is
the odd integer $(2^d-1)(2^{d-1}-1)/3$. Take $H=A_3$ in
$\operatorname{GL}_2(2)\cong S_3$. Choose
$V=\langle e_1,e_2,e_3\rangle\oplus W$ and put
\[
\begin{array}{lll}
U_0=\langle e_2,e_3\rangle,
&U_1=\langle e_1,e_3\rangle,
&U_2=\langle e_1+e_2,e_3\rangle,\\
K_0=W\oplus\langle e_1\rangle,
&K_1=W\oplus\langle e_1+e_2\rangle,
&K_2=W\oplus\langle e_2+e_3\rangle.
\end{array}
\]
Use the ordered bases $(e_2,e_3)$, $(e_1,e_3)$ and
$(e_1+e_2,e_3)$ for $U_0,U_1,U_2$, and choose $\pi_{K_i}$ such that
$p_{U_i,K_i}=1$. Then $p_{U_0,K_1}$ and $p_{U_2,K_0}$ are the identity,
while
\[
        p_{U_1,K_2}=\begin{pmatrix}1&1\\0&1\end{pmatrix}.
\]
This matrix is a transvection and has odd sign in $S_3$. Use the cycle
$(U_0,U_1,U_2)$ and pair the remaining $2$-subspaces by common complements, {as in the case $k=1$ with odd $q$.}
Theorem~\ref{t:Rees0-general-anchored} applies once again. We have proved
that every proper principal factor has a complete mapping.

We now can finish the proof of the equivalence statement. The implication (a)$\Rightarrow$(b) follows from
Corollary~\ref{Cor:units}, and (b)$\Leftrightarrow$(c) is the previous
proposition. Suppose that (b) holds. The top principal factor is
$\operatorname{GL}_d(q)$ with zero adjoined and hence has a complete mapping.
All the proper principal factors have complete mappings by (2). Therefore
Theorem~\ref{t:J-reduct} gives (a). The theorem is proved.
\end{proof}

\subsection{The partition monoid}

The aim of this subsection is to classify the partition monoids having a
complete mapping. Let $[n]'=\{1',\ldots,n'\}$ be a disjoint copy of $[n]$.
The partition monoid $\mathcal P_n$ consists of the set partitions of
$[n]\cup[n']$. Its diagrams have the points of $[n]$ in an upper row and
the points of $[n]'$ in a lower row, with points joined precisely when they
belong to the same block. {In diagrams, one usually draws only enough edges for each block to appear as a connected component of the resulting graph.}

To multiply two partitions, place the first
diagram above the second, identify the two middle rows and read the connected
components meeting the outer rows. A block meeting both outer rows is said
to be \emph{transversal}, and the rank is the number of transversal blocks.
These definitions are given in
\cite[Section~2.1]{EastMitchellRuskucTorpey}.

\begingroup
\color{black}
For example, the diagram below represents the rank-$2$ partition
\[
\alpha=\{\{1,2,1'\},\{3,3',4'\},\{4\},\{2'\}\}.
\]
\endgroup
\begin{center}
\begin{tikzpicture}[x=0.78cm,y=1.15cm]
\node at (1.5,1.9) {$\alpha$};
\foreach \x/\lab in {0/1,1/2,2/3,3/4}{
  \node (a\lab) at (\x,1) {$\bullet$};
  \node at (\x,1.28) {$\lab$};
  \node (ap\lab) at (\x,0) {$\bullet$};
  \node at (\x,-0.28) {$\lab'$};
}
\draw (a1) to[bend left=28] (a2);
\draw (a1) -- (ap1);
\draw (a3) -- (ap3);
\draw (ap3) to[bend right=28] (ap4);
\end{tikzpicture}
\end{center}

Proposition~2.1 and Remark~2.2(v)--(vii) of the same
paper give the description of the $\mathcal J$-classes, their regularity and
the maximal subgroups. Thus we have the following theorem.

\begin{quotedtheorem}[East, Mitchell, Ru\v{s}kuc and Torpey
{\cite[Proposition~2.1 and Remark~2.2(vii)]{EastMitchellRuskucTorpey}}]
In $\mathcal P_n$, Green's $\mathcal J$-classes are the rank classes and are
regular. A maximal subgroup in rank $r$ is isomorphic to $S_r$. 
\end{quotedtheorem}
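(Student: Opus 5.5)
The plan is to work directly with the diagrammatic calculus and the anti-involution $\alpha\mapsto\alpha^*$, which reflects a diagram in the horizontal axis by interchanging $x$ and $x'$. First I would record the elementary facts. Multiplication is associative, and $(\alpha\beta)^*=\beta^*\alpha^*$. Every transversal block of $\alpha\beta$ contains a transversal block of $\alpha$ in its upper part and one of $\beta$ in its lower part, and distinct transversal blocks of $\alpha\beta$ use disjoint such blocks. Hence $\rank(\alpha\beta)\le\min(\rank\alpha,\rank\beta)$. It follows that $\rank$ is non-increasing along $\preceq_{\gj}$, so each $\gj$-class lies inside a single rank class.

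\emph{Regularity.} I would check that $\alpha\alpha^*\alpha=\alpha$. In the triple stack, each block of $\alpha$ is reflected into the middle layer and then restored, so the connected components meeting the outer rows are exactly the blocks of $\alpha$. Then $\alpha^*$ is an inverse of $\alpha$ in the sense of Section~\ref{Sec:regularity}, and every element is regular.

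\emph{Same rank implies $\gj$.} I expect this to be the main step. For a partition $\alpha$, let its \emph{upper datum} be the partition of $[n]$ induced by $\alpha$, with the transversal parts marked, and define the lower datum on $[n]'$ similarly. I would show the following.
\begin{enumerate}
\item $\alpha\grr\alpha\alpha^*$ and $\alpha\gll\alpha^*\alpha$. These follow from regularity, since $\alpha=(\alpha\alpha^*)\alpha$ and $\alpha\alpha^*=\alpha\cdot\alpha^*$.
\item The idempotent $\alpha\alpha^*$ depends only on the upper datum of $\alpha$.
\end{enumerate}
Each $\alpha$ is therefore $\gd$-related to a \emph{projection} $\alpha\alpha^*$ of the same rank $r$. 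It then suffices to connect any two rank-$r$ projections $\varepsilon,\varepsilon'$. To do this, choose a bijection between their transversal parts, and let $\gamma$ be the partition whose upper datum is that of $\varepsilon$ and whose lower datum is that of $\varepsilon'$, with transversal blocks joined according to the bijection. A direct diagram computation should give $\gamma\gamma^*=\varepsilon$ and $\gamma^*\gamma=\varepsilon'$, so $\varepsilon\grr\gamma\gll\varepsilon'$. The case $r=0$ needs separate care, since it has no transversals, but the same construction works with empty bijections. Combined with the first paragraph, this shows that the $\gj$-classes are exactly the rank classes, and that $\gd=\gj$ there.

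\emph{Maximal subgroups.} For a rank-$r$ projection $\varepsilon$, I would show that its $\gh$-class consists of the partitions whose upper datum equals that of $\varepsilon$ and whose lower datum equals that of $\varepsilon^*$. Such a partition is determined by a bijection from the $r$ upper transversal parts to the $r$ lower ones. Concatenating two such diagrams composes the bijections, which gives an isomorphism of the $\gh$-class onto $S_r$.

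The delicate point throughout is item (2), together with the converse statement that the $\grr$-class is characterised by the upper datum. The inclusion ``same upper datum $\Rightarrow$ $\grr$'' is obtained by the explicit $\gamma$ construction above. The reverse inclusion follows because right multiplication can only merge upper parts and can only unmark transversals. I would verify these merge-only monotonicity claims first, since everything else reduces to them.
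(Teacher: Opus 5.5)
Your argument is correct, but it does not follow the paper. The paper gives no proof of this statement: it imports it from \cite[Proposition~2.1 and Remark~2.2(vii)]{EastMitchellRuskucTorpey}, where, as the paper later recalls, an $\mathcal R$-class is determined by the upper partition together with the upper blocks meeting transversal blocks.

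You instead derive everything from the involution. The identity $\alpha\alpha^*\alpha=\alpha$ gives regularity. The computation $\alpha\alpha^*=e_I$, with $I$ the marked upper partition of $\alpha$, together with $\alpha\mathrel{\mathcal R}\alpha\alpha^*$, places every element in the $\mathcal R$-class of a projection. Your partition $\gamma$ links any two projections of equal rank via $\varepsilon\mathrel{\mathcal R}\gamma\mathrel{\mathcal L}\varepsilon'$. The inequality $\rank(\alpha\beta)\le\min(\rank\alpha,\rank\beta)$ gives the converse, so the rank classes are exactly the $\mathcal J$-classes, which coincide with the $\mathcal D$-classes.

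The citation is shorter. Your route is self-contained, and as by-products it proves the facts the paper later uses on the authority of the same reference. These are that the rank-$r$ $\mathcal R$-classes are indexed by $\mathcal I_{n,r}$, with $e_I$ the projection in the class indexed by $I$ (compare Proposition~\ref{p:partition-Rees-data} and Lemma~\ref{l:projection-matching}), and that products in $H_{e_I}$ compose the bijections between marked blocks.

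Three small tidy-ups:
\begin{enumerate}
\item The implication ``same upper datum $\Rightarrow$ $\mathcal R$'' comes from your items (1) and (2), via $\alpha\mathrel{\mathcal R}\alpha\alpha^*=\beta\beta^*\mathrel{\mathcal R}\beta$, rather than from $\gamma$.
\item State the monotonicity as ``every marked upper part of $\alpha\beta$ contains a marked upper part of $\alpha$'', not ``right multiplication can only unmark'', since a merge can absorb an unmarked part into a marked one. This version suffices, because $\alpha\mathrel{\mathcal R}\beta$ forces the two upper partitions to coarsen each other and hence to coincide.
\item For idempotents that are not projections, invoke Green's lemma that group $\mathcal H$-classes in one $\mathcal D$-class are isomorphic.
\end{enumerate}
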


A set partition of $[n]$ together with $r$ distinguished blocks is said to
be an \emph{$r$-marked partition}. We order the marked blocks by their least
elements. An $r$-subset meeting every marked block exactly once is said to be
a \emph{transversal} of the marked partition.

\begingroup
For example,
$I=(\{1,3\}^{\bullet},\{2,4\}^{\bullet},\{5\})$ is a $2$-marked
partition and $A=\{1,4\}$ is a transversal. In the diagram below the two
marked blocks are joined by arcs, the unmarked singleton is isolated and the
circles mark the selected points of the transversal $A$, not the
marked blocks themselves.
\begin{center}
\begin{tikzpicture}[x=1.05cm,y=0.75cm]
\foreach \x/\lab in {0/1,1/2,2/3,3/4,4/5}{
  \node (m\lab) at (\x,0) {$\bullet$};
  \node at (\x,-0.38) {$\lab$};
}
\draw (m1) to[bend left=38] (m3);
\draw (m2) to[bend left=38] (m4);
\draw[thick] (m1) circle (0.18cm);
\draw[thick] (m4) circle (0.18cm);
\end{tikzpicture}
\end{center}
\endgroup
By $\mathcal I_{n,r}$ we will
denote the set of $r$-marked partitions. Then
\[
        |\mathcal I_{n,r}|=N_{n,r}=\sum_{j=r}^n\binom{j}{r}S(n,j).
\]
Here $S(n,j)$ is a Stirling number of the second kind.

\begingroup
\color{black}
For $D\subseteq[n]$, put $D'=\{d':d\in D\}$. If
$I\in\mathcal I_{n,r}$, define $e_I\in\mathcal P_n$ to have one
transversal block $B\cup B'$ for each marked block $B$ of $I$, and two
separate non-transversal blocks $C$ and $C'$ for each unmarked block $C$
of $I$. Thus $e_I$ is a projection of rank $r$.
\endgroup

\begin{samepage}
\begingroup
\color{black}
For example, if
\[
I=(\{1,2\}^{\bullet},\{3\}^{\bullet},\{4\}),
\]
then
\[
        e_I=\{\{1,2,1',2'\},\{3,3'\},\{4\},\{4'\}\}.
\]
The corresponding diagram is shown below.
\endgroup

\begin{center}
\begin{tikzpicture}[x=0.78cm,y=1.15cm]
\node at (1.5,1.9) {$e_I$};
\foreach \x/\lab in {0/1,1/2,2/3,3/4}{
  \node (b\lab) at (\x,1) {$\bullet$};
  \node at (\x,1.28) {$\lab$};
  \node (bp\lab) at (\x,0) {$\bullet$};
  \node at (\x,-0.28) {$\lab'$};
}
\draw (b1) to[bend left=28] (b2);
\draw (bp1) to[bend right=28] (bp2);
\draw (b1) -- (bp1);
\draw (b3) -- (bp3);
\end{tikzpicture}
\end{center}
\end{samepage}

For $I,Q\in\mathcal I_{n,r}$, form the middle-row graph obtained by
identifying the lower row of $e_I$ with the upper row of $e_Q$. Write
$I\sim Q$ if every component which meets a marked block contains exactly
one marked block of $I$ and exactly one marked block of $Q$. When
$I\sim Q$, these components induce a bijection from the ordered marked
blocks of $I$ to those of $Q$; by $p_{I,Q}\in S_r$ we will denote the
corresponding permutation. Put $p_{I,Q}=0$ when $I\nsim Q$, and write
$P_r=(p_{I,Q})_{I,Q\in\mathcal I_{n,r}}$.

The following proposition gives the Rees representation used below.

\begin{prop}\label{p:partition-Rees-data}
Let $1\le r\le n$. The rank-$r$ principal factor of $\mathcal P_n$ is
isomorphic to
\(\mathcal M^0(S_r,\mathcal I_{n,r},\mathcal I_{n,r},P_r).\)
Moreover, the support relation is reflexive and symmetric, and
$p_{I,I}=1$, for all $I\in\mathcal I_{n,r}$.
\end{prop}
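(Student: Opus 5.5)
The plan is to realise the rank-$r$ $\mathcal J$-class $J_r$ of $\mathcal P_n$ as $\mathcal M^0(S_r,\mathcal I_{n,r},\mathcal I_{n,r},P_r)$ by an explicit coordinatisation, in the same way as in the proof of Theorem~\ref{t:linear-monoid}, using the quoted theorem of East, Mitchell, Ru\v{s}kuc and Torpey: $J_r$ is the rank-$r$ class, it is regular, and its maximal subgroups are isomorphic to $S_r$. For $\alpha\in J_r$, the upper row of $\alpha$ induces an $r$-marked partition $I(\alpha)$ of $[n]$: its blocks are the traces $B\cap[n]$ of the blocks of $\alpha$, and the marked ones are the traces of transversal blocks. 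The lower row similarly gives $Q(\alpha)$, using the traces on $[n]'$ with primes removed. Ordering the marked blocks of each by least element, $\alpha$ determines a permutation $g(\alpha)\in S_r$, namely the $k$-th marked upper block lies in the same transversal block as the $(k)g(\alpha)$-th marked lower block. The map $\alpha\mapsto (I(\alpha),g(\alpha),Q(\alpha))$ is a bijection $J_r\to\mathcal I_{n,r}\times S_r\times\mathcal I_{n,r}$, since $\alpha$ is recovered by gluing the marked upper and lower blocks according to $g$. So $I(\alpha)$ indexes the $\mathcal R$-class and $Q(\alpha)$ the $\mathcal L$-class.

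Next comes the product. For $\alpha=(I,g,\Lambda)$ and $\beta=(Q,h,M)$, the product $\alpha\beta$ is computed through the middle-row graph. The middle row depends only on the lower partition $\Lambda$ of $\alpha$ and the upper partition $Q$ of $\beta$, so it is exactly the middle-row graph of $e_\Lambda e_Q$. Transversal blocks of $\alpha\beta$ arise from middle components joining a marked block of $\Lambda$ to a marked block of $Q$. The product has rank $r$ exactly when every component meeting a marked block contains exactly one marked block of each, which is the relation $\Lambda\sim Q$. Otherwise two transversals merge, or some transversal is lost, and the rank drops, so the product is $0$ in $J_r^0$. When $\Lambda\sim Q$, the upper partition of $\alpha\beta$ is still $I$: unmarked upper blocks of $\alpha$ cannot acquire transversality, and distinct marked blocks stay distinct because the rank is preserved. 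Likewise the lower partition is $M$, and the connecting permutation is $g\,p_{\Lambda,Q}\,h$ with the right-action convention. This gives $(I,g,\Lambda)(Q,h,M)=(I,gp_{\Lambda,Q}h,M)$, which is the Rees $0$-matrix product.

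The remaining claims are routine. The relation $I\sim I$ holds because in $e_Ie_I$ each marked block meets only itself, and the induced bijection is the identity, so $p_{I,I}=1$. Symmetry holds because the condition defining $\sim$ is symmetric in the two partitions, the middle-row graph of $(\Lambda,Q)$ being that of $(Q,\Lambda)$ with the roles of the two rows exchanged.

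The main obstacle is the careful verification that the rank-$r$ product has the same upper and lower marked partitions, and that the permutation composes in the stated order. I would handle this by first treating the projections, showing $e_Ie_Q$ has rank $r$ iff $I\sim Q$, and then writing $\alpha=e_I\cdot(\text{unit } g)\cdot e_\Lambda$ up to the chosen orderings.
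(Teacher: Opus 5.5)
Your argument is correct, but it is organised differently from the paper's. The paper quotes East, Mitchell, Ru\v{s}kuc and Torpey for the description of the $\mathcal R$-classes by marked upper partitions. It gets the $\mathcal L$-classes by reflection and the maximal subgroup $S_r$ from the same source, then invokes Rees' theorem and reads the sandwich entries off the products $e_Ie_Q$.

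You instead write down the explicit coordinatisation $\alpha\mapsto(I(\alpha),g(\alpha),Q(\alpha))$. You then check directly in the middle-row graph that the diagram product is $(I,g,\Lambda)(Q,h,M)=(I,g\,p_{\Lambda,Q}\,h,M)$ when $\Lambda\sim Q$, and drops in rank otherwise. This needs neither Rees' theorem nor an identification of the $\mathcal H$-classes.

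It also makes precise a point the paper treats briefly. Abstract Rees theory fixes the sandwich matrix only after $\mathcal R$- and $\mathcal L$-class representatives are chosen. Your coordinates (marked blocks ordered by least element, right action) are exactly what turn the entries into the $p_{I,Q}$ defined via $e_Ie_Q$. Your proof of reflexivity also shows that $P_r$ has no zero row or column, as the definition of a Rees $0$-matrix semigroup requires. The paper's proof is shorter, at the price of resting on the cited literature.

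Drop the factorisation $\alpha=e_I\cdot(\text{unit})\cdot e_\Lambda$ suggested in your last paragraph: it is false as stated, and you do not need it. Take $n=4$, $r=2$, $I=(\{1,2,3\}^{\bullet},\{4\}^{\bullet})$ and $\Lambda=(\{1,2\}^{\bullet},\{3,4\}^{\bullet})$. Every permutation $u$ of $[4]$ maps $\{1,2,3\}$ onto a set meeting both $\{1,2\}$ and $\{3,4\}$, so $e_Iue_\Lambda$ has rank $1$. Yet $\{\{1,2,3,1',2'\},\{4,3',4'\}\}$ is a rank-$2$ element with upper marked partition $I$ and lower marked partition $\Lambda$. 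Taking $u$ in the group $\mathcal H$-class of $e_I$ or of $e_\Lambda$ fails too, because rank $r$ would then force $I\sim\Lambda$, which does not hold here. Your direct middle-row computation already settles the product law, so nothing is lost.
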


\begin{proof}
Let $J$ be the rank-$r$ $\mathcal J$-class. In
\cite[Proposition~2.1]{EastMitchellRuskucTorpey} it is proved that an
$\mathcal R$-class is determined by the upper partition and the upper blocks
meeting transversal blocks. Thus the $\mathcal R$-classes of $J$ are
indexed by $\mathcal I_{n,r}$, and reflection gives the same index set for
the $\mathcal L$-classes. In
\cite[Remark~2.2(vii)]{EastMitchellRuskucTorpey} it is proved that the
maximal subgroup at $e_I$ is $S_r$. Rees' theorem gives the displayed
representation. The product $e_Ie_Q$ has rank $r$ precisely when
$I\sim Q$, and the connections through the middle row give the permutation
$p_{I,Q}$. Reflection gives symmetry, while $e_I^2=e_I$ gives reflexivity
and the diagonal identity. The proposition is proved.
\end{proof}

We now prove a technical lemma which will be useful in the exceptional
ranks.

\begin{lemma}\label{l:marked-partition-pairing}
Let $r\in\{2,3\}$ and $n\ge r+2$. Suppose that $N_{n,r}$ is odd, put
$A_0=\{1,\ldots,r\}$, and remove three members of
$\mathcal I_{n,r}$, none of which has $A_0$ as a transversal. The remaining
members can be paired so that the two members in every pair have a common
transversal.
\end{lemma}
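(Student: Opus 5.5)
Two members of $\mathcal I_{n,r}$ that share a transversal will be called \emph{compatible}. The plan is to show that, after removing the three members, the compatibility graph on the remaining $N_{n,r}-3$ members (an even number, since $N_{n,r}$ is odd) has a perfect matching.

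Rather than work with the graph directly, we group the marked partitions into \emph{classes} indexed by $r$-subsets. For each $r$-subset $A\subseteq[n]$, the marked partitions having $A$ as a transversal are pairwise compatible, so any two of them can be paired. Fix a total order on the $r$-subsets that begins with $A_0$. Assign each marked partition $I$ to the \emph{first} $r$-subset in this order that is a transversal of $I$. Every $I$ has at least one transversal (choose one point from each marked block), so this partitions $\mathcal I_{n,r}$ into classes $\mathcal C_A$. Inside each class, any pairing uses a shared transversal, namely $A$. The only obstruction is parity: each class left with an odd number of members after the removal has one leftover member.

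To deal with these leftovers, we would not use an arbitrary order but a chain-like one. Order the $r$-subsets so that consecutive subsets differ by exchanging a single point, for example by a revolving-door (Gray code) order. We then process the classes one at a time. When a class has odd size, we move one of its members $I$ to the next class $\mathcal C_{A^+}$, and we need $I$ to also have $A^+$ as a transversal. We choose this member deliberately. Since $A$ and $A^+$ differ in one point, say $A^+=A\setminus\{x\}\cup\{y\}$, take $I$ to be the marked partition whose marked blocks are singletons of $A\setminus\{x\}$ together with a block $\{x,y\}$. Then $I$ has both $A$ and $A^+$ as transversals. We need $I$ to lie in $\mathcal C_A$ rather than in an earlier class, which can be arranged by requiring that $\{x,y\}$ not be contained in the union of earlier subsets in a problematic way. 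In practice it is easier to re-assign: let $\mathcal C_A$ collect only members whose earliest transversal is $A$, and carry the leftover forward whenever it also has $A^+$ as a transversal. A simpler alternative is to pair the leftover of $\mathcal C_A$ with a suitably chosen member of $\mathcal C_{A^+}$ that also has $A$ as a transversal, which preserves the parity bookkeeping.

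The three removed members all lie outside $\mathcal C_{A_0}$, since none of them has $A_0$ as a transversal. Their removal changes the parities of at most three later classes. Because the total $N_{n,r}-3$ is even, the parity defects propagate along the chain and cancel at the end, so the final class is even once all carries have been made. The hypothesis $n\ge r+2$ ensures there are enough points outside $A$ for such swaps to exist, and that each class containing a swap structure is nonempty.

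The main obstacle is guaranteeing the carry step. Whenever a leftover must be passed from $\mathcal C_A$ to $\mathcal C_{A^+}$, an element of $\mathcal C_A$ compatible with $A^+$ must still be available, and it must not be one of the removed members or one already used. For the finitely many shapes involved ($r\in\{2,3\}$) I would first try the explicit witnesses described above, namely the partitions with a two-point block $\{x,y\}$ and the remaining points arranged as singletons or as an unmarked block. There are at least two such witnesses, coming from different treatments of the unmarked points, and $n\ge r+2$ makes them distinct. With at most three removed members, this leaves enough slack to always find an unused witness. If the chain argument becomes awkward, the fallback is to check Tutte's condition on the compatibility graph directly, using its very high degree.
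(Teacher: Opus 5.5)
There is a genuine gap. Your architecture is the right one: fibres over $r$-subsets, a few special members with two transversals used to repair parities, and total parity settling the last fibre. But the step that carries all the weight is not established, and as stated it fails. That step is the existence, at every link of your chain, of a carry witness that was not removed.

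The hypothesis of the lemma only protects marked partitions that have $A_0$ as a transversal. Your witness for a link $A\to A^+$ with $A_0\notin\{A,A^+\}$ has marked singletons on $A\cap A^+$, the marked pair $A\triangle A^+$, and all other points unmarked. It has exactly the two transversals $A$ and $A^+$, so it may well be one of the three removed members.

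Your claim that there are at least two such witnesses, distinct because $n\ge r+2$, is false at $n=r+2$. There only $n-r-1=1$ point lies outside $A\cup A^+$, so ``singletons'' and ``one unmarked block'' coincide. This case genuinely occurs, since $N_{4,2}=31$ and $N_{5,3}=75$ are odd. For example, with $r=2$, $n=4$ and the link $\{1,3\}\to\{1,4\}$, the only clean witness is $(\{1\}^{\bullet},\{3,4\}^{\bullet},\{2\})$. It does not have $A_0=\{1,2\}$ as a transversal, so it may have been removed.

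Two possible repairs would each need their own argument, which you do not give. One is rerouting the Gray code around up to three forbidden links. The other is using messier witnesses whose membership in $\mathcal C_A$ depends on the chosen order. The Tutte fallback is also not carried out.

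The missing idea is to route every parity correction through $A_0$ rather than along a chain. This is exactly what the hypothesis on the removed members is designed for. For each $r$-set $A\ne A_0$, define $C_A$ as follows.
\begin{itemize}
\item Mark the singletons on $A\cap A_0$.
\item Write $A_0\setminus A=\{a_1<\cdots<a_s\}$ and $A\setminus A_0=\{b_1<\cdots<b_s\}$, and mark the pairs $\{a_i,b_i\}$.
\item Leave every other point as an unmarked singleton.
\end{itemize}
Then both $A_0$ and $A$ are transversals of $C_A$. The $C_A$ are pairwise distinct, since $A$ can be read off from $C_A$. Crucially, none of them can have been removed, because each has $A_0$ as a transversal.

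Now choose an arbitrary transversal for every other remaining member. Give $C_A$ the transversal $A$ exactly when the fibre over $A$ would otherwise be odd, and $A_0$ otherwise. These corrections act on distinct fibres, so every fibre over $A\ne A_0$ becomes even. The fibre over $A_0$ is then even because $N_{n,r}-3$ is even.

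This replaces your chain by a star centred at $A_0$. It also makes your ``first transversal'' assignment unnecessary, since the fibres may be built from arbitrary choices.
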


\begin{proof}
Let $A$ be an $r$-set different from $A_0$. We construct a marked partition,
say $C_A$, as follows. Mark the singletons belonging to $A\cap A_0$. Write
$A_0\setminus A=\{a_1<\cdots<a_s\}$ and
$A\setminus A_0=\{b_1<\cdots<b_s\}$, mark the blocks $\{a_i,b_i\}$ and
leave all the remaining points as unmarked singletons. Both $A_0$ and $A$
are transversals of $C_A$, and the partitions $C_A$ are distinct. Choose one
transversal of every remaining marked partition, initially choosing $A_0$
for every $C_A$. Whenever the fibre over $A\ne A_0$ is odd, change the
chosen transversal of $C_A$ from $A_0$ to $A$. Every fibre over
$A\ne A_0$ is then even. Since $N_{n,r}-3$ is even, the fibre over $A_0$
is also even. Pair the marked partitions within each fibre. The lemma is
proved.
\end{proof}

The aim of the next theorem is to classify the partition monoids having a
complete mapping.

\begin{theorem}\label{t:partition-monoid}
Every proper principal factor of $\mathcal P_n$ has a complete mapping.
Moreover, the following are equivalent:
\begin{enumerate}
\item $\mathcal P_n$ has a complete mapping;
\item $S_n$ has a complete mapping;
\item $n=1$ or $n\ge4$.
\end{enumerate}
\end{theorem}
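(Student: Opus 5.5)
The plan mirrors the proof of Theorem~\ref{t:linear-monoid}. By Theorem~\ref{t:J-reduct}, $\mathcal P_n$ has a complete mapping exactly when every principal factor does. The top factor (rank $n$) is $S_n$ with a zero adjoined, so Corollary~\ref{Cor:units} gives (a)$\Rightarrow$(b). Theorem~\ref{Thm:HP} gives (b)$\Leftrightarrow$(c): $S_2$ and $S_3$ have non-trivial cyclic Sylow $2$-subgroups, while for $n\ge4$ the Sylow $2$-subgroup contains the Klein four-group and is non-cyclic. Once every proper factor is handled, (b)$\Rightarrow$(a) follows from Theorem~\ref{t:J-reduct}. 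So the whole content lies in the proper principal factors, of ranks $0\le r<n$.

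Rank $0$ is not a problem. There $S_0$ is trivial, so Theorem~\ref{c:Rees0} applies and only the pattern matters. The support relation contains the diagonal ($p_{I,I}=1$ by Proposition~\ref{p:partition-Rees-data}). Hence the pattern, which is square, contains a one-transversal, and Corollary~\ref{cor:rees0-hall-square} applies.

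In general, Proposition~\ref{p:partition-Rees-data} gives a square pattern with reflexive support. This yields a one-transversal, and more importantly a positive balanced support weighting. For that I would average over the action of $S_n$ on $\mathcal I_{n,r}$, which preserves $\sim$. Alternatively, one can take the identity matrix scaled by $N_{n,r}$ and use a doubly stochastic decomposition, or simply note that the identity permutation matrix is already a balanced support weighting. Positivity, however, needs weight at least $1$ on every support edge. I would obtain it by averaging permutation matrices $\pi$ supported in $\sim$, taking enough of them to cover every edge. The symmetric relation invariant under the $S_n$-orbit structure should make this possible; failing that, one restricts $P$ to a sub-pattern. Restricting is harmless, exactly as in Corollary~\ref{c:Rees0-even-transversal}.

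With the weighting in hand, the cases for $1\le r<n$ run as follows.
\begin{itemize}
\item If $S_r$ has a complete mapping ($r=1$ or $r\ge4$), Theorem~\ref{c:Rees0} finishes.
\item If $r\in\{2,3\}$ and $N_{n,r}$ is even, Theorem~\ref{t:Rees0-noncomp-even} or Corollary~\ref{c:Rees0-even-transversal} finishes.
\item The remaining case is $r\in\{2,3\}$ with $N_{n,r}$ odd. This is the main obstacle, and I would handle it with Corollary~\ref{c:Rees0-S3-anchored} (for $r=2$, use its analogue with $N$ trivial and $C=S_2$).
\end{itemize}

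For the last case, I would first pick three marked partitions $I_0,I_1,I_2$ with $I_t\sim J_t$ and a sign pattern giving an odd product. Concretely, $I_t\sigma=I_t$ (sign $+1$), and the off-diagonal entries $p_{I_t,I_{t+1}\sigma}$ have an odd number of transpositions among them, so that $(-1)^3\prod=1$. Such a triangle should exist once $n\ge r+2$, for instance using blocks that swap the order of the marked blocks. I would also arrange that none of the $I_t$ has $A_0=\{1,\dots,r\}$ as a transversal. Lemma~\ref{l:marked-partition-pairing} then pairs the remaining members so that each pair shares a transversal $A$. A common transversal $A$ yields a common partner: the marked partition whose marked blocks are the singletons of $A$, with everything else unmarked singletons, is $\sim$-related to both members. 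Let $\sigma$ send both members of a pair to this partner and let $\rho$ swap them. The anchored condition is automatic on $2$-cycles and holds on the triangle by construction, so Theorem~\ref{t:Rees0-general-anchored} applies.

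Two cases fall outside Lemma~\ref{l:marked-partition-pairing}, namely $n=r+1$, that is $(n,r)=(3,2),(4,3)$. For these I would compute the parity of $N_{n,r}$ directly. If it is odd, I would exhibit the anchored data by hand, again via the transversal-based partners.
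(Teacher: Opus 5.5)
Your architecture matches the paper's. You use Theorem~\ref{t:J-reduct}, Corollary~\ref{Cor:units} and Theorem~\ref{Thm:HP} for the equivalences, Theorem~\ref{c:Rees0} when $r=1$ or $r\ge4$, and Theorem~\ref{t:Rees0-noncomp-even} when $N_{n,r}$ is even. For $r\in\{2,3\}$ with $N_{n,r}$ odd, you feed an odd triangle, Lemma~\ref{l:marked-partition-pairing} and the partners $Q_A$ into Theorem~\ref{t:Rees0-general-anchored}. However, the two ingredients that carry the odd case are asserted rather than proved.

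The first is the \emph{positive} balanced support weighting. Averaging permutation matrices does not by itself give weight at least $1$ on every support cell. With equal weights, a cell covered by only one of $M$ matrices receives $N/M$, which is below $1$ once $M>N$, so adding matrices to cover more cells works against you. Restricting to a sub-pattern is no remedy either. If you keep only the diagonal and the cells used by $\sigma,\rho$, then any row $J$ outside the image of $\sigma$ has only its diagonal cell. So $h_{J,J}=N$ saturates column $J$ and forces $h_{J\sigma,J}=0$. The fix uses exactly the reflexivity and symmetry you noted. With $d(I)$ the support degree, put $h_{I,I}=N-d(I)+1$, $h_{Q,I}=1$ for $Q\ne I$ with $I\sim Q$, and $h_{Q,I}=0$ otherwise. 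All row and column sums equal $N$, and every support entry is at least $1$. (Your matching idea can be salvaged: sum the involutions of a proper edge-colouring of the loopless support graph with at most $\max_I d(I)\le N$ colours, which exists by Vizing, and pad with the identity. This yields the same matrix.)

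The second is the triangle. ``Such a triangle should exist'' is the crux of the odd case, and you need an explicit witness that is compatible around the cycle, has odd total sign, and avoids $A_0$ as a transversal. The paper takes, for $r=2$, marked partitions $I_0,I_1,I_2$ whose only non-singleton block is the unmarked block $\{1,2\}$, $\{1,3\}$, $\{1,4\}$ respectively. Their marked singletons are $\{3\},\{4\}$; $\{2\},\{4\}$; $\{2\},\{3\}$ respectively, and all other points are unmarked singletons. For $r=3$ it adds the marked singleton $\{5\}$ to each. The middle-row joins give sandwich permutations $1$, $1$ and $(12)$ for $(I_0,I_1)$, $(I_1,I_2)$ and $(I_2,I_0)$. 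So with $I_t\sigma=I_t$ and $\rho=(I_0\,I_1\,I_2)$, the cycle condition \eqref{eq:general-anchored-cycle-condition} reads $-1=\varepsilon^3$, which holds. The transversals $\{3,4\},\{2,4\},\{2,3\}$ (with $5$ adjoined for $r=3$) differ from $A_0$, and the construction needs exactly $n\ge r+2$.

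Finally, your boundary cases need no hand construction. $N_{3,2}=6$ and $N_{4,3}=10$ are even, so Theorem~\ref{t:Rees0-noncomp-even} already covers them.
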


\begin{proof}
The proof goes as follows:
\begin{enumerate}
\item[(1)] we start by treating all proper principal factors, but the exceptional ranks
$2$ and $3$;
\item[(2)] we treat the exceptional ranks;
\item[(3)] we prove the equivalences in the statement.
\end{enumerate}

{Regarding  (1),} the non-zero part of the rank-$0$ principal factor
is a rectangular band, and the principal factor is this rectangular band
with zero adjoined. Every element is idempotent, and hence the identity
mapping is complete.

Fix $1\le r<n$, put $N=N_{n,r}$ and let $d(I)$ be the support degree of $I$
in the rank-$r$ factor. The matrix
\[
h_{Q,I}=
\begin{cases}
N-d(I)+1,&Q=I,\\
1,&Q\ne I\text{ and }I\sim Q,\\
0,&I\nsim Q
\end{cases}
\]
has every row and column sum equal to $N$, by reflexivity and symmetry of
the support, and is positive on every support edge. The diagonal support
also gives a one-transversal of the pattern.

For $r=1$, the maximal subgroup is trivial. For $r\ge4$, the maximal
subgroup $S_r$ has a complete mapping by Theorem~\ref{Thm:HP}, since it
contains $\langle(12),(34)\rangle\cong C_2\times C_2$. These factors are
covered by Theorem~\ref{c:Rees0}, and hence (1) is proved.

We now prove (2). It remains to consider $r=2,3$. If $N$ is even,
Theorem~\ref{t:Rees0-noncomp-even} applies. Thus we can now suppose that
$N$ is odd. Since $N_{3,2}=6$ and $N_{4,3}=10$, we have $n\ge4$ for $r=2$
and $n\ge5$ for $r=3$. For $r=2$, take the following three marked
partitions, where all unlisted points lie in unmarked singeltons:
\[
\begin{array}{c|c|c}
 &\text{unmarked block}&\text{marked singleton blocks}\\ \hline
I_0&\{1,2\}&\{3\},\{4\}\\
I_1&\{1,3\}&\{2\},\{4\}\\
I_2&\{1,4\}&\{2\},\{3\}.
\end{array}
\]

\begingroup
The non-trivial parts of the corresponding projections are represented below. A horizontal arc in
each row is obtained from an unmarked block, while a vertical string originates from a marked singleton
block.
\begin{center}
\begin{tikzpicture}[x=0.62cm,y=0.86cm]
\begin{scope}[xshift=0cm]
\node at (1.5,2.0) {$e_{I_0}$};
\foreach \x/\lab in {0/1,1/2,2/3,3/4}{
  \node (a\lab) at (\x,1) {$\bullet$};
  \node at (\x,1.48) {$\lab$};
  \node (ap\lab) at (\x,0) {$\bullet$};
  \node at (\x,-0.48) {$\lab'$};
}
\draw (a1) to[bend left=28] (a2);
\draw (ap1) to[bend right=28] (ap2);
\draw (a3)--(ap3);
\draw (a4)--(ap4);
\end{scope}
\begin{scope}[xshift=4.3cm]
\node at (1.5,2.0) {$e_{I_1}$};
\foreach \x/\lab in {0/1,1/2,2/3,3/4}{
  \node (b\lab) at (\x,1) {$\bullet$};
  \node at (\x,1.48) {$\lab$};
  \node (bp\lab) at (\x,0) {$\bullet$};
  \node at (\x,-0.48) {$\lab'$};
}
\draw (b1) to[bend left=28] (b3);
\draw (bp1) to[bend right=28] (bp3);
\draw (b2)--(bp2);
\draw (b4)--(bp4);
\end{scope}
\begin{scope}[xshift=8.6cm]
\node at (1.5,2.0) {$e_{I_2}$};
\foreach \x/\lab in {0/1,1/2,2/3,3/4}{
  \node (c\lab) at (\x,1) {$\bullet$};
  \node at (\x,1.48) {$\lab$};
  \node (cp\lab) at (\x,0) {$\bullet$};
  \node at (\x,-0.48) {$\lab'$};
}
\draw (c1) to[bend left=28] (c4);
\draw (cp1) to[bend right=28] (cp4);
\draw (c2)--(cp2);
\draw (c3)--(cp3);
\end{scope}
\end{tikzpicture}
\end{center}
\endgroup

For $r=3$, add the marked
singleton $\{5\}$ to all three marked partitions. In the middle-row joins for $(I_0,I_1)$ and
$(I_1,I_2)$, the ordered marked blocks are paired in order, while for
$(I_2,I_0)$ they are interchanged. Thus Proposition~\ref{p:partition-Rees-data}
gives the sandwich permutations $1$, $1$ and $(12)$, respectively. The
diagonal entries are identities, and none of the three marked partitions has
$A_0$ as a transversal.

{By Lemma~\ref{l:marked-partition-pairing}, we can choose a pairing of $\mathcal I_{n,r}\setminus\{I_0,I_1,I_2\}$ in which all paired members have a common transversal. We again apply Theorem~\ref{t:Rees0-general-anchored}.}

 If $I,I' \in \mathcal I_{n,r} \setminus\{I_0,I_1,I_2\}$ are paired with common transversal $A$,
let $Q_A$ be the discrete marked partition with marked singletons indexed by
$A$. Put $I\sigma=I'\sigma=Q_A$ and let $\rho$ exchange $I$ and $I'$. On
the three distinguished indices put $I_i\sigma=I_i$ and let $\rho$ act as
$(I_0,I_1,I_2)$. For $r=2$, take $1\unlhd S_2$; for $r=3$, take
$A_3\unlhd S_3$. The anchored cycle condition {\eqref{eq:general-anchored-cycle-condition}}  is automatic on the
transpositions and holds on the distinguished cycle because its signs are
$1$, $1$ and $-1$. Therefore Theorem~\ref{t:Rees0-general-anchored} applies.
It is proved that the rank-$2$ and rank-$3$ proper factors have complete
mappings, and hence (2) is proved.

We finish by proving (3). The implication (a)$\Rightarrow$(b) follows from
Corollary~\ref{Cor:units}, and (b)$\Leftrightarrow$(c) follows from
Theorem~\ref{Thm:HP}. Suppose that (b) holds. The top principal factor is
$S_n$ with zero adjoined and hence has a complete mapping. Every proper
principal factor has a complete mapping by (1) and (2). Therefore
Theorem~\ref{t:J-reduct} gives (a). The theorem is proved.
\end{proof}

\subsection{Other diagram monoids}

{In this subsection, we assume that the reader has} a basic knowledge of the planar partition
monoid, the Motzkin monoid and the Jones monoid. As reference we suggest
\cite[Section~2.2]{EastMitchellRuskucTorpey}. A semigroup is said to be
\emph{aperiodic} if all its subgroups are trivial. A regular $*$-semigroup
is a semigroup with an involutory anti-automorphism $x\mapsto x^*$ satisfying
$x=xx^*x$, for all $x$, and an element $p$ is said to be a \emph{projection}
if $p=p^*=p^2$.

{In the partition monoid, let ${}^*$ denote reflection in the horizontal axis.} In \cite[Section~2.1]{EastMitchellRuskucTorpey}, 
the partition monoid is shown to satisfy
$\alpha^{**}=\alpha$, $(\alpha\beta)^*=\beta^*\alpha^*$ and
$\alpha\alpha^*\alpha=\alpha$. In
\cite[Section~2.2]{EastMitchellRuskucTorpey}, the planar partition,
Motzkin and Jones monoids are defined as submonoids of $\mathcal P_n$ and
are stated to be closed under $*$. The three identities therefore restrict
to each of these submonoids, so they are regular $*$-monoids. Their
$\mathcal J$-classes, regularity and maximal subgroups are given by
\cite[Proposition~2.1 and Remark~2.2(iv)--(vii)]
{EastMitchellRuskucTorpey}. Thus we have the following theorem.

\begin{quotedtheorem}[East, Mitchell, Ru\v{s}kuc and Torpey]
The planar partition, Motzkin and Jones monoids are finite regular
$*$-monoids. Their $\mathcal J$-classes are the rank classes, all these
classes are regular and every maximal subgroup is trivial.
\end{quotedtheorem}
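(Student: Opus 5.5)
This quoted theorem collects standard facts, so I would establish it by reduction to the corresponding statements for $\mathcal P_n$ in \cite{EastMitchellRuskucTorpey} rather than by a new argument. The plan has three steps: the regular $*$-structure, the description of $\mathcal J$-classes as rank classes, and the triviality of maximal subgroups.

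\emph{Regular $*$-structure.} Each of the planar partition, Motzkin and Jones monoids $M$ is a submonoid of $\mathcal P_n$ closed under the reflection ${}^*$. The identities $\alpha^{**}=\alpha$, $(\alpha\beta)^*=\beta^*\alpha^*$ and $\alpha\alpha^*\alpha=\alpha$ hold in $\mathcal P_n$, so they hold in $M$. Hence $M$ is a regular $*$-monoid. In particular every element is regular, so every $\mathcal J$-class is regular. Finiteness is immediate, since $M\subseteq\mathcal P_n$.

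\emph{$\mathcal J$-classes are rank classes.} In a regular $*$-semigroup, $\alpha\mathrel{\gr}\alpha\alpha^*$ and $\alpha\mathrel{\gl}\alpha^*\alpha$, and these are projections. The rank of $\alpha\alpha^*$ equals the rank of $\alpha$. Rank cannot increase under multiplication, so $\preceq_{\gj}$ implies that rank is weakly decreasing. For the converse I would show that any two projections of equal rank in $M$ are $\gd$-related. This is done by an explicit planar diagram $\gamma\in M$ with $\gamma\gamma^*=p$ and $\gamma^*\gamma=p'$: connect the transversal blocks of $p$ and $p'$ in left-to-right order, which planarity forces, and keep the upper non-transversal structure of $p$ and the lower non-transversal structure of $p'$. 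The main point to check here is that this $\gamma$ is again planar and, for the Motzkin and Jones cases, has blocks of size at most $2$ (respectively exactly $2$). That follows because the transversal blocks are joined in order and no new blocks are created. This is exactly the content of \cite[Proposition~2.1]{EastMitchellRuskucTorpey}. For finite semigroups $\gd=\gj$, which then gives the claim.

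\emph{Maximal subgroups are trivial.} An $\gh$-class containing a projection $p$ of rank $r$ acts on the $r$ transversal blocks by a permutation. Planarity forces that permutation to preserve the left-to-right order of these blocks, so it must be the identity. An element of the group $\gh$-class is therefore determined by $p$ and equals $p$, as in \cite[Remark~2.2(iv)--(vii)]{EastMitchellRuskucTorpey}. The only delicate step is the planarity check in the $\gd$-step above, and I would simply cite the reference for it.
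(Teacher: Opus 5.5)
Your proposal is correct and follows the paper's route. The regular $*$-structure comes from restricting $\alpha^{**}=\alpha$, $(\alpha\beta)^*=\beta^*\alpha^*$ and $\alpha\alpha^*\alpha=\alpha$ from $\mathcal P_n$ to these $*$-closed submonoids, and the rank description of the $\mathcal J$-classes and the triviality of maximal subgroups are taken from \cite[Proposition~2.1 and Remark~2.2(iv)--(vii)]{EastMitchellRuskucTorpey}, which your sketches (the diagram $\gamma$ with $\gamma\gamma^*=p$ and $\gamma^*\gamma=p'$, and planarity forcing an order-preserving, hence identity, matching of transversal blocks) correctly supplement.
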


We now prove a lemma which will be useful in what follows.

\begin{lemma}\label{l:projection-matching}
In a regular $*$-semigroup, every $\mathcal R$-class and every
$\mathcal L$-class contains a unique projection, and $*$ interchanges the two
sets of classes.
\end{lemma}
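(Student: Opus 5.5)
We will prove that each $x$ satisfies $x\grr xx^*$, that $xx^*$ is a projection, and that a projection is the only projection in its $\gr$-class; the statements for $\gl$-classes then follow by applying $*$. First, $xx^*$ is a projection: $(xx^*)^*=x^{**}x^*=xx^*$ and $(xx^*)^2=(xx^*x)x^*=xx^*$. Moreover $xx^*\in xS^1$ and $x=(xx^*)x\in xx^*S^1$, so $x\grr xx^*$. Hence every $\gr$-class contains at least one projection.

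For uniqueness, let $p,q$ be projections with $p\grr q$. Since $p,q$ are idempotents in the same $\gr$-class, we have $p=qp$ and $q=pq$, because $p\in qS^1$ and $q$ acts as a left identity on $qS^1$, and symmetrically. Applying $*$ to $p=qp$ gives $p=p^*=(qp)^*=p^*q^*=pq=q$. So each $\gr$-class contains exactly one projection.

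Next we show that $*$ interchanges the two families of classes. Since $*$ is an involutory anti-automorphism, $a=bv$ implies $a^*=v^*b^*$. The same holds with $S^1$, setting $1^*=1$. Hence $a\preceq_{\gr} b$ if and only if $a^*\preceq_{\gl} b^*$, so $a\grr b$ if and only if $a^*\gll b^*$. Thus $*$ maps each $\gr$-class bijectively onto an $\gl$-class, and vice versa because $*$ is an involution. Projections are fixed by $*$, so the $\gl$-class $R^*$ contains the projection $p$ of $R$. Applying the uniqueness argument to $R^*$ by duality, or directly from $p=pq$ and $q=qp$ for $\gl$-related idempotents, shows that $p$ is the unique projection of that $\gl$-class. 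The existence of a projection in each $\gl$-class comes from $x\gll x^*x$.

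None of these steps is a real obstacle. The only point needing care is the idempotent identity $p=qp$ for $\gr$-related idempotents. It holds because $p=qs$ for some $s\in S^1$, so $qp=q^2s=qs=p$.
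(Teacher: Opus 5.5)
Your proof is correct and follows essentially the same route as the paper: you exhibit $xx^*$ and $x^*x$ as projections in the $\mathcal R$- and $\mathcal L$-classes of $x$, deduce $p=q$ by applying $*$ to $p=qp$ and using $pq=q$, and use the fact that the anti-automorphism turns $\mathcal R$ into $\mathcal L$. You also supply routine checks that the paper leaves implicit, namely that $xx^*$ is idempotent and self-adjoint, and that $a\preceq_{\mathcal R} b$ if and only if $a^*\preceq_{\mathcal L} b^*$.
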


\begin{proof}
Let $x$ belong to the regular $*$-semigroup. Then $xx^*$ is a projection in
the $\mathcal R$-class of $x$, and $x^*x$ is a projection in its
$\mathcal L$-class. Let $p,q$ be $\mathcal R$-related projections. Then
$pq=q$ and $qp=p$. Taking $*$ in the first equality gives $qp=q$, and hence
$p=q$. The $\mathcal L$-statement is dual, and the anti-automorphism
interchanges the $\mathcal R$- and $\mathcal L$-class sets. The lemma is
proved.
\end{proof}


\begin{prop}\label{p:aperiodic-regular-star}
Every finite aperiodic regular $*$-semigroup has a complete mapping.
\end{prop}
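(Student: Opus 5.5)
The proof goes through the principal factor reduction. By Theorem~\ref{t:J-reduct} it suffices to show that every principal factor $J^0$ of a finite aperiodic regular $*$-semigroup $S$ has a complete mapping.

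Since $S$ is regular, no $\mathcal J$-class is null. By Theorem~\ref{t:principal-factor-Rees}, each $J^0$ is therefore a Rees $0$-matrix semigroup $\mathcal M^0(G,I,\Lambda,P)$. Because $S$ is aperiodic, the maximal subgroup $G$ is trivial, and the trivial group has a complete mapping. Theorem~\ref{c:Rees0} then reduces the problem to a statement about the pattern $Q$ alone: we must show that $Q$ admits a balanced support weighting, or equivalently that it satisfies the Hall-type conditions (c) and (d).

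To construct the weighting we use the $*$-structure. By Lemma~\ref{l:projection-matching}, each $\mathcal R$-class and each $\mathcal L$-class of $J$ contains a unique projection, and $*$ interchanges $\mathcal R$- and $\mathcal L$-classes. This gives a bijection $I\to\Lambda$, sending the $\mathcal R$-class of a projection $p$ to its $\mathcal L$-class. In particular $|I|=|\Lambda|=N$.

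We index both sets by the projections of $J$. Then $Q_{p,q}=1$ precisely when $pq\in J$, that is, when the $\mathcal L$-class of $p$ and the $\mathcal R$-class of $q$ meet in an $\mathcal H$-class containing an idempotent. We check the following properties.
\begin{itemize}
\item The pattern is reflexive: $Q_{p,p}=1$, since $pp=p\in J$.
\item The pattern is symmetric: $(pq)^*=q^*p^*=qp$, and $*$ preserves $\mathcal J$-classes because it maps the principal ideal $S^1xS^1$ onto $S^1x^*S^1$, while $x$ and $x^*$ generate the same ideal by $x=xx^*x$. Hence $pq\in J$ if and only if $qp\in J$.
\end{itemize}

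With this symmetric reflexive pattern we copy the weighting used for $\mathcal P_n$ in Theorem~\ref{t:partition-monoid}. Put $h_{p,p}=N-d(p)+1$, where $d(p)$ is the support degree of $p$, put $h_{p,q}=1$ on the other support edges, and $h_{p,q}=0$ off the support. Row sums are $N$ by construction, and column sums are $N$ by symmetry, which gives a balanced support weighting. Alternatively, the diagonal is already a one-transversal, so Corollary~\ref{cor:rees0-hall-square} applies directly.

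The main point requiring care is the identification of the sandwich pattern with the relation $pq\in J$ under the projection indexing, including the orientation conventions $\Lambda\times I$ versus $I\times\Lambda$. Only two facts about the pattern are needed: it is square and it has nonzero diagonal. Both follow from $p^2=p$ together with the bijection of Lemma~\ref{l:projection-matching}, which is the expected obstacle. The standard Rees coordinate fact needed there is that $(i,g,\lambda)(j,h,\mu)\neq 0$ if and only if $p_{\lambda j}\ne0$, and that this holds if and only if $L_\lambda\cap R_j$ contains an idempotent.
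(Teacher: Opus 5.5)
Your proposal is correct and follows the paper's own route. You reduce to principal factors via Theorem~\ref{t:J-reduct}, observe that each factor is a Rees $0$-matrix semigroup over the trivial group, and use Lemma~\ref{l:projection-matching} to index $I$ and $\Lambda$ by the projections of $J$, so that the diagonal is a one-transversal of the pattern and Theorem~\ref{c:Rees0} (or Corollary~\ref{cor:rees0-hall-square}) applies. The symmetric weighting $h_{p,p}=N-d(p)+1$ is valid but unnecessary, since the diagonal one-transversal alone is what the paper uses, and your explicit check that the pattern is nonzero exactly when $pq\in J$ spells out what the paper states in one line.
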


\begin{proof}
Let $S$ be a finite aperiodic regular $*$-semigroup. Every principal factor
is a Rees $0$-matrix semigroup over a maximal
subgroup
. Since $S$ is
aperiodic, all the maximal subgroups are trivial. By the previous lemma, the
projections give a diagonal one-transversal in the pattern of every
principal factor. The result follows from
Theorems~\ref{c:Rees0} and~\ref{t:J-reduct}. The proposition is proved.
\end{proof}

As a corollary we obtain the following result.

\begin{cor}\label{c:planar-diagram-monoids}
{The planar partition monoid, the Motzkin monoid and the Jones
monoid, the latter also called the Temperley--Lieb monoid in
\cite[Section~2.2]{EastMitchellRuskucTorpey}, have complete mappings.}
\end{cor}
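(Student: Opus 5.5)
The plan is to deduce the corollary directly from Proposition~\ref{p:aperiodic-regular-star}. So I only need to check that each of the three monoids is a finite aperiodic regular $*$-semigroup, and that check comes from the quoted theorem of East, Mitchell, Ru\v{s}kuc and Torpey stated just before Lemma~\ref{l:projection-matching}.

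First, finiteness. Each of the three monoids is a submonoid of $\mathcal P_n$, which is finite.

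Second, the regular $*$-structure. The paper has already recorded that these monoids are closed under the reflection ${}^*$. The identities $\alpha^{**}=\alpha$, $(\alpha\beta)^*=\beta^*\alpha^*$ and $\alpha\alpha^*\alpha=\alpha$ hold in $\mathcal P_n$, so they restrict to each submonoid. Hence each one is a regular $*$-monoid in the sense used in Proposition~\ref{p:aperiodic-regular-star}.

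Third, aperiodicity. The quoted theorem says every maximal subgroup of these monoids is trivial. In a finite semigroup every subgroup lies in some maximal subgroup, namely the group $\mathcal H$-class of its identity idempotent. So all subgroups are trivial, which is exactly the definition of aperiodic given above.

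With these three points, Proposition~\ref{p:aperiodic-regular-star} applies to each monoid and gives a complete mapping. Behind that proposition, the mechanism is as follows. By Lemma~\ref{l:projection-matching}, each principal factor has a diagonal one-transversal in its pattern. This feeds Theorem~\ref{c:Rees0}, applicable because the trivial group has a complete mapping, and then Theorem~\ref{t:J-reduct}. There is no real obstacle. The only point needing care is that aperiodicity, defined via all subgroups, follows from the statement about maximal subgroups; finiteness makes this immediate.
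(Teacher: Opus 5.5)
Your proposal is correct and follows the paper's proof: the paper also feeds the East--Mitchell--Ru\v{s}kuc--Torpey description of these monoids (finite regular $*$-monoids with trivial maximal subgroups) directly into Proposition~\ref{p:aperiodic-regular-star}. Your extra observation, that trivial maximal subgroups force all subgroups to be trivial because each subgroup lies in the $\mathcal H$-class of its identity, is a valid detail the paper leaves implicit.
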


\begin{proof}
By the external theorem above, these monoids are finite aperiodic regular
$*$-monoids. The result follows from the previous proposition. The corollary
is proved.
\end{proof}

\endgroup
\section[Complete mappings of full transformation monoids]{Complete mappings of the full transformation monoids $T_n$}
\label{Sec:Tn}

In this section, we classify which of the full transformation monoids $T_n$
have a complete mapping. 

By Theorem~\ref{t:J-reduct}, the problem  reduces to the principal factors of
$T_n$. The rank-1 factor is a band, while the factors of ranks at least 4 are
Rees $0$-matrix semigroups over groups with complete mappings. The rank-3 and
rank-2 factors are treated by verifying the conditions established in the
preceding sections for Rees $0$-matrix semigroups over groups without complete
mappings.

Clearly, $T_1$ has a complete mapping. Moreover, the top principal factors of
$T_2$ and $T_3$ are $S_2$ and $S_3$, respectively, with zero adjoined. Since
$S_2$ and $S_3$ have no complete mappings, neither $T_2$ nor $T_3$ has a
complete mapping. Therefore, we may assume that $n\ge4$.

{It is worth observing that the family $T_n$ was the principal
motivating test case for much of Section~\ref{Sec:Rees0-matrix-2}. In
particular, a substantial part of the general machinery developed there for
Rees $0$-matrix semigroups over groups without complete mappings was abstracted
from arguments first found while analysing the rank-$2$ and rank-$3$ principal
factors of $T_n$.}

\subsection{The common Rees pattern for the non-zero rank factors}
\label{Sec:Tn-Rees-pattern}
\label{Sec:Tn-rank-greater-three}

Let $n\ge4$, and let $T_n$ be the full transformation monoid on
$X=\{1,\ldots,n\}$. For $1\le k\le n$, let
\({J_k}:=\{f\in T_n\mid |\im(f)|=k\}\)
be the $\mathcal J$-class of all transformations of rank $k$. For
$1<k\le n$, the principal factor $J_k^0$ is completely $0$-simple.

By Rees' theorem,
 ${J_k^0}\cong \mathcal M^0(S_k,I,\Lambda,P)$,
where $I$ is the set of partitions of $X$ into $k$ non-empty blocks, $\Lambda$
is the set of all $k$-subsets of $X$, and $P=(p_{\lambda i})$ is a
$\Lambda\times I$ matrix over $S_k\cup\{0\}$ such that
\(p_{\lambda i}\ne0\quad\Longleftrightarrow\quad\lambda\text{ is a transversal of the partition }i.\)
Thus the zero-one pattern $Q_k$ of $P$ is given by
\((Q_k)_{\lambda i}=1 \quad\Longleftrightarrow\quad \lambda\text{ is a transversal of }i.\)

\begin{prop}\label{p:Tn-pattern-Hall}
Let $1<k\le n$. The pattern $Q_k$ satisfies the equivalent conditions (c) and
(d) of Theorem~\ref{c:Rees0}.
\end{prop}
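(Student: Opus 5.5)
The plan is to produce a balanced support weighting of $Q_k$, which is condition (e) of Theorem~\ref{c:Rees0} and hence equivalent to (c) and (d). The natural tool is symmetry: $S_n$ acts on $X$, hence on the set $I$ of $k$-partitions and on the set $\Lambda$ of $k$-subsets, and this action preserves the transversal relation. Moreover $S_n$ is transitive on $\Lambda$. I will build the weighting by averaging a single support pair over $S_n$, or more simply by giving every support pair a weight that depends only on the partition.

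Fix $i\in I$ with block sizes $b_1,\ldots,b_k$. The number of $k$-subsets that are transversals of $i$ is $t(i)=b_1b_2\cdots b_k\ge1$, since every block is non-empty. Each $k$-subset $\lambda$ is a transversal of at least one partition: put each element of $\lambda$ in its own block and add the remaining points to any block. Thus $Q_k$ has no zero row or column. Put $m=|I|$ and $N=|\Lambda|=\binom nk$. I try
\[
h_{\lambda i}=\frac{N}{t(i)}\quad\text{whenever }\lambda\text{ is a transversal of }i,
\]
and $h_{\lambda i}=0$ otherwise. Then the column sum at $i$ is $t(i)\cdot N/t(i)=N=|\Lambda|$, as required.

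The step I expect to be the main obstacle is the row sum. At $\lambda$ it equals
\[
N\sum_{i:\,\lambda\text{ transversal of }i}\frac1{t(i)},
\]
and this must equal $|I|=m$. For a given $\lambda$, this sum does not depend on $\lambda$: any permutation $g\in S_n$ carrying $\lambda$ to $\lambda'$ gives a bijection between the partitions with transversal $\lambda$ and those with transversal $\lambda'$, and it preserves block sizes and hence $t$. So all row sums share a common value $c$. Summing all entries of $H$ in two ways gives $Nc=\sum_i N=mN$, so $c=m$. Hence $H$ is a balanced support weighting, and Theorem~\ref{c:Rees0} gives (c) and (d).

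If one prefers to avoid (e), the same double count yields (c) directly. Take $s$ rows $L\subseteq\Lambda$. The total $H$-weight on those rows is $sm$. It lies in columns of $N_Q(L)$, each of which has total weight $N$. Therefore $|N_Q(L)|\ge sm/N$, which is exactly condition (c).
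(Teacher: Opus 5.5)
Your proof is correct, but it packages the argument differently from the paper.

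\textbf{The paper's route.} It proves condition (c) directly. It splits the $k$-partitions into classes $\mathcal T$ of fixed block-size type and uses $S_n$-transitivity to see that each bipartite graph between $\mathcal T$ and $\Lambda$ is biregular. It then double counts edges from a set $C$ of $r$ subsets into each class, obtaining $|N(C)\cap\mathcal T|\ge r|\mathcal T|/|\Lambda|$, and sums over types.

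\textbf{Your route.} You exhibit the balanced support weighting $h_{\lambda i}=|\Lambda|/t(i)$ and then invoke (e)$\Rightarrow$(c),(d).

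\textbf{How they relate.} Underneath, the two arguments are the same double count. Restricted to one type class, your weighting has row sums $|\mathcal T|$ and column sums $|\Lambda|$, and its Hall inequality is exactly the paper's typewise bound.

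\textbf{What each buys.} Your version is shorter and needs no type decomposition: you use only that $t$ is $S_n$-invariant and that $S_n$ is transitive on $\Lambda$. Since $t(i)\le|\Lambda|$, your weighting is even positive. It is exactly the harmonic weighting the paper later builds separately for $k=3$ in Proposition~\ref{t3:prop:harmonic} and for $k=2$ in Proposition~\ref{p:Tn-rank-two-indirect}. There positivity is needed for Theorem~\ref{t:Rees0-general-anchored}, so your argument would cover both cases at once. The paper's version gives a slightly finer, typewise neighbourhood bound, and reaches (c) without passing through the equivalence with (e).
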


\begin{proof}
It is enough to prove condition (c) of Theorem~\ref{c:Rees0}. Consider the
bipartite graph whose left vertices are the partitions of $X$ into $k$ non-empty
blocks and whose right vertices are the $k$-subsets of $X$, with a partition
joined to a $k$-set precisely when the $k$-set is a transversal of the partition.
We prove that, for every $C\subseteq\Lambda$ with $|C|=r$,
\[
        |N(C)|\ge r\frac{|I|}{|\Lambda|},
\]
where $N(C)$ is the set of partitions admitting at least one transversal in $C$.

Put $t=n-k$. A $k$-partition is obtained from $k$ singleton blocks by
distributing $t$ extra elements among these blocks. For each integer partition
$\lambda=(\lambda_1,\ldots,\lambda_\ell)$ of $t$, with $\ell\le k$, let
$\mathcal T_\lambda$ be the set of partitions whose block sizes are
\[
        \underbrace{1,\ldots,1}_{k-\ell},
        1+\lambda_1,\ldots,1+\lambda_\ell.
\]
The sets $\mathcal T_\lambda$ are disjoint and
\(I=\bigsqcup_\lambda \mathcal T_\lambda.\)
If $P\in\mathcal T_\lambda$, then the number of transversals of $P$ is
\[
        d_\lambda:=\left(\prod_{i=1}^\ell(1+\lambda_i)\right).
\]

We claim that every $k$-subset of $X$ has the same number of neighbours in
$\mathcal T_\lambda$. The symmetric group on $X$ acts transitively on
$\Lambda$, preserves the transversal relation, and acts transitively on
$\mathcal T_\lambda$, since the latter consists exactly of the partitions with a
fixed multiset of block sizes. Hence the bipartite graph between
$\mathcal T_\lambda$ and $\Lambda$ is biregular. If $d_{\lambda,R}$ denotes the
common degree on the $\Lambda$-side, then double-counting edges gives $|\mathcal T_\lambda|d_\lambda=|\Lambda|d_{\lambda,R}$.

Let $N_\lambda(C):=N(C)\cap\mathcal T_\lambda$. 
The number of edges from $C$ to $\mathcal T_\lambda$ is $r d_{\lambda,R}$, while
each partition in $N_\lambda(C)$ is incident with at most $d_\lambda$ of these
edges. Hence $r d_{\lambda,R}\le d_\lambda |N_\lambda(C)|$,
and therefore
\[
        |N_\lambda(C)|\ge r\frac{d_{\lambda,R}}{d_\lambda}
        =r\frac{|\mathcal T_\lambda|}{|\Lambda|}.
\]
Summing over all integer partitions $\lambda$ of $t$ gives
\[
        |N(C)|=\left(\sum_\lambda |N_\lambda(C)|\right)
        \ge r\left(\sum_\lambda\frac{|\mathcal T_\lambda|}{|\Lambda|}\right)
        =r\frac{|I|}{|\Lambda|}.
\]
This is condition (c) of Theorem~\ref{c:Rees0}. The equivalence with condition
(d) is part of that theorem.
\end{proof}

For $k=2,3$, we require the parities of the relevant index sets. The next result is well known.

\begin{prop}\label{p:Tn-small-rank-parity}
Let $n\ge4$. Then:
\begin{enumerate}
\item $S(n,2)$ is odd;
\item $\binom n2$ is odd if and only if $n\equiv2,3\pmod4$;
\item $\binom n3$ is odd if and only if $n\equiv3\pmod4$;
\item $S(n,3)$ is odd if and only if $n$ is odd.
\end{enumerate}
\end{prop}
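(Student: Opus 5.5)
We treat the four parity claims separately. The two binomial claims follow from Lucas' theorem (equivalently Kummer's theorem), and the two Stirling claims follow from explicit closed forms.

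For (a), we use $S(n,2)=2^{n-1}-1$. This holds because a partition of $X$ into two non-empty blocks corresponds to a non-empty proper subset up to complementation, giving $(2^n-2)/2$ partitions. For $n\ge2$ this number is odd.

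For (d), we use the standard formula
\[
S(n,3)=\frac{3^{n}-3\cdot 2^{n}+3}{6}=\frac{3^{n-1}+1}{2}-2^{n-1},
\]
which comes from inclusion--exclusion on surjections $X\to[3]$. Since $n\ge4$, the term $2^{n-1}$ is even, so the parity of $S(n,3)$ equals the parity of $(3^{n-1}+1)/2$. Now $3^{n-1}\equiv 1$ or $3\pmod 8$ according as $n-1$ is even or odd. Hence $3^{n-1}+1\equiv 2$ or $4\pmod 8$, and $(3^{n-1}+1)/2$ is odd exactly when $n-1$ is even, that is, when $n$ is odd.

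For (b) and (c), Lucas' theorem says that $\binom nk$ is odd if and only if the binary digits of $k$ are dominated by those of $n$. For $k=2=(10)_2$, this requires bit $1$ of $n$ to be set, which means $n\equiv2,3\pmod4$. For $k=3=(11)_2$, it requires bits $0$ and $1$ of $n$ to be set, which means $n\equiv3\pmod4$.

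None of these steps presents a real obstacle. The only point needing care is the mod $8$ computation in (d), where the hypothesis $n\ge4$ is what removes the $2^{n-1}$ contribution.
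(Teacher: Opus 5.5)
Your proof is correct. The paper gives no proof of this proposition at all; it only remarks that the result is well known. So your argument supplies exactly the justification the paper omits. The closed forms $S(n,2)=2^{n-1}-1$ and $S(n,3)=\frac{3^{n-1}+1}{2}-2^{n-1}$, the residues of $3^{n-1}$, and Lucas' theorem for $\binom n2$ and $\binom n3$ all check out.

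One small inaccuracy is in your closing remark. The term $2^{n-1}$ is already even once $n\ge 2$, so the hypothesis $n\ge4$ is not what removes it. In fact, all four parity statements hold for every $n\ge 2$; the restriction $n\ge4$ comes only from how the paper uses the result.
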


\subsection{Ranks at least 4}
\label{Sec:Tn-ranks-at-least-four}

\begin{prop}\label{p:Tn-ranks-at-least-four}
Let $n\ge4$ and let $4\le k\le n$. The rank-$k$ principal factor $J_k^0$ of
$T_n$ has a complete mapping.
\end{prop}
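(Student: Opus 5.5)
The plan is to apply Theorem~\ref{c:Rees0} directly to the Rees $0$-matrix description of $J_k^0$ given in Section~\ref{Sec:Tn-Rees-pattern}. First, since $4\le k\le n$, I would identify $J_k^0$ with $\mathcal M^0(S_k,I,\Lambda,P)$, where $I$ is the set of partitions of $X$ into $k$ blocks, $\Lambda$ is the set of $k$-subsets of $X$, and the pattern is $Q_k$ (transversality). The maximal subgroup $S_k$ has a complete mapping by the Hall--Paige theorem (Theorem~\ref{Thm:HP}). Indeed, for $k\ge4$ the subgroup $\langle(1\,2),(3\,4)\rangle\cong C_2\times C_2$ lies in $S_k$, so a Sylow $2$-subgroup of $S_k$ contains a Klein four-group and therefore cannot be cyclic. (This is the same observation used for the partition monoid in Theorem~\ref{t:partition-monoid}.)

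Second, I would invoke Proposition~\ref{p:Tn-pattern-Hall}. It shows that $Q_k$ satisfies conditions (c) and (d) of Theorem~\ref{c:Rees0} for every $1<k\le n$, and in particular for $k\ge4$. Since $S_k$ possesses a complete mapping and $P$ has pattern $Q_k$, the implication (c)$\Rightarrow$(a) of Theorem~\ref{c:Rees0} gives that $\mathcal M^0(S_k,I,\Lambda,P)$ has a complete mapping. Transporting it along the isomorphism with $J_k^0$ completes the proof. The case $k=n$ is the same argument with $|I|=|\Lambda|=1$: here $J_n^0$ is $S_n$ with a zero adjoined, and any complete mapping of $S_n$ extended by fixing $0$ suffices.

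There is essentially no obstacle. The only points to check are that the Rees representation really has maximal subgroup $S_k$ with support exactly the transversal relation, which is standard and already recorded in Section~\ref{Sec:Tn-Rees-pattern}, and that the Hall-type condition holds, which is exactly Proposition~\ref{p:Tn-pattern-Hall}. The real difficulty for $T_n$ lies in ranks $2$ and $3$, not in this proposition.
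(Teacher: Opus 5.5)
Your proposal is correct and matches the paper's proof: you use the Rees form $\mathcal M^0(S_k,I,\Lambda,P)$ with transversal pattern $Q_k$, then Proposition~\ref{p:Tn-pattern-Hall} for conditions (c) and (d), then Hall--Paige for $S_k$ when $k\ge4$, and finally Theorem~\ref{c:Rees0}. Your separate remark on the case $k=n$ is consistent with this, but it is not needed, because the general argument already covers it.
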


\begin{proof}
The rank-$k$ principal factor has the Rees form
$J_k^0\cong\mathcal M^0(S_k,I,\Lambda,P)$,
where $Q_k$ is the transversal pattern. Proposition~\ref{p:Tn-pattern-Hall}
shows that $Q_k$ satisfies the equivalent conditions (c) and (d) of
Theorem~\ref{c:Rees0}. Since $k\ge4$, the group $S_k$ has a complete mapping
by the Hall--Paige theorem. Theorem~\ref{c:Rees0} applies.
\end{proof}

\subsection[The rank-3 factor when n is not congruent to 3 modulo 4]{The rank-3 factor when $n\not\equiv 3\pmod 4$}
\label{Sec:Tn-rank-three-nonexceptional}

\begin{prop}\label{p:Tn-rank3-nonexceptional}
Let $n\ge4$. If $n\not\equiv3\pmod4$, then the rank-3 principal factor
$J_3^0$ of $T_n$ has a complete mapping.
\end{prop}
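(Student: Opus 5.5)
The rank-$3$ principal factor of $T_n$ is $J_3^0\cong\mathcal M^0(S_3,I,\Lambda,P)$. Here $I$ is the set of partitions of $X$ into $3$ blocks, so $|I|=S(n,3)$. The set $\Lambda$ consists of the $3$-subsets of $X$, so $|\Lambda|=\binom n3$. The plan is to invoke Theorem~\ref{t:Rees0-noncomp-even}, which applies to Rees $0$-matrix semigroups over a group without a complete mapping as soon as one index set has even cardinality. Its two hypotheses need checking.

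First, $S_3$ has no complete mapping, since its Sylow $2$-subgroups are cyclic of order $2$ (Theorem~\ref{Thm:HP}). Second, by Proposition~\ref{p:Tn-pattern-Hall} with $k=3$, the pattern $Q_3$ satisfies the equivalent conditions (c) and (d) of Theorem~\ref{c:Rees0}.

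The only remaining point is the parity hypothesis: at least one of $|I|=S(n,3)$ and $|\Lambda|=\binom n3$ must be even. By Proposition~\ref{p:Tn-small-rank-parity}(c), $\binom n3$ is odd exactly when $n\equiv3\pmod 4$. So under the assumption $n\not\equiv3\pmod4$, $|\Lambda|$ is even and Theorem~\ref{t:Rees0-noncomp-even} gives a complete mapping of $J_3^0$ directly.

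Two further observations are worth recording. When $n$ is even, $S(n,3)$ is also even by part (d), but this is not needed. When $n\equiv 1\pmod 4$, $S(n,3)$ is odd, yet $\binom n3$ is still even, so the parity argument covers that residue as well.

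The main obstacle is thus only the parity fact, which the paper cites as well known. If it must be justified, I would use Lucas' theorem for $\binom n3$: the binomial coefficient is odd iff the two lowest binary digits of $n$ are both $1$, i.e. iff $n\equiv3\pmod 4$. For $S(n,3)$, I would use the formula $S(n,3)=(3^{n-1}-2^n+1)/2$ and note that $3^{n-1}+1\equiv 2$ or $4\pmod 8$ according to the parity of $n-1$. Neither is strictly required, since Proposition~\ref{p:Tn-small-rank-parity} may be assumed. The substantive work has already been done in Theorem~\ref{t:Rees0-noncomp-even} and Proposition~\ref{p:Tn-pattern-Hall}.
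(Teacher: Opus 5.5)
Your proposal is correct and follows the paper's proof essentially verbatim. It uses the Rees form $\mathcal M^0(S_3,I,\Lambda,P)$, the Hall conditions from Proposition~\ref{p:Tn-pattern-Hall}, the failure of Hall--Paige for $S_3$, and the evenness of $\binom n3$ from Proposition~\ref{p:Tn-small-rank-parity} to invoke Theorem~\ref{t:Rees0-noncomp-even}; your optional parity checks via Lucas' theorem and $S(n,3)=(3^{n-1}-2^n+1)/2$ are also correct.
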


\begin{proof}
The rank-3 principal factor has the Rees form
 $J_3^0\cong\mathcal M^0(S_3,I,\Lambda,P)$,
where $I$ is the set of rank-3 kernels, $\Lambda$ is the set of rank-3
images, and $Q_3$ is the transversal pattern. Proposition~\ref{p:Tn-pattern-Hall}
shows that $Q_3$ satisfies the equivalent conditions (c) and (d) of
Theorem~\ref{c:Rees0}. The group $S_3$ has no complete mapping. Moreover
\[
|I|=S(n,3),
        \quad
        |\Lambda|=\binom n3.
\]
By Proposition~\ref{p:Tn-small-rank-parity}, these two numbers are both odd
exactly when $n\equiv3\pmod4$. Therefore, if $n\not\equiv3\pmod4$, at least
one of $|I|$ and $|\Lambda|$ is even, and Theorem~\ref{t:Rees0-noncomp-even}
applies.
\end{proof}

\subsection[The rank-3 factor]{The rank-3 factor when $n\equiv 3\pmod 4$}
\label{t3:Sec:T3-transformations}

Assume throughout this subsection that
$n\equiv 3\pmod 4
       \ \text{and}\
        n\ge7$.
Let $X_n=\{1,2,\ldots,n\}$ and $T_3(n)=\{t\colon X_n\to X_n\mid\rank(t)=3\}$.
The non-zero part of $J_3^0$ is $T_3(n)$. We will show the existence of a complete mapping in this exceptional case by
verifying the hypotheses of Corollary~\ref{c:Rees0-S3-anchored}, which is the
$S_3$ instance of Theorem~\ref{t:Rees0-general-anchored}.

For a triple $y=(y_1,y_2,y_3)$ and $\pi\in S_3$, put
$y\pi=(y_{1\pi},y_{2\pi},y_{3\pi})$.

\subsubsection{Canonical rank-3 classes}

Let $\mathcal K$ be the set of ordered partitions
 $Q=(Q_1,Q_2,Q_3)$
of $X_n$ into three non-empty blocks such that
 $1\in Q_1$ and 
        $\min(X_n\setminus Q_1)\in Q_2$.
For $Q\in\mathcal K$ and for a triple $y=(y_1,y_2,y_3)$ of pairwise distinct
points of $X_n$, let $t_{Q,y}$ be the transformation sending every point of
$Q_r$ to $y_r$.

Let
\[
        \mathcal Y=\binom{X_n}{3}.
\]
For $I=\{i_1<i_2<i_3\}\in\mathcal Y$, put
\(i(I)=(i_1,i_2,i_3).\)
For $Q\in\mathcal K$ and $I\in\mathcal Y$, define
\(\Omega_{Q,I}:=\{t_{Q,i(I)\pi}\mid\pi\in S_3\}.\)
For $Q=(Q_1,Q_2,Q_3)\in\mathcal K$, write $\Tr(Q)$ for the set of transversals
of $Q$, that is, the set of all $I\in\mathcal Y$ meeting each block $Q_r$ in
exactly one point.

\begin{prop}\label{t3:prop:canonical}
Every element of $T_3(n)$ has a unique expression
\(t=t_{Q,y},\)
where $Q\in\mathcal K$ and $y=(y_1,y_2,y_3)$ has pairwise distinct coordinates.
\end{prop}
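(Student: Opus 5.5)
The plan is to prove existence and uniqueness separately, using only the kernel partition of $t$ together with the normalization built into $\mathcal K$.

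For existence, let $t\in T_3(n)$. Its kernel $\ker t=\{(x,x')\mid xt=x't\}$ partitions $X_n$ into exactly three non-empty blocks, because $|\im t|=3$. Order these blocks as follows. Let $Q_1$ be the block containing $1$. Let $Q_2$ be the block containing $\min(X_n\setminus Q_1)$, which exists because there are three blocks. Let $Q_3$ be the remaining block. Then $Q=(Q_1,Q_2,Q_3)\in\mathcal K$. Now put $y_r$ equal to the common value of $t$ on $Q_r$. The $y_r$ are pairwise distinct, since distinct kernel blocks have distinct images. By construction $t=t_{Q,y}$.

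For uniqueness, suppose $t_{Q,y}=t_{Q',y'}$ with $Q,Q'\in\mathcal K$. Since the coordinates of $y$ are distinct, the kernel of $t_{Q,y}$ is exactly the unordered partition $\{Q_1,Q_2,Q_3\}$; the same holds for $Q'$. So the two ordered triples have the same underlying blocks. The conditions $1\in Q_1$ and $\min(X_n\setminus Q_1)\in Q_2$ determine the order uniquely: $Q_1$ is forced to be the block containing $1$, then $Q_2$ is forced, and then $Q_3$. Hence $Q=Q'$. Finally, $y_r$ is the image of any point of $Q_r=Q'_r$, which gives $y=y'$.

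No step here is a real obstacle. The one point to state carefully is that the normalization in the definition of $\mathcal K$ picks out a unique ordering of each unordered $3$-partition.
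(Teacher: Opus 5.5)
Your proposal is correct and follows essentially the same route as the paper: the kernel of $t$ gives three blocks, the normalization in $\mathcal K$ fixes their order, and the common values on the blocks give $y$. Your uniqueness paragraph spells out in detail what the paper covers with ``the same argument gives uniqueness.''
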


\begin{proof}
Let $t\in T_3(n)$. Its kernel is a partition of $X_n$ into three non-empty
classes. There is a unique way to order these classes as $Q=(Q_1,Q_2,Q_3)$ by
our convention for $\mathcal K$: $1\in Q_1$, the least element outside $Q_1$ lies in $Q_2$, and
the remaining class is $Q_3$. If $y_r$ is the common value of $t$ on $Q_r$, then
$y=(y_1,y_2,y_3)$ has pairwise distinct coordinates and $t=t_{Q,y}$. The same
argument gives uniqueness.
\end{proof}

\subsubsection[The Rees data of the rank-3 factor]{The Rees data of the rank-3 factor}
In the following, $S_3$ always acts on triples by permuting coordinates.

\begin{prop}\label{t3:prop:product-rule}
Let $s=t_{B,p}$ and $t=t_{Q,y}$, where $B,Q\in\mathcal K$ and $p,y$ have
pairwise distinct coordinates. Then $st\in T_3(n)$ if and only if the three
points $p_1,p_2,p_3$ lie in three distinct blocks of $Q$. If this holds, there
is a unique $\theta\in S_3$ such that
\(p_r\in Q_{r\theta}\quad(r=1,2,3),\)
in which case
\(st=t_{B,y\theta}.\)
\end{prop}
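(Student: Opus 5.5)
This is a direct computation with the right-action convention, where $x(st)=(xs)t$. I will evaluate the composite $st$ pointwise and read off its image.

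First I compute $st$ on each block of $B$. For $x\in B_r$ we have $xs=p_r$, and hence $x(st)=p_r t$. By the definition of $t_{Q,y}$, $p_r t=y_{c}$, where $c$ is the index with $p_r\in Q_{c}$. Every point of $X_n$ lies in some $B_r$, since $B$ is a partition and $s$ takes only the three values $p_1,p_2,p_3$. So the image of $st$ is exactly $\{p_1t,p_2t,p_3t\}$.

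For the rank criterion, the coordinates of $y$ are pairwise distinct, so $p_rt$ and $p_{r'}t$ coincide exactly when $p_r$ and $p_{r'}$ lie in the same block of $Q$. Thus the image has size $3$ if and only if $p_1,p_2,p_3$ lie in three distinct blocks of $Q$. The rank of $st$ is at most $3$ in any case. So $st\in T_3(n)$ holds precisely under this condition, and otherwise the rank is smaller and the product is $0$ in $J_3^0$.

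Now assume the condition holds. Define $\theta$ by letting $r\theta$ be the index of the block of $Q$ containing $p_r$. This map sends $\{1,2,3\}$ to itself, and it is injective because the $p_r$ lie in distinct blocks. Hence $\theta\in S_3$ with $p_r\in Q_{r\theta}$. It is unique, because the blocks of $Q$ are disjoint, so each $p_r$ lies in exactly one block.

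It remains to identify $st$. Every point of $B_r$ is sent to $y_{r\theta}$, which is the $r$-th coordinate of $y\theta$ under the convention $y\pi=(y_{1\pi},y_{2\pi},y_{3\pi})$. Therefore $st=t_{B,y\theta}$. The kernel of $st$ is $B$ with its canonical ordering already satisfied, since $B\in\mathcal K$. By the uniqueness in Proposition~\ref{t3:prop:canonical}, this is the canonical expression of $st$.

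The only place requiring care is matching the index convention for the action of $S_3$ on triples, so that the formula reads $y\theta$ rather than $y\theta^{-1}$. The rest is routine.
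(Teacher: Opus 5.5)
Your proposal is correct and follows the paper's own argument: for $x\in B_r$ you compute $x(st)=p_rt$, use the distinctness of the coordinates of $y$ to get the rank criterion, and read off $y_{r\theta}$ as the common value on $B_r$, which gives $st=t_{B,y\theta}$. One small point: the image equals $\{p_1t,p_2t,p_3t\}$ because each block $B_r$ is non-empty, which holds since $B\in\mathcal K$.
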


\begin{proof}
If $x\in B_r$, then $xs=p_r$, so $x(st)=p_rt$. Hence $st$ has rank 3
exactly when $p_1,p_2,p_3$ lie in distinct blocks of $Q$. In that case the
image of $B_r$ is the coordinate of $y$ indexed by the block of $Q$ containing
$p_r$, namely $y_{r\theta}$.
\end{proof}

For $Q\in\mathcal K$ and $I\in\Tr(Q)$, let $\theta(Q,I)\in S_3$ be defined by
\(x_r=i(I)_{r\theta(Q,I)},\)
where $x_r$ is the unique point of $I\cap Q_r$. Put
\(s(Q,I):=\sgn(\theta(Q,I)).\)

\begin{prop}\label{t3:prop:rank-three-Rees-data}
The principal factor $J_3^0$ has a Rees representation
\[
J_3^0\cong \mathcal M^0(S_3,\mathcal K,\mathcal Y,P).
\]
For $I\in\mathcal Y$ and $Q\in\mathcal K$, the sandwich entry $p_{I,Q}$ is
non-zero if and only if $I\in\Tr(Q)$; in that case
\(\sgn(p_{I,Q})=s(Q,I).\)
\end{prop}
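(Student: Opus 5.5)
We will build an explicit Rees coordinatisation of $J_3^0$ from the canonical forms of Proposition~\ref{t3:prop:canonical}, and read off the sandwich matrix from the product rule of Proposition~\ref{t3:prop:product-rule}.

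The $\mathcal R$-classes of $T_3(n)$ are indexed by the kernels, that is by $\mathcal K$, and the $\mathcal L$-classes by the images, that is by $\mathcal Y$. The $\mathcal H$-class indexed by $(Q,I)$ is exactly $\Omega_{Q,I}$. We define
\[
\Phi\colon (Q,\pi,I)\longmapsto t_{Q,i(I)\pi}\qquad (Q\in\mathcal K,\ \pi\in S_3,\ I\in\mathcal Y),
\]
extended by $0\mapsto0$. By Proposition~\ref{t3:prop:canonical}, $\Phi$ is a bijection from $(\mathcal K\times S_3\times\mathcal Y)\cup\{0\}$ onto $J_3^0$. Here we use that every injective triple with image $I$ is $i(I)\pi$ for a unique $\pi$, because $S_3$ acts regularly on orderings of $I$.

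Next we compute the product. Take $s=t_{B,i(I)\pi}$ and $t=t_{Q,i(J)\tau}$, and set $p=i(I)\pi$. By Proposition~\ref{t3:prop:product-rule}, $st$ has rank $3$ if and only if $p_1,p_2,p_3$ lie in distinct blocks of $Q$, that is, if and only if $I\in\Tr(Q)$. In that case
\[
st=t_{B,\,i(J)\tau\theta},
\]
where $p_r\in Q_{r\theta}$. We then express $\theta$ through $\theta(Q,I)$. Recall $x_m=i(I)_{m\theta(Q,I)}$ with $x_m\in Q_m$. Since $p_r=i(I)_{r\pi}$, we get $p_r=x_{r\pi\theta(Q,I)^{-1}}$, and so $\theta=\pi\,\theta(Q,I)^{-1}$ in the paper's right-action composition. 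The action convention $y\pi=(y_{1\pi},y_{2\pi},y_{3\pi})$ means that composing permutations on triples may reverse order. We will fix this by checking $(y\alpha)\beta=y(\beta\alpha)$ or $y(\alpha\beta)$ directly from the definition, and then adapt the formula.

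With this settled, $\Phi$ becomes a Rees isomorphism once the group coordinate is taken with the appropriate orientation. That is, we may need to replace $\pi$ by $\pi^{-1}$ in $\Phi$ so that the product reads
\[
(B,\pi,I)(Q,\tau,J)=\bigl(B,\ \pi\,p_{I,Q}\,\tau,\ J\bigr).
\]
In that form
\[
p_{I,Q}=\theta(Q,I)^{\pm1}\ \text{ if } I\in\Tr(Q),\qquad p_{I,Q}=0\ \text{ otherwise}.
\]
In both cases $\sgn(p_{I,Q})=\sgn(\theta(Q,I))=s(Q,I)$, since a permutation and its inverse have the same sign. Associativity is inherited from $T_n$, and $\Phi$ preserves the zero products by the first part of Proposition~\ref{t3:prop:product-rule}. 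This gives both claims: the support is exactly the transversal relation, and the signs are $s(Q,I)$.

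The main obstacle is the bookkeeping of left versus right conventions in the composition of permutations acting on coordinates. An error there could give $\theta(Q,I)$ versus its inverse, or put the group factors in the wrong order. Since only signs are asserted, the sign claim is robust to this ambiguity. The remaining care is to make sure the multiplication genuinely has the Rees form $\pi p\tau$ rather than, say, $\tau p\pi$. We will settle this by testing on an explicit small example before fixing the orientation of $\Phi$.
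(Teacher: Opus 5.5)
Your proposal is correct and is essentially the paper's proof: the same coordinatisation $(Q,\pi,I)\mapsto t_{Q,i(I)\pi}$, the product rule of Proposition~\ref{t3:prop:product-rule}, and the sandwich entry $\theta(Q,I)^{\pm1}$, whose sign is $s(Q,I)$. The orientation check you defer resolves as $(y\alpha)\beta=y(\beta\alpha)$, so the product is $t_{B,\,i(J)(\pi\,\theta(Q,I)^{-1}\tau)}$, no inversion of $\pi$ is needed, and $p_{I,Q}=\theta(Q,I)^{-1}$, exactly as the paper states.
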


\begin{proof}
	The first statement is clear, except for the nature of $P$.
On the individual $\mathcal{H}$-classes, we can use the index scheme
\(\Omega_{B,I}=\{t_{B,i(I)\pi}\mid\pi\in S_3\},\quad \Omega_{Q,J}=\{t_{Q,i(J)\sigma}\mid\sigma\in S_3\}.\)
Proposition~\ref{t3:prop:product-rule} gives a non-zero product precisely when
$I$ is a transversal of $Q$. In that case, if $x_r$ is the unique point in $I\cap Q_r$, then
$x_r=i(I)_{r\theta(Q,I)}$, and
\(t_{B,i(I)\pi}\,t_{Q,i(J)\sigma} =t_{B,i(J)\pi{\theta(Q,I)\inv}\sigma}.\)
Thus the corresponding sandwich entry may be taken to be $\theta(Q,I)\inv$, whose
sign is $s(Q,I)$.
\end{proof}

\subsubsection[Verification of the abstract Rees hypotheses]{Verification of the abstract Rees hypotheses}
Put
\[
\kappa=|\mathcal K|=S(n,3),
        \quad
        \nu=|\mathcal Y|=\binom n3.
\]
By Proposition~\ref{p:Tn-small-rank-parity}, both $\kappa$ and $\nu$ are odd.

\begin{prop}\label{t3:prop:common-kernels}
Let $I,J\in\mathcal Y$, and put $r=|I\cap J|$. Then
\(|\{Q\in\mathcal K\mid I\in\Tr(Q)\text{ and }J\in\Tr(Q)\}|=(3-r)!3^{n-6+r}.\)
In particular, any two image triples admit a common compatible kernel.
\end{prop}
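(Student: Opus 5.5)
The count is a direct enumeration. I will split the ground set into $U=I\cup J$ and its complement $X_n\setminus U$. Since $|U|=6-r$, the complement has $n-6+r$ points. The set $\mathcal K$ is in bijection with unordered partitions of $X_n$ into three blocks, because each partition has a unique canonical ordering. It therefore suffices to count unordered $3$-block partitions for which both $I$ and $J$ are transversals. I will count these as ordered triples $(Q_1,Q_2,Q_3)$ with the blocks labelled by the points of $I$: the block $Q_s$ is the one containing $i(I)_s$. Because $I$ is a transversal, this labelling gives a bijection between such labelled partitions and the unordered ones.

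\textbf{Points of $U$.} Each point of $I$ determines its own block. A point $x\in J\cap I$ is already placed, and $J\cap I$ meets distinct blocks automatically. The $3-r$ points of $J\setminus I$ must occupy exactly the blocks not containing a point of $J\cap I$, one point per block, so that $J$ meets every block exactly once. There are $(3-r)!$ such assignments. Every block already contains a point of $I$, so no block is empty.

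\textbf{Points outside $U$.} Each of the remaining $n-6+r$ points may go into any of the three blocks. Both transversal conditions only constrain points of $I\cup J$, so they remain intact. This gives a factor $3^{n-6+r}$.

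Multiplying the two factors gives $(3-r)!\,3^{n-6+r}$. This is positive because $n\ge7$ makes the exponent non-negative, and $(3-r)!\ge1$. Hence any two image triples have a common compatible kernel.

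The only delicate point is confirming that labelling blocks by $I$ is a bijection with $\mathcal K$, so that there is neither overcounting nor undercounting. This holds because the canonical ordering in $\mathcal K$ and the labelling by $I$ are each a unique ordering of the same unordered partition.
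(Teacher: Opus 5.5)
Your proposal is correct and follows essentially the same route as the paper: label the blocks by the points of $I$, place the $3-r$ points of $J\setminus I$ in $(3-r)!$ ways, place the remaining $n-6+r$ points freely in $3^{n-6+r}$ ways, and recanonicalize to land in $\mathcal K$. One small remark: the exponent $n-6+r$ is non-negative automatically, since $|I\cup J|=6-r\le n$; the hypothesis $n\ge7$ is not actually needed for that step.
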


\begin{proof}
Fix $I=\{i_1,i_2,i_3\}$. Colour $i_a$ by colour $a$. Every point of
$I\cap J$ keeps its colour. The remaining $3-r$ points of $J\setminus I$ must
receive the $3-r$ unused colours, which can be done in $(3-r)!$ ways. Every
point outside $I\cup J$ may then be coloured arbitrarily in $3$ ways. Thus we
obtain $(3-r)!3^{n-6+r}$ colourings in which each colour class meets both $I$
and $J$ exactly once.

Such a colouring gives three labelled non-empty blocks. Recanonicalizing those
blocks gives a kernel $Q\in\mathcal K$ for which both $I$ and $J$ are
transversals. Conversely, if $Q\in\mathcal K$ has both $I$ and $J$ as
transversals, then each block of $Q$ contains a unique point of $I$; labelling
that block by the corresponding index recovers a unique colouring counted above.

The last statement follows from the assumption $n\ge7$.
\end{proof}

We will remove the sign obstruction of Corollary~\ref{c:Rees0-S3-anchored} by a three-cycle of compatible transversals.

\begin{prop}\label{t3:prop:positive-three-cycle}
Define
\[
\begin{aligned}
P_0&=(\{1\},\{2,4\},\{3,5,6,\ldots,n\}),\\
P_1&=(\{1\},\{2,3\},\{4,5,6,\ldots,n\}),\\
P_2&=(\{1\},\{2,3,6\},\{4,5,7,8,\ldots,n\}),
\end{aligned}
\]
and \(I_0=\{1,3,4\}\), \(I_1=\{1,2,5\}\) and
\(I_2=\{1,2,7\}\).
Then $I_j\in\Tr(P_i)$ for all $i,j\in\{0,1,2\}$, and
\(s(P_0,I_0)=-1,\quad s(P_i,I_j)=+1\quad\text{for all other pairs }(i,j).\)
Consequently
\[
        (-1)^3
        \left(
        \frac{s(P_1,I_0)}{s(P_0,I_0)}
        \right)
        \left(
        \frac{s(P_2,I_1)}{s(P_1,I_1)}
        \right)
        \left(
        \frac{s(P_0,I_2)}{s(P_2,I_2)}
        \right)=1.
\]
\end{prop}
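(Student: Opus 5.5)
The statement is a finite check, so the plan is to verify it directly from the definitions of $\mathcal K$, $\Tr(Q)$, $\theta(Q,I)$ and $s(Q,I)$; no earlier theorem is needed. I would carry it out in three steps.

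First, I check that $P_0,P_1,P_2\in\mathcal K$. In each of them the first block is $\{1\}$, so $1\in Q_1$. The least point outside $Q_1$ is $2$, and $2$ lies in the second block. All three blocks are non-empty because $n\ge7$; in particular the third block of $P_2$ contains $4,5,7$.

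Second, for each of the nine pairs $(P_i,I_j)$ I will list the triple $(x_1,x_2,x_3)$, where $x_r$ is the point of $I_j$ lying in block $r$ of $P_i$. I then compare it with the increasing triple $i(I_j)$; because $\theta(Q,I)$ is defined by $x_r=i(I)_{r\theta(Q,I)}$, this comparison determines $\theta$ and hence its sign $s(Q,I)$.
\begin{itemize}
\item For $I_0=\{1,3,4\}$ the triples are $(1,4,3)$ in $P_0$, since $4\in\{2,4\}$ and $3$ lies in the third block. In $P_1$ and $P_2$ the triple is $(1,3,4)$, since $3$ is in the second block and $4$ in the third. So $\theta$ is a transposition for $P_0$ and the identity for $P_1,P_2$.
\item For $I_1=\{1,2,5\}$ the triple is $(1,2,5)$ in all three partitions, because $5$ always lies in the third block. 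So $\theta$ is the identity.
\item For $I_2=\{1,2,7\}$ the triple is $(1,2,7)$ in all three partitions. The one point to note is that $7$ lies in the third block of $P_2$: the value $6$ is deliberately placed in the second block and $7\ge7$ stays in the third. So $\theta$ is again the identity.
\end{itemize}
Each listed triple meets every block exactly once, which gives $I_j\in\Tr(P_i)$ for all $i,j$. It also gives $s(P_0,I_0)=-1$ and $s(P_i,I_j)=+1$ for every other pair.

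Third, I substitute these signs into the displayed product. The first quotient is $s(P_1,I_0)/s(P_0,I_0)=-1$, and the other two quotients equal $1$. Hence the product is $(-1)^3\cdot(-1)=1$.

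There is no real obstacle here. The only care needed is the direction convention in $x_r=i(I)_{r\theta}$, but a transposition has sign $-1$ whichever way it is read, so this cannot affect the conclusion.
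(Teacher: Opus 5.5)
Your proposal is correct and follows the same route as the paper, which simply observes the nine transversal memberships and obtains the signs by direct calculation. Your tabulation of the triples $(x_1,x_2,x_3)$ against $i(I_j)$, with $\theta(P_0,I_0)$ a transposition and all other $\theta(P_i,I_j)$ the identity, is exactly that calculation written out. The additional check that each $P_i$ lies in $\mathcal K$ when $n\ge7$ is a harmless extra.
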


\begin{proof} That $I_j\in\Tr(P_i)$ for all $i,j\in\{0,1,2\}$ can be readily observed. The remaining assertions follow by direct calculation.
\end{proof}
It remains to construct the matrix satisfying the conditions of Theorem~\ref{t:Rees0-general-anchored}.

\begin{prop}\label{t3:prop:harmonic}
For $Q=(Q_1,Q_2,Q_3)\in\mathcal K$, put
\(d(Q):=|\Tr(Q)|=|Q_1||Q_2||Q_3|.\)
Define
\[
        h_{Q,I}:=
        \begin{cases}
        \nu/d(Q), & I\in\Tr(Q),\\
        0, & I\notin\Tr(Q).
        \end{cases}
\]
Then
\[
\left(\sum_{I\in\mathcal Y}h_{Q,I}\right)=\nu\quad(Q\in\mathcal K),
\]
and
\[
\left(\sum_{Q\in\mathcal K}h_{Q,I}\right)=\kappa\quad(I\in\mathcal Y).
\]
Moreover, if $I\in\Tr(Q)$, then $h_{Q,I}\ge1$.
\end{prop}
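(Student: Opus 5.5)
The row sums are immediate; the column sums are the real content, and I would get them from an orbit-counting argument like the one in Proposition~\ref{p:Tn-pattern-Hall}. The positivity bound $h_{Q,I}\ge1$ then reduces to a crude inequality.

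\emph{Row sums.} Fix $Q$. The entry $h_{Q,I}$ is non-zero exactly for the $d(Q)$ transversals $I\in\Tr(Q)$, and each such entry equals $\nu/d(Q)$. Hence the row sum is $d(Q)\cdot\nu/d(Q)=\nu$. Here $d(Q)=|Q_1||Q_2||Q_3|$ simply counts the ways to choose one point from each block.

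\emph{Column sums.} Fix $I\in\mathcal Y$. I would split $\mathcal K$ into the classes $\mathcal K_\tau$ according to the multiset $\tau$ of block sizes, so that $d(Q)=d_\tau$ is constant on $\mathcal K_\tau$. The group $\Sym(X_n)$ acts on unordered $3$-partitions and on $3$-subsets, transitively on each $\mathcal K_\tau$ (identified with unordered partitions via the canonical ordering) and on $\mathcal Y$, and it preserves the transversal relation. So the bipartite graph between $\mathcal K_\tau$ and $\mathcal Y$ is biregular. Double counting then gives
\[
|\{Q\in\mathcal K_\tau : I\in\Tr(Q)\}| = |\mathcal K_\tau|\, d_\tau/\nu .
\]
Therefore the contribution of $\mathcal K_\tau$ to the column sum is $(|\mathcal K_\tau|d_\tau/\nu)\cdot(\nu/d_\tau)=|\mathcal K_\tau|$. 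Summing over $\tau$ gives $\sum_\tau|\mathcal K_\tau|=\kappa$. This is exactly the computation already carried out in the proof of Proposition~\ref{p:Tn-pattern-Hall}, specialised to $k=3$, and I would simply cite it.

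\emph{Positivity.} I need $\nu\ge d(Q)$ whenever $\Tr(Q)\neq\varnothing$, which is always the case. This holds because $\Tr(Q)\subseteq\mathcal Y$: each transversal is a $3$-subset, and distinct transversals are distinct subsets, so $d(Q)=|\Tr(Q)|\le|\mathcal Y|=\nu$. Hence $h_{Q,I}=\nu/d(Q)\ge1$.

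The only step needing care is the transitivity claim. I have to check that the canonical ordering in $\mathcal K$ does not interfere, which it does not, because $\Tr(Q)$ depends only on the underlying unordered partition. So I expect no real obstacle.
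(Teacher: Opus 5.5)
Your proof is correct and is essentially the paper's argument. The paper also obtains the column sums from $\Sym(X_n)$-invariance together with double counting, but it applies these directly to the weighted sum $c(I)=\sum_{Q:I\in\Tr(Q)}1/d(Q)$, showing that it does not depend on $I$ and then summing over $I$ to get $\nu c=\kappa$, instead of first stratifying $\mathcal K$ by block-size type as you do; your row-sum and positivity arguments ($d(Q)=|\Tr(Q)|\le\nu$) are the same as the paper's.
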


\begin{proof}
The row sums are immediate from $|\Tr(Q)|=d(Q)$. For a fixed $I\in\mathcal Y$, put
\[
        c(I):=\left(\sum_{Q:I\in\Tr(Q)}\frac1{d(Q)}\right).
\]
The natural action of $\Sym(X_n)$ is transitive on $\mathcal Y$, and
recanonicalization preserves the multiset of block sizes of a kernel, hence
preserves $d(Q)$. Therefore $c(I)$ is independent of $I$; write $c(I)=c$.
Summing over all $I$ gives
\[
        \nu c=
        \left(\sum_{Q\in\mathcal K}\left(\sum_{I\in\Tr(Q)}\frac1{d(Q)}\right)\right)
        =\left(\sum_{Q\in\mathcal K}1\right)=\kappa.
\]
Thus $c=\kappa/\nu$, and multiplying by $\nu$ gives the column sum. Finally,
$d(Q)=|\Tr(Q)|\le\nu$, so $h_{Q,I}=\nu/d(Q)\ge1$ whenever $I\in\Tr(Q)$.
\end{proof}

\begin{definition}\label{t3:def:anchored-selector}
Fix the triples $I_0,I_1,I_2$ and kernels $P_0,P_1,P_2$ from
Proposition~\ref{t3:prop:positive-three-cycle}. Since $\nu$ is odd,
$\mathcal Y\setminus\{I_0,I_1,I_2\}$ has even cardinality. Pair its elements
arbitrarily. For each pair $\{I,I'\}$, choose a common compatible kernel
$P(I,I')$ with $I,I'\in\Tr(P(I,I'))$, which exists by
Proposition~\ref{t3:prop:common-kernels}. The \emph{anchored selector} is the map
$\sigma\colon \mathcal Y\to\mathcal K$ defined by
$I_a\sigma=P_a$, with $a=0,1,2$,
and
 $I\sigma=I'\sigma=P(I,I')$
for each paired two-element set $\{I,I'\}$.
\end{definition}

\begin{prop}\label{t3:prop:abstract-hypotheses}
The Rees data of $J_3^0$ satisfy the hypotheses of
Theorem~\ref{t:Rees0-general-anchored}, with the modifications from Corollary~\ref{c:Rees0-S3-anchored}.
\end{prop}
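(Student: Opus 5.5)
The verification will run through the hypotheses of Theorem~\ref{t:Rees0-general-anchored} in the form of Corollary~\ref{c:Rees0-S3-anchored}, one at a time, using the Rees representation of Proposition~\ref{t3:prop:rank-three-Rees-data}. That proposition gives $J_3^0\cong\mathcal M^0(S_3,\mathcal K,\mathcal Y,P)$ with support relation $I\sim Q$ if and only if $I\in\Tr(Q)$, and with sign data $s(Q,I)$. We have $\kappa=S(n,3)$ and $\nu=\binom n3$, both odd by Proposition~\ref{p:Tn-small-rank-parity} since $n\equiv3\pmod4$. The group $S_3$ has no complete mapping, and $N=A_3$ has odd order with $S_3/A_3\cong C_2$. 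So the standing hypotheses of the anchored setting hold.

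The positive balanced support weighting is supplied directly by Proposition~\ref{t3:prop:harmonic}. The matrix $h_{Q,I}=\nu/d(Q)$ on support edges has row sums $\nu$ on $\mathcal K$ and column sums $\kappa$ on $\mathcal Y$, which are exactly the margins required. It is also at least $1$ on every support edge, because $d(Q)=|Q_1||Q_2||Q_3|\le\nu$.

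The anchored pair is $(\sigma,\rho)$, with $\sigma$ the anchored selector of Definition~\ref{t3:def:anchored-selector} and $\rho$ the permutation of $\mathcal Y$ defined as follows: $\rho$ acts as the $3$-cycle $(I_0,I_1,I_2)$ and exchanges the two members of each chosen pair $\{I,I'\}$.
\begin{enumerate}
\item The conditions $I\sim I\sigma$ and $I\sim(I\rho)\sigma$ are checked as follows. For a pair $\{I,I'\}$ both indices map under $\sigma$ to $P(I,I')$, which has both $I$ and $I'$ as transversals by Proposition~\ref{t3:prop:common-kernels}. For the distinguished indices, $I_a\sigma=P_a$ and $I_a\rho=I_{a+1}$, and Proposition~\ref{t3:prop:positive-three-cycle} gives $I_j\in\Tr(P_i)$ for all $i,j$.
\item The sign condition \eqref{eq:S3-anchored-cycle-condition} must hold on every cycle of $\rho$. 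On a transposition $(I,I')$ we have $(I\rho)\sigma=I\sigma$, so each ratio equals $1$ and $(-1)^2=1$.
\item On the $3$-cycle, the product $\prod_t s(I_{t+1}\sigma,I_t)/s(I_t\sigma,I_t)$ is exactly the product displayed at the end of Proposition~\ref{t3:prop:positive-three-cycle}, with factors $s(P_1,I_0)/s(P_0,I_0)$ and so on. That proposition states that this product, multiplied by $(-1)^3$, equals $1$.
\end{enumerate}

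The only delicate point is bookkeeping of orientation conventions. The paper writes the Corollary~\ref{c:Rees0-S3-anchored} condition with $s(Q,I)$ where $Q$ is the kernel and $I$ the image, whereas the sandwich entries are written $p_{I,Q}$. I would check that $\sgn(p_{I,Q})=s(Q,I)$ from Proposition~\ref{t3:prop:rank-three-Rees-data}. I would also check that the direction of $\rho$ matches the cycle order $(I_0,I_1,I_2)$, so that the factor $\bar p_{I_t,I_{t+1}\sigma}$ corresponds to $s(P_{t+1},I_t)$. If the opposite orientation were needed, the check would still go through, since every $s(P_i,I_j)$ other than $s(P_0,I_0)$ is $+1$ and the product is unchanged.
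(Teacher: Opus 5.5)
Your proposal is correct and follows essentially the same route as the paper. Like the paper, you take $N=A_3$ with $\bar p_{I,Q}=s(Q,I)$, use the weighting $h$ of Proposition~\ref{t3:prop:harmonic}, and define $\rho$ as the $3$-cycle $(I_0,I_1,I_2)$ together with the swaps of the anchored-selector pairs; the transposition cycles are trivial because $I\sigma=I'\sigma$, and the $3$-cycle condition is read off from Proposition~\ref{t3:prop:positive-three-cycle}.
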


\begin{proof}
The support relation is $I\in\Tr(Q)$. We take $N=A_3$, so that
$S_3/N\cong\{\pm1\}$, and the quotient entry $\bar p_{I,Q}$ in
Theorem~\ref{t:Rees0-general-anchored} is $s(Q,I)$ by
Proposition~\ref{t3:prop:rank-three-Rees-data}. The matrix $h$ required in
Theorem~\ref{t:Rees0-general-anchored} is the matrix of
Proposition~\ref{t3:prop:harmonic}.

Choose $B_\ast\in\mathcal K$. Since $\kappa$ is odd, there is a
fixed-point-free involution on $\mathcal K\setminus\{B_\ast\}$. Let $\sigma$
be the anchored selector of Definition~\ref{t3:def:anchored-selector}. Define a
permutation $\rho$ of $\mathcal Y$ by
\(I_0\rho=I_1, \quad I_1\rho=I_2, \quad I_2\rho=I_0,\)
and, for every paired two-element set $\{I,I'\}$ in
Definition~\ref{t3:def:anchored-selector}, by
\(I\rho=I', \quad I'\rho=I.\)

The condition $I\in\Tr(I\sigma)$ follows from the definition of $\sigma$. The
condition $I\in\Tr((I\rho)\sigma)$ holds for the two-cycles because paired
triples have the same selected kernel, and it holds for the distinguished
three-cycle because Proposition~\ref{t3:prop:positive-three-cycle} gives
$I_j\in\Tr(P_i)$ for all $i,j\in\{0,1,2\}$. Proposition~\ref{t3:prop:harmonic}
gives $h_{I\sigma,I}\ge1$.

For a two-cycle $I\leftrightarrow I'$ of $\rho$ outside
$\{I_0,I_1,I_2\}$, one has $I\sigma=I'\sigma$. Condition~\eqref{eq:S3-anchored-cycle-condition}
is then immediate. For the remaining cycle
$I_0\to I_1\to I_2\to I_0$, condition~\eqref{eq:S3-anchored-cycle-condition}
is exactly the last assertion of Proposition~\ref{t3:prop:positive-three-cycle}.
Thus all hypotheses of Theorem~\ref{t:Rees0-general-anchored} and Corollary~\ref{c:Rees0-S3-anchored} hold.
\end{proof}

\begin{cor}\label{t3:thm:right-half}
If $n\equiv3\pmod4$ and $n\ge7$, then the rank-3 principal factor $J_3^0$
of $T_n$ has a complete mapping.
\end{cor}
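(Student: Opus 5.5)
This follows by combining Proposition~\ref{t3:prop:abstract-hypotheses} with Corollary~\ref{c:Rees0-S3-anchored}. First, Proposition~\ref{t3:prop:rank-three-Rees-data} identifies $J_3^0$ with $\mathcal M^0(S_3,\mathcal K,\mathcal Y,P)$, with support relation $I\in\Tr(Q)$ and signed quotient entries $s(Q,I)$. By Proposition~\ref{p:Tn-small-rank-parity}, both $\kappa=S(n,3)$ and $\nu=\binom n3$ are odd when $n\equiv3\pmod4$. This is the odd-by-odd situation in which the anchored machinery is needed; the even case of Theorem~\ref{t:Rees0-noncomp-even} is not available here. The group $S_3$ has a non-trivial cyclic Sylow $2$-subgroup, so it has no complete mapping. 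We take $N=A_3$, which has odd order and cyclic $2$-quotient $\{\pm1\}$, as required by the setup of Theorem~\ref{t:Rees0-general-anchored}.

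Next we check the two ingredients of Theorem~\ref{t:Rees0-general-anchored}. The positive balanced support weighting is the matrix $h$ of Proposition~\ref{t3:prop:harmonic}. Its margins are $\nu$ on $\mathcal K$ and $\kappa$ on $\mathcal Y$, and $h_{Q,I}\ge1$ on every support edge. The anchored pair is $(\sigma,\rho)$, where $\sigma$ is the anchored selector of Definition~\ref{t3:def:anchored-selector} and $\rho$ is the permutation from the proof of Proposition~\ref{t3:prop:abstract-hypotheses}: the $3$-cycle $(I_0,I_1,I_2)$ together with the transpositions of paired triples. The selector exists because $\nu-3$ is even and, since $n\ge7$, any two triples have a common compatible kernel by Proposition~\ref{t3:prop:common-kernels}.

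The compatibilities $I\in\Tr(I\sigma)$ and $I\in\Tr((I\rho)\sigma)$ hold on the transpositions because paired triples share their selected kernel. They hold on the $3$-cycle because $I_j\in\Tr(P_i)$ for all $i,j$. On each transposition the sign condition \eqref{eq:S3-anchored-cycle-condition} reduces to $(-1)^2=1$, since both quotient factors coincide. On the distinguished $3$-cycle it is exactly the computation in Proposition~\ref{t3:prop:positive-three-cycle}: the single $-1$ at $s(P_0,I_0)$ supplies the required $\varepsilon$ and cancels $(-1)^3$. Corollary~\ref{c:Rees0-S3-anchored} then gives a complete mapping of $J_3^0$.

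The main point to watch is this sign computation, which rests on the explicit entries in Proposition~\ref{t3:prop:positive-three-cycle}. We also need to confirm two details there. First, $P_0,P_1,P_2$ must be canonical members of $\mathcal K$; this holds because $1\in Q_1$ and $2$ is the least element outside $Q_1$ in each case. Second, $n\ge7$ is needed so that the point $7$ appearing in $I_2$ and $P_2$ exists. Everything else is a direct appeal to results stated earlier in the paper.
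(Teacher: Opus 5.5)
Your proposal is correct and follows the paper's route. The paper's proof simply combines Proposition~\ref{t3:prop:rank-three-Rees-data} and Proposition~\ref{t3:prop:abstract-hypotheses} with Corollary~\ref{c:Rees0-S3-anchored}, and you have unpacked the verification of Proposition~\ref{t3:prop:abstract-hypotheses} inline: the weighting $h$, the anchored selector, the cycle $(I_0,I_1,I_2)$ with its transpositions, and the sign check, including your accurate side remarks that $P_0,P_1,P_2\in\mathcal K$ and that $n\ge7$ is needed for the point $7$.
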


\begin{proof}
By Proposition~\ref{t3:prop:rank-three-Rees-data}, the rank-3 principal
factor has the Rees form required in Theorem~\ref{t:Rees0-general-anchored}.
By Proposition~\ref{t3:prop:abstract-hypotheses}, the hypotheses of that theorem
are satisfied, with the modifications from Corollary~\ref{c:Rees0-S3-anchored}. Hence $J_3^0$ has a complete mapping.
\end{proof}

Combining this with the preceding results, the rank-3 principal factor is covered for all $n\ge4$.
\begin{theorem}\label{t3:thm:main}
For every $n\ge4$, the rank-3 principal factor $J_3^0$ of $T_n$ has a
complete mapping.
\end{theorem}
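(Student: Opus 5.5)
The proof will be a case split on $n$ modulo $4$, combining the two results already established for the rank-$3$ factor. The standing setup is that $n\ge4$ and that $J_3^0\cong\mathcal M^0(S_3,I,\Lambda,P)$ with $|I|=S(n,3)$ and $|\Lambda|=\binom n3$. The pattern $Q_3$ satisfies conditions (c) and (d) of Theorem~\ref{c:Rees0} by Proposition~\ref{p:Tn-pattern-Hall}.

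\emph{Case $n\not\equiv3\pmod4$.} Proposition~\ref{p:Tn-rank3-nonexceptional} gives the result directly. Recall why: by Proposition~\ref{p:Tn-small-rank-parity}, $\binom n3$ is odd only when $n\equiv3\pmod4$, so here $|\Lambda|$ is even. Since $S_3$ has no complete mapping, Theorem~\ref{t:Rees0-noncomp-even} applies.

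\emph{Case $n\equiv3\pmod4$.} Since $n\ge4$, we have $n\ge7$, which is exactly the standing hypothesis of the preceding subsection. Corollary~\ref{t3:thm:right-half} then yields a complete mapping. That corollary rests on Proposition~\ref{t3:prop:abstract-hypotheses}, which feeds the following data into Corollary~\ref{c:Rees0-S3-anchored}:
\begin{itemize}
\item the positive balanced weighting of Proposition~\ref{t3:prop:harmonic};
\item the anchored selector built from the common-kernel count of Proposition~\ref{t3:prop:common-kernels}, which is positive for $n\ge7$;
\item the sign-correcting three-cycle of Proposition~\ref{t3:prop:positive-three-cycle}.
\end{itemize}

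The one point to check carefully is that the two cases exhaust all $n\ge4$. In particular, the only value with $n\equiv3\pmod4$ that is at least $4$ but below $7$ would be none, since $3<4$. Nothing beyond invoking the two earlier results is needed, so I expect no real obstacle.
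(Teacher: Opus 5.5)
Your proposal is correct and matches the paper's proof exactly. For $n\not\equiv3\pmod4$ you cite Proposition~\ref{p:Tn-rank3-nonexceptional} (there $\binom n3$ is even, so Theorem~\ref{t:Rees0-noncomp-even} applies), and for $n\equiv3\pmod4$ you note that $n\ge4$ forces $n\ge7$ and invoke Corollary~\ref{t3:thm:right-half}.
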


\begin{proof}
If $n\not\equiv3\pmod4$, the result is Proposition~\ref{p:Tn-rank3-nonexceptional}.
It remains to consider $n\equiv3\pmod4$. Then $n\ge7$, and
Corollary~\ref{t3:thm:right-half} gives a complete mapping of $J_3^0$.
\end{proof}

\subsection[The rank-2 factor from Rees 0-matrix theory]{The rank-2 factor from Rees $0$-matrix theory}
\label{Sec:Tn-rank-two-Rees-observation}

\begin{prop}\label{p:Tn-rank-two-indirect}
For every $n\ge4$, the rank-2 principal factor $J_2^0$ of $T_n$ has a
complete mapping by the Rees $0$-matrix criteria of
Section~\ref{Sec:Rees0-matrix-2}.
\end{prop}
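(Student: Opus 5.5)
The rank-2 factor is $J_2^0\cong\mathcal M^0(S_2,I,\Lambda,P)$, where $I$ is the set of partitions of $X$ into two blocks and $\Lambda=\binom X2$. By Proposition~\ref{p:Tn-pattern-Hall}, the pattern $Q_2$ satisfies conditions (c) and (d) of Theorem~\ref{c:Rees0}, and $S_2\cong C_2$ has no complete mapping. By Proposition~\ref{p:Tn-small-rank-parity}, $|I|=S(n,2)$ is always odd and $|\Lambda|=\binom n2$ is odd exactly when $n\equiv2,3\pmod4$. So the plan splits into two cases.

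If $n\equiv0,1\pmod4$, then $|\Lambda|$ is even and Theorem~\ref{t:Rees0-noncomp-even} applies at once.

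If $n\equiv2,3\pmod4$, both index sets are odd, and I would imitate the rank-$3$ argument using Theorem~\ref{t:Rees0-general-anchored} with $N=1$ and $C=S_2$. I would first record the Rees data, as in Proposition~\ref{t3:prop:rank-three-Rees-data}. Canonicalize a kernel as $(Q_1,Q_2)$ with $1\in Q_1$. The sandwich entry $p_{I,Q}$ for a transversal $I=\{i_1<i_2\}$ is then the transposition exactly when $i_1\in Q_2$, that is, when $i_2\in Q_1$. For the positive weighting, the harmonic matrix $h_{Q,I}=\nu/(|Q_1||Q_2|)$ on transversals works exactly as in Proposition~\ref{t3:prop:harmonic}, and $h_{Q,I}\ge1$ holds since $|Q_1||Q_2|\le\binom n2$. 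Common compatible kernels exist for any two $2$-sets $I,J$: colour $I$ with two colours, give the points of $J\setminus I$ the missing colours, and colour the remaining points freely. This always succeeds for $n\ge4$; when $I\cap J=\emptyset$ and $n=4$ there are no free points, but a valid colouring still exists.

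The key step, and the main obstacle, is an explicit odd cycle that carries the sign obstruction, analogous to Proposition~\ref{t3:prop:positive-three-cycle}. I would take three $2$-sets $I_0,I_1,I_2$ and three kernels $P_0,P_1,P_2$ with $I_j\in\Tr(P_i)$ for all $i,j$, chosen so that exactly one (or an odd number) of the nine entries is the transposition, arranged so that the cycle product in \eqref{eq:general-anchored-cycle-condition}, namely $(-1)^3\prod_t s(P_{t+1},I_t)/s(P_t,I_t)$, equals $1$. My first try is $I_0=\{1,3\}$, $I_1=\{1,4\}$, $I_2=\{2,3\}$ together with kernels of the form $(\{1,2\}\cup A,\{3,4\}\cup B)$ and variants. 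I would adjust by trial, using the rule that $s(Q,I)=-1$ iff the larger element of $I$ lies in the block containing $1$, until exactly one of the three ratios in the cycle is $-1$. This should be possible with points inside $\{1,\dots,5\}$ and so valid for $n\ge6$. The case $n=7$ is fine, and among the odd-index cases $n\equiv2,3\pmod 4$ the small values are $n=6,7$, so $n\ge6$ suffices.

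Finally, pair the remaining $\nu-3$ image sets, which is an even number, giving each pair a common compatible kernel. Define $\sigma$ and $\rho$ as in Definition~\ref{t3:def:anchored-selector} and Proposition~\ref{t3:prop:abstract-hypotheses}: the transpositions of $\rho$ satisfy the anchored condition automatically, and the distinguished $3$-cycle satisfies it by construction. Theorem~\ref{t:Rees0-general-anchored} then gives a complete mapping of $J_2^0$.
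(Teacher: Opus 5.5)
Your overall reduction is the paper's. You use Theorem~\ref{t:Rees0-noncomp-even} when $\binom n2$ is even. Otherwise you use Theorem~\ref{t:Rees0-general-anchored} with $N=1$, together with the harmonic weighting, the sign rule ($s(Q,I)=-1$ iff $\max I$ lies in the block of $1$), and a pairing of the remaining $\nu-3$ two-sets through common separating kernels. All of that is fine.

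\textbf{The gap.} It is the step you yourself call the main obstacle: you never exhibit the distinguished $3$-cycle. That cycle is the only non-routine input, since it is what defeats the parity obstruction described after Corollary~\ref{c:Rees0-S3-anchored}. ``Adjust by trial'' does not establish it, and your first try cannot be repaired by varying the kernels.

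To see why, put $\chi_i(x)=+1$ if $x$ lies in the block of $P_i$ containing $1$ and $\chi_i(x)=-1$ otherwise, so $s(P_i,I)=\chi_i(\min I)$ for a transversal $I$. Suppose, as you require, that $I_j\in\Tr(P_i)$ for all $i,j$, and that the graph with edges $I_0,I_1,I_2$ is connected.
\begin{itemize}
\item On the vertices of that graph, $\chi_i=\eta_i\beta$ for one fixed proper $\pm1$-colouring $\beta$ and signs $\eta_i$.
\item The three ratios in \eqref{eq:S3-anchored-cycle-condition} then multiply to $(\eta_0\eta_1)(\eta_1\eta_2)(\eta_2\eta_0)=1$, whereas you need $-1$.
\item Your pairs $\{1,3\},\{1,4\},\{2,3\}$ form the path $4$--$1$--$3$--$2$ through $1$. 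Every admissible kernel is therefore forced to be $(\{1,2\}\cup A,\{3,4\}\cup B)$, and all signs are $+1$.
\end{itemize}
Also, your selection criterion is misstated. What matters is the parity of the six entries $s(P_t,I_t)$ and $s(P_{t+1},I_t)$ that occur in the cycle condition, not ``one of the nine'' entries.

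\textbf{How to close it.} The anchored condition only needs $I_t\sim P_t$ and $I_t\sim P_{t+1}$, not all nine compatibilities.
\begin{itemize}
\item The paper uses the triangle $Y_0=\{2,3\}$, $Y_1=\{1,2\}$, $Y_2=\{1,3\}$ with $P_0=(\{1,2\},X_n\setminus\{1,2\})$, $P_1=(\{1,3\},X_n\setminus\{1,3\})$ and $P_2=(\{1\},X_n\setminus\{1\})$. Here $s(P_1,Y_0)=-1$ and the other five relevant signs are $+1$.
\item If you insist on all nine compatibilities, you need a disconnected configuration. For example, take $I_0=\{4,5\}$, $I_1=\{1,2\}$, $I_2=\{1,3\}$ with $P_0=(\{1,4\},X_n\setminus\{1,4\})$, $P_1=(\{1,5\},X_n\setminus\{1,5\})$ and $P_2=(\{1,4,6\},X_n\setminus\{1,4,6\})$. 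This is valid since $n\ge6$ in the odd case, and $s(P_1,I_0)=-1$ is the only $-1$ among the nine entries.
\end{itemize}
With either explicit cycle, the rest of your argument goes through unchanged.
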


\begin{proof}
The rank-2 principal factor has the Rees form
\(J_2^0\cong \mathcal M^0(S_2,\mathcal K,\mathcal Y,P),\)
where $\mathcal K$ is the set of canonical ordered bipartitions
$Q=(Q_1,Q_2)$ of $X_n$, with $1\in Q_1$, and
$\mathcal Y=\binom{X_n}{2}$. A pair $Y\in\mathcal Y$ is compatible with
$Q\in\mathcal K$ precisely when $Y$ is a transversal of $Q$.

If $n\equiv0,1\pmod4$, then
\[
        |\mathcal Y|=\binom n2
\]
is even, and Theorem~\ref{t:Rees0-noncomp-even} applies.

Assume now that $n\equiv2,3\pmod4$. Then both $|\mathcal K|=S(n,2)$ and
$|\mathcal Y|=\binom n2$ are odd, by
Proposition~\ref{p:Tn-small-rank-parity}. We verify the hypotheses of
Theorem~\ref{t:Rees0-general-anchored}, taking $N=1$. Identify $S_2$ with $\{\pm1\}$ by the
sign map. If $Y=\{y_1<y_2\}$ and $Q=(Q_1,Q_2)$ separates $Y$, let $s(Q,Y)$ be
$+1$ when $y_1\in Q_1$ and $y_2\in Q_2$, and $-1$ otherwise. This is the image
in the cyclic $2$-group quotient of the corresponding sandwich entry.

For $Q=(Q_1,Q_2)$ put
\(d(Q)=|Q_1||Q_2|.\)
Define
\[
        h_{Q,Y}:=
        \begin{cases}
        |\mathcal Y|/d(Q), & Y\text{ is a transversal of }Q,\\
        0, & \text{otherwise}.
        \end{cases}
\]
Then
\[
\left(\sum_{Y\in\mathcal Y}h_{Q,Y}\right)=|\mathcal Y|
        \quad(Q\in\mathcal K).
\]
For fixed $Y$, the quantity
\[
        \left(\sum_{Q:Y\text{ transversal of }Q}\frac1{d(Q)}\right)
\]
is independent of $Y$, because permutations of $X_n$, followed by
recanonicalizing the two blocks, preserve $d(Q)$ and the transversal relation.
Summing over all $Y$ gives
\[
\left(\sum_{Q\in\mathcal K}h_{Q,Y}\right)=|\mathcal K|
        \quad(Y\in\mathcal Y).
\]
Moreover $h_{Q,Y}\ge1$ whenever $Y$ is a transversal of $Q$.

Choose
\(Y_0=\{2,3\}, \quad Y_1=\{1,2\}, \quad Y_2=\{1,3\},\)
and
\[
P_0=(\{1,2\},X_n\setminus\{1,2\}),
        \quad
        P_1=(\{1,3\},X_n\setminus\{1,3\}),
        \quad
        P_2=(\{1\},X_n\setminus\{1\}).
\]
The required compatibilities are
\[
Y_0\in\Tr(P_0)\cap\Tr(P_1),
        \quad
        Y_1\in\Tr(P_1)\cap\Tr(P_2),
        \quad
        Y_2\in\Tr(P_2)\cap\Tr(P_0).
\]
The corresponding signs are
\(s(P_0,Y_0)=+1, \quad s(P_1,Y_0)=-1,\)
\(s(P_1,Y_1)=s(P_2,Y_1)=+1, \quad s(P_2,Y_2)=s(P_0,Y_2)=+1.\)
Hence
\[
        (-1)^3
        \left(\frac{s(P_1,Y_0)}{s(P_0,Y_0)}\right)
        \left(\frac{s(P_2,Y_1)}{s(P_1,Y_1)}\right)
        \left(\frac{s(P_0,Y_2)}{s(P_2,Y_2)}\right)=1.
\]

Since $|\mathcal Y|$ is odd, the set
$\mathcal Y\setminus\{Y_0,Y_1,Y_2\}$ has even cardinality. Pair its elements
arbitrarily. If $Y$ and $Y'$ are paired, choose a canonical bipartition
$Q(Y,Y')$ separating both pairs. Such a bipartition exists because a graph
with two edges is bipartite. Define $\sigma\colon \mathcal Y\to\mathcal K$ by
\(Y_i\sigma=P_i\quad(i=0,1,2),\)
and by
\(Y\sigma=Y'\sigma=Q(Y,Y')\)
for each paired set $\{Y,Y'\}$. Define $\rho$ by the cycle
\(Y_0\rho=Y_1, \quad Y_1\rho=Y_2, \quad Y_2\rho=Y_0,\)
and by swapping the two elements in every remaining pair. The two-cycles have
trivial sign contribution, because both elements in such a pair have the same
selected kernel. The distinguished three-cycle satisfies the displayed sign
condition above. Thus Theorem~\ref{t:Rees0-general-anchored} applies.
\end{proof}

We can now state and prove the main theorem of this section.

\begin{theorem}\label{Thm:full-transformation-semigroup}
The full transformation monoid $T_n$ has a complete mapping if and only if
$n=1$ or $n\ge4$.
\end{theorem}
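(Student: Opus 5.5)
The proof assembles the principal-factor results of this section via Theorem~\ref{t:J-reduct}, which says that $T_n$ has a complete mapping if and only if every principal factor $J_k^0$, $1\le k\le n$, does.

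\emph{Necessity.} Suppose $T_n$ has a complete mapping. By Corollary~\ref{Cor:units}, so does its group of units $S_n$. For $n=2,3$ the Sylow $2$-subgroups of $S_n$ are $C_2$, which is non-trivial and cyclic, so Theorem~\ref{Thm:HP} rules these cases out. Equivalently, as observed at the start of the section, the top principal factor is $S_n$ with zero adjoined, and it has no complete mapping. Hence $n=1$ or $n\ge4$.

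\emph{Sufficiency.} For $n=1$ the monoid is trivial and the identity map is a complete mapping. Now let $n\ge4$ and check each rank in turn.

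\begin{enumerate}
\item Rank $1$: the non-zero part of $J_1^0$ consists of the $n$ constant maps. Their products are $c_a c_b = c_b$, so the factor is a right-zero band with a zero adjoined, and every element is idempotent. The identity map is therefore complete, since $x\cdot x=x$.
\item Rank $2$: Proposition~\ref{p:Tn-rank-two-indirect}.
\item Rank $3$: Theorem~\ref{t3:thm:main}.
\item Ranks $4\le k\le n$: Proposition~\ref{p:Tn-ranks-at-least-four}. This includes the top factor $k=n$, which is $S_n$ with zero adjoined; $S_n$ has a complete mapping by Hall--Paige because it contains $\langle(12),(34)\rangle\cong C_2\times C_2$.
\end{enumerate}

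One convention needs checking before applying Theorem~\ref{t:J-reduct}: $T_n$ has no rank-$0$ class, so the $\mathcal J$-classes are exactly the rank classes $J_1,\dots,J_n$. The minimal ideal $J_1$ has principal factor $J_1$ with a new zero adjoined, which is the case handled in the rank-$1$ item.

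With every principal factor covered, Theorem~\ref{t:J-reduct} gives a complete mapping of $T_n$. All the substantive work is in the cited rank-$2$ and rank-$3$ results; the only point requiring care here is that the ranks $1,2,3,\ge4$ together with the top factor exhaust all $\mathcal J$-classes.
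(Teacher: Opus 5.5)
Your proof is correct and follows the paper's argument: it assembles the principal factors rank by rank via Theorem~\ref{t:J-reduct}, cites the same results for ranks $2$, $3$ and $\ge4$, and uses the identity map on the rank-$1$ band factor. The only cosmetic difference is at $n=2,3$, where you invoke Corollary~\ref{Cor:units}. The paper instead argues directly that a complete mapping of $S_n\cup\{0\}$ fixes $0$ (Proposition~\ref{Prp:zero}) and so restricts to a complete mapping of $S_n$, and then applies Theorem~\ref{t:J-reduct}; the two routes amount to the same thing.
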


\begin{proof}
For $n=1$, the monoid $T_n$ is the trivial group and has a complete mapping.
For $n=2$ and $n=3$, the top principal factor is $S_n$ with zero adjoined. If
$S_n\cup\{0\}$ had a complete mapping, then that mapping would fix zero by
Proposition~\ref{Prp:zero}, and its restriction to $S_n$ would be a complete
mapping of the group $S_n$. But $S_2$ and $S_3$ have cyclic Sylow
$2$-subgroups, and therefore do not have a complete mapping by the Hall--Paige theorem. Hence $T_2$ and $T_3$ have no complete mappings by
Theorem~\ref{t:J-reduct}.

Assume now that $n\ge4$. The rank-1 principal factor is a band, so the
identity mapping is a complete mapping. The principal factors of ranks $k\ge4$
have complete mappings by Proposition~\ref{p:Tn-ranks-at-least-four}. The
rank-3 principal factor has a complete mapping by Theorem~\ref{t3:thm:main}.
The rank-2 principal factor has a complete mapping by Proposition~\ref{p:Tn-rank-two-indirect}.
Therefore every principal factor of $T_n$ has a complete mapping, and
Theorem~\ref{t:J-reduct} gives a complete mapping of $T_n$.
\end{proof}

\section{Inverse semigroups revisited}
\label{Sec:inv-revisited}

In this section we classify finite inverse semigroups with complete mappings.

\begin{theorem}\label{t:inverse}
Let $I$ be a finite inverse semigroup. Then $I$ has a complete mapping if and only if, for each $\mathcal{J}$-class $J$ of $I$, one of the following holds:
\begin{enumerate}
\item The maximal subgroups of $J$ have a complete mapping;
\item The number of $\mathcal L$-classes of $J$, which equals the number of $\mathcal R$-classes of $J$, is even.
\end{enumerate}
\end{theorem}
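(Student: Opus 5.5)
By Theorem~\ref{t:J-reduct}, $I$ has a complete mapping if and only if every principal factor $J^0$ has one. So the plan is to identify each principal factor of a finite inverse semigroup as a Rees $0$-matrix semigroup with a very special pattern, and then read off the answer from the results of Sections~\ref{Sec:Rees0-matrix-1} and~\ref{Sec:Rees0-matrix-2}.

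A finite inverse semigroup is regular, so no principal factor is null. The standard structure theory says each $J^0$ is a Brandt semigroup: it is isomorphic to $\mathcal M^0(G,[n],[n],\Delta)$, where $G$ is the maximal subgroup of $J$, $n$ is the common number of $\mathcal L$- and $\mathcal R$-classes of $J$, and $\Delta$ is the identity pattern with identity entries on the diagonal. This uses that each $\mathcal L$- and $\mathcal R$-class contains exactly one idempotent; Lemma~\ref{l:projection-matching} applies, since $x\mapsto x^{-1}$ is a $*$-structure. The pattern $\Delta$ is a permutation matrix, so it satisfies conditions (c) and (d) of Theorem~\ref{c:Rees0}: any $s$ rows meet exactly $s$ columns.

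Sufficiency then splits into two cases. If $G$ has a complete mapping, Theorem~\ref{c:Rees0}(a) gives one for $J^0$. If instead $n$ is even, Theorem~\ref{t:Rees0-noncomp-even} gives one, equivalently Corollary~\ref{c:Rees0-even-transversal}.

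For necessity, suppose $G$ has no complete mapping and $n$ is odd. The sandwich matrix is normalized on its support, in the sense that every non-zero entry is the identity, which has odd order. Theorem~\ref{t:converse0} requires $P$ to come from a normalized matrix over $G$ whose entries all have odd order, by replacing some entries with $0$. So I take the all-identity $n\times n$ matrix, which is normalized with entries of odd order $1$, and zero out its off-diagonal entries. By Hall--Paige, $G$ has non-trivial cyclic Sylow $2$-subgroups, and Theorem~\ref{t:converse0} then shows that $J^0$ has no complete mapping. Theorem~\ref{t:J-reduct} finishes the argument.

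The main point needing care is the identification of the Brandt form with identity sandwich entries, since Theorem~\ref{t:converse0} is sensitive to the actual entries. I would cite Howie's theorem that every completely $0$-simple inverse semigroup is a Brandt semigroup $\mathcal M^0(G,[n],[n],\Delta)$. As a check, one can also run the parity argument of Theorem~\ref{t:converse} directly on $J^0$. Map to the cyclic $2$-quotient $G/N$ from Lemma~\ref{l:cyclic2-quotient}. The product of the middle coordinates over all $n^2$ $\mathcal H$-classes is $\varepsilon^{n^2}=\varepsilon$. Each non-zero product has middle coordinate $gh$, so the product over the transversal is $\varepsilon^2=1$, which is a contradiction.
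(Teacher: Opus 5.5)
Your proposal is correct and follows the paper's proof essentially step for step: you reduce to principal factors via Theorem~\ref{t:J-reduct}, put each $J^0$ in Brandt form with identity sandwich pattern, get sufficiency from Theorem~\ref{c:Rees0} or Theorem~\ref{t:Rees0-noncomp-even}/Corollary~\ref{c:Rees0-even-transversal}, and get necessity from Theorem~\ref{t:converse0} applied to the all-identity normalized matrix with its off-diagonal entries replaced by $0$. Your extra justification of the identity sandwich matrix (via the unique idempotent in each $\mathcal L$- and $\mathcal R$-class) and your direct parity check in $G/N$ are both sound; they spell out what the paper compresses into ``the usual normalizations'' and the citation of Theorem~\ref{t:converse0}.
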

\begin{proof}
By Theorem~\ref{t:J-reduct}, $I$ has a complete mapping if and only if every
principal factor $J^0$ has a complete mapping. Let $J$ be a $\mathcal J$-class
and let
\(S=\mathcal M^0(G,A,A,P)\)
be the Rees $0$-matrix semigroup isomorphic to $J^0$. Since $I$ is an inverse
semigroup, the number of $\mathcal L$-classes and the number of $\mathcal R$-classes
in $J$ are equal, and, after applying the usual normalizations and permuting the
indices, we may assume that $P$ is the identity matrix.

If $G$ has a complete mapping, then $S$ has a complete mapping by
Theorem~\ref{c:Rees0}, since the identity pattern satisfies conditions (c) and
(d) of that theorem. If $G$ does not have a complete mapping and $|A|$ is even,
then $S$ has a complete mapping by Corollary~\ref{c:Rees0-even-transversal}.
This proves one implication.

Conversely, suppose that some principal factor $J^0$ has maximal subgroup $G$
with no complete mapping and has an odd number of $\mathcal L$-classes. Then
$J^0$ is isomorphic to $\mathcal M^0(G,A,A,P)$ with $|A|$ odd and $P$ the
identity matrix. By the Hall--Paige theorem, $G$ has non-trivial cyclic Sylow
$2$-subgroups. The matrix $P$ is obtained from a normalized matrix all of
whose entries are the identity, by replacing the off-diagonal entries by $0$.
Thus Theorem~\ref{t:converse0} implies that $J^0$ has no complete mapping.
By Theorem~\ref{t:J-reduct}, neither does $I$.
\end{proof}

As an application, we obtain the following result about symmetric inverse semigroups.

\begin{cor}
Let $\mathcal I_n$ be the symmetric inverse semigroup on $n$ points. Then $\mathcal I_n$ has a complete mapping if and only if $n\equiv0,1\pmod 4$.
\end{cor}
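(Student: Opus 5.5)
The plan is to apply Theorem~\ref{t:inverse} to $\mathcal I_n$, which is a finite inverse semigroup. Its $\mathcal J$-classes are the rank classes $J_k$, $0\le k\le n$. In rank $k$ the $\mathcal R$-classes are indexed by domains and the $\mathcal L$-classes by images. Both are $k$-subsets of $[n]$, so there are $\binom nk$ of each. The maximal subgroup is $S_k$. By Theorem~\ref{Thm:HP}, $S_k$ has a complete mapping exactly when $k\in\{0,1\}$ or $k\ge4$: $S_0$ and $S_1$ are trivial, $S_2$ and $S_3$ have cyclic Sylow $2$-subgroups of order $2$, and for $k\ge4$ the group contains $\langle(12),(34)\rangle\cong C_2\times C_2$. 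Thus only ranks $k=2$ and $k=3$ are constrained. By Theorem~\ref{t:inverse}, $\mathcal I_n$ has a complete mapping if and only if $\binom n2$ is even whenever $n\ge2$, and $\binom n3$ is even whenever $n\ge3$.

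Next comes a parity computation, which I would do by Lucas' theorem or directly. $\binom n2=n(n-1)/2$ is odd exactly when $n\equiv2,3\pmod4$, and $\binom n3$ is odd exactly when $n\equiv3\pmod4$; this matches Proposition~\ref{p:Tn-small-rank-parity}(b),(c), which covers $n\ge4$. The small cases I would check by hand. For $n=0,1$ there are no ranks $2$ or $3$ at all, and $0,1\equiv0,1\pmod4$. For $n=2$, $\binom22=1$ is odd, so there is no complete mapping. For $n=3$, $\binom32=3$ is odd, again failing.

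Combining these: the rank-$2$ condition ($\binom n2$ even, or $n<2$) holds exactly when $n\equiv0,1\pmod4$. The rank-$3$ condition only removes $n\equiv3\pmod4$, which is already excluded. Hence $\mathcal I_n$ has a complete mapping if and only if $n\equiv0,1\pmod4$.

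The only point needing care, and the main obstacle, is confirming the Rees data of $\mathcal I_n$. I need the $\mathcal J$-classes to be exactly the rank classes, with $\binom nk$ $\mathcal L$- and $\mathcal R$-classes and maximal subgroup $S_k$. These are standard facts, since $\alpha\mathrel{\mathcal R}\beta$ iff $\operatorname{dom}\alpha=\operatorname{dom}\beta$ and $\alpha\mathrel{\mathcal L}\beta$ iff $\operatorname{im}\alpha=\operatorname{im}\beta$. I would cite them rather than reprove them.
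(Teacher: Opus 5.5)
Your proposal is correct and follows the same route as the paper. You apply Theorem~\ref{t:inverse} to the rank classes $J_k$ of $\mathcal I_n$, each with maximal subgroup $S_k$ and $\binom nk$ $\mathcal L$-classes, so only ranks $2$ and $3$ matter and the parities of $\binom n2$ and $\binom n3$ decide the answer; your explicit check of the cases $n\le 3$, which Proposition~\ref{p:Tn-small-rank-parity} does not cover, fills in a detail the paper's one-line proof leaves implicit.
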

\begin{proof}
The $\mathcal J$-classes of $\mathcal I_n$ are indexed by $0,1,\ldots,n$. The class $J_i$ has maximal subgroup $S_i$ and $\binom ni$ $\mathcal L$-classes. Hence the first condition of Theorem~\ref{t:inverse} holds unless $i=2,3$, while the second holds for both $J_2$ and $J_3$ exactly when $n\equiv0,1\pmod 4$.
\end{proof}

\section{Complete regularity and involutive complete mappings}
\label{Sec:CR_involutions}

A \emph{completely regular} semigroup is a union of groups. For $x$ in a completely regular semigroup,
let $x\inv$ denote the inverse of $x$ in the maximal subgroup containing $x$. Then $x\inv$ is an inverse
in the semigroup sense: $xx\inv x=x$, $x\inv xx\inv =x\inv$, and it also commutes with $x$:
$xx\inv=x\inv x$. In addition, $x\inv$ is the unique inverse of $x$ commuting with $x$, hence is
called the commuting inverse of $x$.

An element $a'\in S$ is a weak inverse of $a$ if $a'=a'aa'$. The set of weak inverses of $a$ is denoted by $W(a)$.

If $S$ is a regular semigroup with inverse mapping $x\mapsto x'\in V(x)$, then for each $x\in S$, set
\(x\inv = x(x^3)'x.\)
Note that $x\inv$ is a weak inverse of $x$:
\begin{equation}\label{Eqn:CR-out}
x\inv xx\inv = x(x^3)'x^3(x^3)'x = x(x^3)'x = x\inv.
\end{equation}
{By Hall's Lemma~4.1.1 and Theorem~4.1.2 \cite[Lemma~4.1.1 and Theorem~4.1.2]{Hall89}, $S$ is completely regular if and only if $xx\inv x=x$. In this case Lemma~4.1.1 identifies $x\inv=x(x^3)'x$ with the inverse of $x$ in its maximal subgroup, and hence $x\inv$ is the commuting inverse of $x$.}

We will need the following slightly technical lemma.

\begin{lemma}\label{Lem:x'xx}
Let $S$ be a semigroup, and suppose that $x\mapsto x'\in W(x)$ is a weak inverse mapping of $S$ satisfying
\begin{equation}\label{Eqn:x'xx}
x'xx = x
\end{equation}
for all $x\in S$. Then the following hold.
\begin{enumerate}
  \item For all $x\in S$, $x'\in V(x)$;
  \item $S$ is completely regular with commuting inverse $x\inv = x(x^3)'x$.
\end{enumerate}
\end{lemma}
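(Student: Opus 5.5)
For part (a) we must show $xx'x=x$. From \eqref{Eqn:x'xx} we have $x=x'x\cdot x$. Left-multiplying by $x$ is not directly useful, so instead we work with the idempotent $e=x'x$. Using $x'=x'xx'$, we get $e^2=x'xx'x=x'x=e$, so $e$ is idempotent and $ex=x$. The goal $xx'x=x$ then says that $x$ is also fixed on the right by $x'x$.

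The plan is to apply \eqref{Eqn:x'xx} to the element $x'$ itself, which gives
\[
(x')'x'x'=x'.
\]
Right-multiplying this by $x$ yields $(x')'x'x'x=x'x$, so the idempotent $x'x$ lies in $S^1(x'x)$ in a way we can exploit. Combined with $x=x'xx$ and $x'=x'xx'$, this should show that $x$ and $x'$ lie in a common subgroup: the $\mathcal H$-class of $e$.

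Concretely, we aim first to show $x\gll e$ and $x'\grr e$. We have $e=x'x\in S^1x$ and $x=ex$, so $x\preceq_{\gl}e$ requires writing $x$ as a left multiple of $e$; we try $x=x'xx=(x'x)x$ and look for $x\in Se$. The harder direction is exactly where $(x')'$ enters. The expectation is that $x$, $x'$ and $e$ all become $\mathcal H$-related. Once $x\mathrel{\gh}e$ with $e$ idempotent, $x$ lies in the group $H_e$, and $ex=xe=x$ gives $xx'x=xe=x$. This proves (a).

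For (b), since $x'\in V(x)$ for all $x$, $S$ is regular, and the preceding step places every $x$ in a subgroup $H_{x'x}$; hence $S$ is completely regular. Equivalently, we verify $xx\inv x=x$ with $x\inv=x(x^3)'x$, using the criterion quoted from Hall's Lemma~4.1.1 and Theorem~4.1.2. That criterion also identifies $x\inv$ as the commuting inverse. The main obstacle is the first step: deriving the right-sided identity $x=xx'x$ from the one-sided \eqref{Eqn:x'xx}. If the $\mathcal H$-class argument stalls, the fallback is to apply \eqref{Eqn:x'xx} to $x^2$ and to $xx'$, extract $x\in xS$ and $x\in Sx^2$, and conclude via the standard fact that $x\gh x^2$ forces $x$ into a group.
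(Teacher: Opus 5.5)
Your proposal has a genuine gap: it never proves part (a), the identity $xx'x=x$. From the point where you apply the hypothesis to $x'$, everything is an expectation rather than a derivation (``this should show'', ``the expectation is'', ``if the argument stalls''). The one concrete identity you obtain, $x''x'x'x=x'x$, is never turned into $x\in Se$.

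Part of the stated target is also false: $x'$ need not lie in $H_e$. Take the right-zero semigroup $\{b,c\}$ with $b'=c$ and $c'=b$. Both $x'=x'xx'$ and $x'xx=x$ hold, and $e=x'x=x$, yet $x'\neq x$ and $\mathcal H$ is trivial. Only $x\mathrel{\mathcal L}e$ is needed, and you were one multiplication away. Right-multiplying your identity by $x$ and using the hypothesis gives
\[
x''x'x=x''x'x'\,x^2=x'x^2=x .
\]
Hence $x=x''e\in Se$, so $xe=x''e^2=x''e=x$, which is $xx'x=x$. This insertion of the instance $x'=x''x'x'$ is exactly what drives the paper's equational chain for (a).

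Part (b) is likewise not carried out. You say ``we verify $xx^{-1}x=x$'' without doing it, and your claim that $x\in H_{x'x}$ presupposes $x\mathrel{\mathcal R}x'x$, which you never address. The fallback also cannot work as described. Every instance of the hypothesis, whether at $x$, $x^2$ or $xx'$, has the form $y\in Sy^2$, so it yields only left-sided containments. It gives $x=x'x^2\in Sx^2$ at once, but never $x\in x^2S^1$, which is what $x\mathrel{\mathcal H}x^2$ needs ($x\in xS$ is not the relevant containment). Right-sided information must come from combining the weak-inverse identity with the instance at $x'$, for example
\[
xx'x'=x'x^2x'x'=x''x'x'\,x^2x'x'=x''x'x\,x'x'=x''x'x'=x' .
\]
With (a), this gives $x=xx'x=x^2x'x'x\in x^2S$, so $x\mathrel{\mathcal H}x^2$ and $H_x$ is a group by Green's theorem. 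Alternatively, $e=x'x=xx'x'x\in xS$ gives $x\mathrel{\mathcal H}e$. Hence $S$ is completely regular. In the group $H_x$ you then have $x(x^3)'x=(x^{-1})^2x^3(x^3)'x^3(x^{-1})^2=x^{-1}$, or you can cite Hall's Lemma~4.1.1.

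Completed this way, your structural route is a little shorter than the paper's. The paper instead derives $xx'x'=x'$, $x^i(x')^i=xx'$ and $x^{-1}x=x'x$, then checks $xx^{-1}x=x$ and invokes Hall's criterion. As written, however, your proposal establishes neither (a) nor (b).
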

\begin{proof}
For (1), we compute
\begin{alignat*}{3}
\underbrace{x}x'x
    &\byeqn{Eqn:x'xx} \underbrace{x'}xxx'x &\byeqn{Eqn:x'xx} x''x'\underbrace{x'xx}x'x &\byeqn{Eqn:x'xx} x''\underbrace{x'xx'}x \\
    &= x''x'\underbrace{x} &\byeqn{Eqn:x'xx} \underbrace{x''x'x'}xx &\byeqn{Eqn:x'xx} x'xx \\
    &\byeqn{Eqn:x'xx} x. & &
\end{alignat*}

For (2), we have already noted that $x\inv$ is an outer inverse of $x$; see \eqref{Eqn:CR-out}.
Next, we verify the identity:
\begin{equation}\label{Eqn:CR-4}
xx'x' = x'
\end{equation}
as follows:
\begin{alignat*}{3}
\underbrace{x}x'x'
    &\byeqn{Eqn:x'xx} \underbrace{x'}xxx'x' &&\byeqn{Eqn:x'xx} x''x'\underbrace{x'xx}x'x' &\byeqn{Eqn:x'xx} x''\underbrace{x'xx'}x' \\
    &= x''x'x' &&\byeqn{Eqn:x'xx} x'. &
\end{alignat*}

Next we show
\begin{equation}\label{Eqn:CR-5}
x^i (x')^i = xx'
\end{equation}
for all $i\ge 2$. First, $x^2(x')^2 = x\underbrace{xx'x'} \byeqn{Eqn:CR-4} xx'$. Next, assuming
\eqref{Eqn:CR-5} for $i$, we have $x^{i+1} (x')^{i+1} = xx^i (x')^i x' = xxx'x' = xx'$, using the
induction hypothesis in the second equality and the case $i=2$ in the third.

Next we prove
\begin{equation}\label{Eqn:CR-6}
x\inv x = x'x
\end{equation}
for all $x\in S$ as follows:
\begin{alignat*}{3}
x\inv x
    &= x\inv\underbrace{xx'}x && \byeqn{Eqn:CR-5} x\inv x^5 (x')^5 x &&= x\underbrace{(x^3)'x^6}(x')^5 x \\
    &\byeqn{Eqn:x'xx} \underbrace{xx^3(x')^4} x'x && \byeqn{Eqn:CR-5} \underbrace{xx'x'}x &&\byeqn{Eqn:CR-4} x'x.
\end{alignat*}

From \eqref{Eqn:CR-6}, we conclude that $x\inv$ is an inverse:
\begin{equation}\label{Eqn:CR-inn}
xx\inv x = xx'x = x.
\end{equation}
{By Hall's Theorem~4.1.2  \cite{Hall89}, $S$ is completely regular, and Lemma~4.1.1 identifies $x\inv$ with the inverse of $x$ in its maximal subgroup \cite[Lemma~4.1.1 and Theorem~4.1.2]{Hall89}. Thus $x\inv$ is the commuting inverse of $x$.}
This proves (2) and completes the proof of the lemma.
\end{proof}

A mapping $\alpha\colon S\to S$ of a semigroup $S$ is \emph{involutive} if $\alpha^2 = \idmap_S$.

\begin{theorem}\label{Thm:involution}
A finite semigroup admitting an involutive complete mapping is completely regular.
\end{theorem}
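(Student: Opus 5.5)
The plan is to apply Lemma~\ref{Lem:x'xx}. By part (2) of that lemma, it suffices to produce a weak inverse mapping $x\mapsto x'\in W(x)$ satisfying \eqref{Eqn:x'xx}, namely $x'xx=x$. Since $S$ is regular by Theorem~\ref{Thm:regular}, I would first reduce the whole problem to showing
\[
x\preceq_{\gl} x^2 \qquad (x\in S),
\]
that is, $ux^2=x$ for some $u\in S^1$.

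This reduction is routine. Suppose $ux^2=x$, let $v\in V(x)$, and put $x'=xvu$. Then
\[
x'xx = xv(ux^2) = xvx = x.
\]
Also
\[
x'xx' = xv(ux^2)vu = xvxvu = xvu = x',
\]
so $x'\in W(x)$. Lemma~\ref{Lem:x'xx} then shows that $S$ is completely regular.

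To obtain $x\preceq_{\gl}x^2$, let $\alpha$ be an involutive complete mapping with orthomorphism $\theta$, and put $\delta=\alpha^{-1}\theta=\alpha\theta$. I would first extract four Green relations from the identities already proved.

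From $x\theta=x\cdot x\alpha$ we get $x\theta\preceq_{\gr}x$. Lemma~\ref{Lem:pregood_inverse} gives $x=x\theta\,(x\theta)'x$, so $x\preceq_{\gr}x\theta$. Hence
\[
x\mathrel{\gr} x\cdot x\alpha .
\]
Next, $x\delta=x\alpha\cdot x$, because $\alpha^{-1}=\alpha$. The second induction in the proof of Theorem~\ref{Thm:good_inverse} gives $x\delta^{-1}\cdot x'x=x\delta^{-1}$. Replacing $x$ by $x\delta$ yields $x=x\,(x\delta)'(x\delta)$, so
\[
x\mathrel{\gl} x\alpha\cdot x .
\]
Write $y=x\alpha$, so that $y\alpha=x$. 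The same two facts applied to $y$ give $y\mathrel{\gr}yx$ and $y\mathrel{\gl}xy$. Thus $xy\in R_x\cap L_y$ and $yx\in R_y\cap L_x$, and all of these lie in a single $\gd=\gj$-class. By the Miller--Clifford theorem, both $L_x\cap R_y$ and $L_y\cap R_x$ contain idempotents.

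The main obstacle is passing from this "crossed" configuration to an idempotent in $H_x=L_x\cap R_x$. Such an idempotent is equivalent to $x^2\mathrel{\gh}x$, which gives the required $x\preceq_{\gl}x^2$. The involutive hypothesis must be used essentially here, since the example in Section~\ref{Sec:firststeps} shows the crossed picture alone is not enough.

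My first attempt would imitate the orbit products in the proof of Theorem~\ref{Thm:regular}, now exploiting that $\alpha$-orbits have length at most $2$. Concretely, I would apply $x\theta=x\cdot x\alpha$ at both $x$ and $y$, giving $x\theta=xy$ and $y\theta=yx$. Then I would write products along the $\theta$-orbit of $x$ inside $R_x$, which $\theta$ preserves by Theorem~\ref{Thm:green}. I would combine these with $x\theta\,(x\theta)'x=x$ and $\overline{y}\,y\,x=x$, the latter being \eqref{Eqn:good1} for $y$ with $y\theta=yx$ and $y\alpha=x$. The aim is to rewrite $x$ as $u\,x\,x$.

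If the direct manipulation stalls, the fallback is a counting argument on the principal factor. If $H_x$ contained no idempotent, the local structure would force $\alpha$ to move every element of $R_x\cap L_x$ off its own $\gh$-class. I would then count, in the Rees $0$-matrix coordinates of Theorem~\ref{t:principal-factor-Rees}, how the involution pairs $\gh$-classes, and derive a contradiction with $\theta$ being a bijection.
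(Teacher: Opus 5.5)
Your preliminary facts are correct: $x\mathrel{\mathcal R}x\theta$ from Lemma~\ref{Lem:pregood_inverse}, $x\mathrel{\mathcal L}x\alpha\cdot x$ from the second induction in Theorem~\ref{Thm:good_inverse} with $\alpha^{-1}=\alpha$, the same two facts at $y=x\alpha$, and the Miller--Clifford consequences. The reduction to $x\preceq_{\mathcal L}x^2$ is also valid. But the decisive step, producing an idempotent in $H_x$, is never proved. The orbit-product manipulation is only announced, and the counting fallback is not carried out. Its starting observation, that $\alpha$ moves each element of a non-group $\mathcal H$-class off that class, is true but gives no contradiction by itself. So the argument stops exactly where the involutive hypothesis has to do its work. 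Separately, no example in Section~\ref{Sec:firststeps} shows what you claim. The \textsc{Mace4} example in Section~\ref{Sec:ideals} is completely simple, hence completely regular, and says nothing about this step.

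The missing idea is to use the crossed configuration at $z=x\theta^{-1}$ rather than at $x$; this is where the bijectivity of $\theta$ enters. Your facts give three things for every $u\in S$.
(i) $u\theta\mathrel{\mathcal R}u$.
(ii) $u\theta=u\cdot u\alpha\mathrel{\mathcal L}u\alpha$; this is your $y\mathrel{\mathcal L}xy$.
(iii) $u\alpha\cdot u=(u\alpha)\theta$ lies in $R_{u\alpha}$ by (i) applied to $u\alpha$, and in $L_u$ by your $x\mathrel{\mathcal L}x\alpha\cdot x$. Hence, by Miller--Clifford, $L_{u\alpha}\cap R_u$ contains an idempotent.

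Take $u=z$. Then (i) and (ii) give $R_z=R_x$ and $L_{z\alpha}=L_x$, so (iii) places an idempotent in $L_x\cap R_x=H_x$. Thus every $\mathcal H$-class is a group and $S$ is completely regular directly. You do not need the detour through $x\preceq_{\mathcal L}x^2$ and Lemma~\ref{Lem:x'xx}. This is the clean form of your counting idea: $\theta$ permutes $R_x$ and $L_{u\alpha}=L_{u\theta}$, so $L_{u\alpha}$ takes every $\mathcal L$-class value as $u$ runs over $R_x$.

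The paper's proof is instead purely equational. Using the good inverse of Theorem~\ref{Thm:good_inverse}, it derives $(x\theta^{-1}\alpha)'\,x\theta^{-1}\alpha\,x=x=x\,(x\theta^{-1}\alpha)'\,x\theta^{-1}\alpha$. Further identities then lead to $x^{-1}xx=x$ for $x^{-1}=x(x^3)'x$, after which it applies Lemma~\ref{Lem:x'xx}. Its pivotal element $x\theta^{-1}\alpha$ is exactly your $z\alpha$.
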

\begin{proof}
Let $S$ be a finite semigroup with an involutive complete mapping $\alpha\colon S\to S$ and orthomorphism $\theta\colon S\to S,\quad x\mapsto x\cdot x\alpha$.
By Theorem~\ref{Thm:regular}, the semigroup $S$ is regular, and by Theorem~\ref{Thm:good_inverse} we may choose an inverse mapping $x\mapsto x'\in V(x)$ satisfying $x'\cdot x\theta=x\alpha$ for every $x\in S$. We shall prove that \eqref{Eqn:x'xx} holds with $x\inv$ in place of $x'$. The result will then follow from Lemma~\ref{Lem:x'xx}.

First, we prove, for all $x\in S$, the identities
\begin{align}
(x\theta\inv\alpha)'\cdot x\theta\inv\alpha\cdot x &= x, \label{Eqn:inv1} \\
x\cdot (x\theta\inv\alpha)'\cdot x\theta\inv\alpha &= x. \label{Eqn:inv2}
\end{align}

To prove \eqref{Eqn:inv1}, observe that, for every $x\in S$,
\begin{alignat*}{2}
(x\alpha)'\cdot x\alpha\cdot \underbrace{x\theta}
    &= (x\alpha)'\cdot x\alpha\cdot \underbrace{x}\cdot x\alpha &&= (x\alpha)'\cdot \underbrace{x\alpha\cdot x\alpha^2}\cdot x\alpha \\
    &= \underbrace{(x\alpha)'\cdot x\alpha\theta}\cdot x\alpha &&= \underbrace{x\alpha^2} \cdot x\alpha \\
    &= x\cdot x\alpha &&= x\theta.
\end{alignat*}
Replacing $x$ with $x\theta\inv$, we obtain \eqref{Eqn:inv1}.

To prove \eqref{Eqn:inv2}, observe that, for every $x\in S$,
\[
\underbrace{x\theta}\cdot (x\alpha)'\cdot x\alpha = x\cdot \underbrace{x\alpha\cdot (x\alpha)'\cdot x\alpha} = x\cdot x\alpha = x\theta.
\]
Replacing $x$ with $x\theta\inv$, we obtain \eqref{Eqn:inv2}.

Similarly to \eqref{Eqn:inv2}, we prove
\begin{equation}\label{Eqn:inv3}
x\theta\inv\alpha\cdot y(xy)'xy = x\theta\inv\alpha\cdot y
\end{equation}
for all $x,y\in S$ as follows.
\begin{align*}
(\underbrace{x\alpha}\cdot y)(x\theta\cdot y)'(x\theta\cdot y) &= x'\underbrace{(x\theta\cdot y)(x\theta\cdot y)'(x\theta\cdot y)} \\
&= x'(x\theta\cdot y) \\
&= x\alpha\cdot y.
\end{align*}
Replacing $x$ with $x\theta\inv$, we have \eqref{Eqn:inv3}.

Next we prove the identity
\begin{equation}\label{Eqn:inv4}
x(x^2)'x^2 = x.
\end{equation}
We compute
\begin{align*}
\underbrace{x}(x^2)'x^2 &\byeqn{Eqn:inv1} (x\theta\inv\alpha)'\cdot \underbrace{x\theta\inv\alpha\cdot x(x^2)'x^2} \\
&\byeqn{Eqn:inv3} (x\theta\inv\alpha)'\cdot x\theta\inv\alpha\cdot x \\
&\byeqn{Eqn:inv1} x,
\end{align*}
as claimed.

Next we prove, for all $x\in S$,
\begin{equation}\label{Eqn:inv5}
x\cdot ((x^2)\theta\inv\alpha)'((x^2)\theta\inv\alpha) = x.
\end{equation}
We have
\begin{align*}
\underbrace{x}\cdot ((x^2)\theta\inv\alpha)'((x^2)\theta\inv\alpha)
    &\byeqn{Eqn:inv4} x(x^2)' \underbrace{x^2((x^2)\theta\inv\alpha)'((x^2)\theta\inv\alpha)} \\
    &\byeqn{Eqn:inv2} x(x^2)'x^2 \\
    &\byeqn{Eqn:inv4} x.
\end{align*}

For the penultimate step, we prove
\begin{equation}\label{Eqn:inv6}
x x\inv x^2 = x^2
\end{equation}
as follows:
\begin{align*}
x x\inv x^2 &= \underbrace{x} x(x^3)' x^3 \\
&\byeqn{Eqn:inv5} x\cdot ((x^2)\theta\inv\alpha)'\underbrace{((x^2)\theta\inv\alpha)\cdot x(x^2x)'x^2x} \\
&\byeqn{Eqn:inv3} \underbrace{x\cdot ((x^2)\theta\inv\alpha)'(x^2)\theta\inv{\alpha}}\cdot x \\
&\byeqn{Eqn:inv5} x^2.
\end{align*}

Finally, we prove \eqref{Eqn:x'xx}:
\begin{align*}
x\inv xx &= \underbrace{x}(x^3)'x^3 \\
&\byeqn{Eqn:inv4} x(x^2)'x\underbrace{x(x^3)'x}x^2 \\
&= x(x^2)'\underbrace{xx\inv x^2} \\
&\byeqn{Eqn:inv6} x(x^2)'x^2 \\
&\byeqn{Eqn:inv4} x.
\end{align*}
Since we already know $x\inv$ is a weak inverse of $x$ by \eqref{Eqn:CR-out}, it follows from Lemma~\ref{Lem:x'xx} that $S$ is completely regular.
\end{proof}

\begin{cor}
Let $S$ be a finite semigroup in which the identity mapping is a complete mapping. Then $S$ is completely regular.
\end{cor}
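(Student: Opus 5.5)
This follows immediately from Theorem~\ref{Thm:involution}. The identity mapping $\idmap_S$ satisfies $\idmap_S^2=\idmap_S$, so it is involutive. If it is a complete mapping, then $S$ is a finite semigroup admitting an involutive complete mapping, and the theorem gives that $S$ is completely regular.

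For the record, here is what the theorem's proof specialises to in this case. The orthomorphism is the squaring map $x\theta=x^2$, which is a permutation of $S$. Theorem~\ref{Thm:good_inverse} then supplies inverses $x'\in V(x)$ with $x'x^2=x$. This is exactly identity~\eqref{Eqn:x'xx}, and Lemma~\ref{Lem:x'xx} already yields complete regularity.

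A third, self-contained route is also available. Since squaring is a permutation of finite order $m$, we have $x=x^{2^m}$ for every $x\in S$. Then $x=x^{k}$ with $k=2^m\ge2$, so $x$ lies in the cyclic subgroup generated by itself, and every element lies in a subgroup.

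None of these routes has a genuine obstacle: the first is a one-line specialisation of Theorem~\ref{Thm:involution}, and the last is an elementary check. I would present the first and remark on the elementary argument.
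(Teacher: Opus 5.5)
Your proof is correct and is exactly the paper's argument: the identity map is involutive, so Theorem~\ref{Thm:involution} applies directly. Your side remarks are also sound. In particular, the elementary observation $x=x^{2^m}$, where $m$ is the order of the squaring permutation, already places each element in a cyclic subgroup and gives a self-contained proof that bypasses the involution machinery.
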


\begin{proof}
The identity mapping is involutive, so the assertion follows immediately from
Theorem~\ref{Thm:involution}.
\end{proof}

\section{Bands}
\label{Sec:bands}

In this section, we examine complete mappings on several particular classes of semigroups. A \emph{band} is a semigroup in which every element is an idempotent, and a commutative band is a \emph{semilattice}. \emph{Left-zero semigroups}, defined by $xy=x$, and \emph{right-zero semigroups}, defined by $xy=y$, are bands. A band is \emph{rectangular} if it satisfies $xyx=x$. Equivalently, a semigroup is a rectangular band if and only if it satisfies the quasi-identity $xy=yx\Rightarrow x=y$. Every rectangular band is isomorphic to a direct product $L\times R$, where $L$ is a left-zero band and $R$ is a right-zero band~\cite[Theorem~1.1.3]{Howie}.

We now give a couple of characterizations of (finite) bands as semigroups with special complete mappings or special orthomorphisms.

\begin{prop}\label{Prp:id_ortho_band}
Let $S$ be a finite semigroup and let $\alpha$ be a complete
mapping satisfying $x\cdot x\alpha=x$ for all $x\in S$. Then $S$ is a
band.
\end{prop}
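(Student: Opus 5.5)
The plan is to use the earlier results on regularity and adapted inverses. The hypothesis says that the orthomorphism $\theta$ is the identity. By Theorem~\ref{Thm:good_inverse}, for each $x$ we may choose $x'\in V(x)$ with $x'\cdot x\theta=x\alpha$, that is, $x'x=x\alpha$. Substituting this into $x\cdot x\alpha=x$ gives $x\cdot x'x=x$, which is just regularity and yields nothing new. So the real content must come from exploiting the identity $x\cdot x\alpha=x$ together with the bijectivity of $\alpha$.

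The key step is a direct orbit argument. Put $y=x\alpha$, so that $xy=x$. Then $x\alpha$ is a right identity for $x$, and hence $x\cdot(x\alpha)^k=x$ for every $k\ge1$. Since $S$ is finite, some power $(x\alpha)^m$ is an idempotent $e$ with $xe=x$. This alone does not give $x^2=x$, so I expect to need the $\alpha$-orbit as well. Apply the identity at $x\alpha^{-1}$: this gives $x\alpha^{-1}\cdot x=x\alpha^{-1}$, so $x$ is a right identity for its preimage $z=x\alpha^{-1}$.

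To get idempotency, I would use the theorem on involutive mappings only if $\alpha$ happened to be involutive, which is not assumed. So the main route is through ideals instead. By Proposition~\ref{Prp:preserve_ideals}(2), the ideal $S^1xS^1$ is $\alpha$-invariant, and Lemma~\ref{Lem:preserve_pideals}(b) shows that $\alpha$ preserves $\mathcal J$-classes. Passing to the principal factor as in Theorem~\ref{t:J-reduct}, the restriction of $\alpha$ is a complete mapping of $J^0$ with identity orthomorphism. That factor is a Rees $0$-matrix semigroup $\mathcal M^0(G,I,\Lambda,P)$. There the relation $(i,g,\lambda)(j,h,\mu)=(i,g,\lambda)$ forces $\mu=\lambda$ and $p_{\lambda j}h=1$, so $h=p_{\lambda j}^{-1}$. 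Consequently $\alpha$ maps $H_{i,\lambda}$ into $\bigcup_j H_{j,\lambda}$ using the group coordinate $p_{\lambda j}^{-1}$ only, and this value does not depend on $g$.

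Injectivity of $\alpha$ then forces $|G|=1$, because all $|G|$ elements of $H_{i,\lambda}$ must be sent to distinct elements of the form $(j,p_{\lambda j}^{-1},\lambda)$, with $j$ ranging over at most $|I|$ choices. The counting has to be done globally: the preimages of the $|I|$ elements $(j,\ast,\lambda)$ with fixed $\lambda$ number $|I||G|$. Bijectivity onto the set $\{(j,g,\lambda)\}$, which has $|I||G|$ elements, while the image lies in a set of only $|I|$ elements, forces $|G|=1$. The null-factor case is excluded since the factor has a complete mapping. With $G$ trivial, the condition $p_{\lambda j}\ne0$ for all used pairs gives that each $x=(i,\lambda)$ satisfies $x^2\in J$. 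In fact $x^2=x$ requires $p_{\lambda i}\ne0$, and this is the main obstacle. I would handle it by using that the $\alpha$-image of $H_{i,\lambda}$ must cover all $(j,\lambda)$ as $i$ varies, so every column $j$ is reachable from $\lambda$. Combined with the $\alpha^{-1}$ identity above, applied to the preimage of $x$, this should show that $x$ is a right identity of an element $\mathcal R$-related to it. Therefore $x$ is an idempotent, and $S$ is a band.
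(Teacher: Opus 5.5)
Your argument is correct in substance, but it takes a long structural detour where the paper uses a three-line orbit argument. The paper proves by induction that $x\cdot x\alpha^{i}=x$ for all $i\ge1$. Given $x=x\cdot x\alpha^i$, the hypothesis applied at $x\alpha^i$ gives $x\cdot x\alpha^{i+1}=x\cdot x\alpha^i\cdot x\alpha^{i+1}=x\cdot x\alpha^i=x$. Taking $i$ equal to the order of $\alpha$ then yields $x^2=x$.

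In your second paragraph you iterated the powers $(x\alpha)^k$ in $S$ instead of the iterates $x\alpha^k$ of the permutation, and that is why the direct route slipped away. Your route instead goes through Lemma~\ref{Lem:preserve_pideals}, the principal-factor reduction of Theorem~\ref{t:J-reduct} and Rees' theorem, and then computes $(i,g,\lambda)\alpha=(j,p_{\lambda j}^{-1},\lambda)$. This uses heavy machinery and relies essentially on the finiteness of $S$, whereas the paper only needs $\alpha$ to have finite order. The structural by-product, that $\alpha$ preserves each $\mathcal L$-class, also drops out of the paper's identity $x=x\cdot x\alpha^i$. So the detour buys little.

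Your final paragraph needs tightening.
\begin{enumerate}
\item The injectivity count you already made does all the work. It shows that $\alpha$ sends the $|I||G|$ elements with third coordinate $\lambda$ injectively into $\{(j,p_{\lambda j}^{-1},\lambda):p_{\lambda j}\ne0\}$. Hence $|I||G|\le|\{j:p_{\lambda j}\ne0\}|\le|I|$, which forces both $|G|=1$ and $p_{\lambda j}\ne0$ for every $j$. In particular $p_{\lambda i}\ne0$, so $x^2=x$ at once.
\item Your intermediate claim that $x^2\in J$ follows from the ``used pairs'' is not justified. The used pair for $x=(i,\lambda)$ is $(\lambda,j)$, where $j$ is the first coordinate of $x\alpha$, not $(\lambda,i)$.
\item The closing sentence is unnecessary and misstated. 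The preimage $z=x\alpha^{-1}$ shares the coordinate $\lambda$ with $x$, so $z\mathrel{\mathcal L}x$, not $z\mathrel{\mathcal R}x$. For a counterexample to $\mathcal R$, take a left-zero semigroup with a non-identity $\alpha$: its $\mathcal R$-classes are singletons, so $z\ne x$ are not $\mathcal R$-related.
\item With $\mathcal L$ the conclusion is immediate: writing $x=wz$ with $w\in S^1$ gives $x^2=wzx=wz=x$. This $\mathcal L$-version needs no Rees machinery at all. Indeed, $x=x\cdot x\alpha$ already gives $x\preceq_{\mathcal L}x\alpha$, and since the $\alpha$-orbit closes up, all its members are $\mathcal L$-equivalent.
\end{enumerate}
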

\begin{proof}
Since $S$ is finite, $\alpha$ has finite order, say, $k$. We claim that for all $i\ge 1$, $x\cdot x\alpha^i=x$. This holds for $i=1$ by assumption. Assuming the claim for some $i\ge 1$, we have
\(x\cdot x\alpha^{i+1} = x\cdot x\alpha^i\cdot x\alpha^{i+1} = x\cdot x\alpha^i = x,\)
which establishes the claim. Taking $i=k$, we have the desired result.
\end{proof}

The band case can be stated without reference to principal factors.

\begin{prop}\label{Prp:weakinv_band}
For a semigroup $S$, the following are equivalent:
\begin{enumerate}
    \item\label{band1} $S$ is a band;
    \item\label{band2} $S$ admits a complete mapping $\alpha$ satisfying $x\alpha\in V(x)$ for all $x\in S$;
    \item\label{band3} $S$ admits a complete mapping $\alpha$ satisfying $x\alpha\in W(x)$ for all $x\in S$.
\end{enumerate}
\end{prop}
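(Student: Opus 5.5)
We prove the cycle (\ref{band1}) $\Rightarrow$ (\ref{band2}) $\Rightarrow$ (\ref{band3}) $\Rightarrow$ (\ref{band1}). For (\ref{band1}) $\Rightarrow$ (\ref{band2}), take $\alpha=\idmap_S$. In a band every $x$ satisfies $x=x^2=x^3$, so $x\in V(x)$ and $x\cdot x\alpha=x^2=x$. The orthomorphism is therefore the identity, hence a bijection, and $\alpha$ is a complete mapping with $x\alpha\in V(x)$. The implication (\ref{band2}) $\Rightarrow$ (\ref{band3}) is immediate, since $V(x)\subseteq W(x)$.

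The substance lies in (\ref{band3}) $\Rightarrow$ (\ref{band1}). Here we work with finite $S$, as the definition of complete mapping in this section implicitly assumes; we should check whether the statement is meant only for finite semigroups, and add that hypothesis if needed. Let $\alpha$ be a complete mapping with $x\alpha\in W(x)$, and let $\theta$ be its orthomorphism, $x\theta=x\cdot x\alpha$. Put $x'=x\alpha$. Then $x'xx'=x'$, so $x\theta=xx'$ is an idempotent for every $x$. Because $\theta$ is a bijection, every element of $S$ has the form $x\theta$ for some $x$. Hence every element of $S$ is an idempotent, and $S$ is a band.

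The main point to watch is surjectivity of $\theta$. This is exactly where the hypothesis that $\alpha$ is a \emph{complete} mapping enters, rather than merely a choice of weak inverses. Finiteness is used only to make this bijection meaningful as in the definition; if $S$ may be infinite, bijectivity of $\theta$ still gives surjectivity directly, so the argument goes through unchanged. Since the argument requires no other earlier results, Proposition~\ref{Prp:id_ortho_band} is not needed. It does, however, give a consistency check: in the band case the identity mapping satisfies both its hypothesis and condition (\ref{band2}).
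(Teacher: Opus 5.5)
Your proof is correct and is the paper's argument: the identity mapping gives (1)$\Rightarrow$(2), (2)$\Rightarrow$(3) is immediate since $V(x)\subseteq W(x)$, and for (3)$\Rightarrow$(1) the condition $x\alpha\in W(x)$ makes each $x\theta=x\cdot x\alpha$ idempotent, so surjectivity of $\theta$ gives $S=E(S)$. As you note, finiteness is not needed anywhere, and the paper's statement correctly carries no finiteness hypothesis.
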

\begin{proof}
    (\ref{band1}) $\Rightarrow$ (\ref{band2}). The identity permutation is a complete mapping satisfying the desired condition.

    \noindent (\ref{band2}) $\Rightarrow$ (\ref{band3}). This is trivial.

    \noindent (\ref{band3}) $\Rightarrow$ (\ref{band1}). Let $\theta\colon S\to S,\quad x\mapsto x\cdot x\alpha$ be the orthomorphism corresponding to $\alpha$. For $a\in S$, $a\alpha\in W(a)$
    implies $a\theta = a\cdot a\alpha = a\cdot a\alpha\cdot a\cdot a\alpha = a\theta\cdot a\theta$, that is, $a\theta$ is an idempotent. Since $\theta$ is a permutation, $S = \{a\theta\mid a\in S\}\subseteq E(S)$, that is, $S$ is a band.
\end{proof}

Next we consider the extreme case of semigroups in which every permutation of the underlying set is a complete mapping. These are easy to characterize.

\begin{prop}\label{Prp:every_perm}
Every permutation of a semigroup $S$ is a complete mapping if and only if $S$ is a right-zero or left-zero semigroup.
\end{prop}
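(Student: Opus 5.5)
The easy direction comes first. If $S$ is a left-zero semigroup, then $x\cdot x\alpha = x$ for every permutation $\alpha$, so the orthomorphism is the identity and is bijective. If $S$ is a right-zero semigroup, then $x\cdot x\alpha = x\alpha$, so the orthomorphism is $\alpha$ itself, again a bijection. In both cases every permutation is a complete mapping; the empty and one-element cases are trivially included.

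For the converse, assume every permutation of $S$ is a complete mapping. I plan to exploit transpositions, whose small support makes the injectivity of $\theta$ easy to test.

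\begin{itemize}
\item[(i)] The identity permutation is complete, so $x\mapsto x^2$ is a bijection of $S$.
\item[(ii)] Fix distinct $a,b$ and let $\alpha$ be the transposition $(a\ b)$. Then $x\theta = x^2$ for $x\ne a,b$, while $a\theta = ab$ and $b\theta = ba$. Both $\theta$ and the squaring map are bijections, and they agree off $\{a,b\}$. Hence the two-element sets of values must coincide: $\{ab,ba\}=\{a^2,b^2\}$.
\end{itemize}

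This gives two cases for every pair $a\ne b$. In the first, $ab=a^2$ and $ba=b^2$. In the second, $ab=b^2$ and $ba=a^2$. Note that $a^2\ne b^2$ by injectivity of squaring.

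The next step is to show $S$ is a band, which I expect to be the main obstacle. The plan is to use finiteness (implicitly assumed, as in the surrounding propositions): some power of each element is idempotent, and I want to combine this with the pairwise relations above.

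Take $a$ and $e=a^k$ idempotent with $e\ne a$. Applying the dichotomy to the pair $(a,e)$ gives either $ae=a^2$ or $ae=e^2=e$.
\begin{itemize}
\item[(a)] If $ae=e$, multiply repeatedly and use associativity with powers of $a$ to force $a^2=e$ or a contradiction with the injectivity of squaring.
\item[(b)] If $ae=a^2$, then $a^{k+1}=a^2$. Since squaring is a bijection, it is injective on the cyclic subsemigroup generated by $a$, which should force $a^k=a$, contradicting $e\ne a$.
\end{itemize}
A cleaner alternative is to note that squaring is a permutation of the cyclic subsemigroup $\langle a\rangle$. A finite monogenic semigroup on which squaring is injective must be a cyclic group. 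If $\langle a\rangle$ is a nontrivial group $\{e,a,\dots\}$, then applying the dichotomy to $a$ and $e$ gives $ae=a$ in $\{a^2,e\}$, contradicting $a\ne e$ and $a^2\ne a$. If instead $a^2=e$, then $a\cdot e=a\ne e$ contradicts the dichotomy directly. So every $\langle a\rangle$ is trivial and $S$ is a band.

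Once $S$ is a band, the dichotomy reads: for $a\ne b$, either ($ab=a$ and $ba=b$) or ($ab=b$ and $ba=a$). So each pair is "left-zero type" or "right-zero type".

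The final step is to rule out mixing the two types among three elements. Suppose $ab=a$ and $ba=b$, while $ac=c$ and $ca=a$. I will compute $b\cdot c$ via $b=ba$, so $bc=bac$, and compare with $(ba)c = b(ac) = bc$. This alone is circular, so I would instead evaluate $bc$ in each of its two possible types. Combining the computation $abc$ as both $(ab)c=ac=c$ and $a(bc)$ should yield a contradiction. If some such case instead survives, I would apply the transposition test directly to a 3-cycle permutation on $\{a,b,c\}$, which gives a finer constraint. Once mixing is excluded, all pairs are of the same type and $S$ is left-zero or right-zero.
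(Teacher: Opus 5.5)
Your first two steps are sound. The identity plus a transposition give $\{ab,ba\}=\{a^2,b^2\}$ with $a^2\ne b^2$, and your monogenic-subsemigroup argument correctly makes $S$ a band when $S$ is finite. Finiteness is not needed, however, and the statement does not assume it. Your step (ii) already gives $ab\ne ba$ for all $a\ne b$. If $a\ne a^2$, then $a\cdot a^2=a^2\cdot a$ contradicts this, so $S$ is a band directly.

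The genuine gap is the last step, which you leave as ``should yield a contradiction''. Take $ab=a$, $ba=b$, $ac=c$, $ca=a$. If $bc=b$ and $cb=c$, your computation works: $(ab)c=c$ but $a(bc)=a$. If instead $bc=c$ and $cb=b$, then $(ab)c=ac=c=a(bc)$, so $abc$ gives nothing. You need a different product, e.g. $(ac)b=cb=b$ against $a(cb)=ab=a$. Your fallback is cleaner, but you must actually carry it out. Let $\alpha$ be the $3$-cycle $a\mapsto b\mapsto c\mapsto a$, fixing everything else. Then $a\theta=ab=a=ca=c\theta$, whatever $bc$ is.

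This only excludes two mixed pairs sharing an element, so you still need the reduction to that case. Suppose $\{a,b\}$ is of left-zero type, $\{c,d\}$ is of right-zero type, and $\{a,b\}\cap\{c,d\}=\varnothing$. The pair $\{a,c\}$ has some type, and so it shares an element with a pair of the opposite type.

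With these additions your proof is complete, and it genuinely differs from the paper's. The paper uses only transpositions, extracting just $ab\ne ba$, hence a rectangular band $L\times R$. It then rules out $|L|,|R|\ge2$ with one transposition exchanging $(a,c)$ and $(b,d)$. Your use of the identity permutation gives the sharper pairwise constraint. That lets you avoid the rectangular-band structure theorem, at the cost of this three-element lemma.
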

\begin{proof}
If $S$ is a left-zero semigroup, then every permutation $\alpha$ of $S$ is a complete mapping with the identity mapping as orthomorphism: $x\cdot x\alpha = x$. A dual argument using Proposition~\ref{Prp:equivs} applies if $S$ is a right-zero semigroup.

Conversely, let $S$ be a semigroup in which every permutation is a complete mapping. If $|S| = 1$, there is nothing to prove, so assume $|S| > 1$. For distinct $a,b\in S$, let $\sigma$ denote the $2$-cycle exchanging $a$ and $b$. Then $\sigma$ is a complete mapping, so $a b = a\cdot a\sigma \ne b\cdot b\sigma = b a$. Thus $S$ satisfies the quasi-identity $xy = yx \Rightarrow x=y$. Thus $S$ is a rectangular band, and hence is isomorphic to a direct product $L\times R$ where $L$ is a left-zero band and $R$ is a right-zero band. We claim that either $|L| = 1$ or $|R| = 1$. By way of contradiction, assume $a,b\in L$ and $c,d\in R$ with $a\ne b$ and $c\ne d$. Let $\alpha$ denote the $2$-cycle on $L\times R$ exchanging the pairs $(a,c)$ and $(b,d)$. Then $(a,c)\cdot (a,c)\alpha = (a,c)(b,d) = (a,d) = (a,d)(a,d) = (a,d)\cdot (a,d)\alpha$. Therefore $\alpha$ is not a complete mapping, a contradiction. If $|L|=1$, then $S$ is a right-zero semigroup, while if $|R|=1$, then $S$ is a left-zero semigroup. This completes the proof.
\end{proof}

At the other extreme are semigroups admitting precisely one complete mapping. Bands with this property have a simple characterization.

\begin{lemma}\label{Lem:cfixed}
    Let $S$ be a finite band and let $\alpha\colon S\to S$ be a complete mapping with orthomorphism $\theta\colon S\to S,\quad x\mapsto x\cdot x\alpha$. If $c\in S$ satisfies $cx = xc$ for all $x\in S$, then $c\alpha = c$.
\end{lemma}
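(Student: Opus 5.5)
The plan is to show that the $\mathcal J$-class of the central element $c$ is the singleton $\{c\}$, and then to invoke Theorem~\ref{Thm:green}(b). That theorem says that a complete mapping $\alpha$ preserves the relation $\mathcal J$, so $c\alpha\mathrel{\mathcal J}c$. Once $J_c=\{c\}$ is established, $c\alpha=c$ follows at once. Neither the orthomorphism $\theta$ nor any counting is needed.

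To show $J_c=\{c\}$, I first record the structure of $c$. Since $S$ is a band, $c$ is idempotent, and since $c$ is central, for all $s,t\in S^1$ we have $sct=c(st)$ (with the obvious reading when $s$ or $t$ is the adjoined identity). Hence
\[
S^1cS^1=cS^1=cS\cup\{c\}=cS,
\]
using $c=cc\in cS$. So the principal ideal generated by $c$ is $cS$, and $c$ acts as an identity on it: $c(cx)=cx$. In particular, every $x$ with $x\mathrel{\mathcal J}c$ lies in $cS$, and therefore satisfies $x=cx=xc$.

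Next I use the standard fact that every $\mathcal J$-class of a band is a rectangular band. This holds because a band is a semilattice of rectangular bands, and the components are the $\mathcal J$-classes; see \cite{Howie}. Given this, for $x\in J_c$ the rectangular band identity gives $c=cxc$. Since $x$ commutes with $c$ and $c^2=c$, this becomes $c=cxc=xc=x$, using $xc=x$ from the previous paragraph. Thus $J_c=\{c\}$ and the lemma follows.

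The only step that is not a one-line computation is the rectangularity of $\mathcal J$-classes in a band. If I want to avoid citing it, I would prove it directly. From $x\mathrel{\mathcal J}c$ write $c=uxv$ with $u,v\in S^1$, and aim for $cx=c$. Idempotency gives $c=uxv=ux\cdot xv$, and multiplying by the central idempotent $c$ lets one absorb factors. The concrete target is $xcx=x$ together with $cxc=c$, which is the standard argument that elements of a band which are $\mathcal J$-related satisfy $aba=a$. I expect this small band calculation to be the main, though modest, obstacle.
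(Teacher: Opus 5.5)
Your argument is correct, but it takes a different route from the paper's.

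\textbf{The paper's route.} The proof is purely equational. From idempotency it derives $x\cdot x\theta=x\theta$, and by iterating around the $\theta$-orbit it obtains $x\theta\cdot x=x$. Centrality then gives $c\theta=c\cdot c\alpha=c$. An induction gives $c\cdot c\alpha^i=c$ for all $i\ge1$. The case $i=k-1$, with $k$ the order of $\alpha$, reads $c\alpha^{k-1}\theta=c=c\theta$. Injectivity of $\theta$ then gives $c\alpha^{k-1}=c$, and hence $c\alpha=c$.

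\textbf{Your route.} You show that a central element of a band is alone in its $\mathcal J$-class, and then apply the $\mathcal J$-class invariance from Section~\ref{Sec:ideals}.
\begin{itemize}
\item This is more conceptual and proves more: in any finite semigroup, a complete mapping fixes every element whose $\mathcal J$-class is a singleton. The band and centrality hypotheses serve only to produce such a class.
\item The paper's proof is self-contained. It avoids the structure theory of bands and uses only the band identity and the finite orders of $\alpha$ and $\theta$.
\end{itemize}

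\textbf{Minor points.}
\begin{itemize}
\item Theorem~\ref{Thm:green}(b), as literally stated, is monotonicity: $a\preceq_{\mathcal J}b\Rightarrow a\alpha\preceq_{\mathcal J}b\alpha$. That alone does not give $c\alpha\mathrel{\mathcal J}c$. What you need is $S^1cS^1=S^1(c\alpha)S^1$. This is Lemma~\ref{Lem:preserve_pideals}(b) combined with Proposition~\ref{Prp:preserve_ideals}(2), or equivalently the first paragraph of the proof of Theorem~\ref{t:J-reduct}.
\item The orthomorphism $\theta$ is not really absent from your argument. Proposition~\ref{Prp:preserve_ideals}(2) is proved via its part~(1), which is about $\theta$.
\item If you want to avoid citing rectangularity, finiteness gives a quicker finish. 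Stability of finite semigroups turns $c\mathrel{\mathcal J}cx=x$ into $c\mathrel{\mathcal R}x$, say $c=xt$ with $t\in S^1$. Then $xc=c$, and centrality gives $c=xc=cx=x$.
\end{itemize}
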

\begin{proof}
    Since $S$ is finite, $\alpha$ and $\theta$ have finite orders, say, $k$ and $m$, respectively. For all $x\in S$,
    \begin{equation}\label{Eqn:cfixed-1}
        x\cdot x\theta = x\cdot x\cdot x\alpha = x\cdot x\alpha = x\theta.
    \end{equation}
    Replacing $x$ with $x\theta\inv$, we have
    \begin{equation}\label{Eqn:cfixed-2}
        x\theta\inv\cdot x = x.
    \end{equation}
    We claim that $x\theta^{-j}\cdot x = x$ for all $j\ge 1$, with \eqref{Eqn:cfixed-2} being the case $j=1$. Assuming the claim to be true for some $j\ge 1$, we have
    \[
    x\theta^{-(j+1)}\cdot x =
    \underbrace{x\theta^{-(j+1)}\cdot x\theta^{-j}}\cdot x \byeqn{Eqn:cfixed-1}
    x\theta^{-j}\cdot x = x,
    \]
    where the last equality follows by the induction hypothesis.

    If $m=1$, then \eqref{Eqn:cfixed-3} follows since $\theta=\idmap_S$ and $S$ is a band. If $m>1$, taking $j = m-1$, we get
    \begin{equation}\label{Eqn:cfixed-3}
        x\theta\cdot x = x
    \end{equation}
    for all $x\in S$. This implies
    \begin{equation}\label{Eqn:cfixed-4}
        c\cdot c\alpha = c\theta \byeqn{Eqn:cfixed-1}
        c\cdot c\theta = c\theta\cdot c \byeqn{Eqn:cfixed-3} c.
    \end{equation}

    Now we claim that $c\cdot c\alpha^i = c$ for all $i\ge 1$, with
    \eqref{Eqn:cfixed-4} as the case $i=1$. Assuming the claim holds for some $i\ge 1$, we have
    \begin{alignat*}{3}
        c &= c\cdot c\alpha^i &&\byeqn{Eqn:cfixed-3} c\cdot c\alpha^i\theta\cdot c\alpha^i &&= c\alpha^i\theta\cdot c\cdot c\alpha^i \\
        &= c\alpha^i\theta\cdot c &&= c\cdot c\alpha^i\theta &&= c\cdot c\alpha^i\cdot c\alpha^{i+1}\\
        &= c\cdot c\alpha^{i+1}. && &&
    \end{alignat*}
    This establishes the claim. If $k=1$, then $\alpha=\idmap_S$ and the result is immediate. We may therefore assume that $k>1$. Taking $i=k-1$, we get
    \(c=c\cdot c\alpha^{k-1}=c\alpha^{k-1}\cdot c=c\alpha^{k-1}\cdot c\alpha^{k-1}\alpha=c\alpha^{k-1}\theta.\)
    Apply $\theta^{m-1}$ to both sides and use $c = c\theta$ to conclude
    $c = c\alpha^{k-1}$. Applying $\alpha$ to both sides, we get $c\alpha = c$, which completes the proof.
\end{proof}

The semilattice case is rigid.

%

\begin{prop}\label{p:unique}
    A finite band has exactly one complete mapping if and only if it is a
    semilattice.
\end{prop}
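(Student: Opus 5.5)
The plan is to prove the two implications separately, with Lemma~\ref{Lem:cfixed} doing most of the work for the forward direction from semilattices and an explicit construction for the converse.

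Suppose first that $S$ is a finite semilattice. The identity mapping is a complete mapping of any band, since $x\cdot x=x$. For uniqueness, let $\alpha$ be any complete mapping of $S$. In a semilattice every element $c$ satisfies $cx=xc$ for all $x\in S$, so Lemma~\ref{Lem:cfixed} gives $c\alpha=c$ for every $c\in S$. Hence $\alpha=\idmap_S$, and the semilattice has exactly one complete mapping.

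Conversely, suppose $S$ is a finite band that is not a semilattice. Since the identity is always a complete mapping of a band, it suffices to exhibit a second one. Choose $a,b\in S$ with $ab\ne ba$, and look for a complete mapping that differs from the identity only on a small set.

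The first candidate I would try is the transposition $\sigma=(a\ b)$. Its product map sends $a\mapsto ab$, $b\mapsto ba$, and $x\mapsto x$ for all other $x$. For this to be a bijection we need $\{ab,ba\}=\{a,b\}$. Since $ab\ne ba$, this means either $ab=a$ and $ba=b$, or $ab=b$ and $ba=a$. This does not always hold, so I would instead use the elements $e=ab$ and $f=ba$.

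The key identities are $ef=abba=ab\cdot ab\cdot$ wait: in a band, $ef=ab\,ba=aba$ and $fe=ba\,ab=bab$. I would rather choose a better pair. Set $u=aba$ and $v=bab$; then $uv=ababab=ab$ and $vu=ba$. These do not obviously close up either.

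A cleaner choice uses a single $\mathcal D$-class. Every band is a semilattice of rectangular bands, and $S$ is a semilattice exactly when every such component is trivial. So if $S$ is not a semilattice, some $\mathcal J$-class $J$ is a rectangular band $L\times R$ with $|J|\ge2$. On $J$, pick distinct $x,y$ lying in the same $\mathcal L$-class, so that $xy=x$ and $yx=y$; this is possible when $|R|\ge2$ after relabelling. The transposition $(x\ y)$ then has product map $x\mapsto xy=x$ and $y\mapsto yx=y$, and is the identity elsewhere, so it is a complete mapping different from $\idmap_S$. If instead $|R|=1$, use the same $\mathcal R$-class, where $xy=y$ and $yx=x$; then the product map swaps $x$ and $y$, which is still a bijection.

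The step I expect to be the main obstacle is justifying that the non-semilattice hypothesis yields such a nontrivial rectangular $\mathcal J$-class. For this I would cite the standard structure theorem for bands, that a band is a semilattice of rectangular bands. Alternatively, by Theorem~\ref{t:J-reduct}-style reasoning: if every $\mathcal J$-class is a singleton, then $ab$ and $ba$ are $\mathcal J$-related (as $ab=a(ba)b$ and $ba=b(ab)a$), hence equal, which forces commutativity.
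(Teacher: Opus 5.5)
Your proof is correct, and in both directions it takes a different route from the paper.

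\textbf{Semilattice direction.} You invoke Lemma~\ref{Lem:cfixed} directly: every element of a semilattice is central, so every complete mapping fixes every element and must be $\idmap_S$. The paper does not use that lemma here. It gives a self-contained order argument. For the orthomorphism $\widehat\phi$ it shows $x\widehat\phi\le x$ in the natural partial order, and uses the finite cycles of $\widehat\phi$ to force $\widehat\phi=\idmap_S$. This yields $x\le x\phi$, and the same cycle trick forces $\phi=\idmap_S$. Your version is shorter and reuses a more general fact already proved. The paper's version avoids the rather intricate computation in Lemma~\ref{Lem:cfixed}.

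\textbf{Converse.} Both arguments end with a two-element left-zero or right-zero subsemigroup $\{a,b\}$ and the transposition $(a\ b)$; they differ in how they find it.
\begin{itemize}
\item You use Green's relations. A non-trivial $\mathcal J$-class exists because $ab=a(ba)b$ and $ba=b(ab)a$ give $ab\mathrel{\mathcal J}ba$. You then pass to a non-trivial $\mathcal L$- or $\mathcal R$-class, via the Clifford--McLean structure theorem or via $\mathcal D=\mathcal J=\mathcal L\circ\mathcal R$ in finite semigroups.
\item The paper argues purely equationally. If both quasi-identities $uv=u,\ vu=v\Rightarrow u=v$ and $uv=v,\ vu=u\Rightarrow u=v$ hold, then substituting $u=xy,\ v=xyx$ and $u=yx,\ v=xyx$ gives $xy=xyx=yx$.
\end{itemize}
Your approach is more conceptual; the paper's needs no structure theory.

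\textbf{Two presentational repairs.}
\begin{itemize}
\item Delete the abandoned attempts with $ab,ba$ and $aba,bab$.
\item Fix the labels in the rectangular-band step. With the convention $J\cong L\times R$, the $\mathcal L$-classes are $L\times\{r\}$. So a non-trivial $\mathcal L$-class (a left-zero pair) exists when $|L|\ge2$; otherwise $|R|\ge2$ gives a non-trivial $\mathcal R$-class (a right-zero pair). You wrote these the other way round.
\end{itemize}
Finally, in any band $x\mathrel{\mathcal L}y$ already gives $xy=x$ and $yx=y$, since $x=uy$ implies $xy=uy^2=x$. Likewise $x\mathrel{\mathcal R}y$ gives $xy=y$ and $yx=x$. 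So once you have two distinct $\mathcal D$-related elements, you do not need the full structure theorem at all.
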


\begin{proof}
    Since $S$ is a band, 
    the identity is a complete mapping.

    We start by proving that if $S$ is not commutative, then it contains a two-element left-zero or right-zero
    subsemigroup.

    Consider the following two assertions:
    \[
        uv=u\ \text{and}\ vu=v
        \ \Rightarrow\ 
        u=v,
        \tag{30}
    \]
    and
    \[
        uv=v\ \text{and}\ vu=u
        \ \Rightarrow\ 
        u=v.
        \tag{31}
    \]
    Suppose, for a contradiction, that both assertions hold for every
    $u,v\in S$. We claim  that $S$ is commutative.

    Let $x,y\in S$. Apply (31) with
    \[
        u=xy
        \ \text{and}\ 
        v=xyx.
    \]
    Since $S$ is a band, both $xy$ and $xyx$ are
    idempotent. Moreover,
    \[
        uv=(xy)(xyx)=xyxyx=(xy)^2x=xyx=v,
    \]
    and
    \[
        vu=(xyx)(xy)=xyxxy=xyxy=(xy)^2=xy=u.
    \]
    Hence (31) implies that
    \[
        xy=xyx.
        \tag{32}
    \]

    Next apply (30) with
    \[
        u=yx
        \ \text{and}\ 
        v=xyx.
    \]
    We have
    \[
        uv=(yx)(xyx)=yxxyx=yxyx=(yx)^2=yx=u.
    \]
    Also,
    \[
        vu=(xyx)(yx)=xyxyx.
    \]
    Since
    \[
        (xyx)^2=xyxxyx=xyxyx
    \]
    and $xyx$ is idempotent, it follows that
    \[
        vu=xyxyx=(xyx)^2=xyx=v.
    \]
    Therefore (30) implies that
    \[
        yx=xyx.
        \tag{33}
    \]
    Combining (32) and (33), we obtain
    \[
        xy=xyx=yx.
    \]
    Since $x$ and $y$ were arbitrary, $S$ is commutative.

    It follows that if $S$ is non-commutative, then at least one of (30)
    and (31) fails. If (30) fails, then there exist distinct elements
    $a,b\in S$ such that
   $ab=a$
        and $ba=b$. 
    Together with $a^2=a$ and $b^2=b$, these equalities show that
   $uv=u$, 
    for all $u,v\in\{a,b\}$. Thus $\{a,b\}$ is a two-element left-zero
    semigroup.

    If (31) fails, it follows by symmetry that  there exist distinct elements
$a,b\in S$ such that $\{a,b\}$ is a two-element right-zero semigroup.

    In either case, the transposition $(a\ b)$ of $S$  is a non-identity complete mapping.
It is proved that if $S$ has exactly one complete mapping, then $S$ is
commutative, and hence it is a semilattice.

    Conversely, suppose that $S$ is a finite semilattice, and let $\phi$
    be a complete mapping of $S$. We prove that $\phi$ is the identity.

    Define the natural partial order on $S$ by
    \[
        x\leq y
        \ \Leftrightarrow\ 
        xy=x.
    \]

    Let
    \[
        \widehat{\phi}\colon S\rightarrow S,
        \ 
        x\widehat{\phi}=x(x\phi).
    \]
    Since $\phi$ is a complete mapping, $\widehat{\phi}$ is a permutation
    of $S$. For every $x\in S$, we have
    \[
        \bigl(x\widehat{\phi}\bigr)x
        =\bigl(x(x\phi)\bigr)x
        =x(x\phi)
        =x\widehat{\phi},
    \]
    using commutativity and idempotence. Hence
    \[x\widehat{\phi}\leq x
        \tag{34}
    \]
    for every $x\in S$.

    We now use the finiteness of $S$. Fix $x\in S$. Since
    $\widehat{\phi}$ is a permutation of the finite set $S$, the element
    $x$ lies in a finite cycle of $\widehat{\phi}$. Thus, for some
    positive integer $m$,
$x{\widehat{\phi}}^{^{^{m}}}=x$.
    Applying (34) successively gives
    \[
        x
        \geq x\widehat{\phi}
        \geq x\widehat{\phi}^{\,2}
        \geq\cdots
        \geq x\widehat{\phi}^{\,m}
        =x.
    \]
    By antisymmetry of the partial order, every inequality in this chain
    is an equality. In particular,
    \[
        x\widehat{\phi}=x.
    \]
    Since $x$ was arbitrary, $\widehat{\phi}$ is the identity
    permutation. Therefore
    \[
        x(x\phi)=x
    \]
    for every $x\in S$, which, by the definition of the natural order,
    means that
    \[
        x\leq x\phi
        \tag{35}
    \]
    for every $x\in S$.

    Finally, $\phi$ is itself a permutation of the finite set $S$. Fix
    $x\in S$, and choose a positive integer $n$ such that
    \[
        x\phi^n=x.
    \]
    Applying (35), we get
    \[
        x
        \leq x\phi
        \leq x\phi^2
        \leq\cdots
        \leq x\phi^n
        =x.
    \]
    Antisymmetry again shows that all these elements are equal. In
    particular,
    \[
        x\phi=x.
    \]
    
    Thus a finite semilattice has exactly one complete mapping, namely
    the identity.
\end{proof}
\section{Strong complete mappings}
\label{Sec:strong}

A bijection $\alpha\colon S\to S$ of a semigroup $S$ is a \emph{strong complete mapping} if $\alpha$ is both a complete mapping and an orthomorphism.
Evans \cite[Theorem~15]{evans3} showed that a finite abelian group admits a strong complete mapping if and only if both its Sylow $2$-subgroup and its Sylow $3$-subgroup are either trivial or non-cyclic. Akhtar and Gagola \cite[Abstract]{ag}
showed that every non-cyclic $3$-group admits a strong complete mapping, except possibly those in a certain infinite family. For a useful survey of strong complete mappings in groups, current up to 2013, see \cite{evans4}.

\begin{theorem}\label{Thm:strong-CR}
Let $S$ be a finite semigroup with a strong complete mapping $\alpha\colon S\to S$ and orthomorphism $\theta\colon S\to S,\quad x\mapsto x\cdot x\alpha$.
Then $S$ is completely regular. Further, if $x\mapsto x\inv\in V(x)$ is the commuting inverse, then $x\inv\cdot x\theta = x\alpha$
for all $x\in S$.
\end{theorem}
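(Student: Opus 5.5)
The plan is to show that every element $a\in S$ satisfies $a\grr a^2$. Finiteness (stability) then upgrades this to $a\mathrel{\gh}a^2$, so every $\gh$-class is a group and $S$ is completely regular. After that, the identity $x\inv\cdot x\theta=x\alpha$ will follow from a short $\gr$-class computation.

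\emph{Step 1: $\alpha$ and $\theta$ fix $\gr$-classes pointwise.} Since $\alpha$ is an orthomorphism, there is a complete mapping $\beta$ with $x\cdot x\beta=x\alpha$ for all $x$.
\begin{itemize}
\item Apply Proposition~\ref{Prp:preserve_ideals}(1) to the pair $(\beta,\alpha)$ and to the right ideal $aS^1$. This gives $(aS^1)\alpha=aS^1$.
\item Lemma~\ref{Lem:preserve_pideals}(a) for the same pair gives $(aS^1)\alpha=a\alpha\cdot S^1$. Hence $a\alpha\cdot S^1=aS^1$, that is, $a\alpha\grr a$.
\item The same two results applied to the pair $(\alpha,\theta)$ give $a\theta\grr a$.
\end{itemize}

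\emph{Step 2: $a\grr a^2$.} By Step 1 we can write $a\alpha=au$ with $u\in S^1$. Then $a\theta=a\cdot a\alpha=a^2u$. Using $a\theta\grr a$ we get
\[
a\preceq_{\gr}a\theta\preceq_{\gr}a^2\preceq_{\gr}a,
\]
so $a\grr a^2$. In particular $a\gjj a^2$.

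\emph{Step 3: $a\gll a^2$.} We also have $a^2\preceq_{\gl}a$. A finite semigroup is stable, so $a^2\gjj a$ together with $a^2\preceq_{\gl}a$ forces $a^2\gll a$. I expect this to be the main obstacle. If I want to avoid quoting stability, I would either rerun Steps 1–2 on the left using Proposition~\ref{Prp:equivs} and the bijection $\delta\colon x\mapsto x\alpha\inv\cdot x$, or prove directly that $a^2=va$ and $a=ya^2z$ give $a=(yv)^k a z^k$, and then iterate to the idempotent power to obtain $a\in S^1a^2$.

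\emph{Step 4: complete regularity.} We now have $a\mathrel{\gh}a^2$. By Green's theorem, $H_a$ is a group for every $a$, so $S$ is a union of groups, i.e.\ completely regular.

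\emph{Step 5: the identity $x\inv\cdot x\theta=x\alpha$.} Let $x\inv$ be the commuting inverse of $x$; then $xx\inv$ is the identity of $H_x$. By Step 1, $x\alpha\grr x$, so $x\alpha=xu$ for some $u\in S^1$. Therefore
\[
x\inv\cdot x\theta=x\inv x\cdot x\alpha=x\inv x\,xu=xu=x\alpha,
\]
using $x\inv xx=x$.
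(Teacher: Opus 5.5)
Your proof is correct, and it takes a different route from the paper.

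The paper argues equationally. It fixes inverses $x'$ with $x'\cdot x\theta=x\alpha$ via Theorem~\ref{Thm:good_inverse}. It then proves $x'x\cdot x\alpha^j=x\alpha^j$ by induction, using the complete mapping $\beta$ whose orthomorphism is $\alpha$. Taking $j$ to be the order of $\alpha$ gives $x'xx=x$, and the technical Lemma~\ref{Lem:x'xx} then yields complete regularity. Its final computation is your Step~5 with $u=x\beta$.

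You instead extract from Proposition~\ref{Prp:preserve_ideals}(1) and Lemma~\ref{Lem:preserve_pideals}(a) a pointwise fact: the orthomorphism of any complete mapping of a finite semigroup sends each element into its own $\mathcal R$-class. You apply this to both $\theta$ and $\alpha$ (the latter as the orthomorphism of $\beta$) and read off $a\mathrel{\mathcal R}a^2$. This bypasses Theorems~\ref{Thm:regular} and~\ref{Thm:good_inverse} and Lemma~\ref{Lem:x'xx} altogether.

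It also proves more. Step~2 only uses $a\preceq_{\mathcal R}a\theta$, which always holds, together with $a\alpha\in aS^1$. So any complete mapping with $x\alpha\preceq_{\mathcal R}x$ for all $x$ forces complete regularity, and this recovers the corollary on the identity complete mapping.

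The step you flag as the main obstacle is easier than you expect. From $a=a^2z$ one gets $a=a^{k+1}z^k$ for all $k\ge1$. If $e=a^m$ is idempotent, this gives $a=e(az^m)$, hence $ea=a$, i.e.\ $a=a^{m+1}$. So $a$ lies in the cyclic group $\langle a\rangle$, with no appeal to stability or Green's theorem.

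What the paper's approach buys is uniformity. The same Lemma~\ref{Lem:x'xx} drives Theorem~\ref{Thm:involution}, where $x\alpha$ need not lie in $xS^1$, so your $\mathcal R$-class argument would not apply there. For instance, the involutive complete mapping in the proof of Theorem~\ref{Thm:ILambda_even} with $d=1$ and $p=q$ sends $(i,1,\lambda)$ to $(j,0,\lambda)$, in a different $\mathcal R$-class.
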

\begin{proof}
Since $S$ is finite, $\alpha$ has finite order, say, $k$.

By Theorem~\ref{Thm:regular}, the semigroup $S$ is regular, and by Theorem~\ref{Thm:good_inverse} we may choose an inverse mapping $x\mapsto x'\in V(x)$ satisfying $x'\cdot x\theta=x\alpha$ for every $x\in S$. Since $\alpha$ is a strong complete mapping, it is an orthomorphism $x\mapsto x\cdot x\beta$ for some complete mapping $\beta\colon S\to S$.

We claim that for all $j>0$,
\begin{equation}\label{Eqn:x'xxaj}
x'x\cdot x\alpha^j = x\alpha^j.
\end{equation}
Indeed, first we have $x'x\cdot x\alpha = x'xx'\cdot x\theta = x'\cdot x\theta = x\alpha$,
taking care of the case $j=1$. Assuming the claim for $j\ge 1$, we have $x'x\cdot x\alpha^{j+1} = x'x\cdot x\alpha^j \cdot x\alpha^j\beta =
x\alpha^j\cdot x\alpha^j\beta = x\alpha^{j+1}$, using the induction hypothesis in the second equality. This establishes the claim. Now in \eqref{Eqn:x'xxaj}, take $j=k$, the
order of $\alpha$. Then $x'xx = x$ for all $x\in S$. By Lemma~\ref{Lem:x'xx}, $S$ is completely regular.

For the remaining assertion, for all $x\in S$ we compute
\[
x\inv\cdot x\theta = x\inv x\cdot x\alpha = \underbrace{x\inv x}x\cdot x\beta = xx\inv x\cdot x\beta = x\cdot x\beta = x\alpha.
\]
This completes the proof.
\end{proof}

\section{Products of all elements}
\label{Sec:products}

The Hall--Paige conjecture (Theorem~\ref{Thm:HP}) is sometimes stated in an extended form:

\begin{theorem}\label{Thm:HPextended}
For a finite group $G$, the following are equivalent: 
\begin{enumerate}
\item[(A)] $G$ admits a complete mapping; 
\item[(B)] the Sylow $2$-subgroups of $G$ are trivial or non-cyclic;
\item[(C)] there exists an ordering $g_1,\ldots,g_n$ of all elements of $G$ such that $g_1 g_2 \cdots g_n = 1$.
\end{enumerate}
\end{theorem}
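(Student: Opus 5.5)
The equivalence (A)$\Leftrightarrow$(B) is exactly Theorem~\ref{Thm:HP}, so the only new work is to connect (C) with (B). The plan is to prove (A)$\Rightarrow$(C) directly from a complete mapping, and then (C)$\Rightarrow$(B) by the Burnside-transfer argument already used in Lemma~\ref{l:cyclic2-quotient} and Theorem~\ref{t:converse}.

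For (A)$\Rightarrow$(C), I would use the classical Paige construction. Let $\alpha$ be a complete mapping with orthomorphism $\theta$. By the group analogue of Theorem~\ref{Thm:fix_ident}, we may assume $1\alpha=1\theta=1$. It would be convenient if $\theta$ were a single cycle, but that is not guaranteed, so I would instead argue with the abelianization and a correction step.

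A cleaner route is the known equivalence: (C) holds if and only if the product of all elements in some order lies in $[G,G]$ and equals $1$. By a theorem of Dénes--Hermann, the set of all such products is a full coset of $G'$. Using that result, (C) is equivalent to saying that the product of all elements of $G$, computed in the abelian group $G/G'$, is trivial. That product is the product of the involutions of $G/G'$. It is nontrivial exactly when $G/G'$ has a unique involution, that is, when the Sylow $2$-subgroup of $G/G'$ is nontrivial cyclic.

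The remaining task is to show that $G/G'$ has a nontrivial cyclic Sylow $2$-subgroup if and only if $G$ does. For one direction, if $G$ has a nontrivial cyclic Sylow $2$-subgroup, Lemma~\ref{l:cyclic2-quotient} gives $N$ of odd order with $G/N$ a nontrivial cyclic $2$-group. Then the $2$-part of $G/G'$ surjects onto $G/N$, and the count of preimages used in Theorem~\ref{t:converse} (each element of $G/N$ occurs $|N|$ times, an odd number) shows directly that any ordered product maps to the involution $p\neq 1$ of $G/N$. Hence (C) fails, which proves (C)$\Rightarrow$(B).

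For (B)$\Rightarrow$(C), the Dénes--Hermann reduction requires that the product of involutions of $G/G'$ be trivial. If the Sylow $2$-subgroup $U$ of $G$ is trivial, this is clear. If $U$ is non-cyclic, I need the $2$-part of $G/G'$ to be trivial or non-cyclic. This is the main obstacle, because an abelianization can be cyclic even when $U$ is not: for $Q_8$, for example, the abelianization is $C_2\times C_2$, which is fine, but in general the image of $U$ in $G/G'$ may be a nontrivial cyclic group. So the reduction is not purely formal.

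I would therefore avoid Dénes--Hermann for this direction and argue constructively from (A). Take a complete mapping $\alpha$ normalized so that $1\alpha=1$, with orthomorphism $\theta$ satisfying $1\theta=1$. Decompose $\theta$ into cycles. On a cycle $(x_1,\dots,x_k)$ of $\theta$, the elements $x_i^{-1}x_{i+1}=x_i\alpha$ telescope to $1$. Concatenating these blocks lists every value of $\alpha$ exactly once, and each block multiplies to $1$. Since $\alpha$ is a bijection, this gives an ordering of all of $G$ whose product is $1$, which is (C). Under this convention the orthomorphism is taken as $x\theta = x(x\alpha)$, so the telescoping should be read with $x\alpha = x^{-1}(x\theta)$, which is consistent with the inverse-semigroup formula in Corollary~\ref{Cor:inv_ortho}.

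Combining the steps: (A)$\Rightarrow$(C) by the telescoping argument, (C)$\Rightarrow$(B) by the transfer and parity argument, and (B)$\Rightarrow$(A) by Theorem~\ref{Thm:HP}.
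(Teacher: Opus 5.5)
Your final chain (A)$\Rightarrow$(C)$\Rightarrow$(B)$\Rightarrow$(A) is correct, though it is not the paper's route; the paper gives no argument of its own and only assembles citations.

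\begin{itemize}
\item Your telescoping step is Paige's argument. It is exactly the group case of Lemma~\ref{Lem:prod_idem} and Theorem~\ref{Thm:E-order}, and the normalization $1\alpha=1$ is not needed.
\item Your step (C)$\Rightarrow$(B) is the parity obstruction modulo the odd-order normal $2$-complement. This is the same computation as in the proof of Theorem~\ref{t:converse}.
\item The paper cites Paige for (A)$\Rightarrow$(C), Hall--Paige for (A)$\Leftrightarrow$(B) via Theorem~\ref{Thm:HP}, and Vaughan-Lee--Wanless for a short direct proof of (B)$\Rightarrow$(C).
\end{itemize}

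The trade-off is as follows. Your (B)$\Rightarrow$(C) passes through (A), so it rests on the deep sufficiency half of Hall--Paige, whereas the cited Vaughan-Lee--Wanless argument is a short direct proof. In exchange, you make (C)$\Rightarrow$(B) explicit. The paper's list of attributions leaves that step implicit, since none of the implications it cites has (C) as its hypothesis.

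One caution about the route you discarded. The image of $g_1\cdots g_n$ in $G/G'$ is $\bigl(\prod_{x\in G/G'}x\bigr)^{|G'|}$, not the product of the involutions of $G/G'$. It is therefore trivial whenever $|G'|$ is even. Your claim that $G/G'$ has a non-trivial cyclic Sylow $2$-subgroup exactly when $G$ does is false: $G=A_4\times C_2$ has $G/G'\cong C_6$.

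Once this is corrected, the obstacle you identified disappears. If $|G'|$ is odd, a Sylow $2$-subgroup of $G$ embeds as a Sylow $2$-subgroup of $G/G'$. So under (B) the image in $G/G'$ is always trivial, and D\'enes--Hermann then yields (B)$\Rightarrow$(C) without appeal to the sufficiency half of Hall--Paige.
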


{\noindent Paige proved (A) $\Rightarrow$ (C) \cite[Theorem~1 and its corollary]{Paige}. Hall and Paige proved that a non-trivial cyclic Sylow $2$-subgroup obstructs (A), which is exactly (A) $\Rightarrow$ (B); see \cite[Theorem~5]{evans4}. Vaughan-Lee and Wanless found a short direct proof that (B) $\Rightarrow$ (C) \cite[Theorem~2.1]{VLW}. Thus a proof that (C) $\Rightarrow$ (A) would have resolved the conjecture, but the proof eventually found was of (B) $\Rightarrow$ (A).}

In this section, we examine analogues of condition (C) for semigroups.

\begin{lemma}\label{Lem:prod_idem}
Let $S$ be a finite semigroup with a complete mapping $\alpha\colon S\to S$ and corresponding orthomorphism
$\theta\colon S\to S,\quad x\mapsto x\cdot x\alpha$. For $c\in S$, let $\{c,c\theta,\ldots,c\theta^{m-1}\}$ be the orbit of $\theta$ through $c$. Then $c\alpha\cdot c\theta\alpha \cdots c\theta^{m-1}\alpha$ is an idempotent in the same $\mathcal{L}$-class as $c$.
\end{lemma}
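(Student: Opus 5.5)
Write $c_i=c\theta^i$ for $0\le i\le m$, so that $c_m=c_0=c$ and, by the defining identity of the orthomorphism,
\[
c_{i+1}=c_i\cdot c_i\alpha \quad (0\le i\le m-1).
\]
Put $e=c_0\alpha\cdot c_1\alpha\cdots c_{m-1}\alpha$. The plan is to telescope the orbit: iterating the identity gives
\[
c_m=c_0\cdot c_0\alpha\cdot c_1\alpha\cdots c_{m-1}\alpha,
\]
and since $c_m=c$, this says $c\cdot e=c$. This is the same computation used in the proof of Theorem~\ref{Thm:regular}, but now applied along the whole $\theta$-orbit rather than stopping one step short.

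First, idempotency. Since $c=ce$, we have $c_i=c\,(c_0\alpha\cdots c_{i-1}\alpha)$ for every $i$. Similarly, because $c_0=c_m$, we can start the telescoping at $c_i$ and go once around the cycle:
\[
c_i=c_i\cdot c_i\alpha\cdots c_{m-1}\alpha\cdot c_0\alpha\cdots c_{i-1}\alpha .
\]
To show $e^2=e$, I would look for a right-multiplication identity satisfied by $e$ itself, since $ce=c$ only shows that $e$ acts as a right identity on $c$. One approach is to apply the same reasoning in the finite semigroup $S$ using a power of $e$. Let $e^\omega$ be the idempotent power of $e$. From $ce=c$ we get $ce^\omega=c$, so $c\preceq_{\gl} e^\omega$. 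This suggests working with $\gl$-classes: if we also prove $e\preceq_{\gl} c$ and $c\preceq_{\gl} e$, then $e\gll c$.

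Second, $e\preceq_{\gl} c$. For this, I would express $e$ as something times $c$. The product $e$ ends in $c_{m-1}\alpha$, so there is no obvious factor of $c$ on the right. Instead, I would use Lemma~\ref{Lem:preserve_pideals}(a) and Theorem~\ref{Thm:good_inverse}: with the good inverses we have $x\alpha=\bar x\cdot x\theta$, so
\[
c_{m-1}\alpha=\overline{c_{m-1}}\cdot c_m=\overline{c_{m-1}}\cdot c .
\]
Hence $e=(c_0\alpha\cdots c_{m-2}\alpha\,\overline{c_{m-1}})\,c$, which gives $e\preceq_{\gl} c$.

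Third, $c\preceq_{\gl} e$. We already have $c=ce$, which gives $c\preceq_{\gl} e$ immediately. So $e\gll c$. In a finite (indeed any) semigroup, an element $e$ with $e\gll c$ and $ce=c$ satisfies $e=uc$ for some $u$, and therefore
\[
e^2=uce=uc=e .
\]
Thus $e$ is idempotent, which removes the need for the $e^\omega$ detour.

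The main obstacle is the factorization in the second step, namely that $e$ ends with $c$ on the right. This relies on choosing the inverses from Theorem~\ref{Thm:good_inverse}. If that choice caused trouble, a fallback would be to write $c_{m-1}\alpha$ as a right factor of $c$ via Lemma~\ref{Lem:preserve_pideals}, or to use the telescoped identity for $c_{m-1}$ to rewrite the last factor.
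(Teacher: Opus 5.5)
Your argument is correct, but it arranges the computation differently from the paper. The paper applies the good inverse of Theorem~\ref{Thm:good_inverse} to the \emph{first} factor, $c\alpha=c'\cdot c\theta$. The product then telescopes forward, $c'\cdot c\theta\cdot c\theta\alpha\cdots c\theta^{m-1}\alpha=c'\cdot c\theta^m=c'c$, so it is identified explicitly as $c'c$. Idempotency and $c'c\mathrel{\mathcal L}c$ then follow at once from $c'\in V(c)$.

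You instead telescope from $c$ on the left to get $ce=c$, which uses only the orbit structure and no inverses. You invoke the good inverse only on the \emph{last} factor, $c\theta^{m-1}\alpha=\overline{c\theta^{m-1}}\cdot c$, to place $e$ in $Sc$. Then $e=uc$ together with $ce=c$ gives $e^2=u(ce)=e$ and $e\mathrel{\mathcal L}c$; this also covers $m=1$, where $e=\overline{c}\,c$.

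Your version separates the two ingredients cleanly: the right-identity property is purely combinatorial, and the good inverse is needed only for $e\in Sc$. The paper's version is shorter and tells you exactly which idempotent in the $\mathcal L$-class of $c$ arises.

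You can delete the exploratory paragraph about $e^\omega$. The suggested fallback via Lemma~\ref{Lem:preserve_pideals} would not work as stated, since that lemma concerns right and two-sided principal ideals, not left ones. Neither is needed, because your main line is already complete.
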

\begin{proof}
By Theorem~\ref{Thm:regular}, $S$ is regular and by Theorem~\ref{Thm:good_inverse}, we may choose an inverse mapping $x\mapsto x'\in V(x)$ such that $x'\cdot x\theta = x\alpha$ for all $x\in S$. Then
\begin{align*}
c\alpha\cdot c\theta\alpha \cdots c\theta^{m-1}\alpha &= c'\cdot \underbrace{c\theta \cdot c\theta\alpha} \cdots c\theta^{m-1}\alpha \\
&= c'\cdot c\theta^2 \cdots c\theta^{m-1}\alpha \\
&= \cdots \\
&= c'\cdot c\theta^{m-1}\cdot c\theta^{m-1}\alpha \\
&= c'\cdot c\theta^m \\
&= c'c.
\end{align*}
This is an idempotent $\mathcal{L}$-related to $c$.
\end{proof}

A semigroup $S$ is said to be an $E$-\emph{semigroup} if the set $E(S)$ of idempotents is a subsemigroup of $S$. A regular $E$-semigroup is said to be \emph{orthodox}.

\begin{theorem}\label{Thm:E-order}
Let $S$ be a finite $E$-semigroup with a complete mapping. Then there exists an ordering $c_1,\ldots,c_n$ of all elements of $S$ such that $c_1\cdots c_n$ is an idempotent.
\end{theorem}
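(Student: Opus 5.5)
The plan is to use Lemma~\ref{Lem:prod_idem} orbit by orbit. Let $\alpha$ be a complete mapping of $S$ with orthomorphism $\theta$. The orbits of $\theta$ partition $S$; call them $O_1,\ldots,O_k$ and choose a representative $c_j\in O_j$ with orbit length $m_j$. Since $\alpha$ is a bijection, the sets $O_j\alpha$ also partition $S$. Hence listing, for $j=1,\ldots,k$ in turn, the elements
\[
c_j\alpha,\ c_j\theta\alpha,\ \ldots,\ c_j\theta^{m_j-1}\alpha
\]
produces an ordering of all elements of $S$, each element appearing exactly once.

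For this ordering, the product of the $j$-th block is, by Lemma~\ref{Lem:prod_idem}, an idempotent $e_j$ (indeed $e_j=c_j'c_j$). The product of all elements in the chosen order is therefore $e_1e_2\cdots e_k$. Because $S$ is an $E$-semigroup, $E(S)$ is a subsemigroup, so this product of idempotents is again an idempotent. This gives the required ordering.

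The main point to check is that the blocks really exhaust $S$ without repetition. This holds because $x\mapsto x\alpha$ is a bijection and the $\theta$-orbits partition $S$. The remaining step is the reliance on Lemma~\ref{Lem:prod_idem} for each orbit, including orbits of length $1$, where the block is the single element $c\alpha$ with $c\theta=c$. In that case $c'\cdot c\theta=c'c$ is idempotent and equals $c\alpha$ by Theorem~\ref{Thm:good_inverse}, so the lemma still applies. No further obstacle is expected.
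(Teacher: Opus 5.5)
Your proposal is correct and follows essentially the same route as the paper's proof: you partition $S$ into $\theta$-orbits, list the $\alpha$-images of each orbit in $\theta$-order, apply Lemma~\ref{Lem:prod_idem} to each block, and use that $E(S)$ is a subsemigroup. Your separate check for orbits of length $1$ is harmless, since the lemma's computation already covers $m=1$; the only blemish is cosmetic, namely that reusing $c_j$ for orbit representatives clashes with the $c_1,\ldots,c_n$ of the statement.
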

\begin{proof}
Let $\mathcal P=\{C_1,\ldots,C_{\ell}\}$ be the partition of $S$ given by the orbits of $\theta$. Since $\alpha$ is a permutation,
$\mathcal P\alpha=\{C_1\alpha,\ldots,C_{\ell}\alpha\}$ is also a partition. For each $i$, choose $a_i\in C_i$ and write
$C_i=\{a_i,a_i\theta,\ldots,a_i\theta^{m_i-1}\}$. By Lemma~\ref{Lem:prod_idem}, $a_i\alpha\cdot a_i\theta\alpha\cdots a_i\theta^{m_i-1}\alpha$ is an idempotent.
Thus \(a_1\alpha\cdot a_1\theta\alpha\cdots a_1\theta^{m_1-1}\alpha\cdots a_{\ell}\alpha\cdot a_{\ell}\theta\alpha\cdots a_{\ell}\theta^{m_{\ell}-1}\alpha\) is a product of idempotents, and hence an idempotent since $S$ is an $E$-semigroup.
\end{proof}

We remark that for semigroups with zero this conclusion can be uninformative: nothing rules out the possibility that the product of elements
given by Theorem~\ref{Thm:E-order} is zero, even if restricted to the non-zero elements. For instance, for the
$3$-element commutative idempotent semigroup $S=\{0,1,2\}$ with $1\cdot2=0$, let $\alpha=\theta=\idmap_S$ and note that
$\mathcal P\alpha = \{\{0\},\{1\},\{2\}\}$.

\section*{Acknowledgements}
We thank Persi Diaconis for calling our attention to \cite{Mittenthal} and to the enormous impact it had in industry.

The first author was partially supported by the Funda\c{c}\~ao para a
Ci\^encia e a Tecnologia (FCT) through projects UID/00297/2025 and
UID/PRR/00297/2025, and by CEMAT-Ci\^{e}ncias FCT through projects
UIDB/04621/2020 and UIDP/04621/2020.

The second author was partially supported by the Funda\c{c}\~ao para a
Ci\^encia e a Tecnologia (FCT) through projects UID/00297/2025 and
UID/PRR/00297/2025.

The fourth author was supported by the Australian Research Council.

\section{Problems}
\label{Sec:problems}

{
The structural proof of Theorem~\ref{t:linear-monoid} leads to the following
problem.

\begin{problem}\label{p:matrix-monoids}
Give explicit complete mappings for every proper principal factor of the full
linear monoid. When $M_n(\mathbb F_q)$ has a complete mapping, construct one
without using the general existence theorems for Rees $0$-matrix semigroups.
In particular, replace the anchored constructions for the rank-$1$ factors
over fields of odd order and the rank-$2$ factors over $\mathbb F_2$ by
explicit formulas or efficient algorithms.
\end{problem}
}

P. T. Bateman \cite[Theorem]{Bateman} proved that every infinite group has a complete mapping. For semigroups, the existence problem should depend on ideal structure and the Green classes, not only on maximal subgroups.

\begin{problem}\label{p:infinite-semigroups}
Develop a theory of complete mappings for infinite semigroups. In particular, determine necessary and sufficient conditions for infinite regular semigroups, inverse semigroups, Rees matrix semigroups and Rees $0$-matrix semigroups to have complete mappings. Is there any analogue of the reduction to principal factors in the infinite case?
\end{problem}

For finite groups, complete mappings which are automorphisms have been studied by Bors~\cite{bors}. Let $S$ be a semigroup admitting a complete mapping. This does not imply that
$\End(S)$ or $\Aut(S)$ admits a complete mapping. For example, if
$S=C_2\times C_2$, then $\Aut(S)\cong S_3$, and $S_3$ has no complete mapping.
Since $S_3$ is also the group of units of the endomorphism monoid of $S$, it
follows that $\End(S)$ cannot have a complete mapping either.

\begin{problem}
Characterize finite semigroups $S$ according to the four possible combinations
of existence and non-existence of complete mappings for $S$ and for $\End(S)$.
\end{problem}

\begin{problem}\label{p:resolvable-transversals}
{Determine which finite Rees matrix semigroups have a resolution
of their Cayley table into disjoint transversals. For group tables, the
transversals in such a resolution are the symbol classes of an orthogonal
mate. In particular, decide whether the existence of one complete mapping
imposes any structural condition that forces such a resolution in non-group
Rees matrix semigroups.}
\end{problem}

The converse of Theorem~\ref{Thm:E-order} leads to the following question.
An affirmative answer would generalize the Hall--Paige conjecture.

\begin{problem}
Let $S$ be an orthodox semigroup with an ordering $c_1,\ldots,c_n$ of all elements of $S$ such that $c_1\cdots c_n$ is an idempotent. Must $S$ have a complete mapping?
\end{problem}

\begin{problem}\label{p:permutation-structure}
Let $S$ be a finite semigroup admitting a complete mapping $f$, and let $\theta$ be the corresponding orthomorphism. Determine which pairs of permutations $(f,\theta)$ can occur. In particular, study the possible cycle structures of $f$, of $\theta$, and of $f^{-1}\theta$, and determine how these cycle structures reflect the Green--Rees structure of $S$.
\end{problem}

\begin{problem}\label{p:orthogonal-semigroup-cliques}
Let $\mathbb S_n$ be the set of semigroup operations on a fixed $n$-element set. Say that two operations $\cdot$ and $\ast$ are orthogonal if the map
\((x,y)\longmapsto (x\cdot y,x\ast y)\)
from $X^2$ to $X^2$ is a bijection. Determine the maximum size of a family of pairwise orthogonal semigroup operations on an $n$-element set, and identify the extremal families.
\end{problem}

It is well known that a finite group $G$ has odd order if and only if the identity is a complete mapping of $G$. This simple idea can be extended to the context of complete mappings as follows.

\begin{definition}
Let $S$ be a semigroup and let $f\colon S\to S$ be a bijection. We call $f$ a $k$-complete mapping if
\(x\mapsto x\cdot(xf)^k\)
is a bijection.
\end{definition}

In this terminology, a group $G$ has odd order if and only if the identity is a $1$-complete mapping. Similarly, in a group $G$, the power map
\(\pi_k\colon G\to G,\quad x\mapsto x^k,\)
is a bijection if and only if $\gcd(k,|G|)=1$ if and only if the identity mapping is a $(k-1)$-complete mapping of $G$.

\begin{problem}
Let $C$ be a class of semigroups, for instance groups, Clifford semigroups,
inverse semigroups, regular semigroups, Rees $0$-matrix semigroups, or natural
classes of transformation semigroups. Characterize the finite semigroups $S$ in
$C$ that have a $k$-complete mapping; where possible, determine the
corresponding infinite cases as well.
\end{problem}

\begin{problem}\label{p:Rees0-full-classification}
{Determine the exact obstruction obtained by combining the
Hall--Paige obstruction in the maximal subgroup with the pattern and the
sandwich entries of $P$, and thereby give necessary and sufficient conditions
for a finite Rees $0$-matrix semigroup $\mathcal M^0(G,I,\Lambda,P)$ to
have a complete mapping without assumptions on the complete mappings of $G$
or on the parities of $I$ and $\Lambda$.}
\end{problem}

\begin{problem}\label{p:complexity-complete-mapping}
Determine the computational complexity of deciding, from the Cayley table of a
finite semigroup $S$, whether $S$ has a complete mapping. Determine the
complexity of the same problem when $S$ is restricted to be regular, inverse,
completely regular, completely $0$-simple, or aperiodic.
\end{problem}

{
We assume the reader to be familiar with the Brauer monoid $\mathcal B_n$
and the partial Brauer monoid $\mathcal{PB}_n$; their definitions are given
in \cite[Section~2.2]{EastMitchellRuskucTorpey}. In
\cite[Proposition~2.1 and Remark~2.2(v)--(vii)]
{EastMitchellRuskucTorpey} it is proved that their $\mathcal J$-classes are
the rank classes and that a maximal subgroup in rank $r$ is isomorphic to
$S_r$.

\begin{problem}\label{p:diagram-monoids}
Classify complete mappings for $\mathcal B_n$ and $\mathcal{PB}_n$. In view
of the Hall--Paige theorem, the unresolved proper factors are those of ranks
$2$ and $3$ which occur in the corresponding monoid. Determine also for
which values of $n$ the whole monoid has a complete mapping.
\end{problem}
}

\begin{problem}\label{p:other-algebras}
Let $\mathcal V$ be a class, or a variety, of finite algebras equipped with a
distinguished finite family $\mathcal T$ of binary basic operations or binary
term operations. Call a permutation $\alpha$ of the underlying set of
$A\in\mathcal V$ simultaneously $\mathcal T$-complete if, for every
$t\in\mathcal T$, the map
\(x\mapsto t(x,x\alpha)\)
is a permutation of $A$. Develop structural, enumerative and algorithmic
criteria for the existence of such permutations in natural classes which do not
reduce to a single semigroup or quasigroup reduct, such as finite lattices,
finite rings considered with both addition and multiplication, Lie algebras,
Jordan algebras and other finite non-associative algebras. Determine which
parts of the Hall--Paige theory and which parts of the Hall matching theory
survive in this broader setting.
\end{problem}

Mittenthal's use of orthomorphic mappings of elementary abelian $2$-groups
as block substitutions suggests asking whether the additional freedom
provided by the ideal structure, principal factors, and Rees sandwich
matrices of finite semigroups can be exploited cryptographically
~\cite[pp.~59--60]{Mittenthal}.

\begin{problem}
Develop a theory of cryptographic constructions based on complete mappings
of finite semigroups.  In particular, determine whether there exist natural
infinite families of finite semigroups $S$ with efficiently computable
complete mappings $\alpha$ for which $\alpha$, the associated orthomorphism
\(\theta:S\to S,\quad x\mapsto x\cdot x\alpha,\)
or constructions derived from the pair $(\alpha,\theta)$ yield
cryptographic permutations, substitution boxes, or related primitives with
security or implementation properties unavailable, or not as efficiently
attainable, from complete mappings of groups of comparable order.  Which
semigroup theoretic features provide useful additional freedom, and which
instead produce invariant structures that can be exploited
cryptanalytically?
\end{problem}

\end{document}